\newtheoremstyle{exampstyle}
{6pt} 
{6pt} 
{\it} 
{} 
{\bfseries} 
{.} 
{.5em} 
{} 
\theoremstyle{exampstyle}
\newtheorem{theorem}{Theorem}[section]
\newtheorem{proposition}[theorem]{Proposition}
\newtheorem{lemma}[theorem]{Lemma}
\newtheorem{corollary}[theorem]{Corollary}
\newtheorem{remark}{Remark}[section]
\newtheorem{defn}{Definition}[section]
\newcommand{\E}{{\mathbb E}}
\newcommand{\R}{{\mathbb R}}
\newcommand{\rs}{{\mathfrak r}}
\renewcommand{\P}{{\mathbb P}}
\newcommand{\aff}{{\mathrm{aff}}}
\newcommand{\C}{{\mathcal{C}}}
\newcommand{\D}{{\mathbf{D}}}
\newcommand{\dist}{{\mathrm{dist}}}
\newcommand{\cone}{{\mathrm{cone}}}
\newcommand{\cras}{{\C_r(\{a_i\}, \{s_i\})}}
\newcommand{\I}{{\mathcal{I}}}
\newcommand{\K}{{\mathcal{K}}}
\newcommand{\krv}{{K^{(r)}(V)}}
\newcommand{\ktr}{{\mathbf{k}_r}}
\newcommand{\ellb}{{\mathbf{\ell}}}
\newcommand{\M}{{\mathcal{M}}}
\newcommand{\kb}{{\mathbf{k}}}
\newcommand{\m}{{\mathbf{m}}}
\newcommand{\q}{{\mathbf{q}}}
\newcommand{\lc}{{\mathcal{L}}}
\newcommand{\gf}{{\mathfrak{g}}}
\newcommand{\sgn}{{\mathrm{sgn}}}
\newcommand\norm[1]{\left\lVert#1\right\rVert}
\def\qt#1{\qquad\text{#1}}
\def\argmin{\mathop{\rm argmin}}
\def\argmax{\mathop{\rm argmax}}
\newcommand{\Rmnum}[1]{\expandafter\@slowromancap\romannumeral #1@}
\newcounter{rcnt}[section]
\begin{document}
\begin{frontmatter}
\title{Adaptive Risk Bounds in Univariate Total Variation Denoising
  and Trend Filtering}         
\runtitle{Risk Bounds in Trend Filtering}  
\runtitle{Risk Bounds in  Trend Filtering\;\;\;\;\;\;\;}

\begin{aug}
\author{\fnms{Adityanand} \snm{Guntuboyina}\thanksref{t2}\ead[label=e2]{aditya@stat.berkeley.edu}}, 
\author{\fnms{Donovan}   \snm{Lieu}\ead[label=e3]{dlieu333@berkeley.edu}},
\author{\fnms{Sabyasachi} \snm{Chatterjee}\ead[label=e1]{sc1706@illinois.edu}},
\and
\author{\fnms{Bodhisattva} \snm{Sen}\thanksref{t4}\ead[label=e4]{bodhi@stat.columbia.edu}}


\thankstext{t2}{Supported by NSF CAREER Grant DMS-1654589}
\thankstext{t4}{Supported by NSF Grant DMS-1712822}

\runauthor{Guntuboyina, A., Lieu, D., Chatterjee, S. and Sen, B.}

\affiliation{University of California at Berkeley, University of
  California at Berkeley, University of Illinois at Urbana-Champaign and Columbia University}

\address{423 Evans Hall\\
Berkeley, CA 94720 \\
\printead{e2}\\
\phantom{E-mail:\ }}

\address{367 Evans Hall\\
Berkeley, CA 94720 \\
\printead{e3}\\
\phantom{E-mail:\ }}

\address{117 Illini Hall \\
725 S. Wright St. M/C 374 \\
Champaign, IL 61820 \\
\printead{e1}}

\address{1255 Amsterdam Avenue \\
New York, NY 10027\\
\printead{e4}
}
\end{aug}

\begin{abstract}
We study trend filtering, a relatively recent method for univariate nonparametric regression. For a given integer $r \geq 1$, 
the $r^{th}$ order trend filtering estimator is defined as the minimizer of the sum of squared errors when we constrain (or penalize)
the sum of the absolute $r^{th}$ order discrete derivatives of the fitted function at the design points. For $r = 1$, the estimator reduces to total variation regularization which has received much attention in the statistics and
image processing literature. In this paper, we study the performance
of the trend filtering estimator for every $r \geq 1$, both in the
constrained and penalized forms. Our main results show that in the
strong sparsity setting when the 
underlying function is a (discrete) spline with few ``knots'', the
risk (under the global squared error loss) of the trend filtering estimator
(with an appropriate choice of the tuning parameter) achieves the {\it 
  parametric} $n^{-1}$-rate, up to a logarithmic (multiplicative) 
factor. Our results therefore provide support for the use of trend
filtering, for every $r \geq 1$, in the strong sparsity setting. 
\end{abstract}

\begin{keyword}
Adaptive splines, discrete splines, fat shattering, higher order total variation regularization, metric entropy bounds, nonparametric function estimation, risk bounds, subdifferential, tangent cone
\end{keyword}
\end{frontmatter}

\section{Introduction}
Consider the nonparametric regression problem where we observe data
generated according to the model:
\begin{equation}\label{eq:NPReg}
	Y_i = f^*({i}/{n}) + \xi_i, \qquad i = 1,\ldots, n,
\end{equation} 
where $f^*:[0,1] \to \R$ is the unknown regression function, and
$\xi_1,\ldots, \xi_n$ are unobserved independent errors
having the normal distribution with mean zero and variance $\sigma^2$. The
goal is to recover the underlying function  
$f^*$ from the measurements $Y_1, \dots, Y_n$. Alternatively, in the 
Gaussian sequence formulation,~\eqref{eq:NPReg} can be expressed as  
\begin{equation}\label{seqmo}
Y = \theta^* + \xi,
\end{equation}
where $\xi \sim N_n(0,\sigma^2 I_n)$, and $\theta^* :=
(f^*({1}/{n}), f^*({2}/{n}), \ldots, f^*(1))$ is unknown. Here $N_n(0,
\sigma^2 I_n)$ denotes the multivariate normal distribution with mean
vector zero and covariance matrix $\sigma^2 I_n$.   

In this paper, we study the performance of {\it trend filtering}, a
relatively new method for nonparametric regression with special
emphasis on its risk properties. For a given
integer $r \geq 1$, the $r^{th}$ order trend filtering estimator is defined
as the minimizer of the sum of squared errors when we constrain or
penalize the sum of the absolute $r^{th}$ order discrete derivatives of the fitted function
at the design points. Formally, given a fixed integer $r \geq 1$ and
a tuning parameter $V \geq 0$, the $r^{th}$ order trend filtering
estimator for $\theta^*$ in the constrained form is given by    
\begin{equation}\label{tf}
  \hat{\theta}^{(r)}_V := \argmin_{\theta \in \R^n}
  \left\{\frac{1}{2}\|Y - \theta\|^2 : \|D^{(r)} \theta\|_1 \leq V
    n^{1-r} \right\}      
\end{equation}
where $V >0$ is a tuning parameter (the multiplicative factor
$n^{1-r}$ is just for normalization), $D^{(0)} \theta := \theta$,
$D^{(1)} \theta := (\theta_2 - 
\theta_1, \dots, \theta_n - \theta_{n-1})$ and $D^{(r)} \theta$, for $r
\geq 2$, is recursively defined as $D^{(r)} \theta := D^{(1)}(D^{(r-1)}
\theta)$. Also $\|\cdot\|_1$ denotes the usual $L^1$ norm defined by
$\|x\|_1 := \sum_{i=1}^k |x_i|$ for $x = (x_1, \dots, x_k) \in
\R^k$. Note that $\|D^{(r)} \theta\|_1$  also 
equals $V(D^{(r-1)} \theta)$ where $V(\alpha) := \sum_{i=2}^k
|\alpha_i - \alpha_{i-1}|$ denotes the variation of a vector $\alpha =
(\alpha_1,\ldots, \alpha_k) \in \R^k$. For simplicity, we denote the
operator $D^{(1)}$ by simply $D$.  

Alternatively, the trend filtering estimator in the penalized form is 
\begin{equation}\label{ptf}
  \hat{\theta}^{(r)}_{\lambda} := \argmin_{\theta \in \R^n} \left(\frac{1}{2} \|Y
  - \theta\|^2 + \sigma n^{r-1} \lambda \|D^{(r)} \theta\|_1 \right)
\end{equation}
for $r \geq 1$ and tuning parameter $\lambda \geq 0$. There is an
abuse of notation here  in that we are using the same notation for
both the constrained and the penalized estimators. It may be noted,
however, that when the subscript of $\hat{\theta}^{(r)}$ is $V$, we
are referring to the constrained estimator \eqref{tf} while when the
subscript is $\lambda$, we are referring to the penalized estimator
\eqref{ptf}.   

For $r = 1$,~\eqref{ptf} reduces to the one-dimensional discrete
version of total variation regularization or total variation denoising
which was first proposed  by~\citet{rudin1992nonlinear} and has since  
been heavily used in the image processing community. The penalized
estimator~\eqref{ptf}, for general $r \geq 1$, was first proposed
by~\citet{steidl2006splines} in the image processing literature who
termed it \textit{higher order total variation regularization}. The
same estimator was later rediscovered by~\citet{kim2009ell_1} who
coined the name \textit{trend filtering} for it. Many properties of
the 
estimator have been studied in \citet{tibshirani2014adaptive} and
\citet{wang2014falling}.  It should also be mentioned here that a continuous version
of~\eqref{ptf}, where the discrete differences are replaced by
continuous derivatives, was proposed much earlier in the statistics
literature by~\citet{mammen1997locally} under the name \textit{locally
  adaptive   regression splines}.  

The presence of the $L^1$ norm in the constraint in \eqref{tf}
(resp. penalty in \eqref{ptf}) promotes sparsity of the vector $D^{(r)}
\hat{\theta}^{(r)}_V$ (resp. $D^{(r)}
\hat{\theta}^{(r)}_{\lambda}$). Now for every vector $\theta \in
\R^n$, $\|D^{(r)} \theta\|_0 = k$ if and only if $\theta$ equals
$(f(1/n), \dots, f(n/n))$ for a \textit{discrete spline} function $f$
that is made of $k+1$ polynomials each of degree $(r-1)$ (here 
$\|x\|_0$ denotes the number of entries of the vector $x$ that 
are non-zero). Discrete 
splines are piecewise polynomials with regularity at the knots. They
differ from the usual (continuous) splines in the form of the
regularity condition at the knots: for splines, the regularity
condition translates to (higher order) derivatives of adjacent
polynomials agreeing at the knots, while for discrete splines it
translates to discrete differences of adjacent polynomials agreeing at
the knots; see \citet{mangasarian1971discrete} for details. This fact
about the connection between $\|D^{(r)} \theta\|_0$ and  
discrete splines is standard (see e.g., \citet{steidl2006splines})
but we included a proof in Subsection \ref{ssds} for the convenience
of the reader.  

Thus the presence of the $L^1$ norm in \eqref{tf}
(resp. \eqref{ptf}) implies that  $\hat{\theta}^{(r)}_V$
(resp. $\hat{\theta}^{(r)}_{\lambda}$) can be written as
$(\hat{f}(1/n), \dots, \hat{f}(n/n))$ for a discrete spline $\hat{f}$
of degree $(r-1)$ made up of not too many polynomial pieces. Trend 
filtering thus presents a way of fitting (discrete) splines to the
data. Note that the knots of the discrete splines are automatically
chosen by the optimization algorithms underlying \eqref{tf} and
\eqref{ptf} without any input from the user (except for the
value of the tuning parameter $V$ or $\lambda$). Because of this
automatic selection of the knots, trend filtering can be regarded as a
spatially adaptive method (in the terminology of
\citet{donoho1994ideal}).  Note that such spatial adaptation is not
exhibited by classical nonparametric regression methods such as local
polynomials, kernels and splines, with a fixed tuning
parameter. On the other hand, methods such as CART
(\citet{B84}), MARS (\citet{F91}), variable-bandwidth
kernel/spline methods (see e.g.,~\citet{MS87, BGH93, P06} and
\citet{ZS01}) and wavelets (\citet{donoho1994ideal}) are also
spatially adaptive. 

The present paper studies the performance of the estimators
$\hat{\theta}_V^{(r)}$ and $\hat{\theta}_{\lambda}^{(r)}$ as
estimators of $\theta^*$ under the multivariate Gaussian model
\eqref{seqmo}. We shall use the squared error loss under which the
risk of an estimator $\hat{\theta}$ is defined as 
\begin{equation}\label{ridef}
  R(\hat{\theta}, \theta^*) := \frac{1}{n} \E_{\theta^*}
  \big\|\hat{\theta} - \theta^*\big\|^2. 
\end{equation}
Under natural sparsity assumptions on $\theta^*$, we provide upper
bounds on the risks $R(\hat{\theta}_V^{(r)}, \theta^*)$ and
$R(\hat{\theta}_{\lambda}^{(r)}, \theta^*)$ as well as high
probability upper bounds on the random loss functions
$\|\hat{\theta}_V^{(r)} - \theta^*\|^2/n$ and
$\|\hat{\theta}_{\lambda}^{(r)} - \theta^*\|^2/n$.  

It is natural to study the risk properties of \eqref{tf} and
\eqref{ptf} under the following two kinds of assumptions on
$\theta^*$: (a) $n^{r-1} \|D^{(r)} \theta^*\|_1 \leq V$ for some $V >
0$ (possibly dependent on $n$), and (b) $\|D^{(r)} \theta^* \|_0 \leq
k$ for some $k$ that is much smaller than $n$. We shall refer to these
two regimes as \textit{weak 
  sparsity} and \textit{strong sparsity} respectively. This breakdown
into weak and strong sparsity settings is inspired by corresponding
terminology in the study of risk properties of thresholding based
estimators in Gaussian sequence models \cite{johnstone2011gaussian}
and the prediction risk properties of the LASSO estimators in
regression  \cite{buhlmannLASSO}. Indeed, as demonstrated in
\citet{tibshirani2014adaptive}, there is a close connection 
between the trend filtering estimators and LASSO (more details are
provided in Subsection \ref{lasso}).  

A thorough study on the performance of the penalized  
trend filtering estimator \eqref{ptf} under weak sparsity 
has been done by \citet{tibshirani2014adaptive} and
\citet{wang2014falling} building on earlier results of
\citet{mammen1997locally}. It is proved there that, when the tuning
parameter $\lambda$ is appropriately chosen, the penalized estimator
\eqref{ptf} is minimax optimal in the weak sparsity setting. Actually,
the weak sparsity results of \cite{tibshirani2014adaptive, wang2014falling} are broader and hold under more general
settings (see Remark \ref{tiban} for more details).  

The present paper focuses on the strong sparsity setting. Compared to
available results in the weak sparsity setting, relatively little is
known about the performance of the trend 
filtering estimators in the strong sparsity setting. In fact, all
existing results \cite{dalalyan2017tvd, lin2016approximate,
ortelli2018total, van2018tight, levy2008catching,
harchaoui2012multiple} for strong sparsity deal 
with the case $r = 1$ (where trend filtering is the 
same as total variation denoising). To the best of our knowledge, the
present paper is the first to prove risk bounds for trend
filtering under strong sparsity for arbitrary $r \geq 1$. We also
improve, in certain aspects, existing results for $r = 1$.   

In order to motivate our results, let us consider the strong sparsity
setting where it is assumed that $D^{(r)}\theta^*$ is sparse. If
$\|D^{(r)} \theta^*\|_0 = k$, then, as mentioned previously, $\theta^* =
(f(1/n), \dots, f((n-1)/n), f(1))$ for a discrete spline function $f$
that is made of $k+1$ polynomials each of degree $(r-1)$. Given data
$Y \sim N_n(\theta^*, \sigma^2 I_n)$, an oracle  
piecewise polynomial estimator (having access to locations of the knots of
$\theta^*$) would put knots corresponding to $\theta^*$ and then fit a
polynomial of degree $(r-1)$ in each of the partitions given by the
knots. This would be a linear estimator with at most $(k+1)r$ degrees
vvof freedom and its risk (defined as in
\eqref{ridef}) will be bounded by $r \sigma^2 (k+1)/n$. This motivates
the following question which is the focus of this paper: When
$\|D^{(r)} \theta^* \|_0 = k$, how do the risks of properly tuned
trend filtering estimators \eqref{tf} and \eqref{ptf} compare with the oracle risk of
$r \sigma^2 (k+1)/n$?  

The main results of this paper for constrained trend filtering
(Theorem \ref{ada} and Corollary \ref{kco}) imply
that when $\|D^{(r)} \theta^*\|_0 = k$, the risk of
$\hat{\theta}^{(r)}_V$ satisfies
\begin{equation}\label{intror.1}
  R(\hat{\theta}^{(r)}_V, \theta^*) \leq C_r(c) \sigma^2 \frac{k+1}{n}
    \log \frac{en}{k+1},
\end{equation}
provided
\begin{enumerate}
\item[(i)]  the tuning  
parameter $V$ is non-random and close to $V^* := n^{r-1} \|D^{(r)}
\theta^*\|_1$, and  
\item[(ii)] (minimum length condition) each of the polynomial pieces of $\theta^*$
have length bounded below by $cn/(k+1)$ for a constant $c>0$
(in fact, our result requires a weaker version of this condition;
see~\eqref{nsa} and Remark~\ref{rem:nsa}). 
\end{enumerate}
Here $C_r(c)$ is a positive constant that depends only on $r$ and the
constant $c$ from the second assumption above. 

We also prove results for the penalized estimators. For $r = 1$, our
main result (Corollary \ref{expen1}) states that the risk of 
$\hat{\theta}^{(1)}_{\lambda}$ is also bounded by the right hand side
of \eqref{intror.1} under the minimum length condition provided
$\lambda$ is close to a theoretical choice $\lambda^*$ and $\lambda
\geq \lambda^*$. This choice $\lambda^*$ depends on $\theta^*$ and is
defined in \eqref{lams}. We provide an explicit upper bound for
$\lambda^*$ in Lemma \ref{explam1} which gives risk bounds for
$\hat{\theta}^{(1)}_{\lambda}$ under more explicit choices of $\lambda$
(see Corollary \ref{expen}). A comparison of these results to existing
results is given in Remarks \ref{comp} and \ref{compa}.   

For $r \geq 2$, we prove, in Corollary \ref{expenr}, that the
penalized estimator satisfies  
\begin{equation}\label{intro.r}
    R(\hat{\theta}^{(r)}_{\lambda}, \theta^*) \leq C_r(c) \sigma^2
    \left(\frac{k+1}{n} \log \frac{en}{k+1} + \frac{(k+1)^{2r}}{n}
    \right) 
\end{equation}
under the minimum length condition provided that $\lambda$ is 
close to $\lambda^*$ (defined in \eqref{lams}) and $\lambda \geq
\lambda^*$. Explicit upper bounds for $\lambda^*$ are in Lemma
\ref{explamr} and risk bounds for $\hat{\theta}^{(r)}_{\lambda}$ with
explicit penalty choices are in Corollary \ref{supno}. Note that
\eqref{intro.r} is weaker compared to \eqref{intror.1} in terms of the
dependence on $k$. 

The implication of our results is the following. As mentioned earlier,
the trend filtering estimators are given by discrete spline functions
of degree $r - 1$. The knots of these splines are chosen automatically
by the algorithm (the user only needs to specify the tuning parameter
$V$ or $\lambda$). Our results indicate that under the assumption
$\|D^{(r)} \theta^*\|_0 = k$ (i.e., $\theta^*$ is a discrete spline of
degree $r-1$ with $k+1$ polynomial pieces) with a minimum length
condition on the polynomial pieces of $\theta^*$, the automatic
selection of knots by the trend filtering estimators (when appropriate choices of $V$ or $\lambda$) happens in a way
that the overall risk is comparable to the 
oracle risk of $r \sigma^2 (k+1)/n$. In fact, when $k = O(1)$, the
risks of the ideally tuned trend filtering estimators is only off
compared to the oracle risk by a factor that is logarithmic in $n$ (we also prove
in Lemma \ref{simpo} that this logarithmic factor cannot be completely
removed in general). The automatic knot selection of trend filtering
can therefore be interpreted as being done \textit{adaptively}
depending on the structure of the unknown $\theta^*$ in order to
approximate the oracle risk. This is the reason why we refer to our
results as  adaptive risk bounds. It should be mentioned here that a
similar 
adaptation story can also be used to describe the weak sparsity
results \cite{tibshirani2014adaptive, wang2014falling} where the knots
are adaptively chosen to attain the minimax rate under the $L^1$
constraint on $D^{(r)} \theta^*$. Therefore, our results (together
with those of \cite{tibshirani2014adaptive, wang2014falling}) provide
support for the use of the trend filtering estimators in both weak and
strong sparsity settings. 

We would like to mention here that theoretical analysis of spatially
adaptive nonparametric regression methods under strong sparsity is
non-trivial. Indeed, among 
various such methods including CART, MARS, variable-bandwidth
kernel/spline methods and wavelets, rigorous theoretical risk results
under strong sparsity only exist for wavelets \cite{donoho1994ideal}
and variable-bandwidth kernel methods  \cite{LMS97, GN97}. The analysis of trend filtering estimators is more
involved compared to estimators based on wavelets and
variable-bandwidth kernels because the trend filtering estimators are
given by the output of an optimization algorithm and have no closed
form expressions.   

The rest of this paper is organized as follows. Our main results are
described in Section \ref{ms}: Subsection \ref{rece} deals with the
constrained estimator where we provide risk bounds under both weak
sparsity (which was not known previously) and strong
sparsity. Subsection \ref{repe} deals with the penalized estimator and
here we separate our presentation into two parts: results for $r = 1$
and results for $r \geq 2$; our results for $r \ge 2$ are weaker
(there is an additional $(k+1)^{2r}/n$ term in the risk) than the
results for $r =1$. Throughout, we focus on nonasymptotic upper bounds
for the risk (expected loss) although all our results can be converted into high
probability upper bounds on the loss (see Remark \ref{exphp}). All
proofs are given in the supplementary material at the end of the
paper and a high level overview of the proofs is provided  
in Section \ref{proids}. Section \ref{simu} contains some simulation
studies supporting some of our theoretical results. Finally several
interesting issues related to our results are described in Section
\ref{discu}.

\section{Main Results}\label{ms}
Throughout $C_r$ will denote a positive constant that depends on $r$ alone
although its precise value will change from equation to
equation. We shall assume that $n \geq 2r$ throughout the paper  
(many of our results also hold under the weaker condition $n \geq r +
1$). 

\subsection{Results for the Constrained Estimator}\label{rece}
We start with the bound of $n^{-2r/(2r + 1)}$ for
risk of  $\hat{\theta}^{(r)}_V$ under the condition that
the tuning parameter $V$ satisfies $\|D^{(r)} \theta^*\|_1 \leq V
n^{1-r}$. This result is similar to results
in~\citet{mammen1997locally}, \citet{tibshirani2014adaptive} and
\citet{wang2014falling} who focussed on the penalized estimator
\eqref{ptf} (see Remark \ref{tiban} for details). We also
explicitly state the dependence of the bound on $V$ and
$\sigma$.      

\begin{theorem}\label{woor}
  Fix $r \geq 1$. Suppose that the tuning parameter $V$ is 
  chosen so that $n^{r-1}\|D^{(r)}\theta^*\|_1 \leq V$. Then there 
  exists a positive constant $C_r$ depending on $r$ alone such that      
  \begin{equation}\label{woor.eq}
    R(\hat{\theta}^{(r)}_V, \theta^*) \leq C_r \max \left(
      \left(\frac{\sigma^2 V^{1/r}}{n} 
      \right)^{2r/(2r + 1)} ,
      \frac{\sigma^2}{n} \log (en)\right). 
  \end{equation}
  Also for every $x > 0$, we have 
  \begin{equation}\label{woor.peq}
    \frac{1}{n} \|\hat{\theta}^{(r)}_V - \theta^* \|^2 \leq C_r \max \left(
      \left(\frac{\sigma^2 V^{1/r}}{n} \right)^{2r/(2r + 1)} ,
        \frac{\sigma^2}{n} \log (en)\right)  + \frac{4 \sigma^2 x}{n} 
  \end{equation}
  with probability at least $1 - e^{-x}$. 
\end{theorem}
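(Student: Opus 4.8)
The plan is to follow the standard "basic inequality + localized empirical process" strategy used for constrained least-squares estimators over convex sets, combined with a metric entropy bound for the relevant parameter set. First I would record the basic inequality: since $\hat\theta := \hat\theta^{(r)}_V$ is the least-squares projection of $Y = \theta^* + \xi$ onto the convex set $K := \{\theta : \|D^{(r)}\theta\|_1 \le Vn^{1-r}\}$, and since $\theta^*\in K$ by the hypothesis $n^{r-1}\|D^{(r)}\theta^*\|_1\le V$, we get the deterministic inequality
\[
\tfrac12\|\hat\theta-\theta^*\|^2 \le \langle \xi,\, \hat\theta-\theta^*\rangle.
\]
Writing $t := \|\hat\theta-\theta^*\|$ and $u := (\hat\theta-\theta^*)/t$, this says $t \le 2\sup_{\theta\in K,\ \|\theta-\theta^*\|\le t}\langle \xi,\ \theta-\theta^*\rangle/t$, so everything reduces to controlling the supremum of the Gaussian process $\theta\mapsto\langle\xi,\theta-\theta^*\rangle$ over the slice of $K$ of radius $t$ around $\theta^*$. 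Because $K$ is itself of the form "a linear subspace (the $r$-dimensional space of polynomials, on which $D^{(r)}$ vanishes) plus a scaled $\ell_1$-type ball in the image of $D^{(r)}$", the diameter of $K-\theta^*$ in the relevant directions is governed by $Vn^{1-r}$ times the operator norm of a discrete antiderivative, which on a mesh of size $n$ contributes the polynomial factors in $n$.

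The heart of the argument is a bound on the expected localized supremum
\[
\E\,\sup_{\theta\in K:\ \|\theta-\theta^*\|\le t}\langle\xi,\ \theta-\theta^*\rangle \ \le\ \phi(t),
\]
obtained via Dudley's entropy integral applied to the set $K\cap\{\|\theta-\theta^*\|\le t\}$. For this I would invoke (or re-derive) the metric entropy estimate for the set of vectors $\theta$ with $\|D^{(r)}\theta\|_1\le W$ and $\|\theta\|\le t$, which is of the order $(W/\epsilon)^{1/r}$ in $\log N(\epsilon)$ up to constants and lower-order terms — this is the discrete analogue of the classical entropy bound for bounded-variation (and higher-order bounded-variation) classes used by \citet{mammen1997locally}; the exponent $1/r$ in the entropy is exactly what produces the rate exponent $2r/(2r+1)$. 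Carrying the entropy bound through Dudley's integral gives $\phi(t)\lesssim \sigma (W^{1/r} t^{1-1/(2r)} + \sqrt{\log(en)}\, t)$ with $W = Vn^{1-r}\cdot(\text{scaling})$; after rescaling $D^{(r)}$ by the $n^{1-r}$ normalization this becomes, in the units of \eqref{woor.eq}, a bound of the form $\phi(t) \lesssim \sigma\big((V^{1/r} n^{1/(2r)}) t^{1-1/(2r)} + \sqrt{\log(en)}\,t\big)$ (the linear-in-$t$ term with the $\log(en)$ factor coming from the finite-dimensional/polynomial component, where we use a standard $\chi^2$ tail or the fact that the Gaussian width of an $r$-dimensional subspace intersected with a ball of radius $t$ is $\lesssim t\sqrt{r}$, and the $\log(en)$ only enters when comparing to the coarsest scale). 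Then, by the standard "peeling"/fixed-point argument (e.g. in the spirit of the proof of Theorem 1.1 in \citet{chatterjee2014}-type results, or van de Geer's machinery), the expected squared loss is bounded by $t_*^2$ where $t_*$ solves $\phi(t)\asymp t^2$, i.e. $t_*^2 \asymp \max\big((\sigma^2 V^{1/r} n^{1/(2r)})^{2r/(2r+1)},\ \sigma^2\log(en)\big)$; dividing by $n$ and simplifying the exponents gives exactly the right-hand side of \eqref{woor.eq}.

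For the high-probability bound \eqref{woor.peq}, I would upgrade the in-expectation control to a tail bound using Gaussian concentration: the map $\xi\mapsto \sup_{\theta\in K,\ \|\theta-\theta^*\|\le t}\langle\xi,\theta-\theta^*\rangle$ is Lipschitz in $\xi$ with constant $t$, so by the Borell–TIS inequality it deviates above its mean by at most $t\sqrt{2\sigma^2 x}$ with probability $\ge 1-e^{-x}$; feeding this into the same fixed-point/peeling argument adds a term of order $\sigma^2 x/n$ to the loss bound, and tracking the constants carefully yields the explicit $4\sigma^2 x/n$ in \eqref{woor.peq}. The main obstacle I anticipate is establishing the metric entropy bound for the higher-order discrete total-variation ball with the correct exponent $1/r$ and with the correct polynomial-in-$n$ dependence on $V$ in a clean form — the continuous analogue is classical, but transferring it to the discrete mesh (and disentangling the $r$-dimensional polynomial null space of $D^{(r)}$, which is responsible for the secondary $\sigma^2\log(en)/n$ term) requires some care; everything downstream (Dudley, peeling, Borell–TIS) is then routine.
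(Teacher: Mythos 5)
Your strategy matches the paper's almost exactly: invoke the general convex-constrained least-squares machinery of \citet{Chat14} (the paper's Theorem \ref{gencha}, which packages the basic inequality plus the fixed-point/peeling step), bound the localized Gaussian supremum $G(t)$ via Dudley's entropy integral over $S_r(V,t)$, derive a metric entropy bound of the form $(V\sqrt{n}/\epsilon)^{1/r}$ plus an $r\log(\cdot)$ term, and solve $G(t_0)\asymp t_0^2$ to obtain the exponent $2r/(2r+1)$. Your scaling arithmetic for the rate is also correct.

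The one place where you defer is exactly where the paper does new work. You observe that the metric entropy of the discrete higher-order TV ball ``requires some care'' to transfer from the continuous case and flag this as the main obstacle, but you do not supply the argument. The paper's route (Theorem \ref{dprg} via Lemma \ref{pgr}) is to bound the \emph{fat shattering dimension} of the class $\cras$ by $C_r(W/t)^{1/r}$ and then convert to metric entropy via the combinatorial result of \citet{rudelson2006combinatorics}; the divided-differences argument in Lemma \ref{cdcr} is the crux. This is a genuinely different mechanism from the constructive covering/Kolmogorov--Tikhomirov-style entropy proofs that underlie the continuous-domain bounds of \citet{mammen1997locally}, and it is the technical novelty of this part of the paper. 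A secondary, minor difference: for the high-probability bound \eqref{woor.peq}, you propose threading Borell--TIS through the peeling argument at each scale $t$, whereas the paper uses the shorter observation that $y\mapsto\|\hat\theta^{(r)}_V(y)-\theta^*\|$ is $1$-Lipschitz (projection onto a convex set is a contraction), applies Gaussian concentration once to this scalar, and squares, which directly yields $\frac1n\|\hat\theta^{(r)}_V-\theta^*\|^2\le 2R(\hat\theta^{(r)}_V,\theta^*)+4\sigma^2x/n$ with probability at least $1-e^{-x}$. Both routes work; the paper's is cleaner and avoids re-running the fixed-point argument.
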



\begin{remark}\label{tiban}
As mentioned earlier, bounds similar to \eqref{woor.eq} and
\eqref{woor.peq} have been proved in \citet{mammen1997locally},
\citet{tibshirani2014adaptive}  and \citet{wang2014falling} 
for the penalized trend filtering estimator. Actually, the bounds in
these earlier papers hold under more general assumptions than the
assumptions of the current paper. For example, their analyses also holds
under the assumption that the (continuous) variation norm of the function $(f^*)^{(r-1)}$ (this
is the $(r-1)^{th}$ derivative of $f^*$) is at most $V$, where $f^*$ is
the true function with $\theta^* = 
(f^*(1/n), \dots, f^*(1))$. Note that there is subtle difference between this and our
assumption of an upper bound on $\|D^{(r)}
\theta^*\|_1$ in the sequence model~\eqref{seqmo}. An assumption on the variation norm of
$(f^*)^{(r-1)}$ does not directly lead to a bound on $\|D^{(r)}
\theta^*\|_1$ which makes the analysis difficult (see
\citet{wang2014falling} for more details on the relation between the
two variation norms). Also, the results in these earlier papers
studied the general setting with $\theta^* := (f^*(x_1), \dots,
f^*(x_n))$ where $x_1, \dots, x_n$ are design points that are not
necessarily equally spaced. We restrict ourselves to the equally
spaced design setting in this paper (see Subsection \ref{weakass}).  
\end{remark}

\begin{remark}\label{mopt}
$n^{-2r/(2r+1)}$ is the minimax rate of
estimation over the class of $\theta \in \R^n$ with
$\|D^{(r)}\theta\|_1 \leq V n^{1-r}$ (see e.g.,
\citet{donoho1998minimax}). This means that the constrained trend 
filtering estimator with tuning parameter $V$ is minimax optimal over
$\{\theta \in \R^n : \|D^{(r)}\theta\|_1 \leq V n^{1-r}\}$. This
result was known previously for the penalized estimator; see
\citet{tibshirani2014adaptive}. Note also that $V$ here can change
with $n$ as well and inequality \eqref{woor.eq}  implies that
$\hat{\theta}_V^{(r)}$ is minimax optimal even in terms of the
dependence of the rate on $V$. 
\end{remark}

Before we state results for strong sparsity, we need 
 some notation. Fix an integer $r \geq 1$ and let $n \geq
r+1$. For a vector $\theta \in \R^n$ and an index $2 \leq j \leq n-r+1$,
we say that $j$ is an $r^{th}$ \textit{order knot} (or \textit{knot of
  order $r$}) of $\theta$ provided $(D^{(r-1)} \theta)_{j-1} \neq (D^{(r-1)}
\theta)_j$. Note that first order knots are just jumps and second
order knots are points of change of slope. We also say
that an $r^{th}$ order knot $j$ has \textit{sign} $+1$ if $(D^{(r-1)}
\theta)_{j-1} < (D^{(r-1)} \theta)_j$ and \textit{sign} $-1$ if $(D^{(r-1)}
\theta)_{j-1} > (D^{(r-1)} \theta)_j$. For $\theta \in \R^n$, we let  
\begin{equation}
  \label{kv}
  \ktr(\theta) := \|D^{(r)} \theta\|_0 ~~~ \text{ and } ~~~ V^{(r)}(\theta)
  := n^{r-1}\|D^{(r)} \theta\|_1. 
\end{equation}
When $r = 1$, note that $V^{(1)}(\theta) = \|D \theta\|_1 = |\theta_2 -
\theta_1| + \dots + |\theta_n - \theta_{n-1}|$ which is simply the
variation of $\theta$. We therefore simply denote $V^{(1)}(\theta)$ by
$V(\theta)$. It also follows then that $V^{(r)}(\theta) = n^{r-1}
V(D^{(r-1)} \theta)$. 

It may be observed that $\ktr(\theta)$ equals precisely the number of
$r^{th}$ order knots of $\theta$. When the value of $r$ and $\theta
\in \R^n$ are clear from the context, we simply denote $\ktr(\theta)$
by $k$. Also, note that as $D^{(r)} \theta$ is a vector of length $n -
r$, we necessarily have $\ktr(\theta) = \|D^{(r)} \theta\|_0 \le n-r
\le n-1$.  

Suppose $\ktr(\theta) = k$   and let $2 \leq j_1 < \dots < j_k \leq
n-r+1$ denote all the $r^{th}$ order knots of $\theta$ with associated
signs $\rs_1, \dots, \rs_k \in \{-1, 1\}$. Also let $\rs_0 = \rs_{k+1} =
0$. Further, let $n_0 := j_1 + r-2$, $n_i := j_{i+1} - j_i$, for $1
\leq i \leq k-1$, and $n_k := n-r+2-j_k$, and observe that $\sum_{i=0}^k
n_i = n$. Finally, let     
\begin{equation*}
  n_{i*} := \min \left(n_i, \frac{n}{k+1} \right) \qt{for $i = 0, 1,
    \dots, k$}. 
\end{equation*}
We now define two quantities $\delta_r(\theta)$ and $\Delta_r(\theta)$
in the following way: 
\begin{equation}\label{drt}
  \delta_r(\theta) := \left(n^{1-2r}_{0*} + n^{1-2r}_{k*}  +
    \sum_{i=1}^{k-1} n_{i*}^{1 - 2r} I\{\rs_i \neq \rs_{i+1}\} 
  \right)^{1/2}
\end{equation}
and 
\begin{equation}\label{mrt}
  \Delta_r(\theta) := \frac{k+1}{n} \log \frac{en}{k+1} +
  \frac{\delta^2_r(\theta)}{n}  \left(\frac{n}{k+1} \right)^{2r-1}
  \log \frac{en}{k+1} + \left(\frac{\delta_r(\theta)}{\sqrt{n}}
  \right)^{1/r}
\end{equation}
where, in the definition of $\delta_r(\theta)$, the quantity $I\{\rs_i
\neq \rs_{i+1} \}$ denotes the indicator variable that equals $1$ if
$\rs_i \neq \rs_{i+1}$ and $0$ if $\rs_i = \rs_{i+1}$. Note that
trivially $\Delta_r(\theta) \geq (k+1)/n \geq 1/n$. 

Our results will show that the risk of the estimator
$\hat{\theta}^{(r)}_V$ for $\theta^*$ will essentially be controlled
by $\Delta_r(\theta^*)$. The key point to note about
$\Delta_r(\theta)$ is the fact (easy to check) that when  
\begin{equation}\label{nsa}
  \min_{0 \leq i \leq k : \rs_i \neq \rs_{i+1}} n_i \geq
  \frac{cn}{k+1} 
\end{equation}
for a positive constant $c \leq 1$ (here $\rs_1, \dots, \rs_k \in
\{-1, 1\}$ are the signs of the $r^{th}$ order knots of $\theta$ while
$\rs_0$ and $\rs_{k+1}$ are taken to be zero), then    
\begin{equation*}
  \delta^2_r(\theta) \leq \left(\frac{cn}{k+1} \right)^{1 - 2r} (k+1) 
\end{equation*}
and consequently
\begin{align}
  \Delta_r(\theta) &\leq \left\{1 + c^{1 - 2r} \right\} \frac{k+1}{n}
  \log \frac{en}{k+1} + c^{(1 - 2r)/(2r)} \frac{k+1}{n} \nonumber \\
&\leq \left\{1 + c^{1 - 2r} + c^{(1-2r)/(2r)} \right\} \frac{k+1}{n}
\log \frac{en}{k+1}.  \label{ghos} 
\end{align}
We say that $\theta$ satisfies the \textit{minimum length condition}
with constant $c$ if condition~\eqref{nsa} holds. We have just
observed that when $\theta$ satisfies the minimum length condition
with constant $c$ then $\Delta_r(\theta) \leq C_r(c) \frac{k+1}{n}
\log \frac{en}{k+1}$ for a constant $C_r(c)$ depending only on $c$ and
$r$.   

The following is our main result for the constrained
trend filtering estimator.  
\begin{theorem}\label{ada}
Fix $r \geq 1$ and $n \geq 2r$. Consider the estimator
$\hat{\theta}^{(r)}_V$ defined in \eqref{tf} with tuning parameter
$V \geq 0$. Then for every $\theta^* \in \R^n$, we have 
\begin{equation}\label{ada.eq}
  R(\hat{\theta}^{(r)}_V, \theta^*) \leq \inf_{\theta \in \R^n :
    V^{(r)}(\theta) = V} \left(\frac{1}{n} \|\theta^* - \theta\|^2 +
   C_r \sigma^2 \Delta_{r} (\theta) \right)
\end{equation}
for a positive constant $C_r$, depending only on $r$. 
\end{theorem}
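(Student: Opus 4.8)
The plan is to split the argument into a purely deterministic reduction, valid for projection onto any closed convex set, and a probabilistic core that controls a Gaussian complexity of the tangent cone of the feasible set. Write $K:=\krv=\{\theta\in\R^n:V^{(r)}(\theta)\le V\}$, a closed convex set, so that $\hat\theta^{(r)}_V=\Pi_K(Y)$ is the Euclidean projection of $Y=\theta^*+\xi$ onto $K$. The Pythagorean inequality for projection onto a convex set gives, for \emph{every} $\theta\in K$ (in particular every $\theta$ with $V^{(r)}(\theta)=V$),
\[
\|\hat\theta^{(r)}_V-\theta^*\|^2+\|\hat\theta^{(r)}_V-\theta\|^2\ \le\ \|\theta-\theta^*\|^2+2\langle\xi,\hat\theta^{(r)}_V-\theta\rangle .
\]
Since $K$ is convex and $\theta,\hat\theta^{(r)}_V\in K$, the direction $\hat\theta^{(r)}_V-\theta$ lies in the tangent cone $T_K(\theta)$, so writing $Z(\theta):=\sup_{v\in T_K(\theta),\,\|v\|\le1}\langle\xi,v\rangle=\|\Pi_{T_K(\theta)}\xi\|$ and using $2\langle\xi,\hat\theta^{(r)}_V-\theta\rangle\le\|\hat\theta^{(r)}_V-\theta\|^2+Z(\theta)^2$, the term $\|\hat\theta^{(r)}_V-\theta\|^2$ cancels and one is left with the deterministic bound $\|\hat\theta^{(r)}_V-\theta^*\|^2\le\|\theta-\theta^*\|^2+\|\Pi_{T_K(\theta)}\xi\|^2$. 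Taking expectations, taking the infimum over $\theta$ with $V^{(r)}(\theta)=V$, and dividing by $n$, the theorem reduces to the single estimate
\[
\E\,\big\|\Pi_{T_K(\theta)}\,\xi\big\|^2\ \le\ C_r\,\sigma^2\,n\,\Delta_r(\theta)\qquad\text{whenever }V^{(r)}(\theta)=V,\ \ktr(\theta)=k .
\]

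Before attacking this estimate I would record the structure of $T_K(\theta)$. Since $\theta$ lies on the boundary $\{\|D^{(r)}\theta\|_1=Vn^{1-r}\}$, the tangent cone is the set of directions along which $\|D^{(r)}\cdot\|_1$ does not increase to first order, i.e.\ the polar of the cone generated by $\partial(\|D^{(r)}\cdot\|_1)(\theta)$; computing this subdifferential and writing $S$ for the support of $D^{(r)}\theta$ (a set of $k$ indices) and $\varepsilon_j=\sgn((D^{(r)}\theta)_j)$ for $j\in S$ gives
\[
T_K(\theta)=\Big\{v\in\R^n:\ \textstyle\sum_{j\notin S}\big|(D^{(r)}v)_j\big|\ \le\ -\sum_{j\in S}\varepsilon_j\,(D^{(r)}v)_j\Big\}.
\]
In particular $\ker D^{(r)}$ — the $r$-dimensional space of discrete polynomials of degree $r-1$ — always lies in $T_K(\theta)$, and on each of the $k+1$ inter-knot blocks, of lengths $n_0,\dots,n_k$, a vector of $T_K(\theta)$ behaves like a degree-$(r-1)$ discrete polynomial plus a term whose discrete $r^{th}$-order total variation is charged against the signed knot contributions. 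Because $\xi\mapsto\|\Pi_{T_K(\theta)}\xi\|$ is $1$-Lipschitz, Gaussian concentration gives $\E\|\Pi_{T_K(\theta)}\xi\|^2\le(\E\|\Pi_{T_K(\theta)}\xi\|)^2+\sigma^2$; since $\Delta_r(\theta)\ge 1/n$, it therefore suffices to bound the Gaussian width $\E\sup_{v\in T_K(\theta),\,\|v\|\le1}\langle\xi,v\rangle$ by $C_r\sqrt{\sigma^2 n\Delta_r(\theta)}$.

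That Gaussian-width bound is where I expect essentially all of the difficulty to lie, and I would obtain it from a Dudley metric-entropy estimate for $T_K(\theta)\cap B$, $B$ the Euclidean unit ball. The idea is to decompose $v\in T_K(\theta)\cap B$ on each block into a ``parametric'' degree-$(r-1)$ piece and a ``bounded-variation residual'' piece, bound the entropy of the residual class on a block of length $n_i$ using known entropy bounds for discrete $r^{th}$-order total-variation balls (this is where the scaling $n_i^{1-2r}$ enters, and the localization forced by the constraint $\|v\|\le1$ is what caps $n_i$ at $n/(k+1)$, producing $n_{i*}$), and then reassemble the block-wise estimates. The three summands of $\Delta_r(\theta)$ should then arise, respectively, as the parametric contribution of $\lesssim k+1$ polynomial pieces together with the $\log\frac{en}{k+1}$ coming from the Dudley integral and the combinatorics of knot placement; an interaction term between the parametric and residual parts; and a nonparametric residual term of the usual $n^{-2r/(2r+1)}$ type localized to the blocks, which accounts for $(\delta_r(\theta)/\sqrt n)^{1/r}$. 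The indicators $I\{\rs_i\ne\rs_{i+1}\}$ in $\delta_r(\theta)$ should appear because when two adjacent knots carry the \emph{same} sign the linear constraint defining $T_K(\theta)$ couples the two flanking blocks — effectively merging them, so no fresh residual direction is created there — whereas a sign change frees a genuinely new direction; this is precisely the bookkeeping that, under the minimum length condition \eqref{nsa}, collapses via \eqref{ghos} to $\Delta_r(\theta)\le C_r(c)\,\frac{k+1}{n}\log\frac{en}{k+1}$. Carrying this entropy computation through uniformly in $r$ — the case $r\ge2$ being the delicate one, since higher-order discrete differences and the discrete-spline boundary behaviour on each block must be handled with care — is the real work; the reduction steps above are routine.
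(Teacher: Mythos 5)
Your reduction is exactly the paper's: the Pythagorean argument you give is a direct proof of what the paper borrows from Bellec's Theorem (namely $R(\hat\theta,\theta^*)\le\inf_\theta[\|\theta-\theta^*\|^2/n + \sigma^2\delta(T_K(\theta))/n]$), your tangent-cone characterization coincides with the paper's Lemma on the tangent cones of $K^{(r)}(V)$, and the Lipschitz/variance step getting you to a Gaussian-width bound is also the same. So you have correctly boxed the problem up as the inequality $w(T_K(\theta))\le C_r\sqrt{n\,\Delta_r(\theta)}$.

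The gap is in how you propose to prove that width bound. Decomposing each inter-knot block into ``degree-$(r-1)$ polynomial plus a bounded-variation residual'' and then invoking entropy bounds for discrete $r^{\text{th}}$-order TV balls does not work, for two reasons. First, the tangent-cone constraint does \emph{not} cap the TV of $D^{(r-1)}\alpha$ on a block by anything small --- it only controls a signed combination across blocks --- so there is no a priori ``small TV ball'' to plug into a Dudley integral. Second, even where a TV bound holds, it produces the nonparametric $n_i^{-2r/(2r+1)}$-type width (this is exactly the content of the paper's Theorem for the weak-sparsity regime) rather than the near-parametric $\sqrt{(k+1)\log\frac{en}{k+1}}$ you need. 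The idea you are missing is qualitative, not quantitative: after combining the tangent-cone inequality with the unit-norm constraint $\|\alpha\|\le1$, one can choose a single split index $\ell_i$ in each block so that $D^{(r-1)}\alpha$ is \emph{nearly made of two monotone pieces} there; the defect is quantified by $\Gamma_i(\alpha,\ell_i)$, and the key technical input that makes the total defect $\sum_i\Gamma_i$ as small as $\delta_r(\theta)$ is a nontrivial lemma asserting that any $\alpha$ with $\|\alpha\|\le 1$ has an index where $(D^{(r-1)}\alpha)_\ell\lesssim n^{1/2-r}$ (proved via explicit control of the Gram matrix $S^T(I-P_X)S$). Once that near-two-monotone structure is in hand, the per-block Gaussian width is $\Theta(\sqrt{\log n_i})$ by known results on shape-constrained cones, which is where the parametric rate comes from; TV-ball entropy only appears in controlling the small correction indexed by $\delta$. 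You also omit two further needed devices: the union bound over a combinatorial family $T(\m,\q,\ell)$ indexing how the norm mass $\|\alpha^{(i)}\|^2$ and the slack $\Gamma_i$ are allocated across blocks and which split $\ell_i$ is used in each (this, not the Dudley integral, is where $\log\frac{en}{k+1}$ comes from); and the block-subdivision step used when some $n_i>2n/(k+1)$, which is what actually introduces the truncated lengths $n_{i*}$ into $\delta_r(\theta)$ (your attribution of $n_{i*}$ to ``localization forced by $\|v\|\le1$'' is not how it arises).
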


\begin{remark}[High-probability bound]\label{exphp}
  Note that Theorem \ref{ada} gives an upper bound for
  $R(\hat{\theta}^{(r)}_V, \theta^*)$ which is the expectation of $\frac{1}{n}\|\hat{\theta}_V^{(r)} - \theta^*\|^2$.
 Similarly as in Theorem \ref{woor},  the risk bound
 \eqref{ada.eq} can be supplemented by the  following high probability
 bound: for every $x > 0$, we have   
\begin{equation}\label{ada.peq}
  \frac{1}{n}
    \|\hat{\theta}_V^{(r)} - \theta^*\|^2 \le \inf_{\theta \in \R^n : 
    V^{(r)}(\theta) = V} \left(\frac{1}{n} \|\theta^* - \theta\|^2 +
   C_r \sigma^2 \Delta_{r} (\theta) \right) + \frac{4 \sigma^2 x}{n} 
\end{equation}
with probability at least $1 - e^{-x}$. This will be true in all the
results of this paper (namely that the bound on $R(\hat{\theta},
\theta^*)$ plus $4 \sigma^2 x/n$ will dominate $\frac{1}{n}
    \|\hat{\theta} - \theta^*\|^2$ with probability at least $1 - e^{-x}$). Thus, for ease of
presentation, we shall omit high probability statements and only
report risk results (i.e., bounds on $R(\hat{\theta}, \theta^*)$) in
the rest of the paper.   
\end{remark}

Theorem \ref{ada} applies to every $\theta^* \in \R^n$ and is stated
in the sharp oracle form. It implies that the risk of
$\hat{\theta}^{(r)}_V$ is small provided
there exists some $\theta \in \R^n$ with $V^{(r)}(\theta) = V$ such
that (a) $\|\theta - \theta^*\|$ is small, and (b) $\Delta_r(\theta)$
is small. 

Theorem \ref{ada} yields the following corollary which is a
non-oracle inequality and is more readily interpretable. Recall from
\eqref{ghos}  that $\Delta_r(\theta)$ is bounded from above by a
constant multiple of $\frac{k+1}{n} \log \frac{en}{k+1}$ with
$\ktr(\theta) = k$ provided $\theta$ satisfies \eqref{nsa}.   

\begin{corollary}\label{kco}
Consider the estimator $\hat{\theta}^{(r)}_V$ with tuning parameter
$V$. Suppose $\theta^*$ satisfies the minimum length 
condition~\eqref{nsa} with constant $c$, then  
  \begin{equation}\label{kco.eq}
    R(\hat{\theta}^{(r)}_V, \theta^*) \leq \left(V - V^{(r)}(\theta^*)
    \right)^2 + C_r(c) \frac{\sigma^2 \left(\ktr(\theta^*)+1\right)}{n}  \log
    \frac{en}{\ktr(\theta^*)+1}  
  \end{equation}
where $C_r(c)$  is a positive constant that depends on $r$ and $c$
alone. Further, if $V$ is chosen so that  
  \begin{equation*}
   \left(V - V^{(r)}(\theta^*)\right)^2 \leq C \frac{\sigma^2
     (\ktr(\theta^*) + 1)}{n} \log \frac{en}{\ktr(\theta^*) + 1} 
  \end{equation*}
  for a positive constant $C$, then we have 
  \begin{equation}\label{kcoc}
    R(\hat{\theta}^{(r)}_V, \theta^*) \leq C_r(c, C) \frac{\sigma^2
      \left(\ktr(\theta^*)+1\right)}{n}  \log 
    \frac{en}{\ktr(\theta^*)+1}  
  \end{equation}
   for a positive constant $C_r(c, C)$ that depends on $r$, $c$ and
   $C$ alone. 
\end{corollary}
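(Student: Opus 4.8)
The plan is to deduce Corollary~\ref{kco} from the oracle inequality \eqref{ada.eq} in Theorem~\ref{ada}. Its infimum ranges over $\theta$ with $V^{(r)}(\theta)=V$, so it suffices to exhibit one competitor $\theta$ with (a) $V^{(r)}(\theta)=V$, (b) $\tfrac1n\|\theta-\theta^*\|^2\le (V-V^{(r)}(\theta^*))^2$, and (c) $\Delta_r(\theta)\le C_r(c)\,\tfrac{\ktr(\theta^*)+1}{n}\log\tfrac{en}{\ktr(\theta^*)+1}$. Feeding such a $\theta$ into \eqref{ada.eq} then yields \eqref{kco.eq} (absorbing the factor $C_r$ from \eqref{ada.eq} into $C_r(c)$), and \eqref{kcoc} follows at once from the hypothesis on $(V-V^{(r)}(\theta^*))^2$. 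I will obtain (c) by arranging that the $r^{th}$-order knots of $\theta$, together with their signs, are essentially those of $\theta^*$, so that the minimum length condition \eqref{nsa} is inherited with the same constant $c$ and \eqref{ghos} applies; the elementary fact that $x\mapsto x\log(en/x)$ is increasing on $[1,n]$ handles the cases where $\theta$ has slightly fewer or more pieces than $\theta^*$.

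The construction of $\theta$ rests on one elementary estimate: every $\psi\in\R^n$ admits a $p\in\ker D^{(r)}$ — the grid restriction of a polynomial of degree $\le r-1$ — with $\|\psi-p\|_\infty\le n^{r-1}\|D^{(r)}\psi\|_1$; indeed $D^{(r-1)}\psi$ differs from a constant by at most half its total variation, which equals $\|D^{(r)}\psi\|_1$, and one then anti-differences $r-1$ times while absorbing the polynomial parts, each step costing a factor at most $n$. Now take cases. If $V\le V^{(r)}(\theta^*)$: when $V^{(r)}(\theta^*)=0$ put $\theta:=\theta^*$; otherwise, with $\alpha:=V/V^{(r)}(\theta^*)\in[0,1]$ and $p$ the polynomial above for $\psi=\theta^*$, put $\theta:=p+\alpha(\theta^*-p)$. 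Since $D^{(r)}p=0$ we get $D^{(r)}\theta=\alpha D^{(r)}\theta^*$, so $V^{(r)}(\theta)=V$ and the knots and signs of $\theta$ form a subset of those of $\theta^*$; and $\theta-\theta^*=-(1-\alpha)(\theta^*-p)$, so $\tfrac1n\|\theta-\theta^*\|^2\le(1-\alpha)^2\|\theta^*-p\|_\infty^2\le(1-\alpha)^2(V^{(r)}(\theta^*))^2=(V-V^{(r)}(\theta^*))^2$. If $V>V^{(r)}(\theta^*)$ and $\theta^*$ has at least one knot, say at index $j$, set $m:=j-1$, pick (using surjectivity of $D^{(r)}$ and the estimate above applied to any $r$-fold anti-difference of $e_m$) a vector $g$ with $D^{(r)}g=e_m$ and $\|g\|_\infty\le n^{r-1}$, and put $\theta:=\theta^*+(V-V^{(r)}(\theta^*))\,n^{1-r}\,\sgn\big((D^{(r)}\theta^*)_m\big)\,g$. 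Only the $m^{th}$ coordinate of $D^{(r)}\theta$ changes, growing in magnitude by $(V-V^{(r)}(\theta^*))n^{1-r}$ without a sign flip, so $V^{(r)}(\theta)=V$ with the knots and signs of $\theta^*$ unchanged, and $\tfrac1n\|\theta-\theta^*\|^2\le (V-V^{(r)}(\theta^*))^2\,n^{2-2r}\|g\|_\infty^2\le (V-V^{(r)}(\theta^*))^2$. The only remaining subcase — $\theta^*$ a polynomial (hence $V^{(r)}(\theta^*)=0$) and $V>0$ — uses the same recipe with $m=\lceil n/2\rceil$, introducing a single knot at the grid midpoint so that the two resulting pieces have length $\asymp n$; then \eqref{ghos} still gives (c) with $\ktr(\theta^*)+1=1$.

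With $\theta$ in hand the corollary is immediate from Theorem~\ref{ada}. The one place demanding genuine care is keeping the constant on $(V-V^{(r)}(\theta^*))^2$ equal to one: this is exactly why the competitor must be built from the \emph{centered} lift — equivalently, from $\psi$ minus a best polynomial approximant — so that both $\|g\|_\infty$ and $\|\theta^*-p\|_\infty$ are bounded by $n^{r-1}\|D^{(r)}\cdot\|_1$ with no slack, after which the normalization factor $n^{1-r}$ built into $V^{(r)}$ produces precisely the $n^{2-2r}$ that cancels the $n^{2r-1}$ coming from $\|\cdot\|^2\le n\|\cdot\|_\infty^2$. A secondary point to verify is that, in the polynomial subcase, the midpoint placement keeps $\delta_r(\theta)$ in \eqref{drt}, and hence $\Delta_r(\theta)$ in \eqref{mrt}, of order $\log(en)/n$ — which it does since both adjacent pieces are of length $\asymp n$ and the lone knot contributes no interior sign-change term.
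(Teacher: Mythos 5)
Your overall strategy — build a competitor $\theta$ with $V^{(r)}(\theta)=V$ that is close to $\theta^*$ and inherits its knot structure, then feed it to Theorem~\ref{ada} — is exactly the paper's, and your proof is correct. Two things are worth noting by way of comparison. First, the paper treats all $V\ge0$ with $V^{(r)}(\theta^*)>0$ by the single scaling $\theta=\eta^*+\alpha(\theta^*-\eta^*)$ with $\alpha=V/V^{(r)}(\theta^*)$, which requires no restriction to $\alpha\in[0,1]$: for $\alpha>1$ one still has $D^{(r)}\theta=\alpha D^{(r)}\theta^*$ (same zero pattern and signs) and $\frac1n\|\theta-\theta^*\|^2=(1-\alpha)^2\frac1n\|\theta^*-\eta^*\|^2\le(V-V^{(r)}(\theta^*))^2$. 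Your separate $g$-perturbation for $V>V^{(r)}(\theta^*)$ is correct but not needed; the one case split you cannot avoid is $V^{(r)}(\theta^*)=0$. Second, for that remaining case ($\theta^*$ a polynomial, $V>0$), the paper does \emph{not} go through Theorem~\ref{ada} with a one-knot competitor as you do: it invokes the weak-sparsity bound of Theorem~\ref{woor} directly and shows algebraically (inequality~\eqref{stta}) that $C_r\max\bigl((\sigma^2V^{1/r}/n)^{2r/(2r+1)},\sigma^2\log(en)/n\bigr)\le V^2+C_r'\sigma^2\log(en)/n$. Your midpoint-knot competitor also works, provided you keep track that the minimum-length constant attached to the competitor must hold uniformly for all $n\ge 2r$, which forces it to depend on $r$ alone — acceptable, since the final constant in \eqref{kco.eq} is allowed to depend on $r$. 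Finally, the paper's polynomial-approximation step is its Lemma~\ref{lemco}, an $\ell_2$ bound $\|\theta-\eta\|^2\le n^{2r-1}\|D^{(r)}\theta\|_1^2$ proved via an operator-norm estimate on $S^T(I-P_X)S$; your $\ell_\infty$ version with the centering/anti-differencing argument is more elementary and gives the same $\frac1n\|\theta^*-p\|^2\le(V^{(r)}(\theta^*))^2$ with constant one, which is exactly what the argument needs.
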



Note that Theorem \ref{ada} and Corollary
\ref{kco} both apply to every $r \ge 1$. On the other hand, existing
adaptation results for trend filtering all deal with the case $r = 1$
(which corresponds to total variation regularization). Even for $r =
1$, our results are stronger, in some respects, 
compared to the existing results in the literature (see Remark
\ref{comp} for a precise comparison).  

\begin{remark}[On the minimum length condition]\label{rem:nsa}
  The minimum length condition
  \eqref{nsa} required for Corollary \ref{kco} is weaker than existing
  minimum length conditions in the literature (this comparison is only
  for $r = 1$ because no results exist for $r \geq 2$) which are all
  of the  form
\begin{equation}\label{wn}
  \min_{0 \leq i \leq k} n_i \geq \frac{cn}{k+1}  \qt{where $k =
    \mathbf{k}_1(\theta^*)$}. 
\end{equation}
Indeed our condition \eqref{nsa} requires that $n_i \geq cn/(k +1)$  
  be true only for those $i$ for which $\rs_i \neq \rs_{i+1}$ while \eqref{wn}
  requires this for all $i$. To see why our condition can be
  substantially weaker, consider, for example, the situation when $D^{(r-1)} \theta^*$ is a monotonic vector (for $r = 1$, this means that $\theta^*$ is itself monotone while
  for $r = 2$, this means that $\theta^*$ is convex/concave). In this
  case, condition \eqref{nsa} is equivalent to requiring that $n_i \geq
  cn/(k+1)$ only for $i = 0$ and $i = k$ which is much weaker than
  requiring it for all $0 \leq i  \leq k$. 

The fact that our minimum length condition involves only those $i$ for
which $\rs_i \neq \rs_{i+1}$ as opposed to involving all $i \in \{0,
1, \dots, k\}$ is especially crucial for $r \geq 2$. To see this,
consider the piecewise linear function $f^*$ on $[0, 1]$  shown in
Figure \ref{fminmo}. This function clearly has three knots (points of
change of slope) in $(0, 1)$. However the vector $\theta^*$ obtained
as $(f^*(1/n), \dots, f^*(n/n))$ (with $n = 15$) has six second order
knots. The reason for the additional knots is due to the fact that the
original knots of $f^*$ are not at the design points $1/n, \dots,
n/n$. Note however that because of these additional knots, the minimum
length condition will not be satisfied over all $i = 0, 1, \dots,
k$. On the other hand, it should be clear that \eqref{nsa} will still
be satisfied because the additional linear pieces satisfy the property
that $\rs_i = \rs_{i+1}$. 
\end{remark}

\begin{figure}[h!]
\begin{center}
  \includegraphics[width = \textwidth]{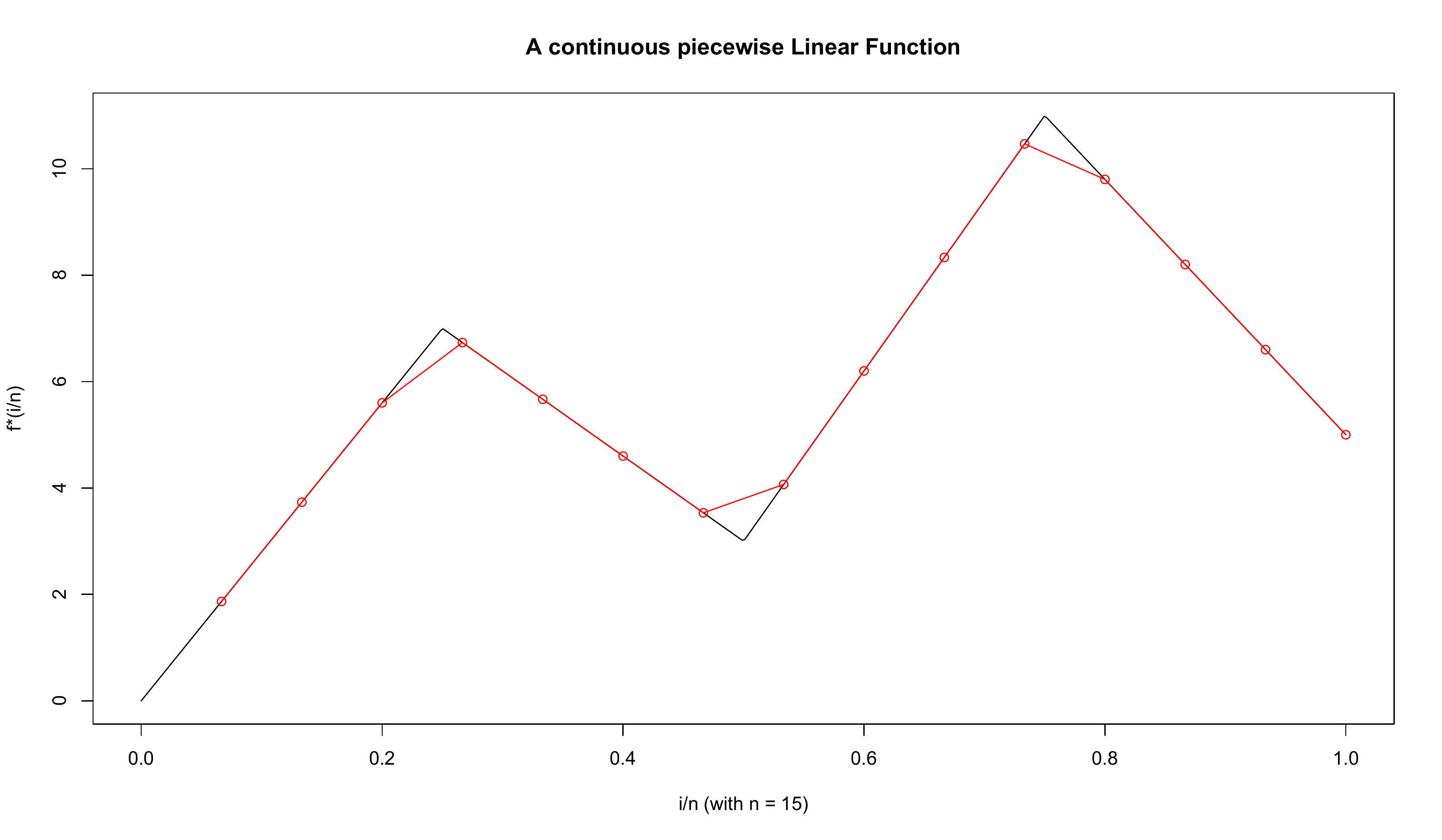}        
\caption{A piecewise linear function $f^*$ on 
$[0, 1]$ together with the vector $\theta^* := (f^*(1/n), \dots,
f^*(1))$ for $n = 15$ plotted in red. Note that $f^*$ has three knots
while $\theta^*$ has six first order knots. } 
\label{fminmo}
\end{center}
\end{figure}


\begin{remark}[The minimum length condition cannot be removed]  
We shall argue here via simulations that the minimum length condition
in Corollary \ref{kco} cannot be removed. Suppose that $\theta^*$ is
given by 
\begin{equation}\label{vio1}
  \theta_1^* = \dots = \theta_{n-1}^* = 0 ~~~ \text{ and } ~~~
  \theta_n^* = 5
\end{equation}
and consider estimating $\theta^*$ from an observation $Y \sim
N_n(\theta^*, I_n)$ (i.e., $\sigma = 1$) by $\hat{\theta}^{(1)}_V$ (i.e.,
$r = 1$) with tuning parameter $V = V^{(1)}(\theta^*) = 5$. It is
clear here that $\mathbf{k}_1(\theta^*) = 1$. The minimum length
condition \eqref{nsa} is not satisfied because $n_0 = n-1$ and $n_1 = 1$. The
risk $R(\hat{\theta}^{(1)}_V, \theta^*)$ can be computed via
simulation. In Figure \ref{f1} (left panel), we have plotted $\log
R(\hat{\theta}^{(1)}_V, \theta^*)$ against $\log n$ for values of $n$
between 1000 and 5000 (chosen to be equally spaced on the log-scale). For
each value of $n$, we calculated the risk using $100$ Monte Carlo
replications. The slope of the least squares line through these points
turned out to be close to $-2/3$ which indicates that the risk 
$R(\hat{\theta}^{(1)}_V, \theta^*)$ decays at the rate
$n^{-2/3}$. This rate is slower than the rate given by Corollary
\ref{kco} indicating that inequality \eqref{kco.eq} is not true for
this $\theta^*$. On the other hand, the $n^{-2/3}$ rate here makes
sense in light of Theorem  
\ref{woor}. Therefore, even though the vector $D \theta^*$ is sparse
(with $\|D \theta^*\|_0 = 1$), the rate of convergence of
$\hat{\theta}^{(1)}$ is equal to the $n^{-2/3}$ and not the
faster rate given by Corollary \ref{kco}. This points to the necessity of
the minimum length condition \eqref{nsa}. 
\begin{figure}[!htbp]
\begin{center}
 \includegraphics[height=2.0in,width=2.2in]{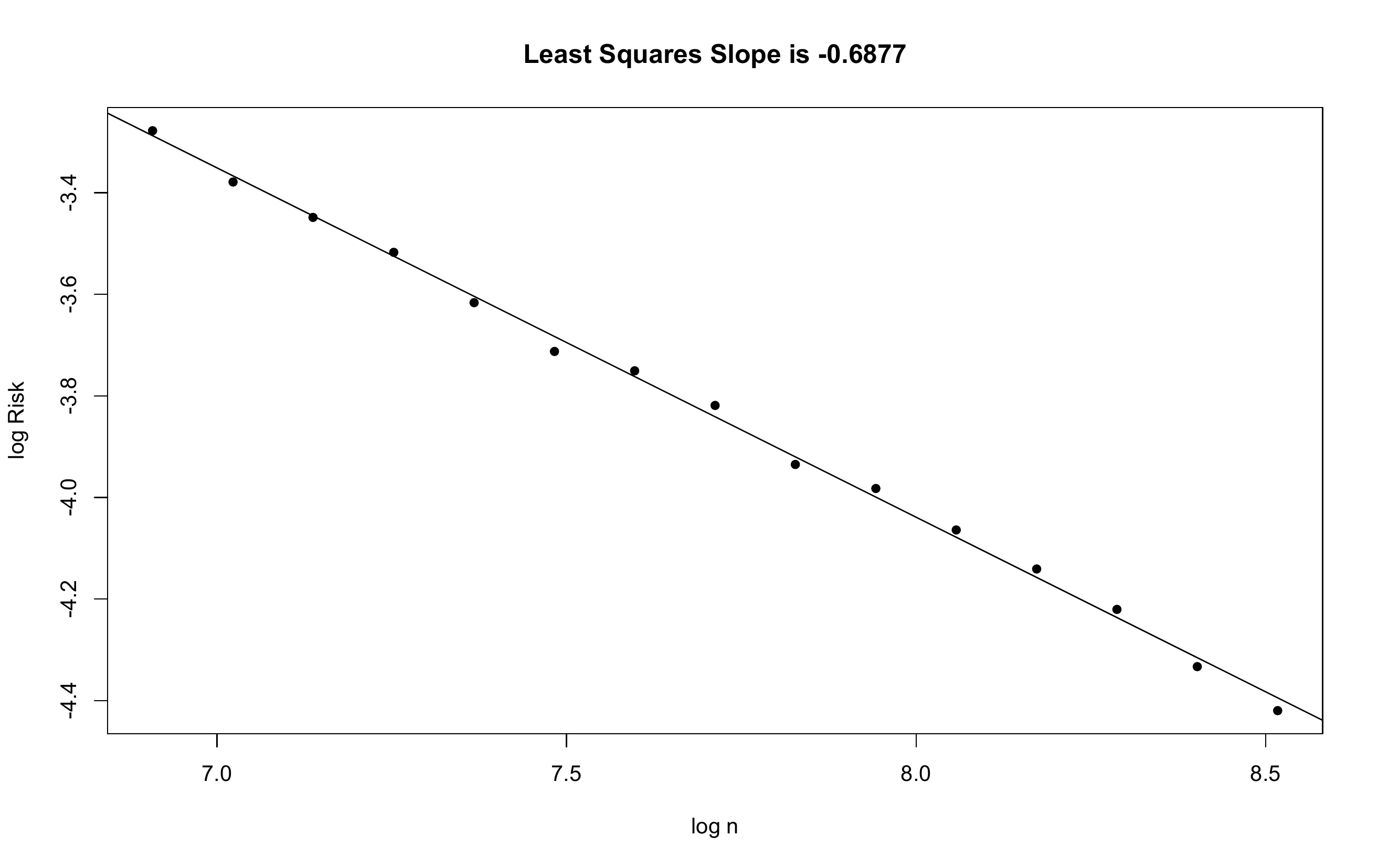}     \includegraphics[height=2.0in,width=2.2in]{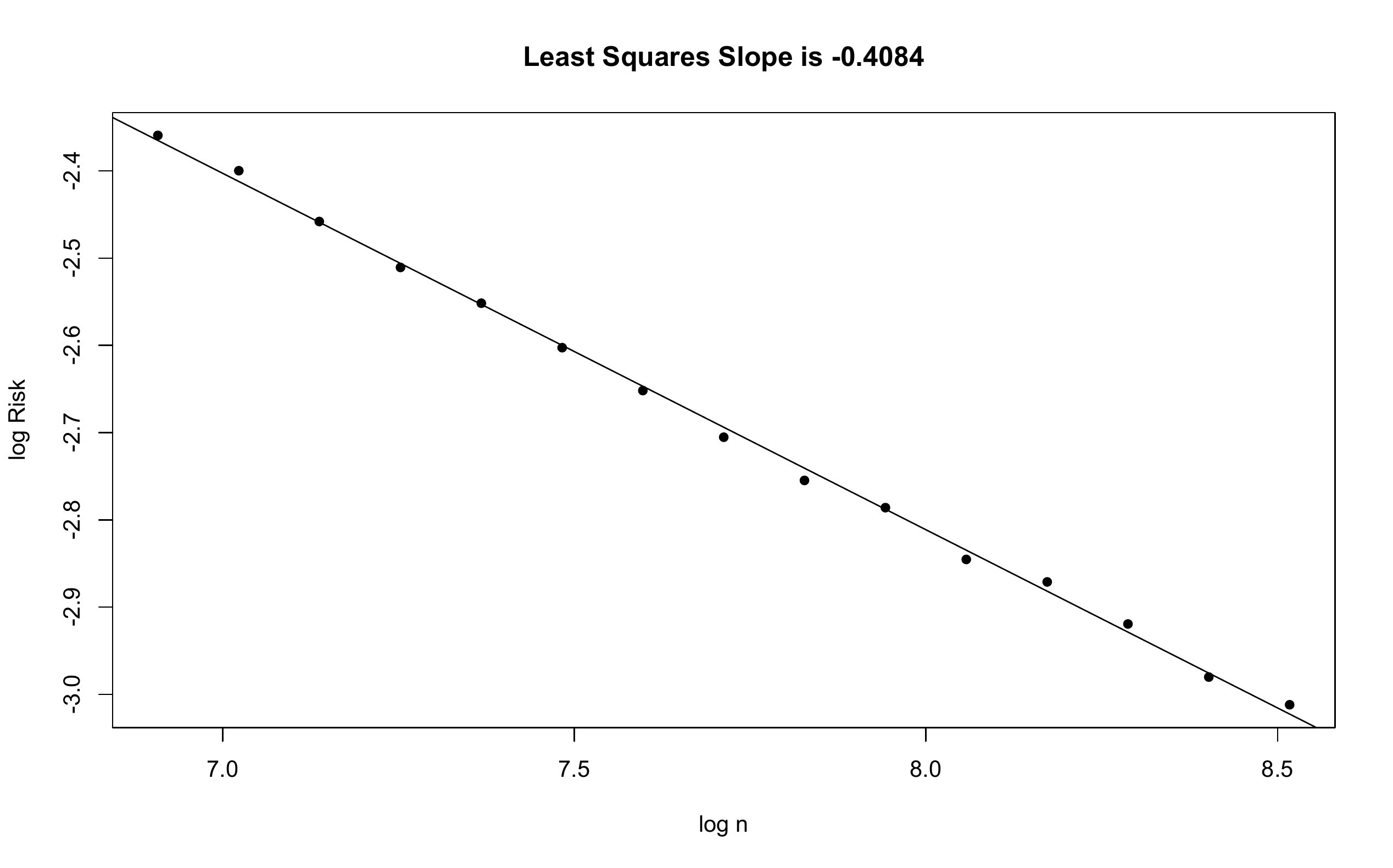}        
\caption{{\bf Left}: plot of $\log R(\hat{\theta}^{(1)}_V, \theta^*)$ against
  $\log n$ for $\theta^*$ as in \eqref{vio1}. The least
  squares slope is close to $-2/3$ which suggests that the risk decays
  as $n^{-2/3}$ instead of the faster rate given by Corollary
  \ref{kco}. {\bf Right}: plot of $\log R(\hat{\theta}^{(2)}_V, \theta^*)$
  against $\log n$ for $\theta^*$ defined in \eqref{vio2}. The slope is close to $-2/5$ which suggests that the risk decays
  as $n^{-2/5}$ instead of the faster rate given by Corollary
  \ref{kco}.} 
\label{f1}
\end{center}
\end{figure}

Another counterexample for the necessity of~\eqref{nsa} for Corollary
\ref{kco} is:  
\begin{equation}\label{vio2}
  \theta_1^* = \dots = \theta_{\lfloor n/2 \rfloor}^* = 0 ~~~ \text{
    and } ~~~ \theta_{\lfloor n/2 \rfloor + 1}^* = \theta_{\lfloor n/2
    \rfloor + 2}^* = \dots = \theta_{n}^* = 5. 
\end{equation}
Here consider the problem of estimating $\theta^*$ by the estimator
$\hat{\theta}^{(2)}_V$  (i.e., $r = 2$) with tuning parameter $V =
V^{(2)}(\theta^*) = 10 n$. It is clear that $\mathbf{k}_2(\theta^*) = 2$,
$n_0 = \lfloor n/2 \rfloor$, $n_1 = 1$ and $n_2 = n - \lfloor n/2
\rfloor - 1$. The minimum length condition \eqref{nsa} is not
satisfied as $n_1$ is too small. The risk $\log
R(\hat{\theta}^{(2)}_V, \theta^*)$ is plotted~against $\log n$ in the
right panel of Figure~\ref{f1} (the values of $n$ are chosen as
before). The slope of the least squares line here is close to $-2/5$
which suggests that the risk decays slowly than what is given by
Corollary \ref{kco}. Note that $n^{-2/5}$ is exactly the rate
given by Theorem \ref{woor} (take $r = 2$ and $V = 10 n$
in \eqref{woor.eq}).   
\end{remark} 

It is natural to ask if the bound given by inequality \eqref{kcoc} can
be improved further by dropping the $\log \frac{en}{\ktr(\theta^*) +
  1}$ term. The following simple result shows that this cannot be done
in general. 

\begin{lemma}\label{simpo}
  Suppose $\theta^* := (0, \dots, 0, 1, \dots, 1)$ with jump at $j =
  \lceil n/2 \rceil$. Let 
  $\hat{\theta}^{(1)}_{V = 1}$  denote the estimator \eqref{tf} with
  $V = 1$. Then
  \begin{equation*}\label{simpo.eq}
    \lim_{\sigma \downarrow 0} \frac{1}{\sigma^2}
    R(\hat{\theta}_{V=1}^{(1)}, \theta^*) \geq 
    \frac{\log(n/2)}{2n}. 
  \end{equation*}
\end{lemma}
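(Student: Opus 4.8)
The plan is to send $\sigma \downarrow 0$ and reduce the claim to a statement about the tangent cone of the feasible set at $\theta^*$. For $r=1$ and $V=1$ the estimator is the Euclidean projection $\hat{\theta}^{(1)}_{V=1} = \Pi_K(Y)$ onto the closed convex set $K := \{\theta \in \R^n : \|D\theta\|_1 \le 1\}$, and $\theta^*$ lies on $\partial K$ since $V(\theta^*) = \|D\theta^*\|_1 = 1$. Writing $Y = \theta^* + \sigma Z$ with $Z \sim N_n(0,I_n)$, I would invoke two standard facts about projection onto a closed convex set: $\Pi_K$ is $1$-Lipschitz with $\Pi_K(\theta^*) = \theta^*$, so that $\sigma^{-2}\|\hat{\theta}^{(1)}_{V=1} - \theta^*\|^2 \le \|Z\|^2$ for all $\sigma>0$; and $t \mapsto \Pi_K(\theta^*+tu)$ is right differentiable at $t=0$ with derivative $\Pi_T(u)$, where $T := T_K(\theta^*)$ is the tangent cone of $K$ at $\theta^*$. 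Then $\sigma^{-1}(\hat{\theta}^{(1)}_{V=1} - \theta^*) \to \Pi_T(Z)$ almost surely, and dominated convergence gives $\lim_{\sigma \downarrow 0} \sigma^{-2} R(\hat{\theta}^{(1)}_{V=1}, \theta^*) = n^{-1}\E\|\Pi_T(Z)\|^2$. It therefore suffices to show $\E\|\Pi_T(Z)\|^2 \ge \tfrac12\log(n/2)$.

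I would then identify $T$ explicitly. Since $D\theta^*$ has its unique nonzero entry, equal to $1$, at index $i^* := \lceil n/2\rceil - 1$, expanding $\|D\theta^* + \epsilon Dw\|_1 \le 1$ for small $\epsilon>0$ shows $T = \{w \in \R^n : (Dw)_{i^*} + \sum_{k\ne i^*}|(Dw)_k| \le 0\}$. The crucial step is to place a large sub-cone inside $T$, namely the ``nonnegative monotone'' cone supported on the initial flat block: $\M_+ := \{w \in \R^n : w_i = 0 \text{ for } i \ge \lceil n/2\rceil,\ 0 \le w_1 \le w_2 \le \cdots \le w_{i^*}\}$. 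For $w \in \M_+$ one has $(Dw)_k = w_{k+1}-w_k \ge 0$ for $k \le i^*-1$, $(Dw)_{i^*} = -w_{i^*}$, and $(Dw)_k = 0$ for $k \ge i^*+1$, so that $(Dw)_{i^*} + \sum_{k\ne i^*}|(Dw)_k| = -w_{i^*} + (w_{i^*}-w_1) = -w_1 \le 0$; hence $\M_+ \subseteq T$. Using the identity $\|\Pi_C(Z)\| = \sup\{\langle Z,v\rangle : v \in C,\ \|v\|\le 1\}$, valid for every closed convex cone $C$, the inclusion $\M_+ \subseteq T$ forces $\|\Pi_T(Z)\| \ge \|\Pi_{\M_+}(Z)\|$ pointwise and hence $\E\|\Pi_T(Z)\|^2 \ge \E\|\Pi_{\M_+}(Z)\|^2$.

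After discarding the coordinates that $\M_+$ pins to zero, $\E\|\Pi_{\M_+}(Z)\|^2 = \E\|\Pi_{\M_{+,m}}(Z')\|^2$ where $m := i^* = \lceil n/2\rceil - 1$, $Z' \sim N_m(0,I_m)$, and $\M_{+,m}$ is the cone of nonnegative nondecreasing vectors in $\R^m$. I would finish using two classical facts from isotonic regression: the projection onto $\M_{+,m} = \M_m \cap \R^m_{\ge 0}$ (with $\M_m$ the monotone cone) is the positive part of the unconstrained isotonic regression of $Z'$, and $\E\|\Pi_{\M_m}(Z')\|^2 = \sum_{k=1}^m 1/k$. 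Since the positive and negative parts of $\Pi_{\M_m}(Z')$ have disjoint supports and, by the sign-reversal symmetry of $Z'$, the same expected squared norm, $\E\|\Pi_{\M_{+,m}}(Z')\|^2 = \tfrac12\sum_{k=1}^m 1/k \ge \tfrac12\log(m+1) = \tfrac12\log\lceil n/2\rceil \ge \tfrac12\log(n/2)$. Combined with the limit from the first paragraph, this gives the lemma.

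The step I expect to be the main obstacle is the construction of the sub-cone: one has to recognize that the monotone fluctuations of $\hat{\theta}^{(1)}_{V=1} - \theta^*$ that are ``free'' over the flat region are forced to be \emph{nonnegative} (the residual $-w_1 \le 0$ above cannot be dropped), so the relevant quantity is the statistical dimension of the nonnegative monotone cone, which turns out to be exactly half that of the full monotone cone. Granting the standard properties of projections onto convex sets and the two isotonic-regression facts, the limiting argument and the remaining estimates are routine.
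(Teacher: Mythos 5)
Your proof is correct and follows essentially the same route the paper takes: pass to the small-$\sigma$ limit to reduce the claim to lower-bounding the statistical dimension of the tangent cone $T$ of $K^{(1)}(1)$ at $\theta^*$, identify $T$ explicitly, exhibit a monotone sub-cone of $T$ supported on the first constant block, and lower-bound its statistical dimension by $\tfrac12\log(n/2)$. Two sub-arguments are handled differently. You derive the small-$\sigma$ limit from first principles (Lipschitzness and directional differentiability of the Euclidean projection onto a polyhedron, plus dominated convergence) rather than invoking \citet[Theorem 2.1]{oymak2013sharp} as the paper does; and you take the sub-cone to be the nonnegative monotone cone $\{0\le w_1\le\cdots\le w_{j-1}\}$ on the first block and compute its statistical dimension as $\tfrac12\sum_{k=1}^{j-1}1/k$ via the clipping identity for constrained isotonic regression plus a symmetry, whereas the paper uses $\{0=\alpha_1\le\cdots\le\alpha_{j-1}\}$ and cites \cite{amelunxen2014living}; both routes are valid and yours is somewhat more self-contained. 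One imprecision to repair: the symmetry that exchanges the positive and negative parts of $\Pi_{\{w_1\le\cdots\le w_m\}}(Z)$ in distribution is $z \mapsto -Rz$ (negation composed with coordinate reversal), not negation alone, since negating $z$ maps the monotone cone to the antitonic cone and so does not by itself produce the needed pairing; with the composed symmetry the identity you use is valid, and the remainder of your argument stands.
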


\subsection{Results for the Penalized Estimator}\label{repe} 
In this section, we present risk results for the penalized estimator
defined in \eqref{ptf}. An important role in these results will be
played by the subdifferential of the convex function $f(\theta) :=
\|D^{(r)} \theta\|_1$ at the true parameter value $\theta^*$. Recall
that the subdifferential of a convex function $g : \R^n \rightarrow
\R$ at a point $\theta \in \R^n$ is the set consisting of all
subgradients of $g$ at $\theta$ and will be denoted by $\partial
g(\theta)$. For every finite convex function $g$ on $\R^n$ and $\theta
\in \R^n$, the subdifferential $\partial g(\theta)$ is non-empty,
closed, convex and bounded (see, for example, \citet[Page
218]{Rockafellar70book}). 

The following is the reason why $\partial f(\theta^*)$ (for $f(\theta)
:= \|D^{(r)} \theta\|_1$) plays a key role in understanding the risk
of \eqref{ptf}. It has been proved by \citet[Theorem
2.2]{oymak2013sharp} that for a general penalized estimator:   
\begin{equation*}
  \hat{\theta}_{\lambda}^{g} := \argmin_{\theta \in \R^n}
  \left(\frac{1}{2} \|Y - \theta\|^2 + \sigma \lambda g(\theta)
  \right) 
\end{equation*}
where $g : \R^n \rightarrow \R$ is convex, its risk under the model $Y
\sim N_n(\theta^*, \sigma^2 I_n)$ satisfies: 
\begin{equation}\label{ohag}
  R(\hat{\theta}_{\lambda}^{g}, \theta^*) \leq \frac{\sigma^2}{n} \E \left(
  \inf_{v \in \lambda \partial g(\theta^*)} \|Z - v\|^2 \right) 
\end{equation}
where $\lambda \partial g(\theta^*) := \left\{\lambda v : v
  \in \partial g(\theta^*) \right\}$ and the expectation on the right
hand side is with respect to the standard Gaussian vector $Z \sim
N_n(0, I_n)$. Moreover, inequality 
\eqref{ohag} cannot in general be improved,
because, as proved in \cite[Proposition 4.2]{oymak2013sharp}, it is
tight in the low $\sigma$ limit, i.e., the limit (as $\sigma
\rightarrow 0$) of the left hand side of \eqref{ohag} scaled by
$\sigma^2/n$ equals the expectation on the right hand side of
\eqref{ohag}. Inequality \eqref{ohag} will be our main technical tool
for studying the risk of \eqref{ptf} and thus it will be important to
understand the subdifferentials of the function $\theta \mapsto
\|D^{(r)} \theta\|_1$. 

The next result (proved in Subsection \ref{sudi}) characterizes the 
subdifferential of $f(\theta) := \|D^{(r)}
\theta\|_1$. 

\begin{proposition}\label{characterization} 
Consider the function $f : \R^n \rightarrow \R$ defined by $f(\alpha)
:= \|D^{(r)} \alpha\|_1$. Fix  $\theta \in \R^n$. Then $\partial
f(\theta)$ consists of vectors $v  \in \mathbb{R}^n$ such that    
  \begin{equation}\label{ch1}
   \sum_{i=j}^n \binom{r+i-j-1}{r-1}v_i = 0 \qt{for $1 \leq j
      \leq r$,}
  \end{equation}
and 
\begin{equation}\label{ch2}
    \sum_{i=j}^n \binom{r+i-j-1}{r-1}v_i = \left\{
  \begin{array}{ll}
\sgn((D^{(r)}\theta)_{j-r}) &  \text{if $(D^{(r)}\theta)_{j-r}
  \neq 0$}\\
    \in [-1, 1] &  \text{otherwise}
  \end{array}
\right.
\end{equation}
for $r < j \leq n$. Here $\sgn(x)$ denotes the sign of $x$ for $x \neq
0$. 
\end{proposition}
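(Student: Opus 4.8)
The plan is to compute $\partial f(\theta)$ by writing $f$ as a composition $f(\alpha) = g(D^{(r)}\alpha)$ where $g(\beta) := \|\beta\|_1$ on $\R^{n-r}$, and then apply the chain rule for subdifferentials together with the well-known characterization of $\partial \|\cdot\|_1$. Since $\alpha \mapsto D^{(r)}\alpha$ is a linear map, represented by a matrix $D^{(r)} \in \R^{(n-r)\times n}$, the standard subdifferential chain rule gives $\partial f(\theta) = (D^{(r)})^{\mathsf T}\, \partial g(D^{(r)}\theta)$, i.e.\ $v \in \partial f(\theta)$ if and only if there exists $w \in \partial \|\cdot\|_1(D^{(r)}\theta) \subseteq \R^{n-r}$ with $v = (D^{(r)})^{\mathsf T} w$. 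Recall that $w \in \partial \|\cdot\|_1(\beta)$ means $w_\ell = \sgn(\beta_\ell)$ whenever $\beta_\ell \neq 0$ and $w_\ell \in [-1,1]$ otherwise. So the content of the proposition is to re-express the relation ``$v$ lies in the image of $(D^{(r)})^{\mathsf T}$ and the corresponding preimage $w$ has entries of the prescribed form'' in terms of the linear constraints \eqref{ch1}--\eqref{ch2} on the coordinates of $v$ directly.

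The main computational step is therefore to understand the image and ``inverse'' of the linear map $w \mapsto (D^{(r)})^{\mathsf T} w$. First I would record the explicit form of $D^{(r)}$: iterating $D^{(1)}$, the $(i, j)$ entry of $D^{(r)}$ involves binomial coefficients $\binom{r}{\cdot}$ with alternating signs (a discrete $r$-fold difference). Dually, a vector $v$ lies in the row space of $D^{(r)}$ precisely when $v$ is ``$r$ times summed to zero'': concretely, applying the adjoint summation operator $r$ times to $v$ must vanish at the appropriate boundary indices, which is exactly the content of \eqref{ch1} (these are $r$ linear constraints, matching $\dim(\ker (D^{(r)})^{\mathsf T}{}^{\perp})$ considerations — note $D^{(r)}$ has full row rank $n-r$, so its transpose is injective with $r$-codimensional range). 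Then, given that $v$ is in the range, the unique $w$ with $(D^{(r)})^{\mathsf T} w = v$ is recovered by the $r$-fold partial-summation formula, and one checks that its $\ell$-th coordinate $w_\ell$ equals precisely $\sum_{i = \ell + r}^{n} \binom{r + i - (\ell+r) - 1}{r-1} v_i = \sum_{i=j}^n \binom{r+i-j-1}{r-1} v_i$ with $j = \ell + r$, i.e.\ the left-hand side of \eqref{ch2}. The binomial coefficient $\binom{r+i-j-1}{r-1}$ is exactly the number of ways to iterate the summation $r$ times, the ``$r$-fold prefix sum'' kernel (a hockey-stick / negative-binomial identity). Substituting $\beta_\ell = (D^{(r)}\theta)_\ell = (D^{(r)}\theta)_{j-r}$ into the $\ell_1$-subdifferential description then yields \eqref{ch2}, and \eqref{ch1} is just the range condition. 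For this I would proceed by induction on $r$: the case $r = 1$ is a direct computation with $D = D^{(1)}$ (here $\partial f(\theta)$ consists of $v$ with $\sum_i v_i = 0$ and $\sum_{i=j}^n v_i = \sgn((D\theta)_{j-1})$ or $\in[-1,1]$), and the inductive step uses $D^{(r)} = D^{(1)} D^{(r-1)}$ together with the Pascal-type recursion $\binom{r+i-j-1}{r-1} = \binom{r+i-j-2}{r-2} + \binom{r+i-j-2}{r-1}$ to pass from the $(r-1)$-fold summation kernel to the $r$-fold one.

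The part I expect to require the most care is bookkeeping the index ranges and the boundary constraints: verifying that the $r$ conditions in \eqref{ch1} (for $1 \le j \le r$) are exactly the conditions for $v$ to be a legitimate row-space element (equivalently, that the partial-summation recovery of $w$ is consistent and terminates correctly at the left boundary), and that the index shift $j = \ell + r$ lines up so that ``$(D^{(r)}\theta)_{j-r}$'' appears in \eqref{ch2} rather than an off-by-one neighbor. I would set this up cleanly by defining the summation operator $S : \R^m \to \R^{m}$ (or with a boundary convention making it $\R^{m}\to\R^{m-1}$, matching $D^{(1)}$'s adjoint up to sign), noting $(D^{(1)})^{\mathsf T}$ acts as a signed difference so that its ``inverse'' is a signed cumulative sum from the top index $n$ downward, and then iterating. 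One mild subtlety is the sign conventions in the recursive definition $D^{(r)} = D^{(1)}(D^{(r-1)})$; I would track signs so that all the $\sgn(\cdot)$ terms in \eqref{ch2} come out with a $+$ sign as stated. Everything else — nonemptiness, closedness, convexity, boundedness of $\partial f(\theta)$ — is automatic from general convex analysis (as the excerpt already notes) and needs no separate argument.
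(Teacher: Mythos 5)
Your proposal is correct and takes a genuinely different route from the paper's. The paper argues directly from the definition of the subdifferential: it builds an invertible change of coordinates $\beta \mapsto M\beta$ that isolates the boundary data $(D^{(i-1)}\beta)_1$, $1 \le i \le r$, from the bulk data $(D^{(r)}\beta)_j$, $1 \le j \le n-r$; rewrites $\langle v,\beta\rangle$ via the inversion formula of Lemma~\ref{dfi} as $\sum_j a_{r+j}(D^{(r)}\beta)_j + \sum_j b_j(D^{(j-1)}\beta)_1$; and then tests against $\beta = \pm M^{-1}\mathbf{e}_j$ and $\beta = \lambda M^{-1}\mathbf{e}_{r+j}$ to force $b_j = 0$ and the stated constraints on $a_{r+j}$, concluding with a triangular change of basis identifying $\{b_j=0,\ 1\le j\le r\}$ with \eqref{ch1}. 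You instead invoke the subdifferential chain rule $\partial(g\circ A)(\theta)=A^{T}\partial g(A\theta)$ (valid here since $g=\|\cdot\|_1$ is finite everywhere), which reduces the proposition to two linear-algebraic claims: \eqref{ch1} characterizes the row space of $D^{(r)}$, and the left-hand side of \eqref{ch2} computes the unique preimage $w$ of $v$ under $(D^{(r)})^{T}$. Both routes ultimately establish the same underlying facts --- your partial-sum recovery of $w$ is precisely the coefficient $a_{r+j}$ that the paper extracts via Lemma~\ref{dfi}, and your range condition is the paper's $a_j=0 \Leftrightarrow b_j=0$ equivalence --- so the calculational burden is comparable, but your framing is cleaner conceptually (one citation of a standard theorem replaces the paper's test-direction argument), while the paper's version is self-contained and does not depend on the chain rule for subgradients under a linear map.
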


It should be clear from the above proposition that $\partial
f(\theta^*)$ is always a convex polyhedron and is of a different
nature when $D^{(r)} \theta^* \neq 0$ 
as opposed to when $D^{(r)} \theta^* = 0$. For example, when $D^{(r)}
\theta^* = 0$, the zero vector belongs to $\partial f(\theta^*)$ and
moreover, the sets $\lambda \partial f(\theta^*) := \{\lambda v : v
\in \partial f(\theta^*)\}$ are increasing as $\lambda$
increases. Both these facts are not true when $D^{(r)} \theta^* \neq
0$. We thus separate our risk results into the two cases:
$D^{(r)} \theta^* \neq 0$ and $D^{(r)} \theta^* = 0$. First we deal
with the case $D^{(r)} \theta^* \neq 0$. The other (simpler) case is in Lemma \ref{rr0}. 

Assume therefore that $D^{(r)} \theta^* \neq 0$. The following
quantities (all defined in terms of the subdifferential $\partial
f(\theta^*)$) will play a key role in our risk bounds for the
penalized estimator \eqref{ptf}. Let
\begin{equation}\label{vvs}
  v^* := \argmin_{v \in \partial f(\theta^*)} \|v\| ~~ \text{ and }
  ~~ v_0 := \argmin_{v \in \aff(\partial f(\theta^*))} \|v\|  
\end{equation}
where $\aff(\partial f(\theta^*))$ denotes the affine hull of
$\partial f(\theta^*)$ (recall that for a subset $S \subseteq \R^n$,
its affine hull $\aff(S)$ consists of all vectors $w_1 x_1 + \dots +
w_m x_m$ such that $m \geq 1$, $x_i \in S$ and $w_1 + \dots + w_m =
1$). Note that 
$v^*$ and $v_0$ are uniquely defined because they are simply the
projections of the zero vector onto the closed convex sets $\partial
f(\theta^*)$ and $\aff(\partial f(\theta^*))$ respectively. Moreover,
they are both non-zero vectors because every vector $v$ in $\partial
f(\theta^*)$ (and consequently $\aff(\partial f(\theta^*))$) is
non-zero as it satisfies
\begin{equation*}
  \sum_{i=j}^n {r + i - j - 1 \choose r-1} v_i = \sgn((D^{(r)}
    \theta^*)_{j-r}) 
\end{equation*}
whenever $(D^{(r)} \theta^*)_{j-r} \neq 0$ (it should be kept in mind
that we are working under the assumption that $D^{(r)} \theta^* \neq
0$). It is helpful to note here that $v_0 = v^*$ when $r = 1$
(see Lemma \ref{gksd}) but for $r \geq 2$, they are not
necessarily the same.    

In addition to $v^*$ and $v_0$, we need the following quantity: 
\begin{equation}\label{lamzdef}
  \lambda_{\theta^*}(z) := \argmin_{\lambda \geq 0} \inf_{v
    \in \partial f(\theta^*)} \|z - \lambda v\| \qt{for $z \in
    \R^n$}. 
\end{equation}
In words, $\lambda_{\theta^*}(z)$ is the value of $\lambda$ which
minimizes the distance of the vector $z$ from the set
$\lambda \partial f(\theta^*)$. Lemma \ref{qfmg} proves that
$\lambda_{\theta^*}(z)$ is uniquely 
defined for each $z \in \R^n$ (under the assumption that $D^{(r)}
\theta^* \neq 0$) and also that $\E \lambda_{\theta^*}(Z) < \infty$
where the expectation is taken with respect to $Z \sim N_n(0,
I_n)$. We are now ready to state our first result on the risk of the
penalized trend filtering estimators (recall $\Delta_r(\theta)$ from
\eqref{mrt}).       
\begin{theorem}\label{prt1}
 Fix $r \geq 1$ and suppose $\theta^* \in \R^n$ with $D^{(r)} \theta^*
 \neq 0$. Let 
    \begin{equation}\label{lams}
    \lambda^* := n^{1-r} \left(\E \lambda_{\theta^*}(Z) +
      \frac{2}{\|v_0\|} \right)
  \end{equation}  
   where the expectation is taken with respect to the standard
   Gaussian vector $Z \sim N_n(0, I_n)$.  Then for every regularization
   parameter $\lambda \geq \lambda^*$, we have  
   \begin{equation}
     \label{prt1.eq}
     R(\hat{\theta}_{\lambda}^{(r)}, \theta^*) \leq C_r \sigma^2
     \Delta_r(\theta^*) + \frac{64\sigma^2}{n}
     \frac{\|v^*\|^2}{\|v_0\|^2} + \frac{4 \sigma^2}{n^{3 - 2r}}
     (\lambda - 
     \lambda^*)^2 \|v^*\|^2
   \end{equation}
   for a constant $C_r$ that only depends on $r$. 
\end{theorem}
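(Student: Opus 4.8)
The plan is to invoke the sharp oracle inequality \eqref{ohag} of \citet{oymak2013sharp}. Taking $g(\theta):=\|D^{(r)}\theta\|_1=f(\theta)$ and noting that the penalty in \eqref{ptf} equals $\sigma\,(n^{r-1}\lambda)\,g(\theta)$, so that the role of ``$\lambda$'' in \eqref{ohag} is played by $\mu:=n^{r-1}\lambda$, inequality \eqref{ohag} gives
$$
R(\hat{\theta}^{(r)}_{\lambda},\theta^*)\;\le\;\frac{\sigma^2}{n}\,\E\big[\dist(Z,\mu\,\partial f(\theta^*))^2\big],\qquad Z\sim N_n(0,I_n).
$$
Write $K:=\partial f(\theta^*)$ and $\mu^*:=n^{r-1}\lambda^*=\E\lambda_{\theta^*}(Z)+2/\|v_0\|$, so that $\lambda\ge\lambda^*\iff\mu\ge\mu^*$ and the third term in \eqref{prt1.eq} is exactly $\tfrac{4\sigma^2}{n}(\mu-\mu^*)^2\|v^*\|^2$. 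Thus the theorem reduces to the purely Gaussian-geometric statement
$$
\E\big[\dist(Z,\mu K)^2\big]\;\le\;C_r\,n\,\Delta_r(\theta^*)\;+\;64\,\frac{\|v^*\|^2}{\|v_0\|^2}\;+\;4\,(\mu-\mu^*)^2\|v^*\|^2\qquad\text{for all }\mu\ge\mu^*.
$$

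The main decomposition separates the behaviour at the optimal per-sample scaling from the price of committing to a deterministic $\mu$. Since $D^{(r)}\theta^*\ne0$, Proposition \ref{characterization} shows every $v\in K$ is nonzero, so $K$ is compact with $0\notin K$ and $\cone K=\bigcup_{\mu\ge0}\mu K$ is a closed convex cone; in fact $\cone K$ is the normal cone at $\theta^*$ of the sublevel set $S:=\{\theta:\|D^{(r)}\theta\|_1\le\|D^{(r)}\theta^*\|_1\}$, whose polar is the tangent cone $T_S(\theta^*)$. Hence $\inf_{\mu\ge0}\dist(z,\mu K)=\dist(z,\cone K)=\|\Pi_{T_S(\theta^*)}(z)\|$, the minimizing scale being precisely $\lambda_{\theta^*}(z)$ from \eqref{lamzdef} (finiteness and measurability coming from Lemma \ref{qfmg}). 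Writing $\lambda_{\theta^*}(z)\,u(z)$ for the metric projection of $z$ onto $\cone K$, so that $u(z)\in K$, the orthogonality in Moreau's decomposition gives the pointwise bound
$$
\dist(z,\mu K)^2\;\le\;\|z-\mu\,u(z)\|^2\;=\;(\mu-\lambda_{\theta^*}(z))^2\,\|u(z)\|^2\;+\;\big\|\Pi_{T_S(\theta^*)}(z)\big\|^2 .
$$
Taking expectations, the second term is handled by the Gaussian-width / metric-entropy estimate that drives the constrained analysis: $\E\|\Pi_{T_S(\theta^*)}(Z)\|^2\le C_r n\Delta_r(\theta^*)$. This is, up to constants, the content of Theorem \ref{ada} specialized to $V=V^{(r)}(\theta^*)$ and $\theta=\theta^*$, whose proof supplies exactly this Gaussian-projection bound (via the fat-shattering/entropy control of the unit ball of $T_S(\theta^*)$), so it can be quoted; it produces the $\Delta_r(\theta^*)$ term.

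What remains, and what I expect to be the crux, is to bound the scaling error $\E\big[(\mu-\lambda_{\theta^*}(Z))^2\|u(Z)\|^2\big]$ by a constant multiple of $\|v^*\|^2/\|v_0\|^2+(\mu-\mu^*)^2\|v^*\|^2$ (the factor-$2$ gap with the stated constants $64,4$ being absorbed into an $(a+b)^2\le 2a^2+2b^2$ split, or by working with a convex-combination reference point in $K$ in place of $u(z)$). The ingredients are: (i) the explicit polyhedral description of $K$ in Proposition \ref{characterization} together with the affine-hull structure $\aff K=v_0+W$ with $v_0\perp W$, used to bound the norm of the relevant point of $K$ in terms of $\|v^*\|$ and $\|v_0\|$ — with the simplification $v_0=v^*$ when $r=1$ (Lemma \ref{gksd}), while for $r\ge2$ one must carry both and control the gap between projecting onto $K$ and onto $\aff K$; (ii) the fact that $z\mapsto\lambda_{\theta^*}(z)$, being built from projections onto $K$, is Lipschitz and hence concentrates about $\E\lambda_{\theta^*}(Z)$; and (iii) the algebraic split $\mu-\lambda_{\theta^*}(Z)=(\mu-\mu^*)+2/\|v_0\|+(\E\lambda_{\theta^*}(Z)-\lambda_{\theta^*}(Z))$, in which the slack $2/\|v_0\|$ deliberately built into $\lambda^*$ neutralizes the deviation term and leaves exactly the two advertised pieces. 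The difficulty is to carry out (i)--(iii) with the honest constants $\|v^*\|,\|v_0\|$ rather than the cruder $\sup_{v\in K}\|v\|$, which forces use of the fine combinatorial structure of $\partial f(\theta^*)$ — the knot positions and the gap quantities $n_{i*}$ entering $\Delta_r$ — rather than soft convex geometry alone. Finally, combining the three displays and rescaling by $\sigma^2/n$ yields \eqref{prt1.eq}; the excluded case $D^{(r)}\theta^*=0$ is treated separately in Lemma \ref{rr0}.
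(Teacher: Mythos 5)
Your scaffolding is right: invoke the Oymak--Hassibi bound \eqref{ohag}/Theorem \ref{ohala}, reduce to controlling $\E\,\dist^2(Z,\mu\,\partial f(\theta^*))$, identify $\inf_{\mu\ge 0}\dist(z,\mu K)=\dist(z,\cone K)=\|\Pi_{T_{K^{(r)}(V^*)}(\theta^*)}(z)\|$ with minimizing scale $\lambda_{\theta^*}(z)$, and feed the cone term into the Gaussian-width bound \eqref{rew}. The rescaling $\mu=n^{r-1}\lambda$ is bookkept correctly. The gap is in how you propose to control the scaling error. Your pointwise $L^2$ decomposition
$\dist(z,\mu K)^2\le(\mu-\lambda_{\theta^*}(z))^2\|u(z)\|^2+\|\Pi_{T}(z)\|^2$
has a factor $\|u(z)\|^2$ that you cannot dominate by $\|v^*\|^2$: for any $z$, $u(z)\in K$ implies $\|u(z)\|\ge\|v^*\|$ by minimality of $v^*$, and $\|u(z)\|$ is otherwise only bounded by $\sup_{v\in K}\|v\|$, which can be much larger than $\|v^*\|$ and has no clean relation to $\|v_0\|$. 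You flag this as the crux but then propose to resolve it via ``the fine combinatorial structure of $\partial f(\theta^*)$ --- the knot positions and the gap quantities $n_{i*}$''. This is a misdirection: that combinatorics enters \emph{only} in Lemma \ref{gksd}, which is used downstream (Corollaries \ref{expen1}, \ref{expenr}) to make $\|v^*\|$ and $\|v_0\|$ explicit; it plays no role in the proof of Theorem \ref{prt1} itself.

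The actual fix, which you mention only parenthetically and do not develop, is the convex-combination trick that drives Proposition \ref{qfmg} (a generalization of \citet[Proposition 1]{foygel2014corrupted}): on the high-probability event $E:=\{\lambda_{\theta^*}(z)<\mu^*\}$ one has $0\le\lambda_{\theta^*}(z)/\mu\le 1$, hence $\tfrac{\lambda_{\theta^*}(z)}{\mu}u(z)+(1-\tfrac{\lambda_{\theta^*}(z)}{\mu})v^*\in K$ by convexity, which yields the \emph{first-power} inequality $\dist(z,\mu K)\le\dist(z,\cone K)+(\mu-\lambda_{\theta^*}(z))\|v^*\|$ with the honest $\|v^*\|$ and no $\|u(z)\|$. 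That $E$ has probability at least $1-2e^{-2}$ uses the $1/\|v_0\|$-Lipschitz property of $z\mapsto\lambda_{\theta^*}(z)$ (itself a consequence of the orthogonality $\langle v-v_0,v_0\rangle=0$ on $\partial f(\theta^*)$) together with Gaussian concentration; the slack $2/\|v_0\|$ in $\lambda^*$ then cancels the deviation of $\lambda_{\theta^*}$ from its mean exactly as you anticipate in your step (iii). The paper then lifts the $L^1$ bound to the $L^2$ bound $\D(\lambda\partial g)\le(\E\,\dist)^2+4$ by noting $\dist(\cdot,\lambda\partial g(\theta^*))$ is $1$-Lipschitz, so its variance is at most $4$. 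In short: replace your pointwise $L^2$ splitting by an $L^1$ estimate on a high-probability set (via the convex combination), propagate it to expectations using \citet[Lemma 4]{foygel2014corrupted}, and only then square; no combinatorics of knots is needed here, only soft convex geometry and Gaussian concentration.
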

The bound \eqref{prt1.eq} (which holds for every $\lambda \geq
\lambda^*$) is clearly smallest when $\lambda = \lambda^*$. To
simplify the right hand side of \eqref{prt1.eq} further, we need to
bound $\|v^*\|$ from above and $\|v_0\|$ from below. This is done in
the next result. 

\begin{lemma}\label{gksd}
Let $f : \R^n \rightarrow \R$ be given by $f(\theta) := \|D^{(r)}
\theta\|_1$ and let $\theta^* \in \R^n$ be such that $D^{(r)} \theta^*
\neq 0$. 
  \begin{enumerate}
  \item Suppose $r = 1$. Then $v_0 = v^*$. Further suppose that $\theta^*$
    has $k \geq 1$ jumps (first order knots) with signs $\rs_1, \dots,
    \rs_k$ and let $n_0, n_1, \dots, n_k$ denote the lengths of the 
    constant pieces of $\theta^*$. Then  
    \begin{equation}
      \label{voex}
      \|v_0\|^2 = \|v^*\|^2 =  \frac{1}{n_0} + \frac{1}{n_k} + 4 \sum_{i=1}^{k-1}
      \frac{I\{\rs_i \neq \rs_{i+1}\}}{n_i}. 
    \end{equation}
  \item For $r \geq 2$, we have
    \begin{equation}\label{gksd.low}
  \|v_0\| \geq \frac{(r -1)!}{(r+1) 2^{r-1}} n^{-r + 1/2}.
    \end{equation}
  \item Suppose $r \ge 2$ and $\theta^*$ satisfies the minimum length
    condition \eqref{nsa} with constant $c$, then
    \begin{equation}\label{gksd.up}
      \|v^*\| \leq C_r c^{-r + 1/2} (k+1)^r n^{-r + 1/2} 
    \end{equation}
   where $C_r$ is a constant depending only on $r$. 
  \end{enumerate}
\end{lemma}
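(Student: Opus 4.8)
The description of $\partial f(\theta^*)$ that I would use throughout is the one implied by Proposition~\ref{characterization} (equivalently, the subdifferential chain rule, valid because $D^{(r)}$ maps $\R^n$ onto $\R^{n-r}$): $v\in\partial f(\theta^*)$ if and only if $v=(D^{(r)})^T w$ for some $w\in\R^{n-r}$ with $w_l=\sgn((D^{(r)}\theta^*)_l)$ whenever $(D^{(r)}\theta^*)_l\neq0$ and $w_l\in[-1,1]$ otherwise. Since $(D^{(r)})^T$ is injective, $w\mapsto(D^{(r)})^T w$ is a bijection from this box onto $\partial f(\theta^*)$, so $\|v^*\|=\min\{\|(D^{(r)})^T w\|:w\text{ in the box}\}$, and $\aff(\partial f(\theta^*))$ is the image of the affine set obtained by pinning $w_l$ to the prescribed sign at each knot and leaving it free elsewhere, so $\|v_0\|$ is the minimum of $\|(D^{(r)})^T w\|$ over that affine set.

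For part~(1), with $r=1$, writing $w_0=w_n=0$, one has $\|(D^{(1)})^T w\|^2=\sum_{i=1}^n(w_{i-1}-w_i)^2$; minimizing this with $w$ pinned to $\rs_t$ at the $k$ knot positions is a discrete Dirichlet problem on a path, solved uniquely by the piecewise linear interpolant through $(0,0),(\text{knot}_1,\rs_1),\ldots,(\text{knot}_k,\rs_k),(n,0)$, where a segment of $\ell$ edges joining values $p$ and $q$ contributes $(p-q)^2/\ell$. Identifying the segment lengths with $n_0,n_k$ and the $n_i$, and using $(\rs_i-\rs_{i+1})^2=4\,I\{\rs_i\neq\rs_{i+1}\}$, gives~\eqref{voex}; since this interpolant takes values in $[-1,1]$ it already lies in the box, so it is at once the affine-hull and the box minimizer and hence $v_0=v^*$. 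For part~(2), as $D^{(r)}\theta^*\neq0$ there is $j$ with $r<j\le n$ and $(D^{(r)}\theta^*)_{j-r}\neq0$, so by~\eqref{ch2} every $v\in\aff(\partial f(\theta^*))$ satisfies $\sum_{i=j}^n\binom{r+i-j-1}{r-1}v_i=\pm1$; Cauchy--Schwarz then gives $\|v\|\ge\big(\sum_{i=j}^n\binom{r+i-j-1}{r-1}^2\big)^{-1/2}$, and bounding each coefficient by $\binom{n-2}{r-1}\le n^{r-1}/(r-1)!$ and the number of terms by $n$ yields $\|v\|\ge(r-1)!\,n^{-r+1/2}\ge\frac{(r-1)!}{(r+1)2^{r-1}}n^{-r+1/2}$, which applied to $v=v_0$ is~\eqref{gksd.low}.

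Part~(3) is the substantive one, and the plan is to exhibit an explicit feasible $w$ and bound $\|(D^{(r)})^T w\|$. Fix once and for all a nondecreasing $C^r$ function $\phi:\R\to[0,1]$ with $\phi\equiv0$ on $(-\infty,0]$ and $\phi\equiv1$ on $[1,\infty)$ --- e.g.\ the normalized integral of the $C^{r-1}$ bump $s\mapsto(s(1-s))^r$ supported on $[0,1]$, so that $\phi$ and its first $r$ derivatives match the relevant constants at $0$ and $1$ --- and put $B_r:=\|\phi^{(r)}\|_\infty<\infty$. Set $\rs_0=\rs_{k+1}=0$ and call block $i\in\{0,\ldots,k\}$ \emph{active} when $\rs_i\neq\rs_{i+1}$. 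Let $w_l:=\tilde w(l)$, where $\tilde w:\R\to\R$ equals the constant $\rs_i$ over each inactive block and, over an active block of (essentially) length $n_i$, the rescaled transition $t\mapsto\rs_i+(\rs_{i+1}-\rs_i)\phi(t/n_i)$ from $\rs_i$ to $\rs_{i+1}$; at the two boundary blocks the outer value $0$ ($=\rs_0$, resp.\ $=\rs_{k+1}$) is exactly the zero padding of $w$. Because $\phi$ and its first $r$ derivatives vanish at $0$ and $1$, these pieces glue into a globally $C^r$ function $\tilde w$, so $w$ is feasible ($|w_l|\le1$ by monotonicity of $\phi$, and $w_l$ equals the prescribed sign at every knot), and, writing $((D^{(r)})^T w)_i$ as $\pm$ an $r$-th finite difference of $w$ and invoking the identity equating an $r$-th (divided) difference with an $r$-th derivative of $\tilde w$ at an interior point, one gets $|((D^{(r)})^T w)_i|\le 2n_i^{-r}B_r$ when the corresponding length-$r$ window meets active block $i$ and $=0$ when it meets only inactive blocks.

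Summing the squares over the at most $k+1$ active blocks --- block $i$ contributes at most $O_r(n_i)$ nonzero terms, each of size $O_r(n_i^{-r})$, hence $O_r(n_i^{-(2r-1)})$ to the sum --- and using $n_i\ge cn/(k+1)$ for active $i$ gives $\|v^*\|^2\le\|(D^{(r)})^T w\|^2\le C_r\,c^{-(2r-1)}(k+1)^{2r}n^{-(2r-1)}$, i.e.~\eqref{gksd.up}. The construction needs each active block to have length $\gtrsim r$, which holds once $cn/(k+1)\ge2r$; in the complementary range $k+1>cn/(2r)$ I would instead use the crude bound $\|v^*\|\le\|(D^{(r)})^T\|_{\mathrm{op}}\sqrt k\le2^r\sqrt k$ (taking $w$ to be the signs at the knots and $0$ elsewhere) and check it is dominated by the claimed right-hand side there. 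The main obstacle is entirely in part~(3): producing one $w$ that simultaneously attains the prescribed signs at the knots, stays in $[-1,1]$, is smooth enough that its $r$-th differences decay like (block length)$^{-r}$, and is compatible with the zero padding at both ends, together with the (routine but fiddly) bookkeeping converting the per-block $n_i^{-r}$ bound into the global $(k+1)^{2r}n^{-(2r-1)}$ estimate; parts~(1) and~(2) are comparatively short.
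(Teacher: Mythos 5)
Your proposal is correct, and for all three parts the underlying strategy parallels the paper's --- the piecewise linear interpolant in part (1), a linear constraint from Proposition~\ref{characterization} plus Cauchy--Schwarz in part (2), and a smooth transition function whose $r$-th differences decay like (block length)$^{-r}$ in part (3) --- but your execution differs in places and is noticeably leaner. The organizing difference is that you work throughout in the chain-rule parameterization $\partial f(\theta^*) = (D^{(r)})^T(\text{box})$, which turns parts (1)--(3) into variations on one theme, whereas the paper treats them with three separate direct computations in $v$-space (guess-and-verify $v_0$, a stacked constraint system, and a hand-built difference vector). In part (2) you use a \emph{single} knot equality and Cauchy--Schwarz, while the paper writes all $k+r$ affine-hull equalities as $Bv=b$ and bounds $\|v_0\|_1 \ge \|b\|_1/\|B\|_1$ via the $\ell^1$ operator norm of $B$; your version is shorter and gives the slightly better constant $(r-1)!\,n^{-r+1/2}$. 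In part (3) the paper's $v^*_j = (-1)^r(D^{(r)}S)_j$ built from its $C^\infty$ bump $S$ is, up to index shifts, the same object as your $(D^{(r)})^T w$, but the paper extracts the norm bound through a multi-step Taylor/Riemann-sum comparison introducing four auxiliary constants ($M_r$, $N_r$, $K_r$, $I_r$), whereas your mean-value theorem for equally spaced $r$-th differences collapses all of that into the single constant $B_r=\|\phi^{(r)}\|_\infty$. The bookkeeping you flag does close: attribute each index $i$ with $v_i\neq0$ to the active block containing the mean-value point $\xi_i$; this charges at most $n_l+r$ indices, each with $|v_i|\le 2B_r n_l^{-r}$, to active block $l$, so $\|v^*\|^2 \le 4B_r^2\sum_{l\text{ active}}(n_l+r)n_l^{-2r} \le 6B_r^2\sum_l n_l^{-(2r-1)}$ once $n_l\ge 2r$, and your fallback $\|v^*\|\le\|(D^{(r)})^T\|_{\mathrm{op}}\sqrt{k}\le2^r\sqrt{k}$ handles the complementary regime $k+1>cn/(2r)$ exactly as you say (the paper likewise notes its per-block bounds ``trivially hold if $n_l<r$''). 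In short: same core ideas, but your unified chain-rule bookkeeping and the finite-difference mean-value argument make parts (2) and (3) materially simpler than the paper's computations.
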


We shall now present more explicit risk bounds by combining Theorem
\ref{prt1} and Lemma \ref{gksd}. Since the information provided by
Lemma \ref{gksd} about $\|v_0\|$ and $\|v^*\|$ is much more precise
for $r = 1$ compared to $r \geq 2$, we find it natural to state our
risk results separately in the two cases $r = 1$ and $r \geq 2$. The
following result deals with the $r = 1$ case. 
\begin{corollary}\label{expen1}
  Suppose $\theta^* \in \R^n$ has $k \geq 1$ jumps with signs $\rs_1,
  \dots, \rs_k$ and suppose that $n_0, n_1, \dots, n_k$ denote the
  lengths of the constant pieces of $\theta^*$. Then, with $\lambda^*$
  as in \eqref{lams}, we have 
  \begin{equation} \label{expen1.eq}
    R(\hat{\theta}_{\lambda}^{(1)}, \theta^*) \leq C \sigma^2
    \left(\Delta_1(\theta^*) + \frac{(\lambda - \lambda^*)^2}{n}
      \sum_{i=0}^k \frac{I\{\rs_i \neq \rs_{i+1}\}}{n_i} \right) 
  \end{equation}
   for every $\lambda \geq \lambda^*$. Here $C$ is a universal
   constant. Also, we use our usual convention
   $\rs_0 = \rs_{k+1} = 0$. 

   Further, if $\theta^*$ satisfies the minimum length condition
   \eqref{nsa} with constant $c$, then   
    \begin{equation}\label{expen1.seq}
     R(\hat{\theta}_{\lambda}^{(1)}, \theta^*) \leq C(c) \sigma^2
     \left(\frac{k+1}{n} \log \frac{en}{k+1} + (\lambda - \lambda^*)^2 
     \frac{k+1}{n^2} \sum_{i=0}^k I\{\rs_i \neq \rs_{i+1}\}\right)
   \end{equation}
    where $C(c)$ depends on $c$ alone. 
\end{corollary}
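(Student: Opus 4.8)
The plan is to apply Theorem \ref{prt1} with $r = 1$ and then substitute the exact expression for $\|v^*\|^2 = \|v_0\|^2$ from part (1) of Lemma \ref{gksd}. Since $r = 1$, the exponents simplify dramatically: $n^{3 - 2r} = n$, so the last term in \eqref{prt1.eq} becomes $(4\sigma^2/n)(\lambda - \lambda^*)^2 \|v^*\|^2$, while the middle term $64\sigma^2 \|v^*\|^2/(n \|v_0\|^2)$ collapses to just $64\sigma^2/n$ because $v^* = v_0$ when $r = 1$. Thus Theorem \ref{prt1} already gives
\begin{equation*}
  R(\hat{\theta}_{\lambda}^{(1)}, \theta^*) \leq C_1 \sigma^2 \Delta_1(\theta^*) + \frac{64\sigma^2}{n} + \frac{4\sigma^2}{n}(\lambda - \lambda^*)^2 \|v^*\|^2.
\end{equation*}
The constant term $64\sigma^2/n$ is absorbed into $C\sigma^2 \Delta_1(\theta^*)$ using the observation recorded just after \eqref{mrt} that $\Delta_r(\theta^*) \geq (k+1)/n \geq 1/n$. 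Plugging in $\|v^*\|^2 = 1/n_0 + 1/n_k + 4\sum_{i=1}^{k-1} I\{\rs_i \neq \rs_{i+1}\}/n_i$ from \eqref{voex}, and noting that with the convention $\rs_0 = \rs_{k+1} = 0$ the indicators $I\{\rs_0 \neq \rs_1\}$ and $I\{\rs_k \neq \rs_{k+1}\}$ are both $1$, we can rewrite $\|v^*\|^2$ (up to the factor $4$, which is harmless since it merges into the universal constant $C$) as a quantity comparable to $\sum_{i=0}^k I\{\rs_i \neq \rs_{i+1}\}/n_i$. This yields \eqref{expen1.eq}.

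For the second claim \eqref{expen1.seq}, I would invoke the minimum length condition \eqref{nsa}: when $n_i \geq cn/(k+1)$ for every $i$ with $\rs_i \neq \rs_{i+1}$, the bound \eqref{ghos} gives $\Delta_1(\theta^*) \leq C(c)\frac{k+1}{n}\log\frac{en}{k+1}$, handling the first term. For the second term, each $1/n_i$ appearing in the sum (restricted to $i$ with $\rs_i \neq \rs_{i+1}$) is bounded above by $(k+1)/(cn)$, so
\begin{equation*}
  \sum_{i=0}^k \frac{I\{\rs_i \neq \rs_{i+1}\}}{n_i} \leq \frac{k+1}{cn} \sum_{i=0}^k I\{\rs_i \neq \rs_{i+1}\},
\end{equation*}
which, after multiplication by $(\lambda - \lambda^*)^2/n$ from \eqref{expen1.eq} and absorption of $1/c$ into $C(c)$, gives exactly the second term of \eqref{expen1.seq}.

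This corollary is essentially a bookkeeping exercise: all the analytic content lives in Theorem \ref{prt1} and Lemma \ref{gksd}, both of which are assumed available. The only point requiring a little care is matching the constant $4$ in \eqref{voex} with the form of the sum written in \eqref{expen1.eq} (which has no explicit $4$), but since the coefficient is a fixed universal constant this is immediate by enlarging $C$. I anticipate no real obstacle; the main thing to be careful about is the treatment of the boundary indices $i = 0$ and $i = k$, where the "$1/n_0$" and "$1/n_k$" terms in $\|v^*\|^2$ must be correctly identified with the $i = 0$ and $i = k$ summands of $\sum_{i=0}^k I\{\rs_i \neq \rs_{i+1}\}/n_i$ under the convention $\rs_0 = \rs_{k+1} = 0$.
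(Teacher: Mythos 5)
Your proposal is correct and follows essentially the same route as the paper: apply Theorem \ref{prt1} with $r=1$, substitute $v^*=v_0$ and the exact formula \eqref{voex} from Lemma \ref{gksd}, absorb the constant $64\sigma^2/n$ term via $\Delta_1(\theta^*)\geq 1/n$, and then specialize under the minimum length condition using \eqref{ghos} and the bound $n_i\geq cn/(k+1)$. The details you flag (handling the factor of $4$, the boundary indices $i=0,k$) are exactly the bookkeeping the paper does.
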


Inequality \eqref{expen1.seq} implies that, under the minimum length
condition, we have 
\begin{equation}  \label{prelim}
  R(\hat{\theta}^{(1)}_{\lambda}, \theta^*)
      \leq C(c) \sigma^2 \frac{k + 1}{n} \log
      \frac{en}{k + 1} \qt{for $\lambda = \lambda^*$}
\end{equation}
where $k$ is the number of jumps of $\theta^*$, i.e.,
$k = \mathbf{k}_1(\theta^*)$. Moreover, the logarithmic term above
cannot be removed in general. This is due to the following
reason. First, note that, for every non-random
$\lambda$ possibly depending on $\lambda^*$, the penalized estimator
$\hat{\theta}^{(1)}_{\lambda}$ has worse risk compared to the ideally
tuned constrained estimator i.e., $\hat{\theta}^{(1)}_{V}$ with $V =
V^{(r)}(\theta^*)$. This fact (which is noted and explained in
Subsection \ref{conpene}), together with Lemma \ref{simpo}, implies
clearly that the logarithmic factor in \eqref{prelim} cannot be
removed in general. 

\begin{remark}[Comparison to existing results]\label{comp} 
Among the class of existing results for the risk of
$\hat{\theta}^{(1)}_{\lambda}$, the strongest (in terms of giving the
smallest bound on the risk) is due to 
\citet{lin2016approximate} who proved that, when
$\lambda$ is appropriately selected (depending on $\theta^*$),
$\hat{\theta}^{(1)}_{\lambda}$ satisfies: 
\begin{equation}\label{tibb}
  R(\hat{\theta}_{\lambda}^{(1)}, \theta^*)  \leq C
  \frac{\sigma^2(k+1)}{n} \left(\left[\log
      (k+1) + \log \log n \right]  
    \log n + \sqrt{k+1} \right)  
\end{equation}
provided 
\begin{equation}\label{wn1}
  \min_{0 \leq i \leq k} n_i \geq \frac{cn}{k+1} 
\end{equation}
for a positive constant $c$. Here $n_0, \dots, n_k$ are
the lengths of the constant pieces of $\theta^*$. This bound from 
\citet{lin2016approximate} is smaller compared to an earlier result of
\citet{dalalyan2017tvd} and to a very recent result of
\citet{ortelli2018total} (although the results of
\cite{dalalyan2017tvd, ortelli2018total} apply to a universal choice
of the tuning parameter $\lambda$; see Remark \ref{compa}). The bound
\eqref{tibb} is weaker than  \eqref{prelim}  in two
respects: (a) there are additional terms in 
\eqref{tibb} involving 
$\log n$ and $k$ compared to \eqref{prelim},
and (b) our minimum length condition \eqref{nsa} is weaker than
\eqref{wn1}: \eqref{nsa} requires that $n_i \geq
cn/(k+1)$ only for those $i$ for which $\rs_i
\neq \rs_{i+1}$ while \eqref{wn1} requires this for all
$i$.   
\end{remark} 

Note that the regularization parameter $\lambda^*$ (for which the near
parametric risk bound \eqref{prelim} holds) depends on
$\theta^*$. Further, the exact nature of its dependence on $\theta^*$
is not apparent from its definition \eqref{lams}. In the next result,
we provide a more explicit upper bound for $\lambda^*$. For this, we
require a stronger length condition than \eqref{nsa}. Note that we are
still in the $r = 1$ case.  
\begin{lemma}\label{explam1}
   Consider the same setting as in Corollary \ref{expen1}. Assume that the
   length condition: 
  \begin{equation}\label{leon}
    \min_{0 \leq i \leq k: \rs_i \neq \rs_{i+1}} n_i \geq
    \frac{c_1n}{k+1} ~~ \text{ and } ~~ \max_{0 \leq i \leq k: \rs_i
      \neq \rs_{i+1}} n_i \leq  \frac{c_2n}{k+1}
  \end{equation}
 holds for two positive constants $c_1 \leq 1$ and $c_2 \geq 1$. Let
 $\lambda^*$ 
 be as defined in \eqref{lams}. Then there exists a positive constant 
  $C^*(c_1, c_2)$ (which depends only on $c_1$ and $c_2$) such that  
  \begin{equation}\label{rambo}
    \lambda^* \leq C^*(c_1, c_2) \sqrt{\frac{n}{\sum_{i=0}^k I\{\rs_i
        \neq \rs_{i+1}\}} \log \left(\frac{en}{k+1} \right)}.  
  \end{equation}
\end{lemma}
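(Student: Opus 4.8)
The plan is to bound the two pieces of $\lambda^* = n^{1-r}\bigl(\E\lambda_{\theta^*}(Z) + 2/\|v_0\|\bigr)$ separately, with $r=1$ so that $n^{1-r}=1$ and (by part (1) of Lemma \ref{gksd}) $v_0 = v^*$ with $\|v_0\|^2 = 1/n_0 + 1/n_k + 4\sum_{i=1}^{k-1} I\{\rs_i\neq\rs_{i+1}\}/n_i$. For the term $2/\|v_0\|$: under the upper length bound $n_i \le c_2 n/(k+1)$ on the relevant indices, each summand $1/n_i$ (for $i$ with $\rs_i\neq\rs_{i+1}$, including $i=0,k$ trivially) is at least $(k+1)/(c_2 n)$, so $\|v_0\|^2 \ge \bigl(\sum_{i=0}^k I\{\rs_i\neq\rs_{i+1}\}\bigr)(k+1)/(c_2 n)$ up to the factor $1$ vs.\ $4$ discrepancy, which only helps. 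Hence $2/\|v_0\| \le C(c_2)\sqrt{n/\bigl((k+1)\sum_i I\{\rs_i\neq\rs_{i+1}\}\bigr)} \cdot \sqrt{k+1}$, and since $\sum_i I\{\rs_i\neq\rs_{i+1}\}\ge 2$ and $\sqrt{k+1}\le \sqrt{\log(en/(k+1))}\cdot$(constant) only when... actually more carefully: $2/\|v_0\| \le C(c_2)\sqrt{n/\sum_i I\{\rs_i\neq\rs_{i+1}\}}$, which is already dominated by the right-hand side of \eqref{rambo} since $\log(en/(k+1))\ge 1$. So this term is the easy one.

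The substantive part is bounding $\E\lambda_{\theta^*}(Z)$. Recall $\lambda_{\theta^*}(z)$ is the minimizing $\lambda$ in $\inf_{v\in\partial f(\theta^*)}\|z - \lambda v\|$. The strategy is to produce a cheap \emph{upper bound} on $\lambda_{\theta^*}(z)$ by evaluating the inner distance at the single point $v = v^* = v_0$ (the minimum-norm subgradient): for that fixed $v^*$, the optimal $\lambda$ is $\langle z, v^*\rangle/\|v^*\|^2$ when positive, so $\lambda_{\theta^*}(z) \le \max(0,\langle z,v^*\rangle)/\|v^*\|^2 \le |\langle z,v^*\rangle|/\|v^*\|^2$ — but this is not quite right because $\lambda_{\theta^*}$ minimizes over $v$ jointly, not at $v^*$. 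The correct route is: for any fixed feasible $v$, $\mathrm{dist}(z,\lambda\partial f) \le \|z-\lambda v\|$, and the value $\lambda_z(v):=\langle z,v\rangle_+/\|v\|^2$ minimizes $\|z-\lambda v\|$ over $\lambda\ge 0$; one then argues that the joint minimizer $\lambda_{\theta^*}(z)$ cannot exceed $\sup$ of such single-$v$ minimizers over the (compact) polyhedron, but more usefully one shows $\lambda_{\theta^*}(z) \le \langle z, v^*\rangle_+/\|v^*\|^2$ directly by a convexity/projection argument, or alternatively uses that the affine-hull projection $v_0$ gives the bound $\lambda_{\theta^*}(z)\le |\langle z - v_0\cdot 0, \ldots\rangle|$... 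I expect the paper uses a lemma (proved in the supplement) of the form $\lambda_{\theta^*}(z) \le \mathrm{dist}(z, \aff(\partial f(\theta^*)))/\|v_0\| + \text{const}$ or simply $\lambda_{\theta^*}(z)\le C|\langle z,v_0\rangle|/\|v_0\|^2 + C$. Taking expectations, $\langle Z, v_0\rangle \sim N(0,\|v_0\|^2)$, so $\E|\langle Z,v_0\rangle| = \sqrt{2/\pi}\,\|v_0\|$, giving $\E\lambda_{\theta^*}(Z) \le C/\|v_0\| + C$. This reproduces the same $1/\|v_0\|$ behavior as the second term, and then the lower bound on $\|v_0\|^2$ from the upper length condition $\max n_i \le c_2 n/(k+1)$ finishes it.

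\textbf{Main obstacle.} The delicate point is getting a clean upper bound on $\lambda_{\theta^*}(Z)$ that is \emph{linear} in a Gaussian functional, so that the expectation produces $1/\|v_0\|$ rather than something with a fat tail; a naive bound via $\|Z\|/\lambda_{\min}$(something) would give $\sqrt{n}$ and be far too weak. This is presumably where the \emph{lower} length bound $\min n_i \ge c_1 n/(k+1)$ enters — it controls the geometry of the polyhedron $\partial f(\theta^*)$ (e.g.\ its ``width'' in the directions orthogonal to $v_0$), ensuring that the distance from $Z$ to the \emph{ray} $\{\lambda v_0:\lambda\ge 0\}$ is comparable to the distance to the full set $\lambda\partial f(\theta^*)$, up to a $\log(en/(k+1))$ factor coming from a union bound / chaining over the $k+1$ coordinates of the relevant block structure. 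I would set up that geometric comparison first (identifying $\partial f(\theta^*)$ via Proposition \ref{characterization} as a product-like polyhedron over the constant pieces), then invoke a sub-Gaussian maximal inequality to get the $\sqrt{\log(en/(k+1))}$, and only at the end combine with the length conditions. The brute arithmetic of plugging $\|v_0\|^2 \ge c(c_2)(k+1)\sum_i I\{\rs_i\neq\rs_{i+1}\}/n$ into everything is routine and I would not dwell on it.
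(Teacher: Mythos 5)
Your treatment of the $2/\|v_0\|$ term is fine (in fact slightly tighter than the paper's, which contents itself with the cruder bound $n_i\le n$), but the heart of the lemma is $\E\lambda_{\theta^*}(Z)$, and the route you sketch there doesn't work. The candidate bound $\lambda_{\theta^*}(z)\le\langle z,v_0\rangle_+/\|v_0\|^2$ is not merely ``not quite right''; it runs in the \emph{wrong direction}. By the orthogonality property \eqref{fomc} one has the exact identity $\lambda_{\theta^*}(z)=\langle\Pi_{\cone(\partial f(\theta^*))}z,\,v_0\rangle/\|v_0\|^2$, and by Moreau's decomposition \eqref{morco} this equals $\langle z,v_0\rangle/\|v_0\|^2-\langle\Pi_T z,v_0\rangle/\|v_0\|^2$ with $T:=T_{K^{(1)}(V^*)}(\theta^*)$ the tangent cone. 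Since $\Pi_T z\in T=(\cone\partial f(\theta^*))^{\circ}$ and $v_0\in\partial f(\theta^*)\subseteq\cone\partial f(\theta^*)$, the cross term $\langle\Pi_T z,v_0\rangle$ is $\le 0$, so in fact $\lambda_{\theta^*}(z)\ge\langle z,v_0\rangle/\|v_0\|^2$. Taking $z\perp v_0$ already gives a positive $\lambda_{\theta^*}(z)$ while the right-hand side of your proposed bound is zero, so the inequality you want is false and cannot be repaired by multiplying by constants.

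What the paper actually does is orthogonal to your sketch. It exploits the explicit partial-sum structure of $\partial f(\theta^*)$ for $r=1$ (from Proposition \ref{characterization}): writing $\Pi z=\lambda_{\theta^*}(z)v(z)$ with $v(z)\in\partial f(\theta^*)$ and using $\sum_{u=j_i}^n v_u(z)=\rs_i$, one gets the block identity $(\rs_i-\rs_{i+1})\lambda_{\theta^*}(z)=(\Pi z)_{j_i}+\dots+(\Pi z)_{j_{i+1}-1}$ for each $i$. Multiplying by $(\rs_i-\rs_{i+1})$, summing, substituting $\Pi z = z-\Pi_T z$, and \emph{then} taking expectations kills the $z$-terms (since $\E Z=0$) and leaves a linear functional of $\E\Pi_T Z$. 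After Cauchy--Schwarz, the quantity $\|\E\Pi_T Z\|$ is controlled by the Gaussian width of the tangent cone via \eqref{rew}, which under the \emph{lower} length condition is $O(\sqrt{(k+1)\log(en/(k+1))})$; the \emph{upper} length condition $n_i\le c_2 n/(k+1)$ then converts $\sqrt{\sum n_i I\{\rs_i\ne\rs_{i+1}\}}$ into the final $\sqrt{n/\sum I\{\rs_i\ne\rs_{i+1}\}}$ scaling. The $\log(en/(k+1))$ factor comes from the Gaussian width bound established in the constrained-estimator analysis, not from any union bound or chaining over the polyhedron as you conjectured, and the $c_1$ condition enters there rather than in a direct geometric comparison of $\partial f(\theta^*)$ with the ray through $v_0$.
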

Lemma \ref{explam1} can be used, in conjunction with the risk bound
\eqref{expen1.seq} (which holds for every $\lambda \geq \lambda^*$) to
yield the following result which provides bounds similar to
\eqref{prelim} for explicit choices of $\lambda$. 

\begin{corollary}\label{expen}
 Consider the same setting as in Lemma \ref{explam1} and assume the
 length condition \eqref{leon}. Then if the regularization parameter
 $\lambda$ satisfies
\begin{equation}\label{lach}
    \lambda = \Gamma \sqrt{\frac{n}{\sum_{i=0}^k I\{\rs_i \neq
        \rs_{i+1}\}} \left(\log \frac{en}{k+1} \right)}, 
  \end{equation}
we have 
  \begin{equation}\label{jku}
    R(\hat{\theta}_{\lambda}^{(1)}, \theta^*) \leq C(c_1) \sigma^2 (1 +
    \Gamma^2)  \frac{k+1}{n} \log \frac{en}{k+1}
  \end{equation}
for every $\Gamma \geq C^*(c_1, c_2)$ (where $C^*(c_1, c_2)$ is the
constant given by Lemma \ref{explam1}). Also $C(c_1)$ depends
only on $c_1$. 

Also, if the regularization parameter $\lambda$ satisfies
    \begin{equation}\label{cdal}
      \lambda = \Gamma \sqrt{n \log (en)}, 
    \end{equation}
we have 
    \begin{equation}\label{cbal}
      R(\hat{\theta}_{\lambda}^{(1)}, \theta^*) \leq C(c_1)
      \frac{\sigma^2(k+1) (\log (en))}{n} \left(1 + \Gamma^2
\sum_{i=0}^k I\{\rs_i \neq \rs_{i+1}\} \right)
    \end{equation}
for every $\Gamma \geq C^*(c_1, c_2)$. 
\end{corollary}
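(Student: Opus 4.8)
The plan is to derive Corollary~\ref{expen} directly by combining the oracle risk bound~\eqref{expen1.seq} of Corollary~\ref{expen1} with the explicit upper bound on $\lambda^*$ supplied by Lemma~\ref{explam1}. Write $S := \sum_{i=0}^k I\{\rs_i \neq \rs_{i+1}\}$ and note that $S \geq 1$, since $\rs_0 = 0 \neq \rs_1$. First I would observe that the length condition~\eqref{leon} (with constant $c_1$) implies the minimum length condition~\eqref{nsa} with the same constant $c_1$, so the hypothesis under which~\eqref{expen1.seq} was established holds with $c = c_1$; in particular~\eqref{expen1.seq} is valid for every $\lambda \geq \lambda^*$, with a constant $C(c_1)$ depending only on $c_1$.

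For the first assertion, set $\lambda = \Gamma \sqrt{(n/S)\log(en/(k+1))}$ with $\Gamma \geq C^*(c_1,c_2)$, where $C^*(c_1,c_2)$ is the constant of Lemma~\ref{explam1}. That lemma gives $\lambda^* \leq C^*(c_1,c_2)\sqrt{(n/S)\log(en/(k+1))} \leq \lambda$, so~\eqref{expen1.seq} applies. Since $0 \leq \lambda^* \leq \lambda$, we may bound $(\lambda-\lambda^*)^2 \leq \lambda^2 = \Gamma^2 (n/S)\log(en/(k+1))$, so the second term of~\eqref{expen1.seq} is at most $(\lambda-\lambda^*)^2 (k+1) S / n^2 \leq \Gamma^2 (k+1)\log(en/(k+1))/n$. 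Adding the first term of~\eqref{expen1.seq} and absorbing everything into a constant depending on $c_1$ yields~\eqref{jku}.

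For the second assertion, set $\lambda = \Gamma\sqrt{n\log(en)}$ with $\Gamma \geq C^*(c_1,c_2)$. Using $S \geq 1$ and $\log(en/(k+1)) \leq \log(en)$, Lemma~\ref{explam1} gives $\lambda^* \leq C^*(c_1,c_2)\sqrt{n\log(en)} \leq \lambda$, so~\eqref{expen1.seq} again applies. Now $(\lambda-\lambda^*)^2 \leq \lambda^2 = \Gamma^2 n\log(en)$, so the second term of~\eqref{expen1.seq} is at most $\Gamma^2 S(k+1)\log(en)/n$ while the first term is at most $(k+1)\log(en)/n$; summing these proves~\eqref{cbal}.

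I do not expect a genuine obstacle in this argument: all the difficulty resides in the earlier results, namely Corollary~\ref{expen1} and, above all, the explicit bound of Lemma~\ref{explam1}. The two points needing care are (i) checking that $\lambda \geq \lambda^*$ in each regime so that~\eqref{expen1.seq} is applicable — which is exactly what the explicit bound~\eqref{rambo} is designed to deliver once $\Gamma$ is taken at least $C^*(c_1,c_2)$ — and (ii) bookkeeping of the constants, with the final $C(c_1)$ absorbing the constant $C(c_1)$ of~\eqref{expen1.seq} and the numerical factors from the two displayed bounds above.
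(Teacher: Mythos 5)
Your proposal is correct and follows essentially the same route as the paper's own proof: apply inequality \eqref{expen1.seq} (which is valid since the length condition \eqref{leon} implies \eqref{nsa} with constant $c_1$, and since Lemma \ref{explam1} guarantees $\lambda \geq \lambda^*$ for both choices of $\lambda$ when $\Gamma \geq C^*(c_1,c_2)$), then use $(\lambda - \lambda^*)^2 \leq \lambda^2$ and plug in the two explicit values of $\lambda$. Your note that $\lambda \geq \lambda^*$ for the second choice \eqref{cdal} follows from $S \geq 1$ and $\log(en/(k+1)) \leq \log(en)$ spells out a step the paper asserts without comment, but the substance is identical.
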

In the bound \eqref{cbal}, the term $\sum_{i=0}^k I\{\rs_i \neq
\rs_{i+1}\}$ can be further bounded by its maximum possible value of
$k+1$ . However in certain instances (such as when
$\theta^*$ is monotone), $\sum_{i=0}^k I\{\rs_i \neq
\rs_{i+1}\}$ can be much smaller than $k+1$. 

\begin{remark}[Comparison to existing results]\label{compa} 
  We now compare Corollary \ref{expen} to existing results for the
  penalized estimator in \citet{lin2016approximate},
  \citet{dalalyan2017tvd} and \citet{ortelli2018total}. Note first
  that the choice \eqref{lach} of 
  $\lambda$ depends on certain aspects of $\theta^*$: in particular,
  it depends on $k$, $\sum_{i=0}^k I\{\rs_i \neq \rs_{i+1}\}$ and the
  values $c_1$ and $c_2$ in the length condition \eqref{leon}.  The
  bound \eqref{tibb} of \citet{lin2016approximate} holds for $ 
  \lambda_1 = \left(n \min_{0 \leq i \leq k} n_i \right)^{1/4}$ which
  also depends on the true vector $\theta^*$ through the 
   lengths $n_1, \dots, n_k$. If we assume that each $n_i$ is of order
   $n/(k+1)$, then
   \begin{equation}\label{lamlin}
     \lambda_1 \sim \sqrt{\frac{n}{\sqrt{k+1}}}. 
   \end{equation}
    Note that the leading term in our choice \eqref{lach} of $\lambda$ as well
    as in $\lambda_1$ is $\sqrt{n}$. Corollary \ref{expen}  also applies to the choice \eqref{cdal} for
    which the bound \eqref{cbal} holds. Note that \eqref{cdal}
    has considerably less dependence on $\theta^*$ as it only depends on
    the constants $c_1$ and $c_2$ appearing in the length
    condition \eqref{leon}. On the other hand, the bound \eqref{cbal} is weaker
    compared to \eqref{jku}. However, \eqref{cbal} needs to be
    compared to the results of \citet[Proposition
    3]{dalalyan2017tvd} and \citet[Corollary
    4.4]{ortelli2018total}. Indeed, \citet{dalalyan2017tvd} considered
    the  choice  
     \begin{equation}\label{lamdal}
       \lambda_2 := 2 \sqrt{2n \log (n/\delta)} 
     \end{equation}
     and proved that the following loss bound holds with probability at
     least $1 - \delta$:
   \begin{equation}\label{daldal}
       \frac{1}{n} \|\hat{\theta}_{\lambda}^{(1)} - \theta^*\|^2
       \leq C(c_1)  \left(\frac{(k+1)^2}{n} \log \frac{en}{\delta} + \frac{k+1}{n} \log
       (en) \log \frac{en}{\delta} \right). 
     \end{equation}
     This result has been improved slightly in the very recent paper
     \citet{ortelli2018total} (see also \citet{van2018tight}) where
     the $\log (en) \log (en/\delta)$ term in the right hand side above is
       replaced by $\log(en/(k+1)) \log(en/\delta)$ (i.e., one of the
       $\log (en)$ terms is relaced by $\log(en/(k+1))$). An
       expectation (risk) bound has not been proved in these two
       papers. Note the the choice of $\lambda$ in 
     \eqref{cdal} is similar to that of $\lambda_2$ in \eqref{lamdal}
     although our choice needs $\Gamma$ to be 
     sufficiently large while the choice $\lambda_2$ is universal
     (although it depends on $\delta$). On the other hand, the high
     probability  bound implied by \eqref{cbal} is (see Remark
     \ref{exphp}) the  statement that 
     \begin{align*}
       \frac{1}{n} \|\hat{\theta}_{\lambda}^{(1)} - \theta^*\|^2
       &\leq  C(c_1)
      \frac{\sigma^2(k+1) (\log (en))}{n} \left(1 + \Gamma^2
\sum_{i=0}^k I\{\rs_i \neq \rs_{i+1}\} \right) \\ &+ \frac{4 \sigma^2}{n} \log(\delta^{-1}) 
     \end{align*}
     holds with probability at least $1 - \delta$. This  is
     stronger compared to \eqref{daldal} because the right hand side
     of \eqref{daldal} has a $\log (en) \log(en/\delta) \geq (\log
     (en))^2$ term.  

     We reiterate here that our length condition
     \eqref{leon} involves an upper
     bound on $n_i$ for $\rs_i \neq \rs_{i+1}$. From an examination of
     the proof of Lemma \ref{explam1}, it will be clear that we will
     obtain a weaker upper bound for $\lambda^*$ in the sense of
     having additional multiplicative factors involving $k$ if this
     upper bound assumption on $n_i$ is removed. No such upper
     bound is needed for the results in \citet{lin2016approximate,
       dalalyan2017tvd, ortelli2018total}. On the other hand, our
     lower bound (and our upper bound in \eqref{leon}) involves only
     those $i$ satisfying $\rs_i \neq \rs_{i+1}$ while the assumptions
     in these earlier papers required a lower bound on every $n_i$.   
\end{remark}

We now state our risk results for \eqref{ptf} with $r \geq 2$ when
$D^{(r)} \theta^* \neq 0$. The following result is obtained by combining
Theorem \ref{prt1} and Lemma \ref{gksd}. 
\begin{corollary}\label{expenr}
  Fix $r \geq 2$. Suppose $D^{(r)} \theta^* \neq 0$ and $\theta^*$
  satisfies the minimum length condition \eqref{nsa} with constant
  $c$. Then, with $\lambda^*$ as in \eqref{lams}, we have 
  \begin{equation}\label{expenr.eq}
\begin{split}
    R(\hat{\theta}^{(r)}_{\lambda}, \theta^*) &\leq C_r(c) \sigma^2
    \left(\frac{k+1}{n} \log \frac{en}{k+1} + \frac{(k+1)^{2r}}{n} \right. \\
 &+ \left.
      (\lambda - \lambda^*)^2\frac{(k+1)^{2r}}{n^{2}} \right)
\end{split}
  \end{equation}
  for every $\lambda \geq \lambda^*$. Here $k :=
  \mathbf{k}_r(\theta^*)$ and $C_r(c)$ depends only on $c$.  
\end{corollary}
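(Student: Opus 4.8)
\textbf{Proof proposal for Corollary \ref{expenr}.}

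The plan is to invoke Theorem \ref{prt1} and then simplify the three terms on the right-hand side of \eqref{prt1.eq} using the bounds on $\|v^*\|$ and $\|v_0\|$ supplied by Lemma \ref{gksd} (parts 2 and 3), together with the bound \eqref{ghos} on $\Delta_r(\theta^*)$ that follows from the minimum length condition \eqref{nsa}. Concretely, Theorem \ref{prt1} gives, for every $\lambda \geq \lambda^*$,
\begin{equation*}
  R(\hat{\theta}^{(r)}_{\lambda}, \theta^*) \leq C_r \sigma^2 \Delta_r(\theta^*) + \frac{64\sigma^2}{n} \frac{\|v^*\|^2}{\|v_0\|^2} + \frac{4\sigma^2}{n^{3-2r}} (\lambda - \lambda^*)^2 \|v^*\|^2,
\end{equation*}
so it suffices to control each of the three summands.

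First I would handle the $\Delta_r(\theta^*)$ term: since $\theta^*$ satisfies the minimum length condition with constant $c$, inequality \eqref{ghos} (already derived in the excerpt) immediately gives $\Delta_r(\theta^*) \leq C_r(c)\, \frac{k+1}{n}\log\frac{en}{k+1}$, which is the first term in \eqref{expenr.eq}. Next, for the middle term, I would combine the lower bound \eqref{gksd.low}, namely $\|v_0\| \geq \frac{(r-1)!}{(r+1)2^{r-1}} n^{-r+1/2}$, with the upper bound \eqref{gksd.up}, namely $\|v^*\| \leq C_r c^{-r+1/2}(k+1)^r n^{-r+1/2}$. Taking the ratio, the factors $n^{-r+1/2}$ cancel, leaving $\|v^*\|^2/\|v_0\|^2 \leq C_r(c) (k+1)^{2r}$, so that $\frac{64\sigma^2}{n}\|v^*\|^2/\|v_0\|^2 \leq C_r(c)\sigma^2 \frac{(k+1)^{2r}}{n}$, which is the second term in \eqref{expenr.eq}. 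Finally, for the third term I would use only the upper bound \eqref{gksd.up} on $\|v^*\|$: then $\|v^*\|^2 \leq C_r(c) (k+1)^{2r} n^{-2r+1}$, and $\frac{4\sigma^2}{n^{3-2r}}(\lambda-\lambda^*)^2\|v^*\|^2 \leq C_r(c)\sigma^2 (\lambda-\lambda^*)^2 (k+1)^{2r} n^{-2r+1} / n^{3-2r} = C_r(c)\sigma^2 (\lambda-\lambda^*)^2 \frac{(k+1)^{2r}}{n^2}$, matching the third term. Summing the three bounds and absorbing constants into a single $C_r(c)$ gives \eqref{expenr.eq}.

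The main subtlety — although it is really bookkeeping rather than a deep obstacle — is keeping track of the powers of $n$ in the exponent arithmetic: one must verify that $n^{1-2r}$ from $\|v^*\|^2$ combines correctly with the $n^{-(3-2r)} = n^{2r-3}$ prefactor to yield exactly $n^{-2}$, and likewise that the $n^{-2r+1}$ cancellation in the ratio $\|v^*\|^2/\|v_0\|^2$ is exact (which it is, since both $v^*$ and $v_0$ scale like $n^{-r+1/2}$). One should also note that Lemma \ref{gksd}(3) already presupposes the minimum length condition, so the hypotheses line up with no extra work, and that the restriction $r \geq 2$ is exactly where \eqref{gksd.low} and \eqref{gksd.up} apply (for $r = 1$ one instead has the exact identity \eqref{voex}, which is why the $r = 1$ case is treated separately in Corollary \ref{expen1}). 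No new estimates are needed; the corollary is a direct consequence of Theorem \ref{prt1}, Lemma \ref{gksd}, and \eqref{ghos}.
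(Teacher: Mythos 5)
Your proposal is correct and follows essentially the same route as the paper's proof: plug the bounds \eqref{gksd.low} and \eqref{gksd.up} from Lemma \ref{gksd} into \eqref{prt1.eq} and then use \eqref{ghos} to bound $\Delta_r(\theta^*)$ under the minimum length condition. Your exponent bookkeeping checks out, and your side remarks (that Lemma \ref{gksd}(3) already assumes \eqref{nsa}, and that $r \geq 2$ is precisely the regime where parts 2 and 3 of Lemma \ref{gksd} apply) are accurate.
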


Corollary \ref{expenr} implies that when $\theta^*$ satisfies the
minimum length condition \eqref{nsa}, then (with $k = \mathbf{k}_r(\theta^*)$)
\begin{equation}
  \label{prelimr}
    R(\hat{\theta}^{(r)}_{\lambda}, \theta^*) \leq C_r(c) \sigma^2
    \left(\frac{k+1}{n} \log \frac{en}{k+1} + \frac{(k+1)^{2r}}{n}
    \right)   \qt{for $\lambda = \lambda^*$}. 
\end{equation}
It may be noted that the above result is weaker than our 
  corresponding risk bound for the constrained trend filtering
  estimator (Corollary \ref{kco}) because of the additional term
  involving $(k + 1)^{2r}$. We believe that this term is redundant and
  is an artifact of our proof. Specifically, this additional term
  comes from the fact that our upper bound for $\|v^*\|$ and lower
  bound for $\|v_0\|$ in Lemma \ref{gksd} are off by a factor of
  $(k+1)^r$. 

With the aim of providing an explicit value for $\lambda$ for which
the bound \eqref{prelimr} holds, the next result gives an upper bound
for $\lambda^*$. As in the case of Lemma \ref{explam1}, we need a
stronger length condition (compared to \eqref{nsa}) for this result. 

\begin{lemma}\label{explamr}
   Fix $r \geq 2$. Suppose $D^{(r)} \theta^* \neq 0$ and $\theta^*$
   satisfies the length condition:  
  \begin{equation}\label{leonr}
    \min_{0 \leq i \leq k: \rs_i \neq \rs_{i+1}} n_i \geq
    \frac{c_1n}{k+1} ~~ \text{ and } ~~ \max_{0 \leq i \leq k: \rs_i
      \neq \rs_{i+1}} n_i \leq  \frac{c_2n}{k+1}
  \end{equation}
for two positive constants $c_1 \leq 1$ and $c_2 \geq 1$. Here $n_0,
\dots, n_k$ have the same meaning as in \eqref{nsa}. Then $\lambda^*$
(defined as in \eqref{lams}) satisfies
\begin{equation}
  \label{explamr.eq}
  \lambda^* \leq C_r^*(c_1, c_2) \sqrt{n \log \left(\frac{en}{k+1} \right)}
\end{equation}
where $C_r^*(c_1, c_2)$ depends on $r$, $c_1$ and $c_2$ alone.  
\end{lemma}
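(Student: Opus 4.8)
The plan is to control the two summands of $\lambda^* = n^{1-r}\bigl(\E\lambda_{\theta^*}(Z) + 2/\|v_0\|\bigr)$ separately. The buffer term is handled immediately by part (2) of Lemma \ref{gksd}: since $\|v_0\| \ge \frac{(r-1)!}{(r+1)2^{r-1}}\, n^{-r+1/2}$ we get $2n^{1-r}/\|v_0\| \le C_r\, n^{1/2} \le C_r\sqrt{n\log(en/(k+1))}$, using only $\log(en/(k+1)) \ge 1$; note that no length condition is needed for this piece. Everything therefore reduces to showing $n^{1-r}\,\E\lambda_{\theta^*}(Z) \le C_r^*(c_1,c_2)\sqrt{n\log(en/(k+1))}$, i.e. $\E\lambda_{\theta^*}(Z) \le C_r^*(c_1,c_2)\, n^{r-1/2}\sqrt{\log(en/(k+1))}$, which is the real content. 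I would prove this along the same lines as Lemma \ref{explam1}, using the coarser control of $\|v_0\|$ and $\|v^*\|$ from parts (2) and (3) of Lemma \ref{gksd} in place of the exact identity \eqref{voex}.

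For that bound I would start from the variational description of $\lambda_{\theta^*}(z)$. The map $\lambda \mapsto \inf_{v\in\partial f(\theta^*)}\|z-\lambda v\|^2$ is convex, so $\lambda_{\theta^*}(z)$ is its unique minimizer; writing $v^\dagger$ for the minimizing dual vector (and noting $\lambda_{\theta^*}(z)=0$ trivially when that minimizer is $0$), the first-order condition gives $\lambda_{\theta^*}(z) = \langle z,v^\dagger\rangle/\|v^\dagger\|^2$, equivalently $\lambda_{\theta^*}(z)\, v^\dagger = \Pi_{\cone(\partial f(\theta^*))}(z)$. Using Proposition \ref{characterization}, parametrize $\partial f(\theta^*)$ by its dual vectors: each $v\in\partial f(\theta^*)$ corresponds to a $u\in\R^{n-r}$ pinned to $\sgn((D^{(r)}\theta^*)_{j})$ at the $k$ knot indices and lying in $[-1,1]$ at the remaining indices, through the $r$-fold partial-sum relations \eqref{ch1}--\eqref{ch2}; in this parametrization the partial sum of $\Pi_{\cone(\partial f(\theta^*))}(z)$ at any one knot index reads off $\lambda_{\theta^*}(z)$ itself. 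I would then split off the component of $z$ along $v_0$: this contributes a term of the form $[\langle z,v_0\rangle]_+/\|v_0\|^2$, whose expectation equals $1/(\sqrt{2\pi}\,\|v_0\|) \le C_r\, n^{r-1/2}$ by \eqref{gksd.low}, while the remaining transverse part of $z$ feeds into a Gaussian functional indexed by the sign- and location-constrained dual set $\partial f(\theta^*)-v_0$.

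The crux is to bound this transverse Gaussian functional, and this is where the two-sided length condition \eqref{leonr} is used essentially. Because $\theta^*$ has only $k+1$ polynomial pieces, the transverse dual directions decompose across these pieces, and a maximal/chaining estimate over the $O(k)$ pieces produces the factor $\log(en/(k+1))$ — exactly as the union over pieces does in the elementary estimate \eqref{ghos} for $\Delta_r$. The lower bound $n_i \ge c_1 n/(k+1)$ keeps the normalizations $\|v_0\|$ and $\|v^*\|$ from degenerating (via \eqref{gksd.low} and \eqref{gksd.up} with $c=c_1$), and the upper bound $n_i \le c_2 n/(k+1)$ caps the length — hence the variance of the relevant Gaussian partial sum — on each sign-change piece. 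Combining the three contributions (the $v_0$-component, the transverse Gaussian term, and the deterministic $2/\|v_0\|$ buffer) and using $n\ge 2r$ gives $\E\lambda_{\theta^*}(Z) \le C_r^*(c_1,c_2)\, n^{r-1/2}\sqrt{\log(en/(k+1))}$ with $C_r^*$ depending only on $r,c_1,c_2$, which is \eqref{explamr.eq}.

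I expect the transverse Gaussian bound to be the main obstacle. The estimates one reaches for most easily — Cauchy--Schwarz giving $\lambda_{\theta^*}(z) \le \|z\|/\|v^*\|$, or the support-function bound $\lambda_{\theta^*}(z)\le \sup_{v\in\partial f(\theta^*)}\langle z,v\rangle / \|v^*\|^2$ — are each off by a factor of order $\sqrt{n}$, so one really has to exploit that $v^\dagger$ is the projection point (not an arbitrary element of $\partial f(\theta^*)$) together with the piecewise structure, mirroring the delicate arguments behind $\Delta_r$ and Lemma \ref{gksd}. A secondary but non-trivial point is carrying the constants so that only $r,c_1,c_2$ survive; in particular the upper length bound in \eqref{leonr} cannot be dropped, since otherwise the transverse term acquires extra multiplicative powers of $k+1$ — precisely the weakening mentioned in the discussion after Remark \ref{compa}.
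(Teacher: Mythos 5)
The core of your plan—the variational identity $\lambda_{\theta^*}(z)v^\dagger=\Pi_{\cone(\partial f(\theta^*))}(z)$ and the observation that a partial-sum functional at a knot index of $\Pi_{\cone(\partial f(\theta^*))}(z)$ reads off $\lambda_{\theta^*}(z)$—are both correct and are the same entry point the paper uses. Your handling of the $2/\|v_0\|$ buffer via \eqref{gksd.low} is also fine (and you are right that no length condition is needed for it). The problem is what comes next: you propose to split $z$ into a $v_0$-aligned part and a transverse part, with the transverse part feeding into a chaining estimate over the $O(k)$ pieces, and you explicitly flag this as the step you have not carried out. That gap is real, and it is exactly the place where the paper takes a different, much shorter route.

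The paper evaluates the partial-sum relation at a \emph{single} knot (the last one, $j_k$), giving $\rs_k\lambda_{\theta^*}(z)=\sum_{i\ge j_k+r-1}\binom{i-j_k}{r-1}(\Pi z)_i$, and then immediately substitutes the Moreau identity $\Pi z = z - \Pi_{T_{K^{(r)}(V^*)}}(z)$ (equation \eqref{morco}) and \emph{takes expectations}. Since $\E Z=0$, the direct $z$-term vanishes and one is left with $\E\lambda_{\theta^*}(Z) = \bigl|\sum_{i\ge j_k+r-1}\binom{i-j_k}{r-1}(\E\Pi_T Z)_i\bigr|$. One Cauchy--Schwarz on the binomial weights (which sit on the last piece of length $n_k$) gives a factor $n^{r-1}\sqrt{n_k}$, the upper length bound $n_k\le c_2 n/(k+1)$ caps $n_k$, and then $\|\E\Pi_T Z\|^2\le\E\|\Pi_T Z\|^2=\delta(T)\le 1+w^2(T)\le 1+C_r^2\, n\Delta_r(\theta^*)$ is bounded by the already-proved Gaussian-width estimate \eqref{rew}, with the lower length bound entering only through the control $n\Delta_r(\theta^*)\le C_r(c_1)(k+1)\log\tfrac{en}{k+1}$. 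The logarithm thus comes from $\Delta_r$, not from a fresh maximal inequality over pieces, and no decomposition of the dual set (e.g.\ into $v_0$ plus a transverse set $\partial f(\theta^*)-v_0$) is needed. This maneuver—expectation first, then Cauchy--Schwarz against the partial-sum weights at one knot, then reuse of the tangent-cone width bound—is the missing idea, and without it your sketch does not close; the chaining argument you gesture at would in effect be re-deriving the Gaussian-width bound for the tangent cone, which the paper has already done once and reuses here. Two smaller issues: for $r\ge2$ the vector $v_0$ need not lie in $\partial f(\theta^*)$, so the $[\langle z,v_0\rangle]_+$ split does not correspond to the projection onto a face of $\cone(\partial f(\theta^*))$ and would have to be justified differently; and the lower bound \eqref{gksd.low} on $\|v_0\|$ holds unconditionally, so the minimum-length hypothesis is not what keeps $\|v_0\|$ from degenerating (it enters only through $\|v^*\|$ via \eqref{gksd.up} and through $\Delta_r$).
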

Note that even though \eqref{leonr} and \eqref{leon} look exactly the
same, the difference is that \eqref{leon} applies to $r = 1$ while
\eqref{leonr} applies to $r = 2$. The meaning of $n_0, \dots, n_k$
depends on $r$. Indeed, the $n_i$'s refer to the lengths of the
constant pieces for $r = 1$, the lengths of the linear pieces for $r =
2$, etc.  


Compared to \eqref{rambo}, the bound
\eqref{explamr.eq} is weaker because there is no $\sum_{i=0}^k
I\{\rs_i \neq \rs_{i+1}\}$ in the denominator in \eqref{explamr.eq}.  

Combining Lemma \ref{explamr} with the risk bound \eqref{expenr.eq},
we obtain the following result which provides bounds similar to
\eqref{prelimr} for explicit choices of $\lambda$. 

\begin{corollary}\label{supno}
   Consider the same setting as in Lemma \ref{explamr}  and assume the
   length condition \eqref{leonr}. Then if the regularization
   parameter satisfies 
   \begin{equation}\label{lachr}
     \lambda = \Gamma \sqrt{n \log \left(\frac{en}{k+1} \right)}, 
   \end{equation}
    we have 
  \begin{equation}\label{jkur}
    R(\hat{\theta}_{\lambda}^{(r)}, \theta^*) \leq C_r(c_1) \sigma^2 (2 +
    \Gamma^2)  \frac{(k+1)^{2r}}{n} \log \frac{en}{k+1}
  \end{equation}
for every $\Gamma \geq C_r^*(c_1, c_2)$ (where $C_r^*(c_1, c_2)$ is the
constant given by Lemma \ref{explam1}). Also $C_r(c_1)$ only depends
only on $r$ and $c_1$. 

Further, if the regularization parameter $\lambda$ satisfies
    \begin{equation}\label{cdalr}
      \lambda = \Gamma \sqrt{n \log (en)}, 
    \end{equation}
we have 
    \begin{equation}\label{cbalr}
      R(\hat{\theta}_{\lambda}^{(r)}, \theta^*) \leq C_r(c_1) \sigma^2
      \left(2 + \Gamma^2 \right) \frac{(k+1)^{2r}}{n} \log (en)
    \end{equation}
for every $\Gamma \geq C_r^*(c_1, c_2)$. 
\end{corollary}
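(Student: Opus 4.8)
The plan is to derive both inequalities directly from Corollary~\ref{expenr} together with the explicit bound on $\lambda^*$ supplied by Lemma~\ref{explamr}. First I would note that the lower bound in the length condition~\eqref{leonr} is exactly the minimum length condition~\eqref{nsa} with constant $c=c_1$, so Corollary~\ref{expenr} is applicable with $c=c_1$ and gives, for every $\lambda \ge \lambda^*$,
\begin{equation*}
  R(\hat{\theta}^{(r)}_{\lambda}, \theta^*) \le C_r(c_1)\sigma^2\left(\frac{k+1}{n}\log\frac{en}{k+1} + \frac{(k+1)^{2r}}{n} + (\lambda-\lambda^*)^2\frac{(k+1)^{2r}}{n^2}\right),
\end{equation*}
where $k := \mathbf{k}_r(\theta^*)$. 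Since $k = \mathbf{k}_r(\theta^*) = \|D^{(r)}\theta^*\|_0 \le n-r$ and $r\ge 1$, we have $k+1 \le n$, hence $\log(en/(k+1)) \ge 1$; combining this with $k+1 \le (k+1)^{2r}$ lets us bound each of the first two terms in the parentheses by $\tfrac{(k+1)^{2r}}{n}\log\tfrac{en}{k+1}$.

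Next I would verify that each of the two proposed choices of $\lambda$ dominates $\lambda^*$, so that the displayed bound may be invoked, and then estimate the remaining term via $(\lambda-\lambda^*)^2 \le \lambda^2$. For the choice~\eqref{lachr}, Lemma~\ref{explamr} gives $\lambda^* \le C_r^*(c_1,c_2)\sqrt{n\log(en/(k+1))} \le \Gamma\sqrt{n\log(en/(k+1))} = \lambda$ whenever $\Gamma \ge C_r^*(c_1,c_2)$, and then $(\lambda-\lambda^*)^2 \le \lambda^2 = \Gamma^2 n\log(en/(k+1))$, so the third term in the display is at most $\Gamma^2\tfrac{(k+1)^{2r}}{n}\log\tfrac{en}{k+1}$; adding this to the bound on the first two terms yields~\eqref{jkur}, with the factor $2$ accounting for those two terms and $C_r(c_1)$ absorbing the dependence on $r$ and $c_1$. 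For the choice~\eqref{cdalr}, note that $\sqrt{n\log(en)} \ge \sqrt{n\log(en/(k+1))}$, so again $\lambda \ge C_r^*(c_1,c_2)\sqrt{n\log(en/(k+1))} \ge \lambda^*$, and $(\lambda-\lambda^*)^2 \le \lambda^2 = \Gamma^2 n\log(en)$; repeating the same estimates with $\log(en)$ in place of $\log(en/(k+1))$—legitimate since $\log(en) \ge \log(en/(k+1)) \ge 1$ and $\log\tfrac{en}{k+1}\le \log(en)$—gives~\eqref{cbalr}.

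I do not anticipate any genuine obstacle: the whole argument is a routine chaining of Corollary~\ref{expenr} with Lemma~\ref{explamr}, together with the crude bound $(\lambda-\lambda^*)^2 \le \lambda^2$. The only points requiring a moment's care are checking that the chosen $\lambda$ exceeds $\lambda^*$ (which is precisely the hypothesis needed to apply Corollary~\ref{expenr}) and absorbing the two non-$(\lambda-\lambda^*)^2$ terms into the stated form using $k+1 \le (k+1)^{2r}$ and $\log(en/(k+1)) \ge 1$.
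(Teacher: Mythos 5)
Your proposal is correct and follows essentially the same route as the paper's own proof: apply Corollary~\ref{expenr} (with $c=c_1$, since \eqref{leonr} implies \eqref{nsa}), verify $\lambda\ge\lambda^*$ via Lemma~\ref{explamr}, replace $(\lambda-\lambda^*)^2$ by $\lambda^2$, and then collect terms using $k+1\le(k+1)^{2r}$ and $\log(en/(k+1))\ge 1$.
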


Finally we deal with the risk of the penalized estimator when $D^{(r)}
\theta^* = 0$. Here we have the following result which proves that the
risk is parametric (without any logarithmic factors) as long as the
tuning parameter $\lambda$ is larger than or equal to $\sqrt{6n
  \log(en)}$. This result holds for every $r \geq 1$. 
\begin{lemma}\label{rr0}
  Suppose $D^{(r)} \theta^* = 0$. Then for every $\lambda \geq \sqrt{6
    n \log (en)}$,  we have
  \begin{equation*}\label{rr0.eq}
      R(\hat{\theta}_{\lambda}^{(r)}, \theta^*) \leq \frac{C_r\sigma^2}{n}. 
  \end{equation*}
   for a constant $C_r$ that depends on $r$ alone. 
\end{lemma}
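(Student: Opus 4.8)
The plan is to exploit that $D^{(r)}\theta^{*}=0$ forces $\theta^{*}$ into the $r$-dimensional subspace $N:=\ker D^{(r)}$ (the discrete polynomials of degree $\le r-1$), and to show that once $\lambda\ge\sqrt{6n\log(en)}$ the penalized estimator \eqref{ptf} \emph{exactly} coincides, with overwhelming probability, with the least–squares projection $P_{N}Y$ of $Y$ onto $N$. On that event the loss is the trivial parametric quantity, and the tiny exceptional event contributes negligibly after a crude deterministic bound.

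First I would pin down the subdifferential. By Proposition \ref{characterization}, when $D^{(r)}\theta^{*}=0$ every condition in \eqref{ch2} is of the ``$\in[-1,1]$'' type, so with $f(\theta):=\|D^{(r)}\theta\|_{1}$ one has $\partial f(\theta^{*})=\{(D^{(r)})^{T}u:\|u\|_{\infty}\le 1\}$, and the same holds at every point of $N$. Since $D^{(r)}$ has full row rank $n-r$, $(D^{(r)})^{T}$ is injective with range $N^{\perp}$, and Proposition \ref{characterization} also identifies the preimage: if $(D^{(r)})^{T}u=v\in N^{\perp}$ then $u_{m}=\sum_{i=m+r}^{n}\binom{i-m-1}{r-1}v_{i}$. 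Writing the KKT conditions for the strictly convex problem \eqref{ptf} at the candidate point $P_{N}Y$, and using $Y-P_{N}Y=P_{N^{\perp}}Y=P_{N^{\perp}}\xi$ (because $\theta^{*}\in N$), I obtain that $\hat{\theta}^{(r)}_{\lambda}=P_{N}Y$ exactly on the event $\mathcal{E}:=\{\|u^{*}\|_{\infty}\le\sigma n^{r-1}\lambda\}$, where $u^{*}$ is the unique solution of $(D^{(r)})^{T}u^{*}=P_{N^{\perp}}\xi$.

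Next I would control $\mathcal{E}$. Each coordinate $u^{*}_{m}=\sum_{i=m+r}^{n}\binom{i-m-1}{r-1}(P_{N^{\perp}}\xi)_{i}$ is a centred Gaussian, and since $P_{N^{\perp}}\preceq I$ its variance is at most $\sigma^{2}\sum_{\ell=r-1}^{\,n-m-1}\binom{\ell}{r-1}^{2}$. Using $\binom{\ell}{r-1}\le\ell^{r-1}/(r-1)!$ and $\sum_{\ell\le L}\ell^{2r-2}\le(L+1)^{2r-1}/(2r-1)$ gives $\operatorname{Var}(u^{*}_{m})\le\sigma^{2}C_{r}n^{2r-1}$ with $C_{r}:=1/\big((2r-1)((r-1)!)^{2}\big)\le 1$ for every $r\ge1$. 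Combining this with $\lambda^{2}\ge 6n\log(en)$ and the Gaussian tail bound, $\P(|u^{*}_{m}|>\sigma n^{r-1}\lambda)\le 2\exp(-3\log(en)/C_{r})\le 2(en)^{-3}$, so a union bound over the $\le n$ coordinates yields $\P(\mathcal{E}^{c})\le 2n(en)^{-3}$. Finally, on $\mathcal{E}$ we have $\|\hat{\theta}^{(r)}_{\lambda}-\theta^{*}\|^{2}=\|P_{N}\xi\|^{2}$ with expectation $r\sigma^{2}$, while on $\mathcal{E}^{c}$ I use the crude bound $\|\hat{\theta}^{(r)}_{\lambda}-\theta^{*}\|\le 2\|\xi\|$, valid because $D^{(r)}\theta^{*}=0$ makes the objective in \eqref{ptf} at $\theta^{*}$ equal to $\tfrac12\|\xi\|^{2}$, hence $\|Y-\hat{\theta}^{(r)}_{\lambda}\|\le\|\xi\|$ and then the triangle inequality. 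By Cauchy--Schwarz and $\E\|\xi\|^{4}\le 3\sigma^{4}n^{2}$,
\[
\E\|\hat{\theta}^{(r)}_{\lambda}-\theta^{*}\|^{2}\le r\sigma^{2}+4\sqrt{\E\|\xi\|^{4}}\,\sqrt{\P(\mathcal{E}^{c})}\le r\sigma^{2}+4\sqrt{6}\,\sigma^{2}n^{3/2}(en)^{-3/2}=r\sigma^{2}+4\sqrt{6}\,e^{-3/2}\sigma^{2},
\]
and dividing by $n$ gives $R(\hat{\theta}^{(r)}_{\lambda},\theta^{*})\le C_{r}\sigma^{2}/n$.

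The only genuinely delicate step is the variance bound for $u^{*}_{m}$: one needs its $n^{2r-1}$ growth to come with a constant $C_{r}$ that is at most $1$, uniformly in $n$, because that constant has to absorb the universal factor $6$ in the threshold against the factor $2$ in the Gaussian exponent (and a small $C_{r}$ is precisely why a \emph{single}, $r$-independent threshold works for all orders). The factorial appearing in $\binom{\ell}{r-1}$ is what makes $C_{r}\le 1$; everything else is routine.
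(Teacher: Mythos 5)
Your proof is correct, and it reaches the bound by a genuinely different route from the paper's, although both arguments eventually reduce to the same Gaussian tail computation on the same event. The paper starts from the Oymak--Hassibi risk bound $R(\hat\theta^{(r)}_\lambda,\theta^*) \le \frac{\sigma^2}{n}\,\D(\lambda\partial g(\theta^*))$ of Theorem~\ref{ohala}, introduces the subspace $S_r$ and the quantity $\lambda(z) := n^{1-r}\max_{r<j\le n}\bigl|\sum_{i=j}^n \binom{r+i-j-1}{r-1}(\Pi_{S_r}z)_i\bigr|$, and splits the expected squared distance on $\{\lambda(Z)\le\lambda\}$: on that event $\Pi_{S_r}Z$ lies in $n^{r-1}\lambda\partial f(\theta^*)$, giving distance at most $\|Z-\Pi_{S_r}Z\|$ (second moment $r$), and on the complement it uses $0\in\partial f(\theta^*)$ to bound the distance by $\|Z\|$ and then Cauchy--Schwarz. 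You instead run a primal--dual witness argument: you observe that $S_r = N^\perp$ (the equalities in~\eqref{ch1} are equivalent to $v\perp\mathrm{col}(X)=\ker D^{(r)}$), so $P_{N^\perp}\xi$ automatically satisfies the equality block of the subdifferential and $\hat\theta^{(r)}_\lambda=P_NY$ holds exactly on the KKT event, after which you combine the parametric risk of $P_NY$ with the triangle-inequality bound $\|\hat\theta^{(r)}_\lambda-\theta^*\|\le 2\|\xi\|$ on the complement. Your event $\mathcal{E}$ is the same as the paper's $\{\lambda(Z)\le\lambda\}$, so the union bound and Gaussian tail calculation are identical in substance. What your route buys is the stronger structural conclusion that the estimator coincides \emph{exactly} with the oracle projection with overwhelming probability; the cost is having to verify KKT sufficiency and uniqueness, which the paper sidesteps via the general distance bound. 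One small remark: the sharpened variance constant $C_r = 1/\bigl((2r-1)((r-1)!)^2\bigr)$ you derive is correct but is not actually needed — the paper's cruder estimate $\|u\|^2 = \sum_{i=j}^n\binom{r+i-j-1}{r-1}^2 \le n\cdot n^{2r-2} = n^{2r-1}$ (using $\binom{m}{r-1}\le m^{r-1}$ and $n-j+r-1<n$ for $j>r$) already produces a constant $\le 1$ uniformly in $r$, which is all the threshold $\lambda\ge\sqrt{6n\log(en)}$ requires. So the ``delicate step'' you flag is in fact routine; the factorial improvement, while correct, is not load-bearing.
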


\section{Proof Ideas}\label{proids}
In this section, we provide a brief overview of the main ideas
underlying our proofs. Full proofs are in
the supplementary material at the end of the paper. For studying the constrained
trend filtering estimator $\hat{\theta}^{(r)}_V$, we invoke the
general theory of convex-constrained least squares
estimators. Convex-constrained least squares estimators are estimators
of the form 
\begin{equation*}
  \hat{\theta} := \argmin_{\theta \in \R^n} \left\{\frac{1}{2} \|Y -
    \theta\|^2 : \theta \in K \right\}. 
\end{equation*}
for a closed convex set $K$. Clearly $\hat{\theta}^{(r)}_V$ is a special case of
this estimator when $K$ is taken to be the set $K^{(r)}(V)$ defined as 
\begin{equation*}
  K^{(r)}(V) := \left\{\theta \in \R^n : \|D^{(r)} \theta\|_1 \leq V
    n^{1- r} \right\} . 
\end{equation*}
The general theory of convex-constrained least squares estimators 
(summarized in Section \ref{prems}) states that the accuracy of
$\hat{\theta}^{(r)}_V$ as an  
estimator for $\theta^*$ under the model $Y \sim N_n(\theta^*,
\sigma^2 I_n)$ can be deduced from bounds on the quantity:  
\begin{equation}\label{omgs}
  \E \sup_{\theta \in K^{(r)}_V : \|\theta - \theta^*\| \leq t}
  \left<\xi, \theta -\theta^* \right> 
\end{equation}
where $\xi \sim N_n(0, \sigma^2 I_n)$. To prove Theorem \ref{woor}, we
prove bounds on \eqref{omgs} in Lemma
\ref{svg}. Our strategy involves using Dudley's 
entropy bound to 
control \eqref{omgs} in terms of the metric entropy of the set:
\begin{equation*}
S_r(V, t) :=  \left\{\alpha \in \R^n : \|\alpha\| \leq t, \|D^{(r)}
  \alpha\|_1  \leq V n^{1-r}  \right\}.   
\end{equation*}
We then bound the metric entropy of $S_r(V, t)$ via its fat-shattering
dimension (it is well known that fat-shattering
dimension can be used to control metric entropy; see e.g.,
\citet{rudelson2006combinatorics}). Metric entropy and fat-shattering
dimension are formally defined in Subsection \ref{gwc1} and Subsection
\ref{ppgr} respectively. Our idea of using fat 
shattering to establish the metric entropy of $S_r(V, t)$ and thereby
bounding \eqref{omgs} seems novel. Previous bounds on quantities
similar to \eqref{omgs} in the context of trend filtering used
eigenvector incoherence (see, for example, \citet{wang2016trend}) and
the ideas here are quite different from our methods.   

To prove the strong sparsity risk bound, Theorem \ref{ada}, we use
another strand of results from the general theory of
convex-constrained least squares estimators. Specifically, a result
from \citet{oymak2013sharp} implies that the risk of
$\hat{\theta}^{(r)}_V$ at $V = V^* := V^{(r)}(\theta^*)$ can be
obtained by controlling the  \textit{Gaussian width} of the
\textit{tangent cone} of the convex set $K^{(r)}(V^*)$ at
$\theta^*$. These  
general results, along with the definitions of tangent cones and
Gaussian width, are again recalled in Subsection
\ref{prems}. Understanding the tangent cone to $K^{(r)}(V^*)$
at $\theta^*$ then becomes key to proving Theorem \ref{ada}.  

We provide a precise characterization of the tangent
cones of $K^{(r)}(V^*)$ in Lemma \ref{tcr}. These tangent cones
have a complicated structure (especially for $r \ge 2$) and
calculating their Gaussian width is non-trivial. Our idea behind these
calculations is the fact (proved in Lemma \ref{alta}) that,
under a unit norm constraint, 
every vector $\alpha$ in the tangent cone of $K^{(r)}(V^*)$ at
$\theta^*$ is nearly made up of two $(r-1)^{th}$ order convex/concave
sequences in each polynomial part of $\theta^*$ (note that a sequence
$\theta \in \R^n$ is said to be $(r-1)^{th}$ 
order convex/concave if the vector $D^{(r-1)} \theta$ is monotone;
see e.g., \citet{kuczma2009introduction}). The special case of
this observation for $r = 1$ implies that every vector $\alpha$ with
$\|\alpha\| \leq 1$ in the tangent cone to $K^{(1)}(V^*)$ at $\theta^*$
is nearly made up of two monotonic sequences in each constant piece of
$\theta^*$. For $r = 2$, it means that every vector $\alpha$ with
$\|\alpha\| \leq 1$ in the tangent cone to $K^{(2)}(V^*)$ at $\theta^*$
is nearly made up of two convex/concave sequences in each linear piece
of $\theta^*$.  

The above observation allows us to compute the Gaussian width of these
tangent cones using metric entropy results (established again via
connections between metric entropy and fat shattering) and also
available results (from \citet{bellec2015annals}) on the Gaussian widths of shape
constrained cones. The set of all $(r-1)^{th}$ order convex
sequences in $\R^n$ forms a convex cone in $\R^n$ and these cones have
been studied in the literature on shape constrained estimation.

For $r = 1$, the above idea bears strong similarities with the method
employed in \citet{lin2016approximate} for studying the penalized
estimator \eqref{ptf} for $r = 1$. In this paper, they use the key
observation that for appropriate $\lambda$, the vector $(I -
P_0)(\hat{\theta}^{(1)}_{\lambda} - \theta^*)$ is well-approximated by
a vector which is made of two monotonic 
sequences in each constant piece of $\theta^*$. Here $P_0$ is the
projection matrix onto the piecewise constant structure determined by
$\theta^*$ and $I$ is the identity matrix. This idea is similar
in spirit to our observation on the tangent cone of $K^{(1)}(V^*)$ at
$\theta^*$. The details differ though as we are working with the
vectors in the tangent cone while \citet{lin2016approximate} focus on
a functional of $\hat{\theta}^{(1)}_{\lambda} - 
\theta^*$ (note though that if $\hat{\theta}$ has variation $\leq V^*$,
then $\hat{\theta}- \theta^*$ does indeed belong to the tangent
cone). Also our method for dealing with the
Gaussian width of the set of these piecewise monotonic vectors is
sharper than the analysis of \citet{lin2016approximate} and our
analysis also extends to every $r \geq 2$. 

The results in Subsection \ref{repe} for the penalized  estimator are
all based on \eqref{ohag}. We use the precise characterization of the
subdifferential of the penalty function $\theta \mapsto \|D^{(r)}
\theta\|_1$ given in Proposition
\ref{characterization} to control the right side of
\eqref{ohag}. Our idea here is to relate the right side of
\eqref{ohag}   to the risk of the constrained
estimator (we use and extend ideas from \citet{foygel2014corrupted}
for this). This allows us to derive risk results for the
penalized trend filtering estimator as a corollary to our results for
the constrained estimator.    

\section{Simulations}\label{simu}
In this section, we present numerical evidence for
our theoretical results. We generate data from a
piecewise constant function $f_1^*$ and a continuous piecewise affine
function $f_2^*$ on $[0, 1]$ and evaluate the performance of the trend
filtering estimators for $r = 1$ (total variation denoising) and $r =
2$ respectively. The functions $f_1^*$ and $f_2^*$ (see Figure \ref{fcnsr1}) are  
given by {$$f_1^*(x) := 2 I_{(0.2, 0.4]}(x) 
 + 4 I_{(0.4, 0.6]} (x) + I_{(0.6, 0.8]}(x) + 4
I_{(0.8, 1]}(x)$$} and $$f_2^*(x) := -44 \max(x - 0.25, 0) + 48
\max(x - 0.5, 0) - 56 \max(x 
  - 0.75, 0) + 28 x.$$ The function $f_1^*$ was used in the simulation study of
\citet{lin2016approximate}. In addition to these functions, we also
performed a simulation study on another piecewise constant function
$f_3^*$ which is similar to the blocks function of
\citet{donoho1994ideal}; results for $f_3^*$ are in Section
\ref{adsims}.  

\begin{figure}
\begin{center}
 \includegraphics[width=\textwidth, height = 2in]{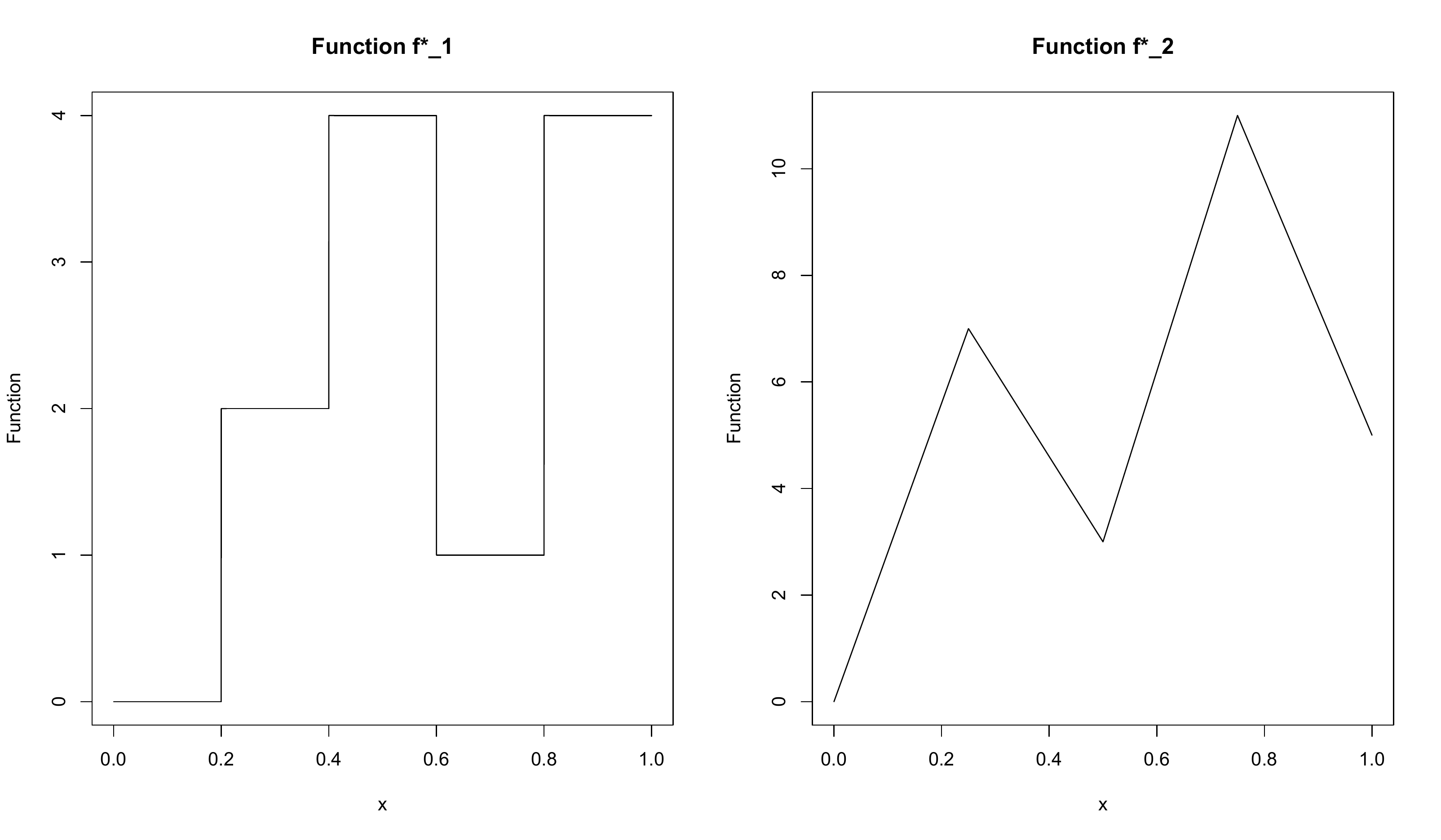}    
\caption{The two functions $f_1^*$ and $f_2^*$.}  
\label{fcnsr1}
\end{center}
\end{figure}
From $f_1^*$ and a value of $n$ (chosen from a grid of size 30 between 100 and 10000; the grid being equally spaced on the
logarithmic scale) we generated an $n \times 1$ observation vector $Y \sim N_n(\theta^*, I_n)$ where $\theta^*$ is the vector obtained by
sampling $f_1^*$ at $n$ equally spaced points with end-points 0 and
1. We then computed the following six estimators on the 
data vector $Y$: (a) the ideal constrained estimator \eqref{tf} with
$V = V^* = \|D \theta^*\|_1$, (b) the ideal penalized estimator
\eqref{ptf} with $\lambda = \lambda^*$ (as defined in
\eqref{lams}), (c) two cross-validation (CV) based estimators, (d) the
penalized estimator \eqref{ptf} with $\lambda$ of the form
\eqref{lach} with $\Gamma = 1$, and (e) the penalized estimator
\eqref{ptf} with $\lambda$  of the form \eqref{cdal} with
$\Gamma = 0.5$. Corollary
\ref{expen} proves that the risk with these $\lambda$ choices decays
as $(\log n)/n$ (ignoring terms involving $k$) provided $\Gamma$ is
taken to be a  large enough constant. In our simulations for $f_1^*$, we found
that $\Gamma = 1$ in \eqref{lach} and $\Gamma = 0.5$ in \eqref{cdal} were large enough to yield the desired performance. Higher
values of $\Gamma$ led to similar rates of decay of the risk with $n$ (even though the risk itself seemed to become larger with $\Gamma$). 

Here are some details behind the computation of these estimates. The constrained estimator was computed by the convex optimization
software MOSEK (via the R package \texttt{Rmosek}). The penalized
estimators were computed via the R package \texttt{tvd} for total
variation denoising. The computation of the ideal penalized estimator
requires computing the value of $\lambda^*$ and, for this, we need to
compute $\E \lambda_{\theta^*}(Z)$ (where $Z \sim N_n(0, I_n)$) and
$2/\|v_0\|$ (see \eqref{lams}). $2/\|v_0\|$ was calculated by the
formula \eqref{voex}. For $\E \lambda_{\theta^*}(Z)$, we used the fact
that $\lambda_{\theta^*}(z)$ can be calculated by convex optimization
for each $z \in \R^n$ which implies that the expectation can be
computed by Monte-Carlo averaging. More details behind this are
provided in Section \ref{adsims}. The CV estimators were calculated using the R package
\texttt{genlasso} which provides two penalized estimates based on CV: 
one based on choosing $\lambda$ so as to minimize the CV error
($CV_1$) and the other based on choosing $\lambda$ via the one
standard error rule ($CV_2$).   

\begin{figure}[h!]
\begin{center}
 \includegraphics[width = \textwidth]{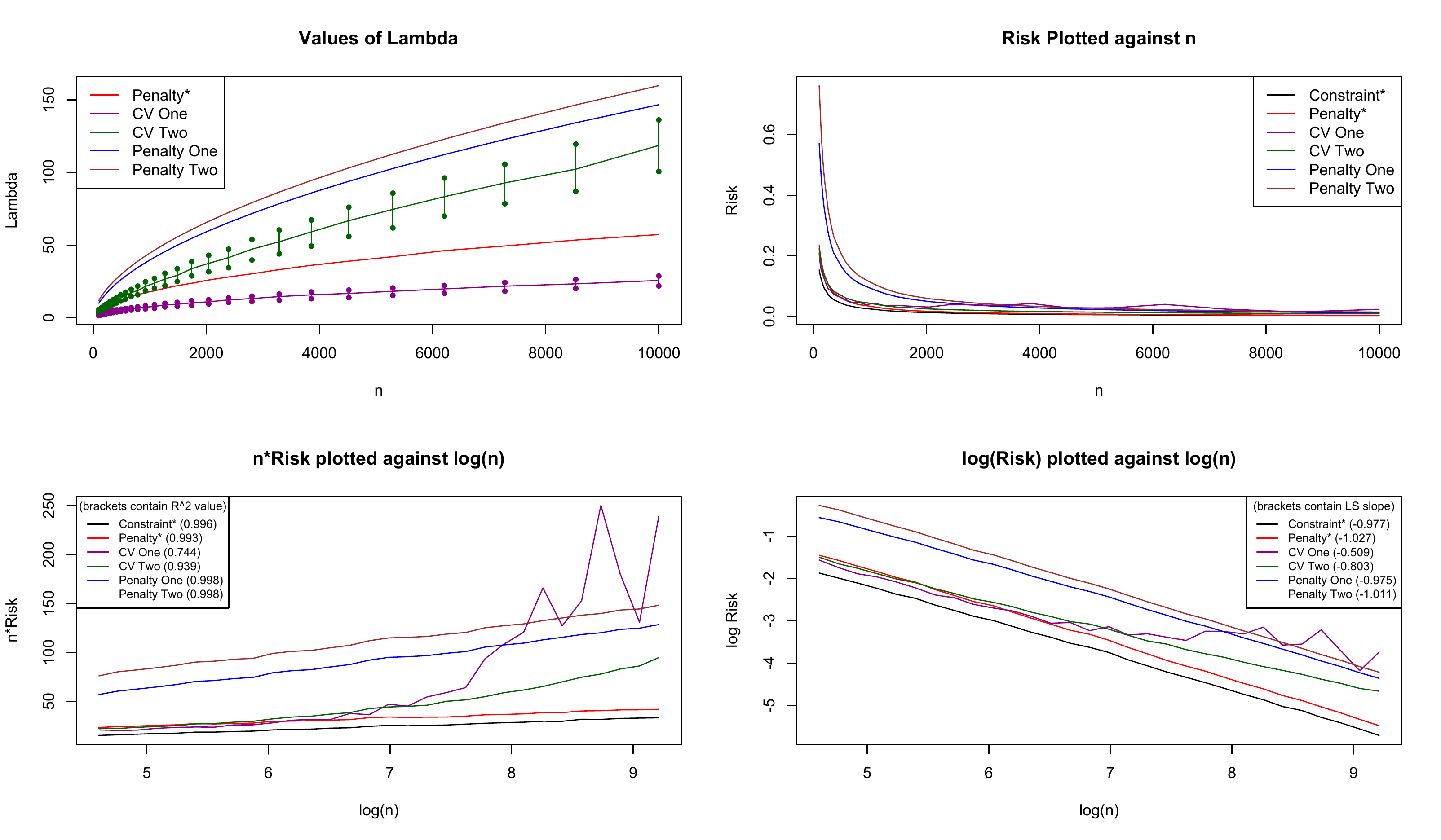}    
\caption{Plots when the true function is $f_1^*$. The top-left plot
  shows the $\lambda^*$ values, the CV $\lambda$ values (median and
  the first and third quartiles over 200 replications) and the values
  corresponding to the explicit penalties \eqref{lach} with $\Gamma =
  1$ and \eqref{cdal} with $\Gamma = 0.5$. The other three figures
  show the behavior of the risk as a function of $n$. In the last two
  plots, the legend shows the value of $R^2$ and the slope respectively for the curves corresponding to each estimator.}   
\label{f1p}
\end{center}
\end{figure}

For each data set, we computed the value of the loss $\|\hat{\theta} -
\theta^*\|^2/n$ for each of these six estimates. We generated 600
replications of the data for each value of $n$ to compute the average
value of the loss which is an approximation of the risk of each
estimator. Our results are provided in Figure \ref{f1p}. The top-left
plot shows the different values of $\lambda$ employed by the
estimators based on \eqref{ptf}. Here we plotted the $\lambda^*$
values as well as those corresponding to \eqref{lach} with $\Gamma =
1$ (penalty one) and \eqref{cdal} with $\Gamma = 0.5$ (penalty
two). In addition, we also plotted here the 
penality levels chosen by the CV estimators. These are random so we
plotted their median and quartile values over the 600
replications. The remaining three plots in Figure \ref{f1p} show the
risks of the six estimators. In the top-right plot, the risk is simply
plotted as a function of $n$ (from our theoretical results, the risk
is supposed to decay like the curve $n \mapsto (t_1/n) \log(t_2 n)$
for two constants $t_1$ and $t_2$). In the bottom-left plot, we plotted 
$n$ times the risk against $\log n$. These curves are supposed to be
linear so we provided the squared correlation ($R^2$) values of each of the
curves in this plot. One can see that the $R^2$ values are close to
one for every estimator except $CV_1$. Finally, in the bottom-right plot, we
plotted the logarithm of the risk against $\log n$. We expect the
curves here to have a near-linear relationship with negative slope of
$-1$. The least squares slope values for the different curves are
given in the legend in this and it is clear that, for the non-CV
estimators, the slope is indeed close to $-1$.  

The numerical results in Figure \ref{f1p} for the non-CV estimates
therefore clearly support our theoretical results. On the other hand,
the behavior of the CV estimators seems more complicated and a
theoretical study of their risk performance is beyond the scope of the
present paper. 


We also show results for $f_2^*$ where we evaluated the performance of
trend filtering for $r = 2$. We did  a simplified
study here with the three estimators: (a) the ideal
constrained estimator \eqref{tf} with $V = V^* = n \|D^{2}
\theta^*\|_1$, (b) the penalized estimator \eqref{ptf} with $\lambda$
taken to be \eqref{lachr} with $\Gamma = 1/16$, and (c) the penalized
estimator \eqref{ptf} with $\lambda$ taken to be \eqref{cdalr} with
$\Gamma = 1/16$. Note that our theoretical
results apply to \eqref{lachr} and \eqref{cdalr} for a sufficiently
large $\Gamma$. For $f_2^*$, we found in simulations that
$\Gamma = 1/16$ was large enough to yield the desired rates. Higher
values of $\Gamma$ inflated risk but gave similar risk decay rates.
We could not compute the ideal penalized estimator with $\lambda = 
\lambda^*$ (defined in \eqref{lams}) here as the
convex optimization problem to compute $\lambda_{\theta^*}(z)$ was
highly ill-conditioned for $n \geq 1000$ so that MOSEK seemed
unable to find the global minimum (see Section \ref{adsims} for more
details). We also did not compute CV 
estimates here as these are not the focus of this paper. 

\begin{figure}[h!]
\begin{center}
 \includegraphics[width = \textwidth, height = 2.0in]{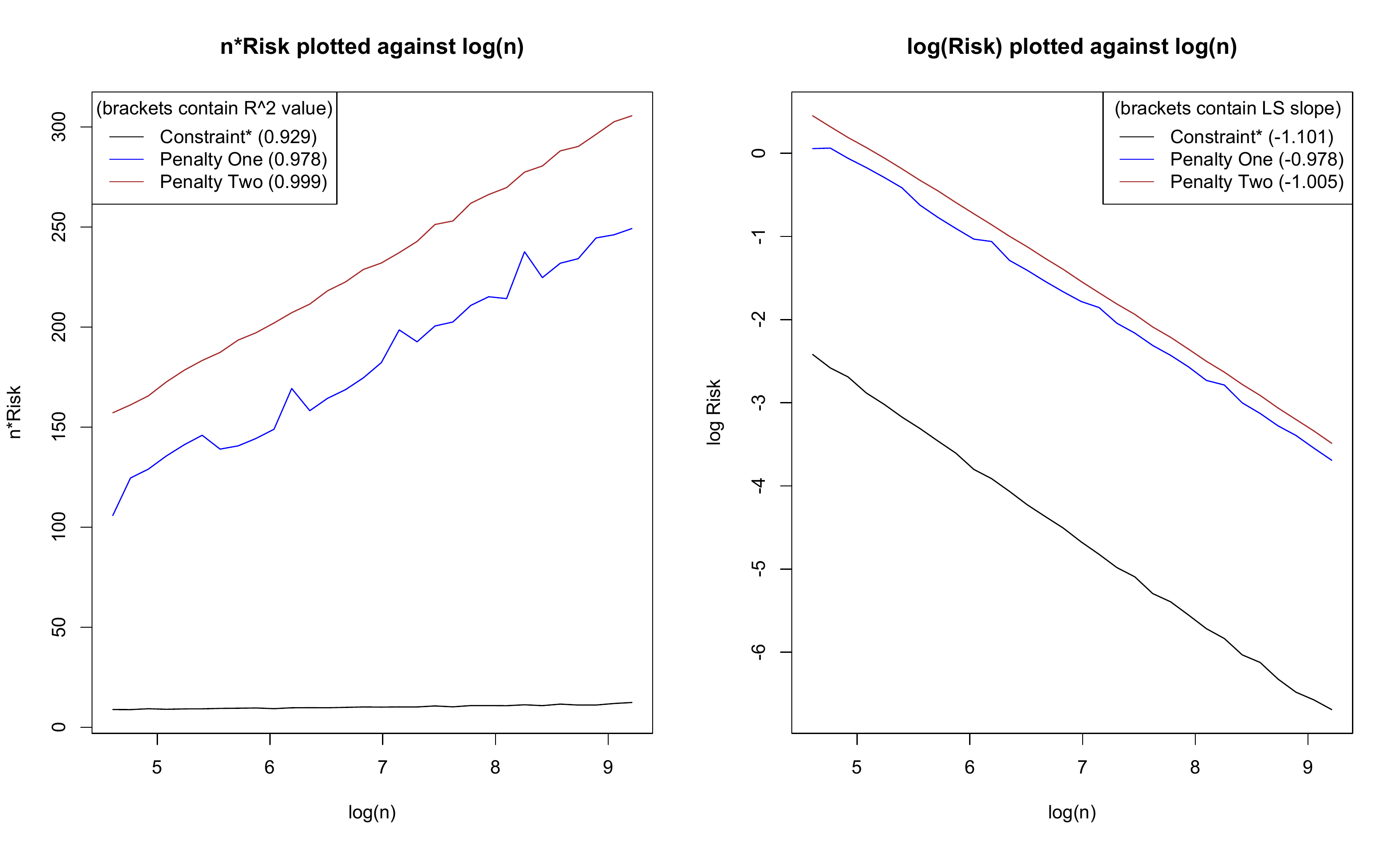}    
\caption{Risk plots when the true function is $f_2^*$.}  
\label{f3p}
\end{center}
\end{figure}

Our results are given in Figure \ref{f3p}. The left plot
  shows $n$ times the risk plotted against $\log n$. Our theory
  indicates that the curve corresponding to each estimator should be
  linear so we provided the squared correlation ($R^2$) values which
  are all close to 1. The right plot shows the behavior of log risk
  against $\log n$. These curves are expected to have a near-linear
  relationship with negative slope of $-1$. The legend shows the least
  squares slopes which are all close to $-1$. These plots therefore
  support our theoretical results.

\section{Discussion}\label{discu}  
In this section, we address various issues that are naturally
linked to our main results.  

\subsection{Weakening our assumptions}\label{weakass}
We emphasized the vector estimation setting~\eqref{seqmo} in this paper. Our results
can also be interpreted in the function estimation setting in
the following way. There is an unknown function $f^*$ and we observe
data $Y_1, \dots, Y_n$ according to the model: 
\begin{equation*}
  Y_i = f^*(x_i)  + \xi_i \qt{for $i = 1, \dots, n$}
\end{equation*}
where $f^* : [0, 1] \rightarrow \R$ is the unknown regression
function and $\xi_1, \dots, \xi_n$ are i.i.d.~$N(0, \sigma^2)$. We
focussed on the situation where $x_i = i/n$ for $i = 1, \dots,
n$. We can estimate $f^*$ by any discrete spline $\hat{f}$
of degree $r-1$ whose values at $i/n, 1= 1, \dots, n$, are given by
$\hat{\theta}_1, \dots, \hat{\theta}_n$ (with $\hat{\theta}$ defined
as in \eqref{tf} or \eqref{ptf}). We then evaluate the performance of
$\hat{f}$ as an estimator for $f^*$ via the loss $\frac{1}{n} \sum_{i=1}^n (f^*{(x_i)} - \hat{f}(x_i) )^2$ and prove bounds for the risk when $f^*$ is a discrete spline in
terms of the number of polynomials that make up $f^*$. 

This basic setting (which is standard and used in many
theoretical papers on univariate nonparametric regression) can be
generalized in many ways and we mention two extensions
involving the design points $x_1, \dots, x_n$ below. One is  
the situation where $x_1, \dots, x_n$ are not
equally spaced. In this case, note that the penalty terms in
\eqref{tf} and \eqref{ptf} need to be changed for $r \ge 2$; see e.g.,
\citet{tibshirani2014adaptive}. We believe that our results 
will still hold in this case provided $x_1, \dots, x_n$ satisfy
$\kappa_1/n \leq x_i - x_{i-1} \leq \kappa_2/n$ for two constants
$\kappa_1$ and $\kappa_2$. However, this would make the notation in
our proofs quite cumbersome. 

One can also study the setting where $x_1, \dots, x_n$ are generated
independently from a common distribution $\nu$ on $[0, 1]$ and/or we
measure the loss via $\int \left(\hat{f}(x) - f^*(x) \right)^2 d\nu(x).$
Analyzing this situation will require handling additional
approximation error terms and we will leave it for future work. 

\subsection{Constrained and penalized estimators}\label{conpene}
As mentioned in the Introduction, we have studied both constrained and
penalized versions of trend filtering while previous papers have
focussed on the penalized estimator alone. When the noise level
$\sigma$ tends to zero, it can be proved that the constrained estimator
with $V = V^* := V^{(r)}(\theta^*)$ is better than the penalized
estimator for every choice of the tuning parameter
$\lambda$. More precisely, 
\begin{equation}\label{oymha}
 \lim_{\sigma \downarrow 0} \frac{1}{\sigma^2}
 R(\hat{\theta}^{(r)}_{V^*}, \theta^*)  <  \lim_{\sigma \downarrow
   0} \frac{1}{\sigma^2}  R(\hat{\theta}^{(r)}_{\lambda}, \theta^*)
 \qt{for every $\lambda \in [0, \infty)$}. 
\end{equation}
Here $\lambda$ is even allowed to depend on $\theta^*$ as long as it
is non-random. Inequality \eqref{oymha} follows from the results of
\citet{oymak2013sharp} as described below: \citet[Theorem
2.1]{oymak2013sharp} implies   
\begin{equation}\label{yama1}
  \lim_{\sigma \downarrow 0} \frac{1}{\sigma^2}
 R(\hat{\theta}^{(r)}_{V^*}, \theta^*) = \frac{1}{n} \E \left(\inf_{v
     \in \cone(\partial g(\theta^*))} \|Z - v\|^2 \right) 
\end{equation}
and \citet[Theorem 1.1]{oymak2013sharp} implies
\begin{equation}\label{yama2}
  \lim_{\sigma \downarrow 0} \frac{1}{\sigma^2}
 R(\hat{\theta}^{(r)}_{\lambda}, \theta^*) = \frac{1}{n} \E \left(\inf_{v
     \in \lambda \partial g(\theta^*)} \|Z - v\|^2\right) 
\end{equation}
for every $\lambda \geq 0$. Here $g(\theta) := n^{r-1} \|D^{(r)}
\theta\|_1$, $\lambda \partial g(\theta^*) := \{\lambda v : v
\in \partial g(\theta^*)\}$, $\cone(\partial g(\theta^*)) :=
\cup_{\lambda \geq 0} \lambda \partial g(\theta^*)$ and $Z \sim N_n(0,
I_n)$. As $\cone(\partial g(\theta^*))$ is strictly larger than $\lambda
g(\theta^*)$ for every fixed $\lambda > 0$, the right hand side of
\eqref{yama1} will be strictly smaller than the right hand side of
\eqref{yama2} which proves \eqref{oymha}. 

The implication of this inequality is that there exist
settings (where $\sigma$ is small) where the constrained estimator
with $V = V^*$ is better than every penalized estimator. Therefore
it makes sense to study the constrained estimator in addition to the
penalized estimator.

\subsection{Results for data-dependent tuning parameters}\label{rddt}
From a practical point of view, a major limitation of the results of
this paper is that they only hold for ideal or oracle choices of the
tuning parameters. Indeed, our strong sparsity risk bounds for the
constrained estimator require $V$ to be close to
$V^* := V^{(r)}(\theta^*)$. On the other hand, our risk bounds for the
penalized estimator require knowledge of the noise level
$\sigma$ (note that the tuning parameter in \eqref{ptf} involves
$\sigma$) as 
well as certain aspects of $\theta^*$. For example, the choices 
\eqref{lams}, \eqref{lach} and \eqref{lachr}  depend on certain
properties of the locations and signs of the knots of $\theta^*$. The
choices  \eqref{cdal} and \eqref{cdalr} have lesser dependence on
$\theta^*$ but they still depend on the constants $c_1$ and $c_2$
from the condition \eqref{leonr}. 

We would like to note that this feature is also present in earlier
papers on the trend filtering estimators. The strong sparsity risk
results of \citet{lin2016approximate}  hold for the tuning choice
\eqref{lamlin} which depends on $\theta^*$. The results of
\citet{dalalyan2017tvd} and \citet{ortelli2018total} hold for the
tuning choice \eqref{lamdal} which does not depend on $\theta^*$ but
depends on the noise level $\sigma$ and the probability level
$\delta$ (note that these results of \cite{dalalyan2017tvd,
  ortelli2018total}  give only high probability statements and not
expectation (risk) bounds).   

We would like to highlight the problem of proving risk bounds under
strong sparsity for completely data-dependent choices of the tuning
parameters as a major open problem. One can approach this problem via
the constrained estimator which would require estimation of the
variation functional $V^{(r)}(\theta^*)$. Alternatively, one can
approach this problem via the penalized estimator which would require
estimation of $\sigma$ and $\lambda^*$ (defined in \eqref{lams}). It
will be  interesting to see if the risk of $\log(en)/n$ (up to
multiplicative factors depending on $k$) will be achieved for a
completely data dependent method~of~tuning.  

\subsection{Connections to results for the LASSO}\label{lasso}
The trend filtering estimators are closely related to the LASSO
estimator of \citet{tibshirani1996regression}. Indeed, for $r = 1$, it
is easy to see that the constrained estimator $\hat{\theta}^{(1)}_V$
is exactly equal to $X \hat{\beta}_V$ where $X$ is the $n \times n$
matrix whose $(i, j)^{th}$ entry equals $I\{i \geq j\}$ and $
\hat{\beta}_V := \argmin_{\theta \in \R^n} \left\{\|Y - X \beta\|^2 :
  \sum_{i=2}^n |\beta_i| \leq V\right\}$. Therefore our strong
sparsity risk results for $\hat{\theta}_V^{(1)}$ 
can simply be seen as results for the LASSO estimator
for this special design matrix $X$. This connection to LASSO also
holds for $r \geq 2$ (see \citet{tibshirani2014adaptive}). 

Based on this link to the LASSO, it might seem possible to believe
that our results might be derivable from general theorems about the
LASSO. However, existing strong sparsity risk bounds for the LASSO
impose stringent conditions on the design matrix (such as the
compatibility condition or the restricted eigenvalue condition) which
do not hold for this particular design matrix $X$ (see
\citet{dalalyan2017tvd}). The relaxed compatibility  
condition of \cite{dalalyan2017tvd} does hold who use this condition
to prove rates under strong sparsity but this argument is 
not strong enough to yield the $\frac{k+1}{n} \log \frac{en}{k+1}$
bound. More importantly, it is not clear if the relaxed compatibility
condition of \cite{dalalyan2017tvd} or a modified version of it holds
for $r \geq 2$.  

\subsection{Comparison to the $L^0$ estimators}
It is natural to compare the performance of the trend-filtering
estimators to the estimators obtained by replacing the $L^1$ norm in
\eqref{tf} by the $L^0$ norm: 
\begin{equation}\label{loes}
  \hat{\theta}^{(r)}_k := \argmin_{\theta \in \R^n} \left\{\frac{1}{2}
  \|Y - \theta\|^2 : \|D^{(r)} \theta\|_0 \leq k\right\}
\end{equation}
Under strong sparsity i.e., $\|D^{(r)}
\theta^*\|_0 \le k$, it should be possible to prove that 
\begin{equation}\label{lori}
  R(\hat{\theta}^{(r)}_k, \theta^*) \leq C_r \frac{\sigma^2(k+1)}{n}
  \log \frac{en}{k+1}. 
\end{equation}
A proof of this result for $r = 1$ can be found in the recent paper
\citet[Theorem 2.1]{gao2017minimax}. We could not find an exact
reference for $r \ge 2$ but we believe that \eqref{lori} should be
true based on the regression connection described in the previous
subsection and existing results for $L^0$-penalized estimators
in linear regression (see e.g., \cite[Theorem
4]{raskutti2011minimax}).   

From a comparison of \eqref{lori} with \eqref{kcoc}, it might seem
that the constrained trend filtering estimator (with $V = V^*$)
has similar performance under strong sparsity as that of the $L^0$
estimator. However, it must be 
kept in mind here that \eqref{kcoc} requires the minimum length
condition \eqref{nsa} while the bound \eqref{lori} for the $L^0$
estimator does not require any such minimum length condition. Without
the minimum length condition, the $L^1$ estimator performs much worse
compared to the $L^0$ estimator as proved in the recent paper
\citet{fan2017l_0}. Note, however, that the minimum length condition
is quite natural from the point of view of
estimating~piecewise~polynomial~functions.    

From a computational viewpoint, \eqref{loes} can
be efficiently computed for $r = 1$ via dynamic programming (see e.g.,
\citet{winkler2002smoothers}) but it is not clear how to 
compute it for $r \geq 2$. On the other hand, the trend
filtering estimators are efficiently computable for every $r \ge 2$
via convex optimization (see e.g., \citet{arnold2016efficient}
and \citet{kim2009ell_1} for details). 

\subsection{Connection to shape constrained estimators}
Shape constrained regression estimators are closely related to the
trend filtering estimators. Indeed, if one takes the constrained trend filtering
estimator \eqref{tf} and replaces the $L^1$ constraint by a
nonnegativity constraint on $D^{(r)} \theta$, then we obtain shape
constrained estimators. Specifically, consider
\begin{equation}\label{shes}
  \hat{\theta}^{(r)}_{\mathrm{shape}} := \argmin_{\theta \in \R^n}
  \left\{\frac{1}{2} \|Y - \theta\|^2 : D^{(r)} \theta \geq 0
  \right\}. 
\end{equation}
Here $D^{(r)} \theta \geq 0$ means that each component of the vector
$D^{(r)} \theta$ is nonnegative. When $r = 1$,  \eqref{shes}
coincides with the classical isotonic least squares estimator and when
$r = 2$, \eqref{shes} coincides with the convex least squares
estimator (see \citet{groeneboom2014nonparametric} for an introduction
to shape constrained estimation). Like the trend filtering estimators,
the shape constrained estimators enjoy the property that $D^{(r)}
\hat{\theta}^{(r)}_{\mathrm{shape}}$ is sparse. However, unlike the
trend filtering estimators, there is no tuning parameter in
\eqref{shes}  (of course, \eqref{shes} is only applicable in
situations where $\theta^*$ satisfies the constraint $D^{(r)} \theta^*
\geq 0$ exactly or in some approximate sense). 

The risk of \eqref{shes} under the strong sparsity assumption (and the
shape assumption $D^{(r)} \theta \geq 0$) has received much recent
attention (see 
\citet{guntuboyina2017nonparametric} for a recent survey). In
\citet{bellec2015annals}, it was proved that 
\begin{equation}\label{balma}
  R(\hat{\theta}^{(r)}_{\mathrm{shape}}, \theta^*) \leq \inf_{\theta : D^{(r)}
    \theta \geq 0} \left(\frac{1}{n} \|\theta^* - \theta\|^2 + C_r
 \frac{\sigma^2(k+ 1)}{n} \log
    \frac{en}{k + 1} \right). 
\end{equation}
where $k := \ktr(\theta) = \|D^{(r)} \theta\|_0$. This result is very
similar to our risk bounds for the constrained 
trend filtering estimator with the important difference that no
minimum length condition is required for \eqref{balma}. It is
interesting to note that we use the above result in the proof of
Theorem \ref{ada}.

\section*{Acknowledgements} We thank Ryan Tibshirani for informing us 
about the reference, \citet{steidl2006splines}, and for many other
helpful comments. We are also extremely thankful to the Associate
Editor and the anonymous referees for very detailed comments on an
earlier version of the paper. Their feedback greatly improved the
quality of the paper. 

\newpage

\section*{Supplementary Material (including proofs of main
  results)}  

\appendix

Here we provide proofs of the results in the paper and some additional
simulation results. The material is organized as follows. Section
\ref{prems} contains a summary of various existing results from the
literature on convex-constrained least squares estimators as well as
convex analysis and geometry that are needed for our main
proofs. Section \ref{pfmr} contains proofs of our main results in
Section \ref{ms} of the main paper. Section \ref{sutta} contains
proofs of various technical supporting results that were crucially
used in the proofs of Section \ref{pfmr}. Section \ref{appa} contains
additional technical results and proofs. Finally Section \ref{adsims}
contains some additional simulation results. 

\section{Preliminaries}\label{prems}
In this section, we state some existing general results on the risk of
constrained and penalized least squares estimators from the
literature. These results will be used in the proofs of our main
theorems from Section \ref{ms}. We shall also state some standard
results from convex analysis and convex geometry which will be used in
our arguments. 

Let us start with results for convex constrained least squares
estimators. These are estimators of the form
\begin{equation}\label{genk}
  \hat{\theta} := \argmin_{\theta \in \R^n} \left\{\frac{1}{2} \|Y -
    \theta\|^2 : \theta \in K \right\}. 
\end{equation}
for a closed convex set $K$. Note that the constrained trend filtering
estimator $\hat{\theta}_V^{(r)}$ is a special case of this estimator
when $K$ is taken to be the set $K^{(r)}(V)$ defined as
\begin{equation}\label{krv}
  K^{(r)}(V) := \left\{\theta \in \R^n : \|D^{(r)} \theta\|_1 \leq V
    n^{1- r} \right\} . 
\end{equation}
The general theory of convex-constrained least squares estimators 
has a long history and is, by now, well established (see e.g.,
\citet{VandegeerBook,   vaartwellner96book, hjort2011asymptotics,
  Chat14}). The following result, essentially from \citet{Chat14} (see
Remark \ref{rsket}) provides upper bounds for the risk of
$\hat{\theta}$. This result will be used in the proof of Theorem
\ref{woor}. 
\begin{theorem}\label{gencha}
  Suppose $Y \sim N_n(\theta^*, \sigma^2 I_n)$ for some $\theta^* \in
  K$ and consider the estimator \eqref{genk}. Then there exists a
  universal positive constant $C$ such that 
  \begin{equation}\label{gencha.eq}
  R(\hat{\theta}, \theta^*) :=  \frac{1}{n}\E_{\theta^*} \|\hat{\theta}
    - \theta^*\|^2 \leq \frac{C}{n} \max(t_0^2, \sigma^2)  
  \end{equation}
  for every $t_0 > 0$ which satisfies 
  \begin{equation}\label{tmup}
    \E \left[\sup_{\theta \in K : \|\theta - \theta^*\| \leq t_0}
      \left<\xi, \theta - \theta^* \right>\right] \leq \frac{t_0^2}{2}
    \qt{where $\xi \sim N_n(0, \sigma^2 I_n)$}.  
  \end{equation}
\end{theorem}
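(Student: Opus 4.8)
The plan is to follow the general theory of \citet{Chat14}. Since $\theta^*\in K$, the estimator $\hat\theta$ is the Euclidean projection of $Y=\theta^*+\xi$ onto the closed convex set $K$, so $\hat v:=\hat\theta-\theta^*$ is the unique maximizer of $\phi(v):=\langle\xi,v\rangle-\tfrac12\|v\|^2$ over the closed convex set $K-\theta^*$, which contains the origin. The size of $\|\hat v\|$ is governed by the random profile
\[
 G(t)\ :=\ \sup\bigl\{\langle\xi,v\rangle\ :\ v\in K-\theta^*,\ \|v\|\le t\bigr\},\qquad t\ge 0,
\]
which is nonnegative, nondecreasing, vanishes at $t=0$, and is concave in $t$ (a supremum of linear functionals over a nested family of convex sets). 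Moreover $\bar G(t):=\E\, G(t)$ is exactly the quantity on the left-hand side of the hypothesis \eqref{tmup}, so the assumption reads $\bar G(t_0)\le t_0^2/2$. By concavity of $\bar G$ and $\bar G(0)=0$, the map $t\mapsto\bar G(t)/t$ is nonincreasing, hence $\bar G(t_1)\le t_1^2/2$ also holds for $t_1:=t_0\vee\sigma$; so I may and will assume from the outset that $t_0\ge\sigma$, and aim to prove $\E\|\hat v\|^2\le C t_0^2$.

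First I would record the pathwise inequality
\[
 G(t)-\tfrac12 t^2\ \le\ \phi(\hat v)\ \le\ G(\|\hat v\|)-\tfrac12\|\hat v\|^2\qquad\text{for every }t\ge 0 .
\]
The right inequality is immediate since $\hat v$ is feasible in the definition of $G(\|\hat v\|)$; the left one follows because if $w_t\in K-\theta^*$ attains $G(t)$, then $\|w_t\|\le t$ and $\phi(\hat v)\ge\phi(w_t)=G(t)-\tfrac12\|w_t\|^2\ge G(t)-\tfrac12 t^2$. Taking $t=t_0$ and using $G(t_0)\ge 0$ gives $\|\hat v\|^2\le 2G(\|\hat v\|)+t_0^2$; in particular $\|\hat v\|^2\le t_0^2$ on $\{\|\hat v\|\le t_0\}$, and it remains only to rule out large values of $\|\hat v\|$.

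For that I would combine concavity with Gaussian concentration. Concavity gives $\bar G(s)\le (t_0/2)\,s$ for all $s\ge t_0$, while for fixed $t$ the map $\xi\mapsto G(t)$ is $t$-Lipschitz, so $\P\bigl(G(t)>\bar G(t)+u\bigr)\le\exp\bigl(-u^2/(2\sigma^2 t^2)\bigr)$. Now peel over the dyadic shells $\|\hat v\|\in[2^j t_0,2^{j+1}t_0]$: on such a shell the displayed inequality together with monotonicity of $G$ forces $G(2^{j+1}t_0)\ge\tfrac12\bigl((2^j t_0)^2-t_0^2\bigr)$, whereas $\bar G(2^{j+1}t_0)\le 2^j t_0^2$, so that for $j$ larger than an absolute constant this requires $G(2^{j+1}t_0)-\bar G(2^{j+1}t_0)\ge c_1 4^j t_0^2$, an event of probability at most $\exp\bigl(-c_2 4^j t_0^2/\sigma^2\bigr)\le\exp(-c_2 4^j)$ since $t_0\ge\sigma$. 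Summing these exceptional probabilities over $j$ and adding the trivial contribution $\lesssim t_0^2$ from the finitely many small shells, and then integrating the resulting tail bound $\P(\|\hat v\|^2>4^j t_0^2)\le 2\exp(-c_2 4^j)$, yields $\E\|\hat v\|^2\le C t_0^2=C\max(t_0^2,\sigma^2)$; dividing by $n$ gives \eqref{gencha.eq}.

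The only delicate part is the bookkeeping in the peeling step — choosing the constant fractions of $4^j t_0^2$ and collecting all the exceptional probabilities into a single universal constant $C$ — and it is entirely routine. In fact \eqref{gencha.eq}--\eqref{tmup} is essentially a reformulation of the main theorem of \citet{Chat14}; the remark \ref{rsket} referenced above spells out the translation, so in the write-up one may alternatively invoke that result directly instead of reproducing the argument.
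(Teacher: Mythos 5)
Your proof is correct, and it takes a more self-contained route than the paper does. Remark \ref{rsket} in the paper quotes the concentration inequality from \citet[Proof of Theorem 1.1]{Chat14} for $\|\hat\theta-\theta^*\|$ around the critical radius $t_{\theta^*}$ (the maximizer of $t\mapsto\bar G(t)-t^2/2$), integrates the tail to get $\E\|\hat\theta-\theta^*\|^2\le C\max(t_{\theta^*}^2,\sigma^2)$, and finally cites \citet[Proposition 1.3]{Chat14} to pass from $t_{\theta^*}$ to the $t_0$ in the hypothesis; so the paper outsources the peeling and the monotonicity of $t\mapsto\bar G(t)/t$ to that reference. You instead re-derive everything from scratch: the pathwise inequality $G(t)-t^2/2\le\phi(\hat v)\le G(\|\hat v\|)-\|\hat v\|^2/2$, the pathwise concavity of $G$ (hence of $\bar G$, giving both the reduction to $t_0\ge\sigma$ and the linear bound $\bar G(s)\le(t_0/2)s$ for $s\ge t_0$), Gaussian concentration of $G(t)$ with Lipschitz constant $t$, and dyadic peeling. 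The constants work out: on the shell $\|\hat v\|\in[2^jt_0,2^{j+1}t_0]$ one indeed needs $G(2^{j+1}t_0)\ge(4^j-1)t_0^2/2$ while $\bar G(2^{j+1}t_0)\le 2^jt_0^2$, which for $j$ past an absolute threshold forces a deviation of order $4^jt_0^2$ on a Lipschitz-$(2^{j+1}t_0)$ functional, giving probability at most $\exp(-c\,4^jt_0^2/\sigma^2)\le\exp(-c\,4^j)$ after the harmless reduction $t_0\ge\sigma$; summing $4^{j+1}t_0^2$ times this over $j$ is then finite. This is essentially the argument inside Chatterjee's proof, made explicit; what it buys is that the result is proved without needing to verify the $\sigma$-dependence of Chat14's statements or to import Proposition 1.3, at the cost of a few lines of peeling bookkeeping. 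Either route is acceptable for the paper, as you note at the end.
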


\begin{remark}\label{rsket}
  The purpose of this remark is to describe how Theorem \ref{gencha}
  follows from the results of \citet{Chat14} which are all stated for
  $\sigma = 1$. Extending \citet[Proof of Theorem 1.1]{Chat14} in a
  straightforward manner to the case of arbitrary $\sigma > 0$, one
  obtains that  
\begin{equation}\label{piq}
  \P \left\{\|\hat{\theta} - \theta^*\| - t_{\theta^*} \geq x
    \sqrt{t_{\theta^*}} \right\} \leq 3 \exp \left(\frac{-x^4}{32
      \sigma^2 \left(1 + x/\sqrt{t_{\theta^*}} \right)^2} \right)
\end{equation}
for every $x \geq 0$ where $t_{\theta^*}$ is defined as the maximizer
of 
\begin{equation*}
  t \mapsto     \E \left[\sup_{\theta \in K : \|\theta - \theta^*\| \leq t}
      \left<\xi, \theta - \theta^* \right>\right] - \frac{t^2}{2}
\end{equation*}
over $t \geq 0$. Inequality \eqref{piq} implies that whenever $x \geq
\sqrt{t_{\theta^*}}$, we obtain 
\begin{equation*}
  \P \left\{\|\hat{\theta} - \theta^*\| - t_{\theta^*} \geq x
    \sqrt{t_{\theta^*}} \right\} \leq 3 \exp \left(\frac{-t_{\theta^*}x^2}{128
      \sigma^2} \right). 
\end{equation*}
This is because $1 + x/\sqrt{t_{\theta^*}} \leq 2 x/\sqrt{t_{\theta^*}}$
under the assumption that $x \geq \sqrt{t_{\theta^*}}$. Replacing $x$ by
$u/\sqrt{t_{\theta^*}}$, we obtain 
\begin{equation*}
  \P \left\{\|\hat{\theta} - \theta^*\| - t_{\theta^*} \geq u \right\} \leq 3 \exp \left(\frac{-u^2}{128
      \sigma^2} \right) \qt{for $u \geq t_{\theta^*}$}. 
\end{equation*}
Multiplying both sides by $u$ and integrating from $u = t_{\theta^*}$ to
$u = \infty$, we get 
\begin{equation*}
  \E \left(\left(\|\hat{\theta} - \theta^*\| - t_{\theta^*} \right)^2 -
    t^2_{\theta^*} \right)_+ \leq 3 \int_{0}^{\infty} u \exp
  \left(\frac{-u^2}{128 \sigma^2} \right) du \leq C \sigma^2. 
\end{equation*}
This implies that (via $a^2 \le 2 (a - b)_+^2 + 2b^2$)
\begin{equation*}
 \E \left(\|\hat{\theta} - \theta^*\| - t_{\theta^*} \right)^2 \leq C
  \sigma^2 + 2t_{\theta^*}^2. 
\end{equation*}
which further implies that 
\begin{equation*}
  \E \|\hat{\theta} - \theta^*\|^2 \leq 6 t_{\theta^*}^2 + C \sigma^2 \leq
  C \max \left(t_{\theta^*}^2 , \sigma^2 \right). 
\end{equation*}
From here, we obtain \eqref{gencha.eq} by noting that $t_{\theta^*}
\leq t_0$ which follows from \citet[Proposition 1.3]{Chat14}.   
\end{remark}

The risk of $\hat{\theta}$ can also be related to the tangent cones of
the closed convex set $K$ at $\theta^*$. To describe these results, we
need some notation and terminology. The tangent cone of $K$ at $\theta
\in K$ is defined as  
\begin{equation}\label{tcod}
  T_K(\theta) := \mathrm{Closure}\{t(\eta - \theta) : t \geq 0, \eta
  \in K\}. 
\end{equation}
Informally, $T_K(\theta)$ represents all directions in which one can
move from $\theta$ and still remain in $K$. Note that $T_K(\theta)$ is
a cone which means that $a \alpha \in T_K(\theta)$ for every $\alpha
\in T_K(\theta)$ and $a \ge 0$. It is also easy to see that
$T_K(\theta)$ closed and convex. 

The statistical dimension of a closed convex cone $T \subseteq \R^n$
is defined as 
\begin{equation*}
  \delta(T) := \E \|\Pi_T(Z)\|^2 \qt{where $Z \sim N_n(0, I_n)$}
\end{equation*}
and $\Pi_T(Z) := \argmin_{u \in T} \|Z - u\|^2$ is the projection of
$Z$ onto $T$. The terminology of statistical dimension is due to
\citet{amelunxen2014living} and we refer the reader to this paper for
many properties of the statistical dimension. The statistical
dimension $\delta(T)$ is closely related to the Gaussian width of $T$
which is defined as 
\begin{equation}\label{gwef}
  w(T) := \E \left[\sup_{\theta \in T : \|\theta\| \leq 1} \left<Z, \theta
  \right> \right] \qt{where $Z \sim N_n(0, I_n)$}. 
\end{equation}
Indeed, it has been shown in \citet[Proposition
10.2]{amelunxen2014living} that 
\begin{equation}\label{gast}
  w^2(T) \leq \delta(T) \leq w^2(T) + 1
\end{equation}
for every closed convex cone $T$. 

The relevance of these notions to the estimator $\hat{\theta}$
(defined in \eqref{genk}) is that
the risk of $\hat{\theta}$ can be related to the statistical dimension
of tangent cones of $K$. This is the content of the following result
due to \citet[Corollary 2.2]{bellec2015annals}. 
\begin{theorem}\label{belma} 
Suppose $Y \sim N_n(\theta^*, \sigma^2 I_n)$ for some $\theta^* \in
\R^n$. Then 
\begin{equation}\label{belma.eq}
  R(\hat{\theta}, \theta^*) \leq \inf_{\theta \in K} \left[\frac{1}{n}
  \|\theta - \theta^*\|^2 + \frac{\sigma^2}{n}
  \delta(T_{K}(\theta))\right]. 
\end{equation}
Moreover for every $x > 0$, we have 
\begin{equation*}
  \frac{1}{n} \|\hat{\theta} - \theta^*\|^2 \leq \inf_{\theta \in K}
  \left[ \frac{1}{n} \|\theta - \theta^*\|^2 + \frac{2\sigma^2}{n}
    \delta(T_K(\theta)) \right] + \frac{4 \sigma^2 x}{n} 
\end{equation*}
with probability at least $1 - e^{-x}$. 
\end{theorem}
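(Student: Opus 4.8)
The plan is to exploit the variational (obtuse-angle) characterization of Euclidean projection onto a closed convex set together with the positive homogeneity of tangent cones; this is essentially the route of \citet{Chat14}, \citet{oymak2013sharp} and \citet{bellec2015annals}. Fix an arbitrary $\theta \in K$. Since $\hat{\theta}$ is the projection of $Y$ onto $K$, we have $\langle \eta - \hat{\theta}, Y - \hat{\theta}\rangle \le 0$ for every $\eta \in K$, which gives the strengthened Pythagorean inequality
\begin{equation*}
  \|\hat{\theta} - Y\|^2 + \|\hat{\theta} - \theta\|^2 \le \|\theta - Y\|^2 .
\end{equation*}
Substituting $Y = \theta^* + \xi$, expanding, and cancelling, this rearranges to
\begin{equation*}
  \|\hat{\theta} - \theta^*\|^2 \le \|\theta - \theta^*\|^2 + 2\langle \xi, v\rangle - \|v\|^2, \qquad v := \hat{\theta} - \theta .
\end{equation*}
The crucial structural observation is that $v \in T_K(\theta)$, because $\hat{\theta} \in K$.

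Next I would control the noise term $2\langle \xi, v\rangle - \|v\|^2$ using only $v \in T_K(\theta)$. Let $u := \Pi_{T_K(\theta)}(\xi)$. By the obtuse-angle property of projection onto a closed convex cone, $\langle \xi - u, w\rangle \le 0$ for every $w \in T_K(\theta)$, so $\langle \xi, v\rangle \le \langle u, v\rangle \le \|u\|\,\|v\|$; completing the square gives $2\langle\xi, v\rangle - \|v\|^2 \le 2\|u\|\,\|v\| - \|v\|^2 \le \|u\|^2$. This yields the deterministic inequality
\begin{equation*}
  \|\hat{\theta} - \theta^*\|^2 \le \|\theta - \theta^*\|^2 + \|\Pi_{T_K(\theta)}(\xi)\|^2 ,
\end{equation*}
valid simultaneously for all $\theta \in K$ and all realizations of $\xi$. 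Retaining the term $\|\hat{\theta}-\theta\|^2$ from the Pythagorean inequality, rather than using the cruder bound $\|\hat{\theta}-Y\|\le\|\theta-Y\|$, is exactly what produces the sharp constant $1$ in front of $\|\theta - \theta^*\|^2$.

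To deduce \eqref{belma.eq}, I would take expectations: since $T_K(\theta)$ is a cone, $\Pi_{T_K(\theta)}(\xi) = \sigma\,\Pi_{T_K(\theta)}(Z)$ with $Z \sim N_n(0, I_n)$, hence $\E\|\Pi_{T_K(\theta)}(\xi)\|^2 = \sigma^2\,\delta(T_K(\theta))$; dividing by $n$ and taking the infimum over $\theta \in K$ finishes the risk bound. For the high-probability statement, note that $Z \mapsto \|\Pi_T(Z)\|$ is $1$-Lipschitz (projection onto a convex set is $1$-Lipschitz, and so is the norm), so Gaussian concentration together with $\E\|\Pi_T(Z)\| \le \sqrt{\delta(T)}$ (Jensen) gives $\|\Pi_T(Z)\| \le \sqrt{\delta(T)} + \sqrt{2x}$, hence $\|\Pi_T(Z)\|^2 \le 2\delta(T) + 4x$, with probability at least $1 - e^{-x}$. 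Applying this with $T = T_K(\theta)$ at a fixed near-minimizer $\theta$ of $\theta \mapsto \|\theta - \theta^*\|^2 + 2\sigma^2\delta(T_K(\theta))$ and inserting it into the deterministic inequality gives the stated loss bound.

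The argument is short, and I do not expect a genuine obstacle: the only points requiring care are (i) using the strengthened projection inequality so that the approximation term carries coefficient exactly one, and (ii) in the high-probability step, invoking Gaussian concentration at a data-independent near-minimizing $\theta$ rather than attempting a union bound over the infimum.
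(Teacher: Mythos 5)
Your proposal is correct. The paper does not re-derive this result: it is stated as Theorem \ref{belma} and attributed directly to \citet[Corollary 2.2]{bellec2015annals} without proof. Your argument is precisely the one in Bellec's paper: the strengthened Pythagorean inequality $\|\hat\theta - Y\|^2 + \|\hat\theta - \theta\|^2 \le \|\theta - Y\|^2$ coming from the variational characterization of projection onto $K$, the observation that $v = \hat\theta - \theta \in T_K(\theta)$, and the pointwise bound $2\langle\xi, v\rangle - \|v\|^2 \le \|\Pi_{T_K(\theta)}\xi\|^2$ via the Moreau/obtuse-angle property of cone projection, with the risk bound then following by taking expectations and the positive homogeneity of $\Pi_T$, and the high-probability version following from the $1$-Lipschitzness of $z \mapsto \|\Pi_T z\|$ plus Gaussian concentration and Jensen. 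Your note about applying concentration at a fixed data-independent near-minimizer (and passing to the infimum afterward) is exactly the point that makes the high-probability version go through without a union bound, and the constants ($2\delta$, $4x$) match after the AM--GM step you describe.
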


\begin{remark}
A useful lower bound corresponding to \eqref{belma.eq} has been proved
by \citet[Theorem 2.1]{oymak2013sharp}. This result states that when
$\theta^* \in K$, we have
\begin{equation}
  \label{losili}
\lim_{\sigma \downarrow 0}  \frac{1}{\sigma^2} R(\hat{\theta},
\theta^*) = \frac{1}{n} \delta(T_{K}(\theta^*))
\end{equation}
  which means that $\delta(T_K(\theta^*))/n$ provides a precise
  description of $R(\hat{\theta}, \theta^*)$ in the low $\sigma$
  limit. The fact \eqref{losili} will be used in the proof of Lemma
  \ref{simpo}. 
\end{remark}

An interesting aspect of Theorem \ref{belma} is that $\theta^*$ is
allowed to be any vector in $\R^n$; in particular, it is not necessary
that $\theta^* \in K$. Note that combining Theorem \ref{belma} with
the bound $\delta(T) \leq w^2(T) + 1$  from \eqref{gast}, we obtain
the following risk and loss bounds in terms of the Gaussian width of
tangent cones: 
 \begin{equation}\label{belma1}
  R(\hat{\theta}, \theta^*) \leq \inf_{\theta \in K} \left[\frac{1}{n}
  \|\theta - \theta^*\|^2 + \frac{\sigma^2}{n} + \frac{\sigma^2}{n} 
  w^2(T_{K}(\theta))\right]
\end{equation}
and for every $x > 0$, 
\begin{equation}\label{belma2}
  \frac{1}{n} \|\hat{\theta} - \theta^*\|^2 \leq \inf_{\theta \in K} \left[ \frac{1}{n} \|\theta
  - \theta^*\|^2 +\frac{2
    \sigma^2}{n} + \frac{2\sigma^2}{n} w^2(T_K(\theta)) \right] + \frac{4 \sigma^2 x}{n}  
\end{equation}
with probability at least $1 - e^{-x}$. The above pair of bounds will
be our starting points in the proof of Theorem \ref{ada}. With these
bounds, the main task for proving Theorem \ref{ada} (as well as
inequality \eqref{ada.peq} in Remark \ref{exphp}) will involve
showing the existence of a constant $C_r$ depending only on $r$ such
that  
\begin{equation}\label{rew}
  w(T_{K^{(r)}(V)}(\theta)) \le C_r \sqrt{n \Delta_r(\theta)} 
\end{equation}
for every $\theta \in \R^n$ with $V^{(r)}(\theta) = V$. Indeed,
combining the inequalities \eqref{belma1} and \eqref{rew}, we obtain 
\begin{equation*}
    R(\hat{\theta}, \theta^*) \leq \inf_{\theta \in K} \left[\frac{1}{n}
  \|\theta - \theta^*\|^2 + \frac{\sigma^2}{n} + C_r^2 \sigma^2
  \Delta_r(\theta) \right]. 
\end{equation*}
Because $\Delta_r(\theta) \geq (k+1)/n \geq 1/n$, the above bound
clearly implies \eqref{ada.eq}. Similarly, \eqref{belma2}, combined
with \eqref{rew}, implies \eqref{ada.peq}. The key therefore is to
prove \eqref{rew} which is accomplished in Subsection \ref{bsi}.  

Let us now describe results for penalized estimators of the form
$\hat{\theta}_{\lambda}^g$ defined as 
\begin{equation}\label{gepn}
  \hat{\theta}_{\lambda}^g := \argmin_{\theta \in \R^n}
  \left(\frac{1}{2} \|Y - \theta\|^2 + \sigma \lambda g(\theta) \right)
\end{equation}
where $g: \R^n \rightarrow \R$ is a convex function. The risk of
$\hat{\theta}^g_{\lambda}$ under $Y \sim N_n(\theta^*, \sigma^2 I_n)$
can be bounded by the Gaussian mean squared distance (defined next) of
the set $\lambda \partial g(\theta^*) := \{\lambda v : v \in \partial
g(\theta^*)\}$  where $\partial g(\theta^*)$ is the subdifferential of
$g$ at $\theta^*$. The Gaussian mean squared distance $\D(\C)$ of a
nonempty set $\C \subseteq \R^n$ is defined as 
\begin{equation}\label{deef}
  \D(\C) := \E \left[\dist^2(Z, \C) \right] \qt{where $\dist(Z, \C) :=
    \inf_{x \in \C} \|Z - x\|$}
\end{equation}
and $Z \sim N_n(0, I_n)$. The following result, due to \citet[Theorem
2.2]{oymak2013sharp} bounds the risk of $\hat{\theta}_{\lambda}^g$ in
terms of $\D(\lambda \partial g(\theta^*))$. 
\begin{theorem}\label{ohala}
  Suppose $Y \sim N_n(\theta^*, \sigma^2 I_n)$. Then 
  \begin{equation*}
    R(\hat{\theta}_{\lambda}^g, \theta^*) \leq \frac{\sigma^2}{n}
    \D(\lambda \partial g(\theta^*)). 
  \end{equation*}
\end{theorem}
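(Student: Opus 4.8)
The plan is to combine the first-order optimality condition characterizing $\hat{\theta}_{\lambda}^g$ with the monotonicity of the subdifferential map $\partial g$; it is the use of monotonicity (rather than a cruder ``basic inequality'' argument, which would only give a constant $4$) that produces the sharp constant $1$. Write $\hat{\theta} := \hat{\theta}_{\lambda}^g$ and $h := \hat{\theta} - \theta^*$. The objective in \eqref{gepn} is $1$-strongly convex in $\theta$, so $\hat{\theta}$ is well defined, and since $g$ is finite and convex on $\R^n$ the subdifferential $\partial g(\theta^*)$ is nonempty. The case $\lambda = 0$ is immediate ($\hat{\theta} = Y$ and $\lambda\,\partial g(\theta^*) = \{0\}$, so both sides equal $\sigma^2\E\norm{Z}^2/n$), so assume $\lambda > 0$. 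The optimality condition is $0 \in \hat{\theta} - Y + \sigma\lambda\,\partial g(\hat{\theta})$, i.e.\ there is $w \in \partial g(\hat{\theta})$ with $Y - \hat{\theta} = \sigma\lambda w$.

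Next, fix an arbitrary $v^* \in \partial g(\theta^*)$. Monotonicity of $\partial g$ gives $\langle w - v^*,\, \hat{\theta} - \theta^*\rangle \ge 0$; multiplying by $\sigma\lambda > 0$, substituting $\sigma\lambda w = Y - \hat{\theta}$, and writing $Y = \theta^* + \xi$ so that $Y - \hat{\theta} = \xi - h$, we get $\langle \xi - h - \sigma\lambda v^*,\, h\rangle \ge 0$, hence $\norm{h}^2 \le \langle \xi - \sigma\lambda v^*,\, h\rangle \le \norm{\xi - \sigma\lambda v^*}\norm{h}$ by Cauchy--Schwarz, so $\norm{h}^2 \le \norm{\xi - \sigma\lambda v^*}^2$. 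Since $v^*$ ranges freely over $\partial g(\theta^*)$, setting $Z := \xi/\sigma \sim N_n(0, I_n)$ and minimizing over $v^*$ yields the pointwise (in $\xi$) bound
\[
  \norm{\hat{\theta} - \theta^*}^2 \;\le\; \inf_{v \in \partial g(\theta^*)} \norm{\xi - \sigma\lambda v}^2 \;=\; \sigma^2\, \dist^2\!\big(Z,\, \lambda\,\partial g(\theta^*)\big).
\]
Taking expectations, recalling $\D(\C) = \E[\dist^2(Z, \C)]$, and dividing by $n$ gives exactly $R(\hat{\theta}_{\lambda}^g, \theta^*) \le \frac{\sigma^2}{n}\,\D(\lambda\,\partial g(\theta^*))$.

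I do not anticipate a real obstacle here: the argument is short, and the only points deserving a word are that $\partial g(\theta^*) \neq \emptyset$ (automatic for finite convex $g$ on $\R^n$) and that the edge case $\lambda = 0$ be split off. Since the bound $\norm{\hat{\theta} - \theta^*}^2 \le \sigma^2\,\dist^2(Z, \lambda\,\partial g(\theta^*))$ holds for every realization of $\xi$, passing to expectations is immediate, and the right side is finite because $\dist^2(Z, \lambda\,\partial g(\theta^*)) \le \norm{Z - \lambda v}^2$ for any fixed $v \in \partial g(\theta^*)$. If one wished to bypass monotonicity of $\partial g$, the same estimate follows by writing $\hat{\theta} = \mathrm{prox}_{\sigma\lambda g}(Y)$, noting $\theta^* = \mathrm{prox}_{\sigma\lambda g}(\theta^* + \sigma\lambda v^*)$ for each $v^* \in \partial g(\theta^*)$, and invoking firm nonexpansiveness of the proximal operator.
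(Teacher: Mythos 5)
Your proof is correct. Note first that the paper itself does not prove Theorem~\ref{ohala}; it simply cites it from \citet[Theorem 2.2]{oymak2013sharp}. So there is no in-paper proof to compare against, but your argument is a clean and self-contained derivation, and every step checks out: the first-order optimality condition gives $Y-\hat{\theta}=\sigma\lambda w$ with $w\in\partial g(\hat{\theta})$, monotonicity of the subdifferential gives $\langle w - v^*, h\rangle\ge 0$ for any $v^*\in\partial g(\theta^*)$, substituting and rearranging yields $\|h\|^2\le\langle \xi-\sigma\lambda v^*,h\rangle$, Cauchy--Schwarz gives $\|h\|^2\le\|\xi-\sigma\lambda v^*\|^2$ pointwise in $\xi$, and since the left side is free of $v^*$ you may take the infimum over $\partial g(\theta^*)$, rescale $\xi=\sigma Z$, and take expectations. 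Your handling of $\lambda=0$ separately is prudent, since in that edge case the monotonicity inequality degenerates to $0\ge 0$ and one must instead use $\hat{\theta}=Y$ directly (where equality holds). The remark at the end about $\mathrm{prox}_{\sigma\lambda g}$ and firm nonexpansiveness is in fact the route closer to what Oymak and Hassibi actually do, so the two proofs you sketch bracket both the monotonicity and the proximal-operator viewpoints; they are of course equivalent, since firm nonexpansiveness of the proximal map is a direct consequence of monotonicity of $\partial g$. No gaps.
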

Theorem \ref{ohala} will be our starting point for proving Theorem
\ref{prt1}.  Note that the penalized trend filtering estimator
$\hat{\theta}^{(r)}_{\lambda}$ is a special case of \eqref{gepn} with
$g(\theta) := n^{r-1} \|D^{(r)} \theta\|_1$ so that Theorem
\ref{ohala} will imply that the risk of $\hat{\theta}^{(r)}_{\lambda}$
will be bounded by $(\sigma^2/n)$ times $\D(\lambda n^{r-1} \partial
f(\theta^*))$ where $f(\theta) := \|D^{(r)} \theta\|_1$. The goal then
becomes that of bounding $\D(\lambda n^{r-1} \partial
f(\theta^*))$ from above in the case when $D^{(r)} \theta^* \neq 0$
(note that Theorem \ref{prt1} does not deal with the case $D^{(r)}
\theta^* = 0$; this case is dealt with in Lemma \ref{rr0} whose proof
is simpler and more direct).  

Our idea for bounding $\D(\lambda n^{r-1} \partial
f(\theta^*))$ is to relate it to the smaller quantity
$\D(\cone(\partial f(\theta^*)))$ where $\cone(\partial f(\theta^*))$
is the convex cone generated by $\partial f(\theta^*)$: 
\begin{equation*}
  \cone(\partial f(\theta^*)) := \cup_{\lambda \geq 0}
 \left[ \lambda \partial f(\theta^*) \right]. 
\end{equation*}
It is clear that $\cone(\partial f(\theta^*))$ contains the set
$\lambda n^{r-1} \partial f(\theta^*)$ for every $\lambda \geq 0$ and
thus by definition of $\D(\cdot)$, it follows that 
\begin{equation*}
\D(\lambda n^{r-1} \partial
f(\theta^*)) \geq  \D(\cone(\partial f(\theta^*))). 
\end{equation*}
However, we need an upper bound and not a lower bound for $\D(\lambda n^{r-1} \partial
f(\theta^*))$. It turns out that an upper bound can indeed be given
for $\D(\lambda n^{r-1} \partial
f(\theta^*))$ in terms of $\D(\cone(\partial f(\theta^*)))$ and
additional terms (involving $\lambda$ and the vectors $v_0$ and $v^*$
defined in \eqref{vvs}). This result (formally stated in Proposition \ref{qfmg}) can be
seen as a generalization of \citet[Proposition
1]{foygel2014corrupted}. The advantage of Proposition \ref{qfmg} is that it
reduces the task to upper bounding $\D(\cone(\partial
f(\theta^*)))$. As we shall outline below, by some standard facts from
convex  analysis, it follows that 
\begin{equation}\label{annpre}
\D(\cone(\partial f(\theta^*))) = \delta(T_{K^{(r)}(V^*)}(\theta^*))
\leq 1 + w^2(T_{K^{(r)}(V^*)}(\theta^*))
\end{equation}
where $V^* := V^{(r)}(\theta^*)$. This allows us to use the bound
\eqref{rew} (established in the course of the proof of Theorem
\ref{ada}) to bound $\D(\cone(\partial f(\theta^*)))$. 

We shall now explain why \eqref{annpre} is true. Note that we only
need to prove the first equality (the second inequality is a
consequence of \eqref{gast}). For this, we need to introduce the
notions of normal cone and polar cone from convex analysis (see, for
example, \citet{Rockafellar70book} for background on these standard
notions). The normal cone of a convex set $\C \subseteq \R^n$ at a
point $x \in \C$ is defined by 
\begin{equation*}
  N_{\C}(x) := \left\{u \in \R^n : \left<y - x, u
    \right> \le 0 \text{ for every } y \in \C  \right\} . 
\end{equation*}
Next let us define the notion of a polar cone. The polar $T^{\mathrm{o}}$ of a
nonempty closed convex cone $T \subseteq \R^n$ is defined as
\begin{equation}\label{pocodef} 
  T^{\mathrm{o}} := \left\{u \in \R^n : \left<u, x \right> \leq 0 \text{ for
      every } x \in T \right\}. 
\end{equation}
The following result (see, for example, \citet[Example
6.24]{rockafellar2009variational}) states that for every convex set
$\C$ and $x \in \C$, the normal cone $N_{\C}(x)$ equals the polar of
the tangent cone $T_{\C}(x)$ (recall that $T_{\C}(x)$ is defined in
\eqref{tcod}).  
\begin{lemma}\label{woki}
  For every convex set $\C \subseteq \R^n$ and $x \in \C$, we have 
  \begin{equation*}
    N_{\C}(x) = \left(T_{\C}(x) \right)^{\mathrm{o}}. 
  \end{equation*}
\end{lemma}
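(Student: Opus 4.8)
The plan is to establish the two inclusions $N_{\C}(x) \subseteq (T_{\C}(x))^{\mathrm{o}}$ and $(T_{\C}(x))^{\mathrm{o}} \subseteq N_{\C}(x)$ directly from the definitions, using only bilinearity and continuity of the inner product. This is entirely standard convex analysis, and I would also point to \citet[Example 6.24]{rockafellar2009variational} for reference; the short self-contained argument below is included only for completeness. Note first that $T_{\C}(x)$ is a nonempty closed convex cone (it contains $0$, taking $t = 0$ in \eqref{tcod}), so its polar $(T_{\C}(x))^{\mathrm{o}}$ is well defined in the sense of \eqref{pocodef}.

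For the inclusion $(T_{\C}(x))^{\mathrm{o}} \subseteq N_{\C}(x)$: given $u \in (T_{\C}(x))^{\mathrm{o}}$ and any $\eta \in \C$, the vector $\eta - x = 1 \cdot (\eta - x)$ belongs to the generating set $\{t(\eta - x) : t \ge 0,\ \eta \in \C\}$, hence to its closure $T_{\C}(x)$. By the definition of the polar cone this forces $\left<u, \eta - x\right> \le 0$, and since $\eta \in \C$ was arbitrary, $u \in N_{\C}(x)$.

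For the reverse inclusion $N_{\C}(x) \subseteq (T_{\C}(x))^{\mathrm{o}}$: given $u \in N_{\C}(x)$ we have $\left<u, \eta - x\right> \le 0$ for every $\eta \in \C$, and multiplying by $t \ge 0$ gives $\left<u, t(\eta - x)\right> \le 0$ for all $t \ge 0$, $\eta \in \C$. Thus the continuous linear functional $z \mapsto \left<u, z\right>$ is nonpositive on the generating set of $T_{\C}(x)$, and therefore also on its closure $T_{\C}(x)$, which yields $u \in (T_{\C}(x))^{\mathrm{o}}$. There is no genuine obstacle in this argument; the only point warranting a word of justification is the passage to the closure in this last step, which is immediate from continuity of the inner product.
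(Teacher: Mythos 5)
Your proof is correct, and it fills in exactly the standard two-inclusion argument that the cited reference \citet[Example 6.24]{rockafellar2009variational} gives; the paper itself records no proof beyond the citation, so there is no conflicting route to compare against.
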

The next result states that $\cone(\partial f(\theta^*))$ equals
$N_{\C}(\theta^*)$ where
\begin{equation}\label{cbar}
 \C := \left\{\theta \in \R^n: f(\theta) \leq
  f(\theta^*) \right\} 
\end{equation}
under some conditions on the convex function $f$ and $\theta^*$. This
result follows from \citet[Theorem 23.7 and Corollary
23.7.1]{Rockafellar70book}. 
\begin{lemma}\label{roci}
  Suppose $f : \R^n \rightarrow \R$ is a convex function and $\theta^*
  \in \R^n$ is such that $\partial f(\theta^*)$ is a compact convex 
  set with $0 \notin \partial f(\theta^*)$. Then 
  \begin{equation*}
\cone(\partial f(\theta^*)) =  N_{\C}(\theta^*) 
  \end{equation*}
  where $\C$ is given by \eqref{cbar}.  
\end{lemma}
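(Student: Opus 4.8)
The plan is to establish the two inclusions separately. The inclusion $\cone(\partial f(\theta^*)) \subseteq N_{\C}(\theta^*)$ is immediate from the subgradient inequality, while the reverse inclusion $N_{\C}(\theta^*) \subseteq \cone(\partial f(\theta^*))$ will come from convex duality (a Karush--Kuhn--Tucker argument) applied to a linear program over the sublevel set $\C$. This is in essence \citet[Theorem 23.7 and Corollary 23.7.1]{Rockafellar70book}, which I would cite directly in the main text, but a self-contained argument is sketched here.

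For the easy inclusion I would fix $v \in \partial f(\theta^*)$ and use $f(y) \geq f(\theta^*) + \langle v, y - \theta^*\rangle$ for all $y \in \R^n$; restricting to $y \in \C$ (so that $f(y) \leq f(\theta^*)$) gives $\langle v, y - \theta^*\rangle \leq f(y)-f(\theta^*) \leq 0$, hence $v \in N_{\C}(\theta^*)$, and since $N_{\C}(\theta^*)$ is a cone, $\lambda v \in N_{\C}(\theta^*)$ for every $\lambda \geq 0$. Note this step does not use the hypothesis $0 \notin \partial f(\theta^*)$.

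For the reverse inclusion I would take $u \in N_{\C}(\theta^*)$. The case $u = 0$ is trivial since $0 \in \cone(\partial f(\theta^*))$ (the set $\partial f(\theta^*)$ being nonempty because $f$ is finite and convex), so assume $u \neq 0$. Membership of $u$ in $N_{\C}(\theta^*)$ says precisely that $\theta^*$ minimizes $y \mapsto -\langle u, y\rangle$ subject to the single convex constraint $g(y) := f(y) - f(\theta^*) \leq 0$. Since $0 \notin \partial f(\theta^*)$, the point $\theta^*$ is not a global minimizer of $f$, so there is some $\bar{\theta}$ with $g(\bar{\theta}) < 0$; that is, Slater's condition holds. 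I would then invoke the KKT theorem for convex programs to obtain a multiplier $\mu \geq 0$ such that $\theta^*$ minimizes the Lagrangian $y \mapsto -\langle u, y\rangle + \mu\, g(y)$ over $\R^n$. Because this Lagrangian is a finite convex function and $-\langle u, \cdot\rangle$ is everywhere differentiable, the subdifferential sum rule applies, so its minimality at $\theta^*$ is equivalent to $0 \in -u + \mu\, \partial f(\theta^*)$, i.e.\ $u \in \mu\, \partial f(\theta^*)$. Since $u \neq 0$ forces $\mu > 0$, we get $u = \mu v$ with $v \in \partial f(\theta^*)$, hence $u \in \cone(\partial f(\theta^*))$.

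The main obstacle is the reverse inclusion, and inside it the justification of strong duality (existence of the multiplier $\mu$): this is exactly where the hypothesis $0 \notin \partial f(\theta^*)$ is used, via the resulting Slater point $\bar{\theta}$. If instead $0 \in \partial f(\theta^*)$, then $\theta^*$ would be a global minimizer and $\C$ would be the set of minimizers of $f$, in which case $N_{\C}(\theta^*)$ can be strictly larger than $\cone(\partial f(\theta^*))$ (for instance when $\theta^*$ lies in the relative interior of that set). I would also remark that compactness of $\partial f(\theta^*)$ together with $0 \notin \partial f(\theta^*)$ makes $\cone(\partial f(\theta^*))$ closed, which is consistent with $N_{\C}(\theta^*)$ being closed.
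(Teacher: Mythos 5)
Your proof is correct. The paper itself offers no self-contained argument for this lemma---it defers entirely to \citet[Theorem 23.7 and Corollary 23.7.1]{Rockafellar70book}---so your KKT/Lagrangian-duality derivation is a genuine alternative route. Rockafellar's Theorem 23.7 shows that the normal cone to the sublevel set at a non-minimizer equals the \emph{closure} of $\cone(\partial f(\theta^*))$, and Corollary 23.7.1 identifies when the closure may be dropped; the hypotheses here ($\partial f(\theta^*)$ compact, $0 \notin \partial f(\theta^*)$) are precisely what make that reduction go through. Your approach bypasses the closure question altogether: given a nonzero $u \in N_{\C}(\theta^*)$, the condition $0 \notin \partial f(\theta^*)$ furnishes a Slater point $\bar\theta$ with $f(\bar\theta) < f(\theta^*)$, strong duality hands you a multiplier $\mu \geq 0$ with $0 \in -u + \mu\,\partial f(\theta^*)$ (the subdifferential sum rule applies since both summands are finite convex functions on all of $\R^n$), and $u \neq 0$ forces $\mu > 0$, so $u \in \mu\,\partial f(\theta^*) \subseteq \cone(\partial f(\theta^*))$ directly, without ever invoking closedness of the cone. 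Note that the one stated hypothesis you never use in the argument proper is compactness of $\partial f(\theta^*)$---which is automatic for a finite convex $f$ on $\R^n$ in any case---and you correctly relegate its role to the side remark that it makes $\cone(\partial f(\theta^*))$ closed (Rockafellar's Corollary 9.6.1, cited elsewhere in the paper), a fact your proof happens not to need.
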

Observe now that when $f(\theta) := \|D^{(r)} \theta\|_1$ and
$\theta^*$ is such that $D^{(r)} \theta^* \neq 0$, the conditions in
Lemma \ref{roci} hold as can be seen from the characterization of
$\partial f(\theta^*)$ in Proposition \ref{characterization}. The
assumption that $0 \notin \partial f(\theta^*)$ holds because for every
$v \in \partial f(\theta^*)$, we must have
\begin{equation*}
  \sum_{i=j}^n {r+i-j-1 \choose r-1} v_i = \sgn((D^{(r)} \theta^*)_{j-r})
\end{equation*}
for every $j$ such that $(D^{(r)} \theta^*)_{j-r} \neq 0$ (there must
exist at least one such $j$ because of the assumption that $D^{(r)}
\theta^* \ne 0$). 

Further, for $f(\theta) := \|D^{(r)} \theta\|_1$, it is easy to see
that the set $\C$ in \eqref{cbar} satisfies
\begin{equation*}
  \C = \left\{\theta \in \R^n : \|D^{(r)} \theta\|_1 \leq \|D^{(r)}
    \theta^* \|_1 \right\} = K^{(r)}(V^{(r)}(\theta^*)) = K^{(r)}(V^*)
\end{equation*}
because $V^* := V^{(r)}(\theta^*)$ and $K^{(r)}(V)$ is defined as in
\eqref{krv}. Putting together the conclusions of Lemma \ref{woki} and
Lemma \ref{roci}, we therefore deduce that 
\begin{equation*}
  \cone(\partial f(\theta^*)) = \left(T_{K^{(r)}(V^*)}(\theta^*) 
  \right)^{\mathrm{o}}.  
\end{equation*}
From here, in order to prove \eqref{annpre}, we need another standard
fact from convex geometry (see, for example, \citet[Theorem
3.2.5]{hiriart2013convex}). This result states that for every closed
convex cone $T \subseteq \R^n$, we have 
\begin{equation}\label{moreau}
 \Pi_{T^{\mathrm{o}}}(z) = z - \Pi_T(z) \qt{for every $z \in \R^n$}
\end{equation}
where $\Pi_{K}(z)$ denotes the projection of $z$ onto $K$. 

Applying \eqref{moreau} to $T := \cone(\partial f(\theta^*))$ (which
is a closed convex cone when $D^{(r)} \theta^* \neq 0$; closedness
follows, for example, from \citet[Corollary
9.6.1]{Rockafellar70book}), we obtain 
\begin{equation}\label{morco}
  z - \Pi_{\cone(\partial f(\theta^*))}(z) =
  \Pi_{T_{K^{(r)}(V^*)}}(z). 
\end{equation}
From the above identity (and the definitions of $\D(\cdot)$ and $\delta(\cdot)$),
the fact \eqref{annpre} readily follows. The fact \eqref{annpre} will
be crucially used in the proof of Theorem \ref{prt1}. Also, the
identity \eqref{morco} will play a key role in the proofs of
Lemma \ref{explam1} and Lemma \ref{explamr}.  

\section{Proofs of the Main Results}\label{pfmr}
In the section, we provide the proofs of the following results in
Section \ref{ms}: Theorem \ref{woor}, Theorem \ref{ada} (and
inequality \eqref{ada.peq} in Remark \ref{exphp}), Corollary
\ref{kco}, Lemma \ref{simpo}, Theorem \ref{prt1}, Corollary
\ref{expen1}, Lemma 
\ref{explam1} and Corollary \ref{expen}, Corollary \ref{expenr}, Lemma
\ref{explamr} and Corollary \ref{supno} and finally, Lemma
\ref{rr0}. In addition to these results, Section \ref{ms} also
contains Proposition \ref{characterization} and Lemma
\ref{gksd}. These are proved in Subsection \ref{sudi}. 

Some of the proofs presented in this section will introduce and use
additional technical results. These technical results will be proved
in the Section \ref{sutta}.

\subsection{Proof of Theorem \ref{woor}} \label{pfwoor}
We prove Theorem \ref{woor} in this subsection. As mentioned at the
start of Section \ref{prems}, our starting point for this proof is
Theorem \ref{gencha}; note that $\hat{\theta}^{(r)}_V$ is the least
squares estimator subject to the constraint that $\theta \in
K^{(r)}(V)$ (recall that the set $K^{(r)}(V)$ is defined in
\eqref{krv}). Theorem \ref{gencha} implies that we can bound the risk
of $\hat{\theta}_V^{(r)}$ via upper bounds for   
\begin{equation}\label{gtd}
  G(t) := \E \left[\sup_{\theta \in K^{(r)}(V) : \|\theta - \theta^*\| \leq 
  t}  \left<\xi, \theta - \theta^* \right>  \right]
\end{equation}
for $t > 0$. Our upper bound for $G(t)$ is proved from the following
lemma. Let 
\begin{equation}
  \label{c2}
S_r(V, t) :=  \left\{\alpha \in \R^n : \|\alpha\| \leq t, \|D^{(r)}
  \alpha\|_1  \leq V n^{1-r}  \right\}.   
\end{equation}

\begin{lemma}\label{svg}
Fix an integer $r \geq 1$. Then there exists a positive constant $C_r$ such that
for every $n \geq r$, $t \geq 0$ and $V \geq 0$, we have     
\begin{equation}\label{svg.eq}
  \E \left[ \sup_{\theta \in S_r(V, t)} \left<\xi, \theta \right>
  \right] \leq C_r \sigma t 
      \left(\frac{\sqrt{n} V}{t} \right)^{1/(2r)} + C_r \sigma t
      \sqrt{\log (en)}.  
\end{equation}
\end{lemma}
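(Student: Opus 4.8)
The plan is to bound the expected supremum in \eqref{svg.eq} by the Gaussian width of the set $S_r(V,t)$ and to control that width through Dudley's entropy integral, the Euclidean metric entropy of $S_r(V,t)$ being estimated in turn via its fat-shattering dimension. Writing $\xi = \sigma Z$ with $Z \sim N_n(0,I_n)$ and using that $S_r(V,t)$ lies in a Euclidean ball of radius $t$, Dudley's bound gives
\begin{equation*}
  \E \sup_{\theta \in S_r(V,t)} \langle \xi, \theta \rangle \;\le\; C\,\sigma \int_0^{2t} \sqrt{\log N\bigl(\epsilon, S_r(V,t), \|\cdot\|\bigr)}\; d\epsilon .
\end{equation*}
It therefore suffices to establish a metric entropy estimate of the form $\log N(\epsilon, S_r(V,t), \|\cdot\|) \le C_r (\sqrt n\, V/\epsilon)^{1/r} + C_r \log(en)$ for $0 < \epsilon \le 2t$: the integral of $\sqrt{(\sqrt n V/\epsilon)^{1/r}}$ over $(0,2t)$ converges (the exponent $-1/(2r)$ lies in $(-1,0)$) and equals a constant multiple of $t(\sqrt n V/t)^{1/(2r)}$, while $\int_0^{2t}\sqrt{\log(en)}\,d\epsilon = 2t\sqrt{\log(en)}$; multiplying through by $\sigma$ then yields \eqref{svg.eq}.

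The heart of the argument is the entropy estimate, and the idea (which appears to be new in this context) is to bound the fat-shattering dimension of $S_r(V,t)$ rather than to invoke classical approximation-theoretic entropy bounds for bounded-variation-type function classes. Viewing $\alpha\in\R^n$ as a function on the $n$ design points and rescaling so that $t=1$, the class consists of functions bounded by $1$ whose $r$-th order differences have $\ell_1$-norm at most $V n^{1-r}$. If a set of $m$ design points is $\gamma$-shattered, then testing against an alternating sign pattern produces some $\alpha$ in the class that oscillates by at least $\gamma$ between consecutive shattered points; a convexity (H\"older-type) argument shows that such oscillations are cheapest, as measured by $\|D^{(r)}\alpha\|_1$, when the shattered points are evenly spaced, and in that configuration each oscillation over a block of $\approx n/m$ indices forces $\|D^{(r)}\alpha\|_1$ to grow by at least $c_r\,\gamma(m/n)^{r-1}$. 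Summing over the $m$ oscillations gives $\|D^{(r)}\alpha\|_1 \ge c_r\,\gamma\, m^r/n^{r-1}$, which, compared with the constraint $\|D^{(r)}\alpha\|_1 \le V n^{1-r}$, yields $m \le C_r (V/\gamma)^{1/r}$ after undoing the rescaling; hence $\mathrm{fat}_\gamma(S_r(V,t)) \le C_r (V/\gamma)^{1/r}$, up to an additive constant accounting for the $r$-dimensional null space of $D^{(r)}$ (the degree-$(r-1)$ polynomials). A standard passage from fat-shattering dimension to metric entropy (as in \citet{rudelson2006combinatorics}; no logarithmic factor is lost in the regime $1/r \le 1$ relevant here), combined with the change of metric from the empirical $L^2$ distance to $\|\cdot\|$, which inserts the factor $\sqrt n$ inside the leading term, then produces the displayed entropy estimate, with the additive $\log(en)$ absorbing the contribution of the low-dimensional polynomial part.

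The main obstacle is the fat-shattering bound $\mathrm{fat}_\gamma(S_r(V,t)) \le C_r (V/\gamma)^{1/r}$: one must quantify precisely how much a discrete sequence with a fixed $\ell_1$-budget on its $r$-th order differences can oscillate, and optimize over the spatial placement of the oscillations. For $r=1$ this reduces to the elementary fact that $m$ oscillations of amplitude $\gamma$ cost total variation at least $2\gamma(m-1)$, but for $r\ge 2$ it requires the convexity argument sketched above together with care in separating the polynomial part of $\alpha$ from its oscillatory part. A secondary, bookkeeping difficulty is controlling the logarithmic factors introduced in the fat-shattering-to-entropy step and in the change of metric, so as to land on exactly the stated bound, with no spurious logarithm multiplying the leading term $t(\sqrt n V/t)^{1/(2r)}$.
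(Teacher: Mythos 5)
Your overall strategy---Dudley's entropy integral combined with a metric-entropy bound derived from the fat-shattering dimension---is exactly the paper's, and the heuristic you give for the fat-shattering bound (``$m$ oscillations of amplitude $\gamma$ over evenly spaced blocks cost at least $c_r\,\gamma\,m^r/n^{r-1}$ of $\|D^{(r)}\alpha\|_1$'') is a plausible alternative to the paper's divided-differences argument and, normalized correctly, yields the same rate $\mathrm{fat}_\gamma \lesssim (V/\gamma)^{1/r}$. However, there is a concrete gap in how you propose to handle the $r$-dimensional null space of $D^{(r)}$ (the degree-$(r-1)$ polynomials).

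You propose to absorb the polynomial part via ``an additive constant accounting for the $r$-dimensional null space'' inside the fat-shattering dimension and then invoke \citet{rudelson2006combinatorics}. This cannot work as stated: the Rudelson--Vershynin entropy theorem (Theorem \ref{rv} in the paper) requires a dominating function $v(\cdot)$ obeying the geometric-halving condition $v(as)\le \tfrac12 v(s)$ for some $a>2$. A function of the form $v(s)=C_r(V/s)^{1/r}+r$ fails this condition (the additive constant $r$ cannot be halved), so the theorem does not apply to your modified fat-shattering bound. More importantly, the true $\epsilon$-entropy of the polynomial part of $S_r(V,t)$ is of order $r\log(t/\epsilon)$ (the entropy of an $r$-dimensional ball of radius $\asymp t$), not the constant $C_r\log(en)$ you assert; your claimed intermediate bound $\log N(\epsilon, S_r(V,t)) \le C_r(\sqrt n\,V/\epsilon)^{1/r}+C_r\log(en)$ is therefore false for $\epsilon$ small. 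The paper sidesteps this by treating the polynomial coordinates separately: in the proof of Theorem \ref{dprg} it discretizes $(D^{(i)}\theta)_1$, $0\le i<r$, at scale $\delta$, accounts for the resulting $\asymp (t/\delta)^r$ shifts explicitly as a $\log(2+2^rn^rt/(\epsilon\sqrt n))$ term, and only applies the fat-shattering machinery to the residual class $\cras$ with $D^{(r)}\alpha\ge 0$ (after the decomposition $\theta=\alpha(\theta)-\beta(\theta)$). That $\log(t/\epsilon)$ correction, although not of the form you claimed, integrates in Dudley's bound to $C_r\,t\sqrt{\log(en)}$, which is what gives the second term of \eqref{svg.eq}. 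To repair your argument you would need to (i) split off the polynomial coordinates explicitly rather than hiding them in the fat-shattering dimension, and (ii) either adopt the paper's monotone decomposition of $\theta$ or supply the details of the oscillation-counting and convexity argument directly on $S_r(V,t)$, being careful that $D^{(r)}\alpha$ has no sign.
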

Lemma \ref{svg} is proved in Subsection \ref{gwc} and the ideas behind 
its proof are as follows. By Dudley's entropy bound, the left hand
side of \eqref{svg.eq}  can be bounded from above by the metric
entropy numbers (formally defined in Subsection \ref{gwc}) of the set
$S_r(V, t)$ (defined in \eqref{c2}). The metric entropy of $S_r(V, t)$
will be bounded by controlling the fat shattering dimension (see
Subsection \ref{gwc1} for details).  

Below, we provide the proof of Theorem \ref{woor} based on Lemma
\ref{svg}. 
\begin{proof}[Proof of Theorem \ref{woor}]
As $\|D^{(r)} \theta^*\|_1 \leq V n^{1-r}$, it follows that
  $\theta^* \in \krv$ (the set $\krv$ is defined in
  \eqref{krv}). Theorem \ref{gencha} implies that 
\begin{equation}\label{chat}
  R(\hat{\theta}^{(r)}_V, \theta^*) \leq \frac{C}{n} \max \left(t_0^2,
    \sigma^2 \right)  
\end{equation}
for a universal positive constant $C$, where $t_0 > 0$ is such that
$G(t_0) \leq t_0^2/2$ with $G(t)$ defined as in \eqref{gtd}. In order
to apply this result, we need to bound the function $G(t)$ 
   from above. By triangle inequality, $\|D^{(r)}(\theta - \theta^*)
   \|_1 \leq \|D^{(r)} \theta\|_1 + \|D^{(r)} \theta^*\|_1$ so that    
  \begin{equation*}
    G(t) = \E \sup_{\theta \in \krv: \|\theta - \theta^*\| \leq t} 
    \left<\xi, \theta - \theta^* \right> \leq 
 \E \sup_{\alpha \in \R^n: \|\alpha\| \leq t, \|D^{(r)} \alpha\|_1
   \leq 2 V n^{1-r}} \left<\xi, \alpha \right>.    
  \end{equation*}
   The right hand side above is controlled in Lemma \ref{svg} from 
   which we deduce that 
\begin{equation*}
  G(t) \leq C_r \sigma t \left(\frac{\sqrt{n} V}{t} \right)^{\frac{1}{2
      r}} +  C_r \sigma t \sqrt{\log (e n)} 
\end{equation*}
for a constant $C_r$ depending on $r$ alone. We now observe that 
\begin{equation*}
C_r \sigma t \left(\frac{\sqrt{n} V}{t} \right)^{\frac{1}{2
      r}} \leq \frac{t^2}{4} ~~ \text{  iff  } ~~ t \geq (4 C_r)^{2r/(2r+1)} \sigma^{2r/(2r+1)}
  \left(V\sqrt{n} \right)^{1/(2r+1)} 
\end{equation*}
and 
\begin{equation*}
  C_r \sigma t \sqrt{\log (en)} \leq \frac{t^2}{4} ~~ \text{  iff  } ~~ t \geq 4 C_r \sigma \sqrt{\log(en)}. 
\end{equation*}
It follows therefore that $G(t_0) \leq t_0^2/2$ provided 
\begin{equation*}
  t_0 := \max \left( (4 C_r)^{2r/(2r+1)} \sigma^{2r/(2r+1)}
  \left(V\sqrt{n} \right)^{1/(2r+1)}, 4 C_r \sigma \sqrt{\log(en)}
\right).  
\end{equation*}
The proof of inequality \eqref{woor.eq} is therefore complete by
inequality \eqref{chat}.    

Inequality \eqref{woor.peq} can be derived as a consequence of
\eqref{woor.eq} and the fact that the map $y \mapsto
\|\hat{\theta}^{(r)}_V - \theta^*\|$ is 1-Lipschitz (see e.g.,~\citet[Section 2]{van2015concentration}). By the usual concentration
inequality for Lipschitz functions of Gaussian variables, this gives 
\begin{equation*}
  \P \left\{\|\hat{\theta}^{(r)}_V - \theta^*\| \geq \E_{\theta^*}
    \|\hat{\theta}^{(r)}_V - \theta^*\| + \sigma z \right\} \leq \exp
  \left(\frac{-z^2}{2} \right). 
\end{equation*}
This gives that
\begin{equation*}
  \frac{1}{n} \|\hat{\theta}^{(r)}_V - \theta^* \|^2 \leq 2
  R(\hat{\theta}^{(r)}_V, \theta^*) + \frac{4 \sigma^2 x}{n}
\end{equation*}
with probability at least $1 - e^{-x}$ so that inequality
\eqref{woor.peq} follows from \eqref{woor.eq}. 
\end{proof}

\subsection{Proof of Theorem \ref{ada}} \label{bsi}
Our starting points for proving Theorem \ref{ada} are the inequalities
\eqref{belma1} and \eqref{belma2} applied to $K = K^{(r)}(V)$. From
here, it is clear that inequality \eqref{ada.eq} (as well as
\eqref{ada.peq}) both follow from the inequality \eqref{rew}. Writing
explicitly the Gaussian width $w(T_{K^{(r)}(V)}(\theta))$, we see that
\eqref{rew}  is equivalent to proving that 
\begin{equation}\label{keytc}
  \E \left[\sup_{\alpha \in T_{\krv}(\theta): \|\alpha\| \leq 1} \left<Z,
    \alpha \right> \right] \leq C_r \sqrt{ n \Delta_r(\theta)} 
\end{equation}
for every $\theta \in \R^n$ with $V^{(r)}(\theta) = V$. For this, we
obviously need to understand the set $T_{\krv}(\theta)$ for $\theta
\in \R^n$ with $V^{(r)}(\theta) = V$. The following result provides a
necessary condition that is satisfied by every vector $\alpha \in 
T_{\krv}(\theta)$ with $\|\alpha\| \le 1$. The proof of this lemma is
given in Subsection \ref{tcc}. Recall,  from Section \ref{ms}, the notion of
$r^{th}$ order knots (along with their signs) of vectors in $\R^n$. We
shall also use the following notation. For $\alpha \in \R^m$ and $1
\leq a \leq b \leq m$, we let  
\begin{equation}\label{vab} 
  V_{a, b}(\alpha) := V(\alpha_a, \dots, \alpha_b) = |\alpha_{a+1} -
    \alpha_a| + \dots + |\alpha_b - \alpha_{b-1}|. 
\end{equation}


\begin{lemma}\label{alta}
Fix $V > 0$, $r \ge 1$, $n \geq r$ and $\theta \in \R^n$ with
$V^{(r)}(\theta) = V$.  Suppose $2 \leq j_1 < \dots < j_k \leq n-r+1$
  denote any set of indices which contains all the $r^{th}$ order
  knots of $\theta$. Let $\rs_1, \dots, \rs_k$ be such that
  $\rs_i$ is the sign of the knot corresponding to $j_i$ if $j_i$ is a
  knot and $\rs_i$ is arbitrary in $\{-1, 0, 1\}$ if $j_i$ is not a
  knot. Also let $j_0 = 1$, $j_{k+1} = n-r+2$ and $\rs_0 = \rs_{k+1} =
    0$. The indices $j_0, j_1, \dots, j_k, j_{k+1}$ define a partition
      $\I_0, \dots, \I_k$ of $\{1, \dots, n\}$ in the following way:
      $\I_0 := \{j_0, \dots, j_1 + r - 2\}$ and     
\begin{equation*}
  \I_i = \{j_i + r - 1, \dots, j_{i+1} + r - 2\} \qt{for $i = 1,
    \dots, k$}.   
\end{equation*}
Let $n_i$ denote the cardinality of $\I_i$ for $i = 0, 1, \dots, k$
i.e., $n_0 := j_1 + r - 2$ and $n_i = j_{i+1} - j_i$ for $1 \leq i
\leq k$.  Then there exists a positive constant $C_r$ (that depends on
$r$ alone) such that for every $\alpha \in T_{\krv}(\theta)$ with
$\|\alpha\| \le 1$, there exist indices $\ell_0 \in \I_0, \ell_1 \in
\I_1, \dots, \ell_k \in \I_k$ such that  
\begin{equation}
  \label{alta.eq}
  \sum_{i=0}^k \Gamma_i(\alpha, \ell_i) \leq C_r \sqrt{ \sum_{i=0}^{k} n_i^{1 - 2r} I\{\rs_i \neq \rs_{i+1}\}}  
\end{equation}
where 
\begin{equation}\label{alta.de}
  \Gamma_i(\alpha, \ell_i) := V_{j_i, j_{i+1} - 1}(\Delta) -
  \rs_{i+1} (\Delta_{j_{i+1} - 1} - \Delta_{\ell_{i}}) - \rs_{i}
    (\Delta_{\ell_i} - \Delta_{j_{i}})
\end{equation}
with $\Delta = (\Delta_1,\ldots, \Delta_{n-r+1}):= D^{(r-1)} \alpha$.        
\end{lemma}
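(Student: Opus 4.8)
The plan is to start from the description of the tangent cone $T_{\krv}(\theta)$. Since $K^{(r)}(V) = \{\theta : \|D^{(r)}\theta\|_1 \le Vn^{1-r}\}$ is a sublevel set of the convex function $\theta \mapsto \|D^{(r)}\theta\|_1$ at a point $\theta$ where the function value is exactly $V n^{1-r}$ (because $V^{(r)}(\theta) = V$), standard convex analysis (as recalled via Lemma \ref{woki} and Lemma \ref{roci} and Proposition \ref{characterization}) gives that $\alpha \in T_{\krv}(\theta)$ if and only if $\langle v, \alpha\rangle \le 0$ for all $v$ in the subdifferential of $\theta \mapsto \|D^{(r)}\theta\|_1$, equivalently $\langle D^{(r)}\alpha, \operatorname{sign\ pattern}\rangle \le 0$ type conditions. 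Concretely, writing $\Delta := D^{(r-1)}\alpha$, membership in the tangent cone forces a directional-derivative inequality $V(\Delta) \le \sum (\text{sign contributions along the knots of } \theta)$, which upon telescoping over the pieces $\I_0, \dots, \I_k$ will be exactly the statement that $\sum_i V_{j_i, j_{i+1}-1}(\Delta)$ is controlled by $\sum_i$ of signed telescoping increments plus slack terms. First I would make this reduction precise, expressing the tangent-cone condition in terms of $\Delta$ and the knot signs $\rs_1, \dots, \rs_k$ of $\theta$.

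Next I would handle each piece $\I_i$ separately. On a single piece, $\Delta$ restricted to $\{j_i + r - 1, \dots, j_{i+1} + r - 2\}$ has total variation $V_{j_i, j_{i+1}-1}(\Delta)$, and the tangent-cone inequality constrains how much this variation can ``fight'' the prescribed signs $\rs_i$ at the left endpoint and $\rs_{i+1}$ at the right endpoint. The key elementary fact is: for a real sequence $x_0, \dots, x_m$, and signs $s, s' \in \{-1,0,1\}$, one has $V(x) - s'(x_m - x_\ell) - s(x_\ell - x_0) \ge 0$ for a well-chosen breakpoint $\ell$ — indeed choosing $\ell$ to be where the partial-sum walk is extremal makes each of the two segments monotone in the favorable direction, so the quantity $\Gamma_i(\alpha, \ell_i)$ defined in \eqref{alta.de} is nonnegative for that choice. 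Combined with the global tangent-cone inequality, the sum $\sum_i \Gamma_i(\alpha, \ell_i)$ of these nonnegative quantities is bounded above by the total slack, which is where the terms $n_i^{1-2r} I\{\rs_i \ne \rs_{i+1}\}$ enter: when $\rs_i = \rs_{i+1}$ there is no genuine sign change across the knot $j_{i+1}$, so that knot contributes no slack; when $\rs_i \ne \rs_{i+1}$ the slack is the amount by which $D^{(r)}\alpha$ can violate the sign constraint at $j_{i+1}$, and bounding this requires converting an $\ell_\infty$-type bound on $\Delta$ (coming from $\|\alpha\| \le 1$ and the fact that $D^{(r-1)}$ applied to a vector of unit norm cannot be too large on a short window, with the $n_i^{1/2 - r}$ scaling arising from the $(r-1)$-fold summation) into the stated $n_i^{1-2r}$ inside the square root.

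The main obstacle I expect is the bookkeeping that turns the raw tangent-cone inequality into the clean ``$\sum_i \Gamma_i \le C_r \sqrt{\sum_i n_i^{1-2r} I\{\rs_i \ne \rs_{i+1}\}}$'' form: one must (a) carefully relate $D^{(r)}\alpha$ near a knot $j_{i+1}$ to the values of $\Delta = D^{(r-1)}\alpha$ at the endpoints of the two adjacent pieces, using the recursion $D^{(r)} = D \circ D^{(r-1)}$ and the binomial identities underlying Proposition \ref{characterization}; (b) track the $r$-dependent normalization $n^{1-r}$ and the polynomial-in-$n_i$ factors so that the exponent $1 - 2r$ comes out correctly; and (c) verify that the choice of breakpoints $\ell_i \in \I_i$ can be made simultaneously for all $i$ (it can, since the pieces are disjoint and the breakpoint for piece $i$ depends only on $\Delta$ restricted to $\I_i$). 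The Cauchy–Schwarz step that produces the square root — passing from a sum of per-knot bounds of size $\lesssim n_i^{1/2 - r}$ times a controlled factor, to $\sqrt{\sum_i n_i^{1-2r} I\{\rs_i \ne \rs_{i+1}\}}$ — is routine once the per-knot estimates are in place, so the real work is entirely in steps (a)–(b).
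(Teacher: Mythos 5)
Your overall architecture matches the paper's: (i) use the tangent-cone structure of $K^{(r)}(V)$ to obtain the inequality $\sum_i \Gamma_i(\alpha, \ell_i) \leq \sum_i (\rs_{i+1} - \rs_i)\Delta_{\ell_i}$, valid for any choice of $\ell_i$ in the prescribed ranges (the paper's Corollary \ref{cort}, derived from the explicit tangent-cone characterization in Lemma \ref{tcr}; you arrive at the same point via the subdifferential-polar route, which is fine); (ii) choose $\ell_i$ in each piece so that each term $(\rs_{i+1}-\rs_i)\Delta_{\ell_i}$ is small, exploiting $\|\alpha\|\le 1$; and (iii) Cauchy--Schwarz to convert a sum of terms of size $n_i^{1/2-r}\|\alpha^{(i)}\|$ into the square-root bound. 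This is exactly the paper's decomposition into Corollary \ref{cort} and Lemma \ref{cr1}.

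There is, however, a genuine gap in how you propose to carry out step (ii). You invoke ``an $\ell_\infty$-type bound on $\Delta$\ldots the fact that $D^{(r-1)}$ applied to a vector of unit norm cannot be too large on a short window, with the $n_i^{1/2-r}$ scaling arising from the $(r-1)$-fold summation.'' No such $\ell_\infty$ bound on $\Delta = D^{(r-1)}\alpha$ holds: for instance $\alpha = (1, 0, \dots, 0)$ has $\|\alpha\|=1$ and $\|D\alpha\|_\infty = 1$ independent of $n$, so the supposed bound $\|D\alpha\|_\infty \lesssim n^{-3/2}$ (for $r=2$) fails badly. What the argument actually requires --- and what the paper supplies as Lemma \ref{do} --- is a one-sided extremal statement: for $\|\alpha^{(i)}\| \le t_i$ there exists an index $\ell$ with $(D^{(r-1)}\alpha^{(i)})_\ell \le C_r n_i^{1/2-r} t_i$, and a (possibly different) index $\ell'$ with $(D^{(r-1)}\alpha^{(i)})_{\ell'} \ge -C_r n_i^{1/2-r} t_i$; one then picks $\ell_i$ to be $\ell$ or $\ell'$ according to the sign of $\rs_{i+1}-\rs_i$, so that $(\rs_{i+1}-\rs_i)\Delta_{\ell_i} \le 2C_r n_i^{1/2-r}t_i\,I\{\rs_i\ne\rs_{i+1}\}$. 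As Remark \ref{dore} notes, for $r\ge 2$ one cannot in general find a single index where $|D^{(r-1)}\alpha|$ is that small, so the two-sided choice is essential and the choice depends on the sign. Moreover this one-sided estimate is not a routine consequence of ``$(r-1)$-fold summation'': the paper's proof of Lemma \ref{do} goes through an explicit analysis of the matrix $A := S^T(I-P_X)S$, including its inversion via the Gohberg--Semencul formula and verification that all its entries are positive (Propositions \ref{inverse}, \ref{rowsum}, \ref{positive}). So while your plan has the right skeleton, it misidentifies the key quantitative fact and underestimates the substance of its proof.
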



\begin{remark}
  It may be noted that the indices $j_1, \dots, j_k$ in Lemma
  \ref{alta}  are not exactly the knots of $\theta$. They are any set
  of indices that contain the knots of $\theta$. We shall mostly work
  with the case when $j_1, \dots, j_k$ are exactly the set of knots of
  $\theta$ but we shall need this additional generality to deal with
  one special situation when some of the distances between the knots
  of $\theta$ are too large. In this case (see the last part of the proof
  of Theorem \ref{ada}), we shall add additional indices to the knots
  in order to keep the inter-distances manageable.  
\end{remark}

The insight provided by Lemma \ref{alta} into the structure
of $\{\alpha \in T_{\krv}(\theta) : \|\alpha\| \le 1 \}$ (note that
understanding this set is necessary for proving \eqref{keytc})  is as
follows. Suppose that $n_0, \dots, n_k$ are such that the right hand
side of \eqref{alta.eq} is small. In this case, Lemma \ref{alta}
implies that for every $\alpha \in T_{\krv}(\theta)$ with $\|\alpha\|
\le 1$, there exist indices $\ell_0, \dots, \ell_k$ for which
$\sum_{i=0}^k \Gamma_i(\alpha, \ell_i)$ is small. Figure \ref{tcplots}
displays two unit norm vectors in the tangent cone of a piecewise
constant vector $\theta$ (i.e., $r = 1$) and the corresponding indices
$\ell_0, \dots, \ell_k$. 

\begin{figure}[h!]
\begin{center}
 \includegraphics[height=2.8in,width=2.4in]{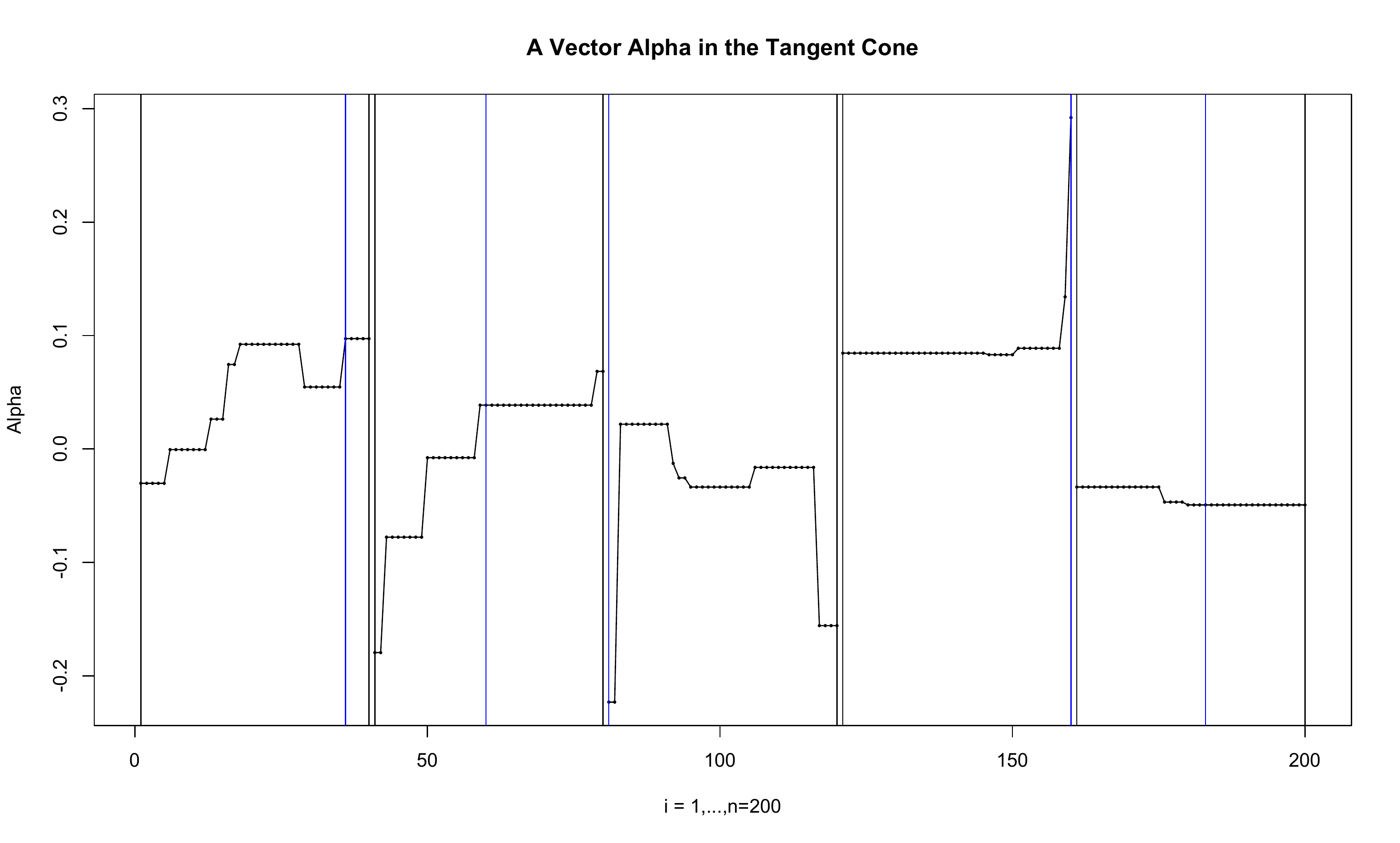}    
  \includegraphics[height=2.8in,width=2.4in]{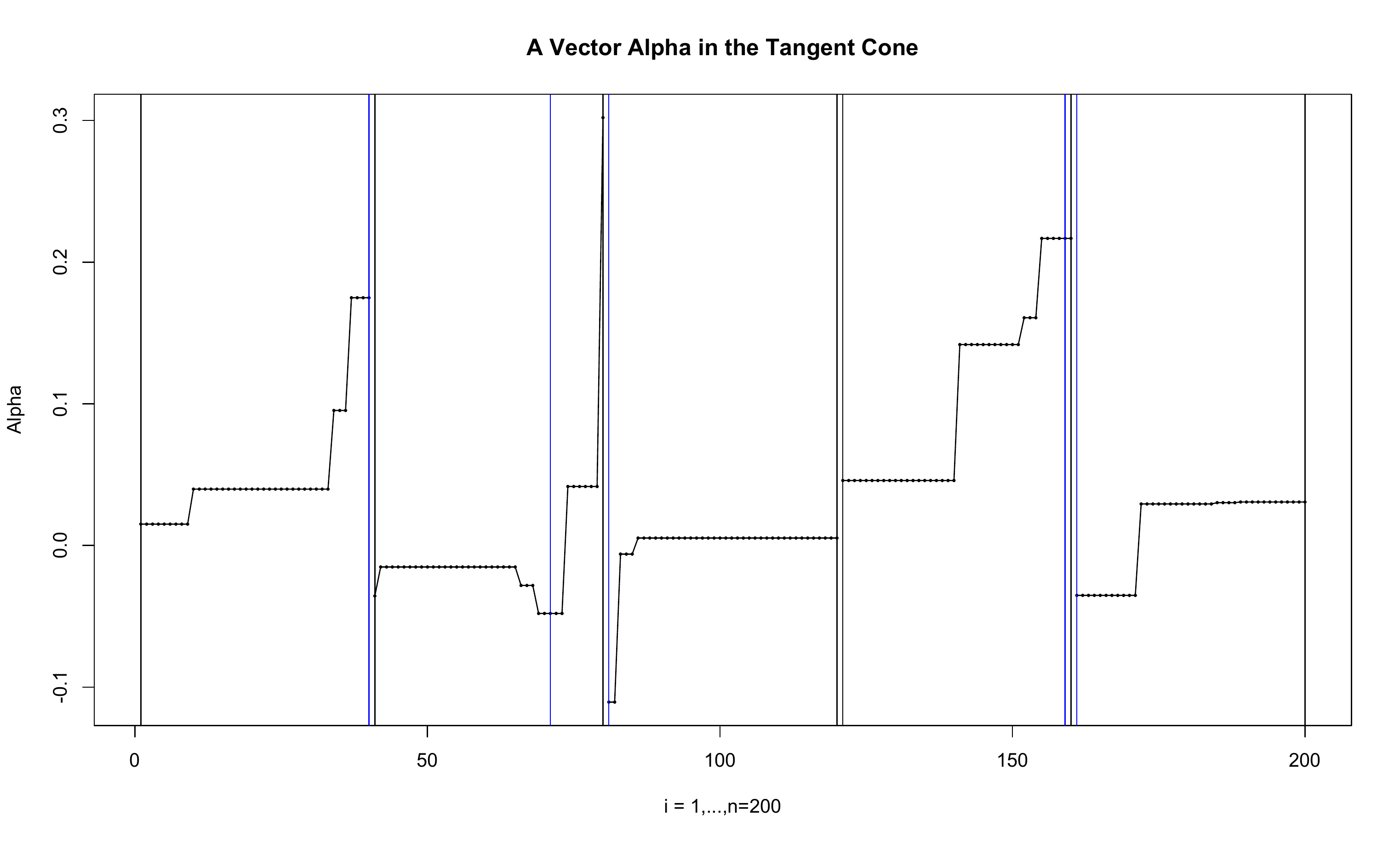}        
\caption{Let $r = 1$, $n = 200$ and let $\theta$ be the vector obtained
by sampling $f_1^*$ at $n$ equally spaced points with end points 0 and
1 (here $f_1^*$ is the piecewise constant vector from Section
\ref{simu} of the main paper). This vector $\theta$ has $k = 4$ jumps
at $j_1 = 41, j_2 = 81, j_3 = 121$ 
and $j_4 = 161$. These indices (and the indices $j_i - 1, i = 1, 2, 3,
4$) are plotted in black lines in the above pair of plots along with
vertical straight lines at $j_0 = 1$ and $j_5 = 200$. We then considered the
tangent cone, $T := T_{K^{(1)}(V)}(\theta)$, where $V$ is the variation of
$\theta$ and plotted two vectors $\alpha$ in $T$ with
$\|\alpha\| = 1$. For each of these two vectors $\alpha$, we also
plotted the integers $\ell_0, \dots, \ell_4$ as blue vertical
lines. Informally, in the five constant segments corresponding to
$\theta$, each vector $\alpha$ is approximately made of two monotone
segments.}   
\label{tcplots}
\end{center}
\end{figure}

It is helpful here
to observe that $\Gamma_i(\alpha, \ell_i)$ is always
nonnegative. Also,  if $\Gamma_i(\alpha, \ell_i) = 0$, then $D^{(r-1)}
\alpha$ is made of two monotone pieces in the interval from $j_i$ to
$j_{i+1} - 1$ (one piece 
  from $j_i$ to $\ell_i$ and the other from $\ell_i$ to $j_{i+1} -
    1$). When $r = 1$, this means that $\alpha$ is made of two
    monotone pieces in the interval from $j_i$ to $j_{i+1} - 1$. When
      $r = 2$, this means that $\alpha$ is made of two convex/concave
      pieces in the interval from $j_i$ to $j_{i+1} - 1$. For general
      $r$, this means that $\alpha$ is made of two $(r-1)^{th}$ order
      convex/concave functions in the interval from $j_i$ to $j_{i+1}
      - 1$. Extending this argument,  when
      $\Gamma_i(\alpha,  \ell_i)$ is small, $D^{(r-1)} \alpha$ is
      \textit{nearly} made of two monotone pieces in the interval from $j_i$  to $j_{i+1}
          - 1$; equivalently $\alpha$ is nearly made of two
          $(r-1)^{th}$ order convex/concave sequences in the interval
          from $j_i$ to $j_{i+1} - 1$.  This suggests therefore that in order to prove
          \eqref{keytc}, we need to prove bounds on the Gaussian suprema
          for vectors $\alpha \in \R^n$  for which $D^{(r-1)} \alpha$
          is nearly monotone. This is the content of the next lemma
          which is another main ingredient for the proof of Theorem
          \ref{ada}. 


\begin{lemma}\label{fina} 
Fix $r \ge 1$, $n \geq r$, $1 \leq l \leq n-r+1$, $t > 0$ and $\delta
\geq 0$. For $\theta \in \R^n$, let $\Delta(\theta) =
(\Delta_1(\theta),\ldots, \Delta_{n-r+1}(\theta)) := D^{(r-1)}
\theta$. Also let $\xi \sim N_n(0, \sigma^2 I_n)$. For 
every $\rs_1, \rs_2 \in \{-1, 0, 1\}$, the 
  quantity    
  \begin{equation*}
    \E \sup_{\theta \in \R^n, \|\theta\| \leq t} \left\{ \left<\xi, \theta
    \right> : \Delta =
    \Delta(\theta) , V(\Delta) \leq \rs_1(\Delta_{\ell} - \Delta_1) +
    \rs_2(\Delta_{n-r+1} - \Delta_{\ell}) + \delta \right\} 
  \end{equation*}
  is bounded from above as 
  \begin{equation*}
G \leq C_r \sigma \left(t + \delta n^{(2r -1)/2} \right)
    \sqrt{\log (en)} + C_r \sigma t^{(2r-1)/(2r)} n^{(2r-1)/(4r)} \delta^{1/(2r)}
  \end{equation*}
  for a positive constant $C_r$ that depends on $r$ alone. 
\end{lemma}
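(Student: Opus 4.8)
The plan is to reduce the supremum over the feasible set $T$ (the set of $\theta\in\R^n$ over which the supremum in the statement is taken, call the resulting expectation $G$) to two pieces: a supremum over a scaled $(r-1)$‑st order ``piecewise monotone'' cone, whose Gaussian width is only logarithmic in $n$, plus a small error term from an $\ell_1$‑bounded perturbation. Write $m:=n-r+1$, and for $\theta\in T$ set $\Delta:=D^{(r-1)}\theta\in\R^m$ and $\Delta':=D\Delta=D^{(r)}\theta\in\R^{n-r}$, so that $V(\Delta)=\|\Delta'\|_1$, $\Delta_\ell-\Delta_1=\sum_{i<\ell}\Delta'_i$ and $\Delta_m-\Delta_\ell=\sum_{i\ge\ell}\Delta'_i$. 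The defining constraint then rewrites as $\sum_{i=1}^{\ell-1}(|\Delta'_i|-\rs_1\Delta'_i)+\sum_{i=\ell}^{n-r}(|\Delta'_i|-\rs_2\Delta'_i)\le\delta$, a sum of nonnegative terms. I would split $\Delta'=g+b$ coordinatewise, assigning $\Delta'_i$ to $b$ exactly on the coordinates where the corresponding term above is strictly positive (i.e.\ where $\Delta'_i$ disagrees in sign with $\rs_1$, resp.\ $\rs_2$, or where that sign is $0$) and to $g$ otherwise. Then $\|b\|_1\le\delta$, while $g$ is sign‑compatible with $\rs_1$ on $\{1,\dots,\ell-1\}$ and with $\rs_2$ on $\{\ell,\dots,n-r\}$; and since $\|D^{(r-1)}\|_{\mathrm{op}}\le 2^{r-1}$ gives $\|\Delta\|_\infty\le\|\Delta\|_2\le 2^{r-1}\|\theta\|\le 2^{r-1}t$, a short computation yields $\|g\|_1\le C_r t+2\delta$.

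Next I would peel off the ``$b$‑part''. Let $A:\R^{n-r}\to\R^n$ be the $r$‑fold antidifference with $D^{(r)}A=I$ and $A\eta$ vanishing on the first $r$ coordinates; its $j$‑th column equals, up to sign, $\binom{i-j-1}{r-1}$ for $i\ge j+r$ and $0$ otherwise, so $\|Ae_j\|_2\le C_r n^{r-1/2}$ and $\|A\eta\|_2\le C_r n^{r-1/2}\|\eta\|_1$. Writing $\theta=\eta+A(b)$ with $\eta:=\theta-A(b)$, we have $D^{(r)}\eta=g$, hence $D^{(r-1)}\eta$ is $\rs_1$‑monotone on the first $\ell$ coordinates and $\rs_2$‑monotone on the last $m+1-\ell$ coordinates (``$\rs$‑monotone'' meaning nondecreasing, nonincreasing, or constant according as $\rs$ is $1$, $-1$, or $0$); write $\mathcal U$ for this closed convex cone. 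Also $\|\eta\|\le\|\theta\|+\|A(b)\|\le t+C_r n^{r-1/2}\delta=:\rho$. Taking suprema term by term,
\[
\sup_{\theta\in T}\langle\xi,\theta\rangle\ \le\ \sup_{\eta\in\mathcal U,\ \|\eta\|\le\rho}\langle\xi,\eta\rangle\ +\ \sup_{\|b\|_1\le\delta}\langle\xi,A(b)\rangle .
\]

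The second supremum equals $\delta\|A^{\top}\xi\|_\infty$, and each coordinate of $A^{\top}\xi$ is centered Gaussian with variance $\sigma^2\|Ae_j\|_2^2\le C_r^2\sigma^2 n^{2r-1}$, so the Gaussian maximal inequality gives $\E\,\delta\|A^{\top}\xi\|_\infty\le C_r\sigma\delta n^{r-1/2}\sqrt{\log(en)}$. For the first, since $\mathcal U$ is a cone, $\E\sup_{\eta\in\mathcal U,\|\eta\|\le\rho}\langle\xi,\eta\rangle=\sigma\rho\,w(\mathcal U\cap B_1)$, where $w$ is the Gaussian width and $B_1$ the Euclidean unit ball, and the crucial estimate is $w(\mathcal U\cap B_1)\le C_r\sqrt{\log(en)}$, uniformly over $\ell$ and $\rs_1,\rs_2$. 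With $\rho=t+C_r n^{r-1/2}\delta$ this yields $G\le C_r\sigma\bigl(t+\delta n^{(2r-1)/2}\bigr)\sqrt{\log(en)}$, which already dominates the asserted bound, since $t^{(2r-1)/(2r)}n^{(2r-1)/(4r)}\delta^{1/(2r)}=t^{1-1/(2r)}\bigl(\delta n^{(2r-1)/2}\bigr)^{1/(2r)}\le t+\delta n^{(2r-1)/2}$ by the weighted AM--GM inequality; the form stated in the lemma is the one that comes out of a direct fat‑shattering estimate applied to $T$.

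The main obstacle is the width bound $w(\mathcal U\cap B_1)\le C_r\sqrt{\log(en)}$ for the $(r-1)$‑st order piecewise monotone cone. One route is to invoke the estimates of \citet{bellec2015annals} on Gaussian widths of shape‑constrained cones, which accommodate a bounded number of sign changes of $D^{(r-1)}\eta$. Alternatively one proves it directly, following the scheme used for Lemma~\ref{svg}: by Dudley's entropy bound it suffices to control the metric entropy of $\mathcal U\cap B_1$, which is controlled by its fat‑shattering dimension at scale $\gamma$; this dimension is $O_r(\gamma^{-1/r})$, because a unit‑$\ell_2$‑norm sequence whose $(r-1)$‑st difference is (piecewise) monotone cannot separate too many well‑spaced levels at the design points, and since $1/(2r)<1$ for every $r\ge1$ the resulting Dudley integral converges and contributes only the $\sqrt{\log(en)}$ factor. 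Apart from this, the remaining steps are routine: the operator‑norm bounds for $D^{(r-1)}$ and $A$, the Gaussian maximal inequality, and checking that the single breakpoint at $\ell$ (which makes $\mathcal U$ ``unimodal'' rather than globally monotone when $\rs_1\neq\rs_2$) does not inflate the width bound beyond a constant depending on $r$.
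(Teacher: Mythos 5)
Your overall plan is a viable alternative to the paper's: the paper proves Lemma~\ref{fina} via Lemma~\ref{fina.full}, which first \emph{splits the vector} $\theta\mapsto(\theta^{(1)},\theta^{(2)})$ at the breakpoint and then, on each half, decomposes $D^{(r)}\theta^{(i)}$ into its positive and negative parts (Lemma~\ref{am}), applying Bellec's width bound to each monotone piece.  You instead decompose $D^{(r)}\theta=g+b$ \emph{before} any coordinate split, so that $\eta:=\theta-A(b)$ lies in a single piecewise‑monotone cone $\mathcal U$ and $\|b\|_1\le\delta$.  The decomposition, the operator bound $\|A(b)\|\le C_r n^{r-1/2}\|b\|_1$, the reduction of the $b$‑supremum to $\delta\,\|A^\top\xi\|_\infty$ and the Gaussian maximal inequality, and the AM--GM observation that $(t+\delta n^{(2r-1)/2})\sqrt{\log(en)}$ dominates the third term of the claimed bound, are all correct, and this reorganization is cleaner in that it treats the four sign cases of Lemma~\ref{fina.full} uniformly.

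The gap is in the one step you flag as the ``main obstacle'': the bound $w(\mathcal U\cap B_1)\le C_r\sqrt{\log(en)}$.  Your second, fat‑shattering/Dudley justification of it is false, and the error is not cosmetic.  Even granting the claimed (already optimistic) fat‑shattering bound $v(\mathcal U\cap B_1,\gamma)=O_r(\gamma^{-1/r})$, Theorem~\ref{rv} converts this to metric entropy at a $\sqrt n$‑inflated scale, $\log N(\epsilon,\mathcal U\cap B_1)\lesssim_r(\sqrt n/\epsilon)^{1/r}$, and Dudley then gives $w(\mathcal U\cap B_1)\lesssim_r n^{1/(4r)}$.  Integrability of $\epsilon^{-1/(2r)}$ controls the $\epsilon$‑dependence but does nothing to remove the $n^{1/(4r)}$ prefactor; one obtains $\sqrt{\log(en)}$ only from a statistical‑dimension bound for the shape‑constrained cone, as in Bellec, not from an entropy bound.  (This is exactly why the paper's Lemma~\ref{svg} gives a rate that is polynomial in $V$ rather than logarithmic, and why Lemma~\ref{am} uses Bellec's Theorem~1 rather than Dudley.)  Your first route — invoking Bellec — is the correct one, but note that to apply Bellec's result for $S^{[r]}_n$ (a globally monotone cone) to the breakpoint cone $\mathcal U$ you still need to split at $\ell$: writing $\eta\mapsto(\eta^{(1)},\eta^{(2)})$ reduces $w(\mathcal U\cap B_1)$ to $w(S^{[r]}_{\ell+r-1})+w(S^{[r]}_{n-\ell-r+1})\le C_r\sqrt{\log(en)}$, and at that point your argument is essentially the paper's with the order of the two decompositions reversed.
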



The proof of Lemma \ref{fina} is given in Subsection \ref{gwc} (in 
fact, in Subsection \ref{gwc}, we prove Lemma \ref{fina.full} which is
a more accurate result compared to Lemma \ref{fina} in the sense that
Lemma \ref{fina.full} gives a bound that depends on the actual values
of $\rs_1$ and $\rs_2$). The proof of Lemma \ref{fina.full}  uses
results on expected Gaussian suprema for classes of shape-constrained
vectors from \citet{bellec2015annals}.   

We are now ready to prove Theorem \ref{ada}. 

\begin{proof}[Proof of Theorem \ref{ada}] As the proof is rather long,
  we divide it into many steps.  

{\bf Step I}: We first note that the case when $V = 0$ is trivial. This is because
the set $\{\theta \in \R^n : \|D^{(r)} \theta\|_1 = 0\}$ is a subspace
of dimension $r$ so that $\hat{\theta}^{(r)}_{V=0}$ becomes a linear
projection onto a subspace. Thus, 
\begin{equation*}
  \frac{1}{n} \|\hat{\theta}^{(r)}_{V =0} - \theta^*\|^2 - \inf_{\theta \in
  \R^n : \|D^{(r)} \theta\|_1 = 0} \frac{1}{n} \|\theta - \theta^*\|^2
\sim \frac{\sigma^2}{n} \chi^2_r
\end{equation*}
where $\chi^2_r$ denotes the chi-squared distribution with $r$ degrees
of freedom. This and a standard tail bound for chi-squared random
variables such as (see e.g., \citet[Subsection
4.1]{laurent2000adaptive}) 
\begin{equation*}
  \P \left\{\chi^2_r \leq 2 r + 3 x \right\} \geq 1 - e^{-x} \qt{for
  every $x > 0$} 
\end{equation*}
prove inequalities \eqref{ada.eq} and \eqref{ada.peq} for $V= 0$; note
that when $\theta \in \R^n$ is such that $V^{(r)}(\theta) = V = 0$, we
have $\ktr(\theta) = 0$,  $\delta_r(\theta) = n^{(1/2) - r}$  and
$\Delta_r(\theta) = \frac{2}{n} \log(en) + \frac{1}{n}$.  


We shall assume from now on that $V > 0$. Based on the discussion at
the beginning of this subsection, it is enough to prove
\eqref{keytc}. We therefore fix $\theta \in \R^n$ with 
$V^{(r)}(\theta) = V$. We need to bound the quantity 
\begin{equation}\label{kann}
  G := \E \left[\sup_{\alpha \in T_{\krv}(\theta): \|\alpha\| \leq 1} 
  \left<Z, \alpha \right> \right] 
\end{equation}
where $Z$ is a standard $n$-dimensional Gaussian random vector. We
bound $G$ by breaking the set $\{\alpha \in T_{\krv}(\theta) : \|\alpha\|
\leq 1\}$ into smaller subsets. 

Let $2 \leq j_1 < \dots < j_k \leq n-r+1$ denote all the $r^{th}$
order knots of $\theta$. Also let $\rs_1, \dots, \rs_k \in \{-1, 1\}$
denote the signs of the knots. For convenience, we take $j_0 = 1, j_{k+1} = 
n-r+2$ and $\rs_0 = \rs_{k+1} = 0$. Let $n_0 = j_1 + r - 2$ and $n_u =
j_{u+1} - j_{u}$ for $u = 1, \dots, k$. Check that $\sum_{u=0}^k n_u =
n$. $\vspace{0.08in}$

{\bf Step II}: We shall prove~\eqref{keytc} first under the simplifying assumption
that   
\begin{equation}\label{sia}
  n_i \leq \frac{2n}{k+1} \qt{for every $i = 0, 1, \dots, k$}. 
\end{equation}
The goal of this step is to find a collection of sets whose union covers $  \left\{\alpha \in T_{\krv}(\theta) : \|\alpha\| \leq 1 \right\}$ (see~\eqref{cuc}). For every vector $\alpha \in \R^n$, let us define the vectors  
\begin{equation*}
  \alpha^{(0)} := (\alpha_{j_0}, \dots, \alpha_{j_1 + r - 2}) 
\end{equation*}
and 
\begin{equation*}
  \alpha^{(u)} := (\alpha_{j_u+r-1}, \dots, \alpha_{j_{u+1} + r - 2})
  \qt{for $u = 1, \dots, k$}. 
\end{equation*}
Note that the vector $\alpha^{(u)}$ has length exactly equal to $n_u$
for $u = 0, \dots, k$. 

Let $\M$ denote the class of all vectors $ \m := (\m_0, \dots,
\m_k)$ where each $\m_i$ is an integer with $1 \leq \m_i \leq k+1$ and
such that $\sum_{i=0}^{k} \m_i \leq 2(k+1)$. Because the number of
$(k+1)$-tuples of positive integers whose sum is equal to $p$ equals
$\binom{p-1}{k}$, it is easy to see that $\M$ is a finite set whose
cardinality $|\M|$ can be bounded as
\begin{align*}
  |\M| &\leq \sum_{p=k+1}^{2k+2} \binom{p-1}{k} \\
&= \sum_{l=k}^{2k+1} \binom{l}{l-k} \leq
  \sum_{l=k}^{2k+1} \binom{2k+1}{l-k} \leq 2^{2k+1} \leq 4^{k+1}. 
\end{align*}
Also let $\lc$ denote the class of all vectors $\ellb := (\ellb_0,
\dots, \ellb_k)$ where each $\ellb_i$ is an integer such that $j_0
\leq \ell_0 \leq j_1 - 1$ and $j_u+r-1 \leq \ell_u \leq j_{u+1} - 1$
for $u = 1, \dots, k$. The cardinality $|\lc|$ of $\lc$ is clearly
bounded   from above by $\prod_{u=0}^k n_u$.   

Let 
\begin{equation*}
  \delta := C_r \sqrt{n_0^{1 - 2r} + n_k^{1 - 2r} + \sum_{i=1}^{k-1}
    n_i^{1 - 2r} I \{\rs_i \neq \rs_{i+1}\}}  
\end{equation*}
where $C_r$ is the constant given by Lemma \ref{alta}. Note that
\begin{equation}\label{eq:delta}
\delta \leq C_r \delta_r(\theta)
\end{equation}
where $\delta_r(\theta)$ is as defined in \eqref{drt}. This is because
$n_i \geq n_{i*}$ for every $i$ and $1 - 2r < 0$. 

Also for $\ell \in
\lc$,  $\alpha \in \R^n$ and $0 \leq i \leq k$, let $\Gamma_i(\alpha, 
\ell_i)$ be defined (as in \eqref{alta.de}) as
\begin{align*}
& V_{j_i, j_{i+1} - 1}(D^{(r-1)} \alpha) -
  \rs_{i+1} \left\{(D^{(r-1)} \alpha)_{j_{i+1} - 1} - (D^{(r-1)}
      \alpha)_{\ell_{i}} \right\} - \\ & \rs_{i}
    \left\{(D^{(r-1)} \alpha)_{\ell_i} - (D^{(r-1)}
      \alpha)_{j_{i}} \right\}.  
\end{align*}
For every $\m, \q \in \M$ and $\ell \in \lc$, let $T(\m, \q, \ell)$
denote the set of all $\alpha \in \R^n$ with $\|\alpha\| \leq 1$ for
which  
\begin{equation*}
  \|\alpha^{(i)}\|^2 \leq \frac{\m_i}{k+1} ~~ \text{ and } ~~
  \Gamma_i(\alpha, \ell_i) \leq \frac{\q_i \delta}{k+1} 
\end{equation*}
for every $i = 0, 1, \dots, k$.
We then claim that 
\begin{equation}\label{cuc}
  \left\{\alpha \in T_{\krv}(\theta) : \|\alpha\| \leq 1 \right\} 
  \subseteq \bigcup\limits_{\m, \q \in \M, \ellb \in \lc} T(\m, \q, \ellb).  
\end{equation}
To see \eqref{cuc}, note first that it follows from Lemma \ref{alta}
that for every $\alpha \in T_{\krv}(\theta)$ with $\|\alpha\| \le  
1$, there exists $\ell \in 
\lc$ such that $\sum_{i=0}^k \Gamma_{i}(\alpha, \ell_i) \leq
\delta$. This implies that for every $0 \leq i \leq k$, the inequality
$0 \leq \Gamma_i(\alpha, \ell_i) \leq \delta$ and so there exists 
an integer $1 \leq \q_i \leq k+1$ such that 
\begin{equation*}
\frac{(\q_i - 1) \delta}{k+1} \leq  \Gamma_i(\alpha, \ell_i) \leq
\frac{\q_i \delta}{k+1}.   
\end{equation*}
The integers $\q_0, \dots, \q_k$ would then have to satisfy
\begin{equation*}
  \delta \geq \sum_{i=0}^k \Gamma_i(\alpha, \ell_i) \geq \delta
  \sum_{i=0}^k \frac{\q_i - 1}{k+1} 
\end{equation*}
which is equivalent to $\sum_{i=0}^k \q_i \leq 2(k+1)$. Thus $\q =
(\q_0, \q_1, \dots, \q_k) \in \M$. Similarly, for each $0 \leq i \leq
k$, the inequality $1 \geq \|\alpha\|^2 \geq \|\alpha^{(i)}\|^2$ holds
so that there exists an integer $1 \leq \m_i \leq k+1$ such that  
\begin{equation*}
\frac{\m_i - 1}{k+1} \leq  \|\alpha^{(i)}\|^2 \leq \frac{\m_i}{k+1}. 
\end{equation*}
As $\sum_{i=0}^k \|\alpha^{(i)} \|^2 \leq 1$, the integers $\m_i,
0 \leq i \leq k$, satisfy $\sum_{i=0}^k \m_i \leq 2(k+1)$ which implies
that $\m = (\m_0, \dots, \m_k) \in \M$. This completes the proof of
\eqref{cuc}. $\vspace{0.08in}$

{\bf Step III}: In this step we find an upper bound of $G$ (in~\eqref{kann}) that depends on the collection $T(\m, \q, \ellb)$. 
Using~\eqref{cuc}, we can bound the quantity $G$ in \eqref{kann} via 
\begin{equation*}
  G \leq \E \left[ \max_{\m, \q \in \M, \ellb \in \lc} \sup_{\alpha \in T(\m,
    \q, \ellb)} \left<Z, \alpha \right> \right]. 
\end{equation*}
Since $Z$ is Gaussian, the first maximum in the right hand side
above can be taken outside the expectation up to an additional
correction term. We state this as a general result in Lemma
\ref{gt} (see the full statement and proof in Section
\ref{appa}). Note that each set $T(\m, \q, \ell)$ contains the zero 
vector and also that every vector in $T(\m, \q, \ell)$ has norm
bounded from above by 1. Therefore the quantity $D$ in Lemma \ref{gt}
can be taken to be 1 (also take $\sigma = 1$ in Lemma \ref{gt}). Lemma
\ref{gt} thus gives  
\begin{equation*}
  G \leq \max_{\m, \q \in \M, \ellb \in \lc} \E \left[ \sup_{\alpha \in T(\m,
    \q, \ellb)} \left<Z, \alpha \right> \right] +  \sqrt{4 \log  |\M| +
    2 \log |\lc|} +  \sqrt{\frac{\pi}{2}}.    
\end{equation*}
As $|\M| \leq 4^{k+1}$ and, by concavity of the logarithm,
\begin{align*}
  \log |\lc| \leq \sum_{i=0}^k \log n_i &= (k+1) \frac{1}{k+1}
  \sum_{i=0}^k \log n_i \\ &\leq (k+1) \log \left(\sum_{i=0}^k
  \frac{n_i}{k+1} \right) \\
&= (k+1) \log \frac{n}{k+1}  
  \leq (k+1) \log \frac{en}{k+1}, 
\end{align*}
we obtain (using also the fact that $\sqrt{2 + 4 \log 4} < 3$) that
\begin{equation}\label{neek}
  G \leq \max_{\m, \q \in \M, \ellb \in \lc} \E \left[ \sup_{\alpha \in T(\m,
    \q, \ellb)} \left<Z, \alpha \right> \right]+ 3  \sqrt{(k+1) \log
    \frac{en}{k+1}} +  \sqrt{\frac{\pi}{2}}.   
\end{equation}
We now fix $\m, \q \in \M$ and $\ellb \in \lc$ and attempt to bound 
\begin{equation*}
  G(\m, \q, \ellb) := \E \left[ \sup_{\alpha \in T(\m, \q, \ellb)} \left<Z, \alpha
  \right> \right].  
\end{equation*}
We write $\left< Z, \alpha \right> = \sum_{i=0}^k \left<Z^{(i)}, 
  \alpha^{(i)} \right>$ so that $G(\m, \q, \ell) \leq \sum_{i=0}^k
G_i(\m, \q, \ell)$ where \\ $ G_i(\m, \q, \ell) :=   \E [\sup_{\alpha
  \in T(\m, \q, \ell)} \left<Z^{(i)}, \alpha^{(i)} \right> ].$ 
Let us now fix $0 \leq i \leq k$ and bound $G_{i}(\m, \q, \ell)$. By
the definition of $T(\m, \q, \ell)$, 
\begin{equation}\label{alad}
  G_i(\m, \q, \ell) \leq \E \left[\sup_{\alpha \in T^{(i)}(\m, \q, \ell)} \left<Z^{(i)},
    \alpha^{(i)} \right> \right]
\end{equation}
where $$ T^{(i)}(\m, \q,
\ell) :=\left\{\alpha \in \R^n: \|\alpha^{(i)}\|^2 \leq
  \frac{\m_i}{k+1}, 
    \Gamma_{i}(\alpha, \ell_i) \leq \frac{\q_i \delta}{k+1}
  \right\}. \vspace{0.08in}$$
{\bf Step IV}: In this step, we describe how Lemma \ref{fina} can be
used to bound the right hand side in \eqref{alad}. Fix $0 \leq i \leq
k$. We do this by rewriting the underlying set  $ T^{(i)}(\m, \q,
\ell)$ in a form recognizable from Lemma \ref{fina}. For convenience,
let    
\begin{equation*}
  \delta_i := \frac{\q_i \delta}{k+1}. 
\end{equation*}
We claim that for every $0 \le i \leq k$ and $\alpha \in T^{(i)}(\m,
\q, \ell)$, we have  
 \begin{align}\label{lay}
  V(\Delta(\alpha^{(i)})) &\leq \rs_{i+1} \left((\Delta(\alpha^{(i)}))_{n_i - r + 1} - (\Delta(\alpha^{(i)}))_{\ell_i'}
  \right) \\ 
&+ \rs_i \left((\Delta(\alpha^{(i)}))_{\ell_i'} -
  (\Delta(\alpha^{(i)}))_{1} \right) + \delta_i \nonumber
\end{align} 
where $\Delta(\alpha^{(i)}) := D^{(r-1)} \alpha^{(i)}$ and $\ell_i'$
is related to $\ell_i$ via 
\begin{equation}\label{llp}
\ell_0 := \ell_0' ~~ \text{ and  } ~~ \ell_i = j_i + r - 2 + \ell_i'
~ \text{ for } ~ 1 \leq i \leq k. 
\end{equation}
Before proving \eqref{lay}, let us observe that the expected supremum
of $\left< Z^{(i)}, \alpha^{(i)} \right>$ over all $\alpha^{(i)}$
which satisfy the norm condition $\|\alpha^{(i)}\|^2 \leq \m_i/(k+1)$
and which satisfy \eqref{lay} can be controlled directly using Lemma
\ref{fina} (this is done in the next step). The argument for
\eqref{lay} goes as follows. Fix $\alpha \in T^{(i)}(\m, \q, \ell)$ and observe  that, from the
definition of $T^{(i)}(\m, \q, \ell)$, we have $\Gamma_i(\alpha,
\ell_i) \leq \delta_i$. For $i = 0$, inequality
\eqref{lay} is exactly the same as $\Gamma_0(\alpha, \ell_0) \leq  
\delta_0$. For $1 \leq i \leq k$, note that    
\begin{equation}\label{llpp}
  (D^{(r-1)} \alpha^{(i)})_{\ell} = (D^{(r-1)} \alpha)_{j_i + r - 2 +
    \ell}  \qt{for every $1 \leq \ell \leq n_i - r + 1$}, 
\end{equation}
which implies that $V(D^{(r-1)} \alpha^{(i)}) = V_{j_i + r - 1, j_{i+1}
- 1} (D^{(r-1)} \alpha)$ and that
\begin{equation*}
  V_{j_i, j_{i+1} - 1} (D^{(r-1)} \alpha) \geq V(D^{(r-1)}
  \alpha^{(i)}) + \rs_i \left((D^{(r-1)} \alpha)_{j_i + r - 1} -
    (D^{(r-1)} \alpha)_{j_i} \right). 
\end{equation*}
The notation $V_{a, b}(\cdot)$ may be recalled from \eqref{vab}. The
above inequality, together with $\Gamma_i(\alpha, \ell_i) \le 
\delta_i$, allows us to deduce that 
 \begin{align*}
  V(D^{(r-1)} \alpha^{(i)})  &\le \rs_{i+1}
    \left\{(D^{(r-1)} \alpha)_{j_{i+1} - 1} - (D^{(r-1)}
      \alpha)_{\ell_{i}} \right\} \\ 
&+ \rs_{i}
    \left\{(D^{(r-1)} \alpha)_{\ell_i} - (D^{(r-1)}
      \alpha)_{j_{i} + r - 1} \right\} + \delta_i.
\end{align*}
Using \eqref{llp} and \eqref{llpp}, it is now easy to see that the
above inequality is the same as \eqref{lay}. This proves
\eqref{lay}. $\vspace{0.08in}$ 

{\bf Step V}: Next, we use the characterization in~\eqref{lay} to
bound $G_i(\m, \q, \ell)$ using Lemma \ref{fina}. Indeed, we can take
$\sigma = 1, t = \sqrt{\m_i/(k+1)} \leq 1$, $n = n_i$, $\ell =
\ell_i'$ and $\delta = \delta_i$ in Lemma \ref{fina} to obtain      
 \begin{align*}
  G_i(\m, \q, \ell) &\leq C_r
   \left(\sqrt{\frac{\m_i}{k+1}} + \delta_i n_i^{(2r-1)/2} \right)
  \sqrt{\log (e n_i)} \\ &+ C_r  \left(\frac{\m_i}{k+1}
  \right)^{(2r-1)/(4r)} n_i^{(2r-1)/(4r)} \delta_i^{1/(2r)}
\end{align*}
for all $0 \leq i \leq k$.  Here $C_r$ is a constant that depends on
$r$ alone. This inequality, together with $G(\m, \q, \ell) \leq \sum_{i=0}^k
G_i(\m, \q, \ell)$, gives the following upper bound for $G(\m, \q,
\ell)/C_r$:    
\begin{align}\label{falc}
&\sum_{i=0}^k  \sqrt{\frac{\m_i}{k+1}} \sqrt{\log (e n_i)} +
\sum_{i=0}^k \delta_i 
    n_i^{(2r-1)/2} \sqrt{\log (e n_i)} \nonumber \\ &+ 
  \sum_{i = 0}^k \left(\frac{\m_i n_i}{k+1}
  \right)^{(2r-1)/(4r)}  \delta_i^{1/(2r)}  . 
\end{align}
We now bound separately each of the three terms above. For the first term, 
note that by the Cauchy-Schwarz inequality and the fact that $\sum_{i=0}^k
\m_i \leq 2 (k+1)$, we get
\begin{equation*}
  \sum_{i=0}^k  \sqrt{\frac{\m_i}{k+1}} \sqrt{\log (e n_i)} \leq
  \sqrt{\sum_{i=0}^k \frac{\m_i}{k+1}} \sqrt{\sum_{i=0}^k \log (e n_i)}
  \leq \sqrt{2} \sqrt{(k+1) \log \frac{en}{k+1}} 
\end{equation*}
where we have also used concavity of the logarithm function to claim
that $\sum_{i=0}^k \log (e n_i) \leq (k+1) \log \frac{en}{k+1}$. For
the second term in \eqref{falc}, we write 
 \begin{align*}
  \sum_{i=0}^k \delta_i n_i^{(2r-1)/2} \sqrt{\log (e n_i)} &\leq
  \max_{0 \leq i \leq k} \left[ n_i^{(2r-1)/2} \sqrt{\log (e n_i)} \right]
  \sum_{i=0}^k \delta_i \\ &\leq 2 \delta \max_{0 \leq i \leq k} \left[
  n_i^{(2r-1)/2} \sqrt{\log (e n_i)} \right]
\end{align*}
where we have used that $\sum_{i=0}^k \delta_i = \delta \sum_{i=0}^k \q_i/(k+1) \leq 2
\delta$. Assumption \eqref{sia} now gives 
\begin{equation*}
  \max_{0 \leq i \leq k} \left[
  n_i^{(2r-1)/2} \sqrt{\log (e n_i)} \right] \leq 2^r \left(\frac{n}{k+1}
\right)^{(2r - 1)/2} \sqrt{\log \frac{en}{k+1}}. 
\end{equation*}
We thus obtain 
\begin{equation*}
  \sum_{i=0}^k \delta_i n_i^{(2r-1)/2} \sqrt{\log (e n_i)} \leq 2^{1
  + r} \delta \left(\frac{n}{k+1}
\right)^{(2r - 1)/2} \sqrt{\log \frac{en}{k+1}}. 
\end{equation*}
For the third term in \eqref{falc}, we use the standard Holder's
inequality 
($\sum_i \alpha_i \beta_i \leq (\sum_i \alpha_i^p)^{1/p} (\sum_i
\beta_i^q)^{1/q}$ with $p = 2r/(2r - 1)$ and $q = 2r$) to obtain  
\begin{align*}
\sum_{i = 0}^k \left(\frac{\m_i n_i}{k+1}
  \right)^{(2r-1)/(4r)}  \delta_i^{1/(2r)} &\leq  \left(\sum_{i=0}^k \sqrt{\frac{\m_i n_i}{k+1}} \right)^{(2r-1)/(2r)}  \left(\sum_{i=0}^k \delta_i \right)^{1/(2r)} \\
&\leq 2^{1/(2r)} \delta^{1/(2r)} \left(\sqrt{\sum_{i=0}^k \frac{\m_i}{k+1}
  \sum_{i=0}^k n_i} \right)^{(2r-1)/(2r)} \\
&\leq 2^{(2r+1)/(4r)}
\delta^{1/(2r)} n^{(2r-1)/(4r)}
\end{align*}
where, in the second inequality above, we used $\sum_{i=0}^k \delta_i
\le 2 \delta$ and the Cauchy-Schwarz inequality and, in the final
inequality, we used $\sum_{i=0}^k \m_i \leq 2(k+1)$ and $\sum_{i=0}^k
n_i = n$. Putting the bounds for the three terms in \eqref{falc}
together, we obtain 
\begin{eqnarray}\label{bade}
  \frac{G(\m, \q, \ell)}{C_r } & \leq & \sqrt{2 (k+1) \log
    \frac{en}{k+1}} + 2^{1 + r} \delta \left(\frac{n}{k+1}
      \right)^{(2r - 1)/2} \sqrt{\log \frac{en}{k+1}} \nonumber \\
      && \qquad \qquad \qquad \qquad  \qquad + \;
      2^{(2r+1)/(4r)} \delta^{1/(2r)} n^{(2r-1)/(4r)}
\end{eqnarray}
which gives (note also that $\delta \leq C_r \delta_r(\theta)$ by~\eqref{eq:delta}) 
\begin{equation*}
  G(\m, \q, \ell) \leq c_r \sqrt{n \Delta_r(\theta)},
\end{equation*}
for a suitable constant $c_r$ depending only on $r$; note that
$\sqrt{a} +\sqrt{b} + \sqrt{c} \le \sqrt{3} \sqrt{a + b + 
  c}$ for $a, b, c > 0$. Combined with \eqref{neek}, this completes
the proof of  \eqref{keytc} when assumption \eqref{sia} is true.
$\vspace{0.08in}$  

{\bf Step VI}: Now we work with the situation when the assumption \eqref{sia} is
violated. Our basic idea here is that we will add indices to the set
of knots $j_1, \dots, j_k$ to create a new set of indices which
contains all the knots of $\theta$ and which satisfies an assumption
similar to \eqref{sia}. Specifically for every $i \geq 1$ for which
$n_i$ is strictly larger than $2n/(k+1)$, we add the indices   
\begin{equation*}
  j_i + \left \lfloor \frac{2n}{k+1} \right \rfloor,   j_i + 2 \left \lfloor \frac{2n}{k+1}
 \right  \rfloor , \dots,   j_i + A_i \left \lfloor \frac{2n}{k+1}
\right \rfloor 
\end{equation*}
to the original set of knots, where $\lfloor x \rfloor$ denotes the largest integer less than or equal to $x$. Here $A_i$ is the
integer part of the ratio of $n_i$ to $\lfloor 2n/(k+1) \rfloor$ and
hence
\begin{equation*}
  A_i \leq n_i \left \lfloor \frac{2n}{k+1} \right \rfloor^{-1}. 
\end{equation*}
Similarly, if $n_0 \geq 2n/(k+1)$, then we add the indices  $2-r + \left \lfloor \frac{2n}{k+1} \right \rfloor,   2-r + 2 \left \lfloor \frac{2n}{k+1}
 \right  \rfloor , \dots,   2-r + A_0 \left \lfloor \frac{2n}{k+1}
\right \rfloor$ to the original set of knots $j_1, \dots, j_k$ where again $ A_0 \leq n_0 \left \lfloor \frac{2n}{k+1} \right \rfloor^{-1}.$
This construction will create a set of indices $j_1' < \dots <
j_{k'}'$ that contains all the original knots and which satisfy 
\begin{equation}\label{nip}
  n_i' \leq \frac{2n}{k+1} \qt{for every $i = 0, \dots, k'$}
\end{equation}
where $n_i'$ are defined with respect to $j_1' < \dots < j_{k'}'$ as
$n_0' = j_1' + r - 2$ and $n_i' := j_{i+1}' - j'_i$ for $i = 1, \dots,
k'$. We now note that the number of these new indices, $k'$, satisfies 
\begin{equation*}
  k' \leq k + \sum_{i=0}^k A_i \leq k + \left \lfloor \frac{2n}{k+1}
  \right \rfloor^{-1} \sum_{i=0}^k n_i = k + n \left \lfloor \frac{2n}{k+1}
  \right \rfloor^{-1}  \leq 2k + 1
\end{equation*}
where we have used $\left \lfloor \frac{2n}{k+1} \right \rfloor \geq \frac{2n}{k+1} - 1 \geq
  \frac{n}{k+1}.$
The inequality $k' \leq 2k + 1$, along with \eqref{nip}, implies that 
\begin{equation}\label{siam}
  n_i' \leq \frac{4n}{k'+1} \qt{for every $i = 0, \dots, k'$}. 
\end{equation}
For these indices $j_1', \dots, j_{k'}'$, we shall assign signs
$\rs'_1, \dots, \rs'_{k'} \in \{-1, 0, 1\}$ in the following way. If
$j_i'$ is a knot (i.e., it is one of $j_1, \dots, j_k$), then $\rs'_i$
equals the sign of the knot $j_i$. If $j_i'$ is not a knot, then we
assign $\rs_i'$ to be the sign of the nearest knot that is to the right
of $j_i'$ (if there is no knot to the right of $j_i'$, then we take
$\rs_i'$ to be zero). 

We now go through the previous proof (which was under the case when assumption \eqref{sia} is satisfied) with the set of indices
$j_1', \dots, j_{k'}'$ and signs $\rs'_1, \dots, \rs'_{k'}$. Instead
of \eqref{sia}, we use the inequality \eqref{siam} which has a
slightly worse constant $4$ instead of $2$. This argument will end
with an inequality similar to \eqref{bade} (but with slightly
different constants). Thus we obtain the following upper bound for
$G$:  
\begin{equation}\label{fion}
  \frac{G^2}{C_r} \le (k' + 1) \log \frac{en}{k' + 1} + (\delta')^2
  \left(\frac{n}{k'+1} \right)^{2r - 1} \log \frac{en}{k'+1} +
    (\delta')^{1/r} n^{(2r-1)/(2r)}
\end{equation}
for a constant $C_r$ where 
\begin{equation*}
  \delta' := \left(\sum_{i=0}^{k'}
  (n'_{i})^{1-2r} I\{\rs'_i \neq \rs'_{i+1}\} \right)^{1/2}. 
\end{equation*}
Now because $k \leq k' \leq 2k+1$, we have $k+1 \leq k'+1 \le 2(k+1)$
and thus we can replace the $k'$ on the right hand side of
\eqref{fion} by $k$ by enlarging the constant $C_r$ slightly. Finally,
to complete the proof, it suffices to observe that because of the
construction of the set of indices $j_1', \dots, j_{k'}'$ and the
choice of the signs, we have  
\begin{align*}
  (\delta')^2 &=  \sum_{i=0}^k \left[ \min\left(n_i, \left \lfloor
      \frac{2n}{k+1} \right \rfloor \right) \right]^{1-2r} I \{\rs_i \neq
    \rs_{i+1}\} \\
& \leq \sum_{i=0}^k n_{i*}^{1-2r} I \{\rs_i \neq \rs_{i+1}\} \\ &=
  n_{0*}^{1 - 2r} + n_{k*}^{1 - 2r} + \sum_{i=1}^{k-1} n_{i*}^{1-2r} I
  \{\rs_i \neq \rs_{i+1}\} = \delta^2_{r}(\theta). 
\end{align*}
This, along with replacing $k'$  by $k$ in \eqref{fion}, completes the
proof of Theorem \ref{ada}. 
\end{proof}

\subsection{Proof of Corollary \ref{kco}} 
We prove Corollary \ref{kco} as a consequence of Theorem \ref{ada} and
Theorem \ref{woor}. The following lemma (proved in Subsection
\ref{vava}) will be needed for this. 


\begin{lemma}\label{lemco}
  Fix $r \geq 1$ and $n \geq r+1$. For every $\theta \in \R^n$, there
  exists $\eta \in \R^n$ such that 
  \begin{equation}\label{lemco.eq}
\|D^{(r)} \eta\|_1 = 0  ~~ \text{   and   } ~~  \|\theta -
\eta\|^2 \leq n^{2r - 1} \|D^{(r)} \theta\|_1^2. 
  \end{equation}
\end{lemma}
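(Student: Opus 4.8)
The plan is to induct on $r$, using the observation that $\|D^{(r)}\eta\|_1 = 0$ simply means $\eta$ lies in the $r$-dimensional kernel of $D^{(r)}$ (the discrete polynomials of degree $\le r-1$), and that such an $\eta$ approximating $\theta$ can be built by successive ``antidifferencing'' of the discrete derivatives of $\theta$.

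For the base case $r = 1$, I would take $\eta := \theta_1 \mathbf{1}_n$, the constant vector. Then $D^{(1)}\eta = 0$, and since $\theta_i - \eta_i = \sum_{j=1}^{i-1}(D\theta)_j$ telescopes, $|\theta_i - \eta_i| \le \|D\theta\|_1$ for every $i$; summing the squares over $i = 1, \dots, n$ gives $\|\theta - \eta\|^2 \le n\|D\theta\|_1^2$, which is exactly \eqref{lemco.eq} with $r = 1$.

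For the inductive step, fix $r \ge 2$ and assume the statement for $r-1$. Given $\theta \in \R^n$ with $n \ge r+1$, set $\beta := D\theta \in \R^{n-1}$, so that $D^{(r)}\theta = D^{(r-1)}\beta$. Since $n - 1 \ge r = (r-1)+1$, the induction hypothesis applies to $\beta$ at order $r-1$ and produces $\gamma \in \R^{n-1}$ with $D^{(r-1)}\gamma = 0$ and $\|\beta - \gamma\|^2 \le (n-1)^{2r-3}\|D^{(r)}\theta\|_1^2$. I then define $\eta \in \R^n$ by $\eta_1 := \theta_1$ and $\eta_i := \theta_1 + \sum_{j=1}^{i-1}\gamma_j$ for $i \ge 2$; equivalently $D\eta = \gamma$, so $D^{(r)}\eta = D^{(r-1)}(D\eta) = D^{(r-1)}\gamma = 0$. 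Since $D(\theta - \eta) = \beta - \gamma$ and $\theta$ and $\eta$ agree in coordinate $1$, telescoping again gives $\theta_i - \eta_i = \sum_{j=1}^{i-1}(\beta_j - \gamma_j)$, hence $|\theta_i - \eta_i| \le \|\beta - \gamma\|_1 \le \sqrt{n-1}\,\|\beta - \gamma\|$ by Cauchy--Schwarz. Squaring, summing over $i = 1,\dots,n$, and inserting the bound on $\|\beta-\gamma\|^2$ yields $\|\theta - \eta\|^2 \le n(n-1)(n-1)^{2r-3}\|D^{(r)}\theta\|_1^2 = n(n-1)^{2r-2}\|D^{(r)}\theta\|_1^2 \le n^{2r-1}\|D^{(r)}\theta\|_1^2$, completing the induction.

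I do not anticipate a genuine obstacle. The only points to watch are (a) the dimension bookkeeping: applying the hypothesis to a vector of length $n-1$ at order $r-1$ requires precisely $n - 1 \ge r$, which is the standing assumption $n \ge r+1$; and (b) checking that the accumulated constant $n(n-1)^{2r-2}$ is dominated by $n^{2r-1}$, which is immediate from $n-1 \le n$. An alternative route would be to take $\eta$ to be the degree-$(r-1)$ discrete polynomial interpolating $\theta$ at the indices $1, \dots, r$ and then expand $\theta_i - \eta_i$ via a Newton forward-difference remainder in terms of the entries of $D^{(r)}\theta$, but the inductive telescoping argument above is cleaner and avoids explicit combinatorial identities.
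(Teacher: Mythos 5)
Your proof is correct, and it takes a genuinely different and noticeably more elementary route than the paper. The paper proves the lemma for $r \geq 2$ by projecting $\theta$ orthogonally onto the column space of a Vandermonde-type matrix $X$ (so that $\eta$ is a discrete polynomial of degree $\leq r-2$, actually giving the stronger $D^{(r-1)}\eta = 0$ after a mean-zero reduction), writing $\theta - \eta = (I - P_X)SD^{(r-1)}\theta$, and then bounding the operator norm of $A = S^T(I-P_X)S$ by its $\ell^\infty$-induced norm. That last step relies on two nontrivial facts about $A$ — positivity of all its entries and an explicit closed form for its row sums — which the paper establishes through substantial combinatorial identities (Propositions \ref{positive} and \ref{rowsum}), machinery it had already built for Lemma \ref{do}. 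Your argument instead inducts on $r$: the base case is a two-line telescoping bound, and the inductive step applies the hypothesis to $\beta = D\theta \in \R^{n-1}$ at order $r-1$, antidifferences the resulting $\gamma$ back into $\eta$, telescopes $\theta - \eta$, and applies Cauchy--Schwarz. The bookkeeping is tight: the hypothesis needs $n-1 \geq r$, exactly the standing assumption, and the accumulated factor $n(n-1)^{2r-2}$ is dominated by $n^{2r-1}$. What you lose relative to the paper is the slightly sharper intermediate constant coming from orthogonal projection (and the structural information that $\eta$ can be taken as a best polynomial fit); what you gain is a self-contained proof that does not lean on the heavy linear-algebraic apparatus of Section~\ref{pdo}. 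Both yield the same stated bound.
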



\begin{remark}
  When $r = 1$, the inequality in \eqref{lemco.eq} is equivalent to 
\begin{equation*}
  \sum_{i=1}^n \left(\theta_i - \bar{\theta} \right)^2 \leq  n \|D
  \theta\|^2 = n V^2(\theta)
\end{equation*}
which relates the variance of $\theta_1, \dots, \theta_n$ to the
variation (here $\bar{\theta} := (\theta_1 + \dots +
\theta_n)/n$). Therefore Lemma \ref{lemco} can be seen as a relation 
between variance and variation for general $r \geq 1$.  
\end{remark}

We are now ready to prove Corollary \ref{kco}. 
\begin{proof}[Proof of Corollary \ref{kco}]
  Recall from \eqref{kv} that $V^{(r)}(\theta^*) = n^{r-1}
  \|D^{(r)} \theta^* \|_1$. We first consider the case when $V^{(r)}(\theta^*) >
  0$. In this case, we use Lemma \ref{lemco} to claim the existence of
  $\eta^* \in \R^n$ such that $V^{(r)}(\eta^*) = 0$ and 
  \begin{equation}\label{ghul}
    \|\theta^* - \eta^*\|^2 \leq  n^{2r-1} \|D^{(r)} \theta^*\|_1^2
    = n \left(V^{(r)}(\theta^*) \right)^2.  
  \end{equation}
  Let now $\theta \in \R^n$ be defined as 
  \begin{equation*}
    \theta := \eta^* + \frac{V}{V^{(r)}(\theta^*)} \left(\theta^* - \eta^*
    \right). 
  \end{equation*}
As $V^{(r)}(\eta^*) = 0$, it follows that $V^{(r)}(\theta) =
  V$. We deduce therefore that 
  \begin{equation}\label{boish}
\inf_{\alpha \in \R^n : V^{(r)}(\alpha) = V} \left\{\frac{1}{n}
  \|\theta^* - \alpha\|^2 + C_r \sigma^2 \Delta_r(\alpha) \right\}
 \end{equation}
is bounded from above by 
  \begin{align*}
 \frac{1}{n} \|\theta^* - \theta\|^2 + C_r \sigma^2
\Delta_r(\theta) &= \frac{1}{n} \|\theta^* - \eta^*\|^2 \left(1 -
  \frac{V}{V^{(r)}(\theta^*)} \right)^2 + C_r \sigma^2 \Delta_r(\theta) \\
&\leq \left(V - V^{(r)}(\theta^*) \right)^2 + C_r \sigma^2
\Delta_r(\theta)
  \end{align*}
where the last inequality above follows from \eqref{ghul}. We now note
that, by construction, $\theta$ satisfies the minimum 
length condition \eqref{nsa} with the same constant $c$ because
$\theta^*$ does so. As a consequence, we have from \eqref{ghos} that 
\begin{equation*}
  \Delta_r(\theta) \leq C_r(c) \frac{k+1}{n} \log \frac{en}{k+1} 
\end{equation*}
where $C_r(c)$  depends on $r$ and $c$ alone and $k = \ktr(\theta) =
\ktr(\theta^*)$. We have thus shown that \eqref{boish} is bounded from
above by
\begin{equation*}
\left(V - V^{(r)}(\theta^*) \right)^2 + C_r(c) \sigma^2
\frac{k+1}{n} \log \frac{en}{k+1}. 
\end{equation*}
Inequality \eqref{kco.eq} then directly follows from Theorem
\ref{ada}.  

We now assume that $V^{(r)}(\theta^*) = 0$.  Here we have
$\ktr(\theta^*) = 0$ so that the second term on the right hand side of
\eqref{kco.eq} becomes $\frac{C_r\sigma^2}{n} \log (en)$. Note also
that because $V^{(r)}(\theta^*) = 0$ and $V \geq 0$, we can use
Theorem \ref{woor}. To complete the proof, we therefore only need to
prove that 
\begin{equation}\label{stta}
C_r \max \left( \left(\frac{\sigma^2 V^{1/r}}{n} 
      \right)^{2r/(2r + 1)} ,
      \frac{\sigma^2}{n} \log (en)\right) \leq V^2 +
    \frac{C^{(2r+1)/(2r)}_r \sigma^2}{n} \log (en).    
\end{equation}
To prove the above inequality, we may assume that 
\begin{equation*}
  \left(\frac{\sigma^2 V^{1/r}}{n}  \right)^{2r/(2r + 1)}  >
    C_r^{1/(2r)} \frac{\sigma^2}{n} \log (en)
\end{equation*}
for otherwise \eqref{stta} is trivial. It is now straightforward to
check that the above inequality is equivalent to 
\begin{equation*}
  \frac{\sigma^2}{n} < \frac{V^2}{(\log en)^{2r+1}} C_r^{-(2r+1)/(2r)}. 
\end{equation*}
From here, it is easy to show that 
\begin{equation*}
  C_r \left(\frac{\sigma^2 V^{1/r}}{n}  \right)^{2r/(2r + 1)} \leq
    \frac{V^2}{(\log en)^{2r}} \leq V^2
\end{equation*}
which proves \eqref{stta}. This completes the proof of \eqref{kco.eq} 
when $V^{(r)}(\theta^*) = 0$. Inequality
\eqref{kcoc} trivially follows from \eqref{kco.eq}. 
\end{proof}

\subsection{Proof of Lemma \ref{simpo}}
Here we provide the proof of the Lemma \ref{simpo} which implies that
the  $\log \frac{en}{\ktr(\theta^*) +
  1}$ appearing in our risk bounds cannot be completely removed. This
proof uses the fact \eqref{losili} as well as the precise
characterization of the tangent cones of the set $K^{(r)}(V)$ (defined
in \eqref{krv}) given in Lemma \ref{tcr} (in Subsection \ref{tcc}).  

\begin{proof}[Proof of Lemma \ref{simpo}]
Let $\theta^* = (0, \dots, 0, 1, \dots, 1)$ where the jump appears at
the index $j := \lceil n/2 \rceil$ (i.e., $\theta^*_j = 1$ and $\theta_{j-1}^* =
0$). The estimator $\hat{\theta}^{(1)}_{V = 1}$ is simply the least
squares projection of $Y$ onto the closed convex set $K^{(1)}(V)$
(defined in \eqref{krv}) with $V = 1$. The identity \eqref{losili}
therefore gives
\begin{equation*}
  \lim_{\sigma \downarrow 0} \frac{1}{\sigma^2} R(\hat{\theta}_{V
    =1}^{(1)}, \theta^*) = \frac{1}{n} \delta(T_{K^{(1)}(V)}(\theta^*)). 
\end{equation*}
 The characterization of $T := T_{K^{(1)}(V)}(\theta^*)$ from Lemma
 \ref{tcr} implies, for this specific $\theta^*$, that $T$ consists of
 all vectors $\alpha \in \R^n$ for which 
 \begin{equation}\label{tcgan}
   V_{1, j-1}(\alpha) + V_{j, n}(\alpha) \leq \alpha_{j-1} -
   \alpha_j
 \end{equation}
  where $V_{1, j-1}(\alpha)$ and $V_{j, n}(\alpha)$ are defined as in
  \eqref{vab}. Now let $\mathfrak{M}$ consist of all vectors $\alpha
  \in \R^n$ which satisfy: 
  \begin{equation*}
    0 = \alpha_1 \leq \alpha_2 \leq \dots \leq \alpha_{j-1} 
  \end{equation*}
   and 
   \begin{equation*}
     \alpha_j \le \alpha_{j+1} \leq \dots \leq \alpha_n = 0. 
   \end{equation*}
   Note that there is no relation between $\alpha_{j-1}$ and
   $\alpha_j$ in the definition of $\mathfrak{M}$. Then, it is easy to
   check directly that every vector $\alpha \in \mathfrak{M}$
   satisfies \eqref{tcgan} so that 
   \begin{equation*}
     \delta(T) \geq \delta(\mathfrak{M}). 
   \end{equation*}
    This follows from the fact that $\delta(C_1) \le \delta(C_2) $
    whenever $C_1$ and $C_2$ are two closed convex cones such that
    $C_1 \subseteq C_2$ (this fact is stated, for example,
    \citet[Subsection 3.1]{amelunxen2014living}). Now if
    \begin{equation*}
      \mathfrak{M}_1 := \left\{(\alpha_1, \dots, \alpha_{j-1}) : 0 =
        \alpha_1 \leq \alpha_2 \leq \dots \le \alpha_{j-1}  \right\} 
    \end{equation*}
     then it is clear that $\delta(\mathfrak{M}) \geq
     \delta(\mathfrak{M}_1)$ so that we have
     \begin{equation*}
     \delta(T) \geq \delta(\mathfrak{M}_1). 
     \end{equation*}
      We now use the fact that  $\delta(\mathfrak{M}_1)$ is precisely
      known to satisfy (see \citet[Equation (D.12) in Subsection
      D.4]{amelunxen2014living}) 
      \begin{equation*}
        \delta(\mathfrak{M}_1) = \frac{1}{2} \left(1 + \frac{1}{2} +
          \dots + \frac{1}{j-1} \right). 
      \end{equation*}
     This therefore implies (via $1 + (1/2) + \dots + (1/m) \geq
     \log(m+1)$) that 
\begin{equation*}
  \lim_{\sigma \downarrow 0} \frac{n}{\sigma^2} R(\hat{\theta}_{V
    =1}^{(1)}, \theta^*) = \delta(T_{K^{(1)}(V)}(\theta^*)) \geq
  \delta(\mathfrak{M}_1) \geq \frac{1}{2} \log(j) \geq \frac{1}{2} \log(n/2)
\end{equation*}
which proves Lemma \ref{simpo}.  
\end{proof}

\subsection{Proof of Theorem \ref{prt1}}
The following proposition is the key to proving Theorem
\ref{prt1}. This proposition provides an upper bound for
$\D(\lambda \partial g(\theta))$ for a convex function $g$ (recall
that $\D(\cdot)$ is defined as in \eqref{deef}  and also that
$\dist(z, \C) := \inf_{x \in \C} \|z - x\|$ for $z \in \R^n$ a subset
$\C \subseteq \R^n$) in terms of the smaller quantity
$\D(\cone(\partial g(\theta)))$. It is a generalization of
\citet[Proposition 1]{foygel2014corrupted}. Indeed, this latter result
of \cite{foygel2014corrupted} is the special case of Proposition
\ref{qfmg} under the additional assumption that $v_0 \in \partial
g(\theta)$ (this assumption does not necessarily hold for $g(\theta)
:= n^{r-1} \|D^{(r)} \theta\|_1$ when $r \ge 2$).  

\begin{proposition}\label{qfmg}
   Suppose $g : \R^n \rightarrow \R$ is a convex function and $\theta
  \in \R^n$. Suppose that the vector $v_0$ defined by 
  \begin{equation}\label{rdw} 
    v_0 := \argmin_{v \in \aff(\partial g(\theta))} \|v\|. 
  \end{equation}
 is a non-zero vector in $\R^n$.  Then for every $z \in \R^n$, 
   \begin{equation}\label{lzde}
     \lambda(z) := \argmin_{\lambda \geq 0} \dist(z, \lambda \partial
     g(\theta)) = \argmin_{\lambda \ge 0} \inf_{v \in \partial
       g(\theta)} \|z - \lambda v \|
   \end{equation}
    exists uniquely and, moreover, $\E \lambda(Z) < \infty$ where the
    expectation is taken with respect to $Z \sim N(0, I_n)$. 

    Further, let 
  \begin{equation*}
    \lambda^* := \E \lambda(Z) + \frac{2}{\norm{v_0}} \qt{where $Z
      \sim N(0, I_n)$}. 
  \end{equation*}
  Then for every $\lambda \geq \lambda^*$ and $v^* \in \partial
  g(\theta)$, we have 
  \begin{equation}\label{qfmg.eq}
    \D(\lambda \partial g(\theta)) \leq 4 +
    \left(\sqrt{\D(\cone(\partial g (\theta))} + 
      \frac{4 \|v^*\|}{\|v_0\|} + 2 +  (\lambda -
      \lambda^*) \norm{v^*} \right)^2.
  \end{equation}
\end{proposition}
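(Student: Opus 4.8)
The plan is to follow the strategy of \citet[Proposition 1]{foygel2014corrupted} but work with the affine hull of $\partial g(\theta)$ rather than assuming $v_0 \in \partial g(\theta)$. First I would establish the structure of $\partial g(\theta)$ as a compact convex set; write $P := \partial g(\theta)$, let $L := \aff(P)$ be the affine hull, and let $v_0$ be the projection of the origin onto $L$. Since $v_0 \neq 0$ by hypothesis, $L$ does not pass through the origin, and the direction $v_0/\|v_0\|$ is the ``normal'' to $L$ in the sense that every $v \in L$ satisfies $\langle v, v_0 \rangle = \|v_0\|^2$. This gives a clean decomposition: for each $z \in \R^n$ and $\lambda \geq 0$, the distance from $z$ to $\lambda P$ splits into a component along $v_0$ (controlled exactly, since $\lambda v$ has inner product $\lambda \|v_0\|^2$ with $v_0$, a quantity not depending on which $v \in P$ we pick) and a component in the orthogonal complement of $v_0$ (which is a translate of a cone-like object). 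From this I would deduce that $\lambda \mapsto \dist(z, \lambda P)$ is strictly convex on the relevant range, giving uniqueness of $\lambda(z)$; the finiteness $\E\lambda(Z) < \infty$ would come from bounding $\lambda(Z)$ above by $|\langle Z, v_0\rangle|/\|v_0\|^2$ plus a term controlled by $\dist(Z, \cone(P))$, both of which are integrable.

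Next I would prove the main inequality \eqref{qfmg.eq}. Fix $\lambda \geq \lambda^*$ and $v^* \in P$. The triangle inequality gives
\begin{equation*}
  \dist(Z, \lambda P) \leq \dist(Z, \lambda(Z) P) + |\lambda - \lambda(Z)| \, \|v^*\| \leq \dist(Z, \cone(P)) + |\lambda - \lambda(Z)| \, \|v^*\|,
\end{equation*}
using that $\lambda(Z) P \subseteq \cone(P)$ and that the optimal $\lambda(Z)$ only helps. The term $|\lambda - \lambda(Z)|$ I would bound by $(\lambda - \lambda^*) + |\lambda^* - \lambda(Z)|$, and then $|\lambda^* - \lambda(Z)| \leq |\E\lambda(Z) - \lambda(Z)| + 2/\|v_0\|$. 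The fluctuation $|\lambda(Z) - \E\lambda(Z)|$ is where the $4\|v^*\|/\|v_0\|$ type terms will come from: here I expect to use a concentration / Lipschitz argument showing $z \mapsto \lambda(z)$ is Lipschitz with constant $1/\|v_0\|$ (this follows because moving $z$ by a unit amount moves the $v_0$-component of the optimal projection by at most $1$, and $\lambda(z)\|v_0\|^2$ tracks that component), so that $\E|\lambda(Z) - \E\lambda(Z)| \leq \E|\langle Z, v_0/\|v_0\|\rangle| / \|v_0\| \leq 1/\|v_0\| \cdot \sqrt{2/\pi} \leq 1/\|v_0\|$, or more simply a crude bound of $2/\|v_0\|$. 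Assembling: take expectations of the square of the distance bound, expand $(a+b)^2 \leq \big(\sqrt{\E a^2} + \sqrt{\E b^2}\big)^2$-style via Minkowski in $L^2(Z)$, and collect the constant $4$ and the terms $4\|v^*\|/\|v_0\| + 2 + (\lambda - \lambda^*)\|v^*\|$ inside the square, matching \eqref{qfmg.eq}.

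The main obstacle I anticipate is handling the affine-hull generalization carefully: in \cite{foygel2014corrupted} the hypothesis $v_0 \in P$ lets one directly insert $v_0$ as a feasible point of $\lambda P$, but here $v_0$ need only lie in $\aff(P)$, so one cannot simply write $\dist(Z, \lambda P) \leq \|Z - \lambda v_0\|$. The fix is to separate the ``in-$L$'' behavior from the ``orthogonal-to-$L$'' behavior: the orthogonal component of $\dist(z, \lambda P)$ reduces (after subtracting the $v_0$ component) to the distance from a shifted point to the cone generated by $P - $ its projection onto $\operatorname{span}(v_0)$, which is genuinely the object $\cone(\partial g(\theta))$ up to the controlled shift; quantifying that shift in terms of $\|v^*\|/\|v_0\|$ for an arbitrary reference point $v^* \in P$ is the delicate bookkeeping step. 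Once that geometric lemma is in place, the probabilistic estimates are routine Gaussian computations, and the $r = 1$ versus $r \geq 2$ distinction (where $v_0 = v^*$ in the former case) just makes the shift term vanish.
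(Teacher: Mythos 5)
Your high-level plan is aligned with the paper's (affine-hull orthogonality $\langle v, v_0\rangle = \|v_0\|^2$ for all $v \in \partial g(\theta)$, Lipschitz constant $1/\|v_0\|$ for $z \mapsto \lambda(z)$, Gaussian concentration), but the central inequality in your second paragraph contains a genuine gap. You write
\begin{equation*}
  \dist(Z, \lambda P) \;\le\; \dist(Z, \lambda(Z) P) + |\lambda - \lambda(Z)|\,\|v^*\|,
\end{equation*}
and this is not correct for an \emph{arbitrary} $v^* \in P$. The natural triangle-inequality bound is obtained by comparing $Z$ to the feasible point $\lambda\, v(Z) \in \lambda P$, where $\Pi_{\cone(P)}(Z) = \lambda(Z)\, v(Z)$; that gives the error term $|\lambda - \lambda(Z)|\,\|v(Z)\|$, and $\|v(Z)\|$ can be much larger than $\|v^*\|$ (or than $\min_{v\in P}\|v\|$). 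The paper circumvents this with a convexity trick that you are missing: for $\lambda \ge \lambda(z)$, the convex combination
\begin{equation*}
  \frac{\lambda(z)}{\lambda}\, v(z) + \Bigl(1 - \frac{\lambda(z)}{\lambda}\Bigr) v^* \;\in\; \partial g(\theta),
\end{equation*}
so $\lambda$ times it is a feasible point of $\lambda P$, and the resulting bound is $\dist(z, \lambda P) \le \dist(z,\cone(P)) + (\lambda - \lambda(z))\|v^*\|$, with $\|v^*\|$ for \emph{any} $v^*\in P$ as required. This trick only applies when $\lambda(z) \le \lambda$, which is precisely why the paper conditions on the high-probability event $E := \{|\lambda(Z) - \E\lambda(Z)| < 2/\|v_0\|\}$, on which $\lambda(Z) < \lambda^* \le \lambda$. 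This event structure is also what makes your proposed ``take $L^2$ norms via Minkowski'' step fail: the pointwise bound does not hold almost surely, only on $E$, so one cannot directly apply Minkowski; the paper instead converts the high-probability bound on the event $E$ to an expectation bound via \citet[Lemma 4]{foygel2014corrupted}, and then separately bounds the variance of $\dist(Z,\lambda P)$ by Gaussian concentration (variance at most $4$, hence the additive $4$) before assembling.

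A secondary, smaller issue: your uniqueness argument claims strict convexity of $\lambda \mapsto \dist(z, \lambda P)$, but that map need not be strictly convex (e.g., $z=0$, $P$ a singleton gives a linear function on $\lambda\ge 0$). The paper proves uniqueness by a cleaner route: if $\Pi_{\cone(P)}(z)$ could be written as $\lambda_1 v_1 = \lambda_2 v_2$ with $v_1, v_2 \in P$, take the inner product with $v_0$ and use $\langle v_i, v_0\rangle = \|v_0\|^2$ to force $\lambda_1 = \lambda_2$. I'd recommend replacing the strict-convexity claim with this inner-product argument.
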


Before proving Proposition \ref{qfmg}, let us first show how Theorem
\ref{prt1} follows from Proposition \ref{qfmg}. The fact
\eqref{annpre} and the bound \eqref{rew} (which was proved in
Subsection \ref{bsi}) will be used in the proof below. 

\begin{proof}[Proof of Theorem \ref{prt1}]
   Let $f(\theta) := \|D^{(r)} \theta\|_1$ and $g(\theta) := n^{r-1}
   f(\theta)$. Because $\hat{\theta}^{(r)}_{\lambda}$ equals the
   penalized estimator \eqref{gepn}, Theorem \ref{ohala} gives 
   \begin{equation}\label{ohalag}
     R(\hat{\theta}_{\lambda}^{(r)}, \theta^*) \leq \frac{\sigma^2}{n}
     \D(\lambda \partial g(\theta^*)). 
   \end{equation}
   We now use inequality \eqref{qfmg.eq} in Proposition \ref{qfmg} to
   bound the right hand side above. Note that under the assumption
   $D^{(r)} \theta^* \neq 0$, we observed (after \eqref{vvs}) that
   $v_0$ is non-zero so that Proposition \ref{qfmg} is
   applicable. This gives 
   \begin{equation*}
     \D(\lambda \partial g(\theta^*)) \leq 4 +
    \left(\sqrt{\D(\cone(\partial g (\theta^*))} + 
      \frac{4 \|v^*(g)\|}{\|v_0(g)\|} + 2 +  (\lambda -
      \lambda^*) \norm{v^*(g)} \right)^2
   \end{equation*}
for every $\lambda \geq \lambda^*(g)$ where 
\begin{equation*}
  \lambda^*(g) := \E \argmin_{\lambda \geq 0} \dist(Z,
  \lambda \partial g(\theta^*)) + \frac{2}{\|v_0(g)\|}, 
\end{equation*}
$v_0(g)$ is defined as in \eqref{rdw} and 
   \begin{equation*}
     v^*(g) := \argmin_{v \in\partial g(\theta)} \|v\|. 
   \end{equation*}
   Note that \eqref{qfmg.eq} holds for every $v^* \in \partial
   g(\theta^*)$ so it holds, in particular, for $v^*(g)$. Also note
   that $v^*(g)$ and $v_0(g)$ above are different from
   $v_0$ and $v^*$ in Theorem \ref{prt1} which are all
   defined in terms of $f$. Now the relation $g = n^{r-1} f$ implies
   that 
   \begin{equation*}
     v_0 = \frac{v_0(g)}{n^{r-1}}, ~~~ v^* = \frac{v^*(g)}{n^{r-1}}
     ~~~ \text{ and } ~~~ 
     \lambda^*(g)= \lambda^*.  
   \end{equation*}
   Note that $\lambda^*(g) = \lambda^*$ holds without any scaling
   factor because of the presence of the $n^{1-r}$ factor in the
   definition of $\lambda^*$ in \eqref{lams}. We have therefore proved
   that for every $\lambda \geq \lambda^*$, we have
   \begin{align*}
     \D(\lambda \partial g(\theta^*)) &\leq 4 +
    \left(\sqrt{\D(\cone(\partial g (\theta^*))} + 
      \frac{4 \|v^*\|}{\|v_0\|} + 2 +  \frac{(\lambda -
      \lambda^*)}{n^{1-r}} \norm{v^*} \right)^2 \\
&\leq 20 + 4 \D(\cone(\partial g (\theta^*)) + \frac{64
  \|v^*\|^2}{\|v_0\|^2}+ \frac{4 (\lambda - \lambda^*)^2}{n^{2 - 2r}} \|v^*\|^2
   \end{align*}
where, in the last inequality, we used the elementary fact $(a + b + c
+ d)^2 \leq 4(a^2 + b^2 + c^2 + d^2)$. Note now that 
\begin{equation*}
  \D (\cone(\partial g (\theta^*)) = \D (\cone(\partial f
  (\theta^*))
\end{equation*}
so that, by inequality \eqref{annpre}, we deduce that 
\begin{equation*}
  \D (\cone(\partial g (\theta^*)) \leq 1 + w^2
  (T_{K^{(r)}(V^*)}(\theta^*)). 
\end{equation*}
The bound \eqref{rew} then gives 
\begin{equation*}
  \D (\cone(\partial g (\theta^*)) \leq 1 + C_r^2 n
  \Delta_r(\theta^*). 
\end{equation*}
Putting the above pieces together (and the fact that
$\Delta_r(\theta^*) \geq 1/n$), we obtain
\begin{equation*}
  \D(\lambda \partial g(\theta^*)) \leq C_r n \Delta_r(\theta^*) + \frac{64
  \|v^*\|^2}{\|v_0\|^2}+ \frac{4 (\lambda - \lambda^*)^2}{n^{2 - 2r}}
\|v^*\|^2
\end{equation*}
for every $\lambda \geq \lambda^*$. Combining this with
\eqref{ohalag} gives \eqref{prt1.eq} and completes the proof of
Theorem \ref{prt1}. 
\end{proof}

We now give the proof of Proposition \ref{qfmg}. 
\begin{proof}[Proof of Proposition \ref{qfmg}]
   Note first that $\partial g(\theta)$ cannot contain the zero vector
   because we assumed that $v_0$ (defined by \eqref{rdw}) is
   non-zero. As a result, it follows from \citet[Corollary 
9.6.1]{Rockafellar70book} that  
   \begin{equation*}
     \cone(\partial g(\theta)) := \bigcup_{\lambda \geq 0}
    \left(\lambda \partial g(\theta) \right) 
   \end{equation*}
   is closed (and, of course, a convex cone). It follows therefore
   that 
   \begin{equation*}
     \Pi_{\cone(\partial g(\theta))}(z) := \argmin_{u \in
       \cone(\partial g(\theta))} \|z - u\|
   \end{equation*}
   exists uniquely. Let $\Pi_{\cone(\partial g(\theta))}(z) := \lambda_1
   v_1$ for some $\lambda_1 \geq 0$ and $v_1 \in \partial g(\theta)$. Then
   it is clear that $\lambda_1$ minimizes $\dist(z, \lambda \partial
   g(\theta))$ over $\lambda \geq 0$. To prove that $\lambda_1$ is the
   unique minimizer, assume, if possible, the existence of $\lambda_2
   \geq 0$ and $v_2 \in \partial g(\theta)$ such that
   $\lambda_1 v_1 = \lambda_2 v_2$. Note now that because $\aff(\partial
   g(\theta))$ is an affine set, the vector $v_0$ defined by
   \eqref{rdw} (which is the projection of the zero vector onto
   $\aff(\partial g(\theta))$) satisfies the orthogonality property: 
  \begin{equation}\label{fomc}
    \left<v - v_0, v_0 \right> = 0 \qt{for every $v \in \partial
      g(\theta)$}. 
  \end{equation}
   In particular, we have $\left<v, v_0 \right> = \|v_0\|^2$ for every
   $v \in \partial g(\theta)$. Applying this to $v = v_1$ and $v =
   v_2$, we obtain that 
   \begin{equation*}
     \lambda_1 \|v_0\|^2 = \left<\lambda_1 v_1 , v_0 \right> =
     \left<\lambda_2 v_2, v_0 \right> = \lambda_2 \|v_0\|^2
   \end{equation*}
   which implies that $\lambda_1 = \lambda_2$. This proves therefore
   that there is a unique $\lambda_1 \geq 0$ for which
   $\Pi_{\cone(\partial g(\theta))}(z) \in \lambda_1 \partial
   g(\theta)$  and this $\lambda_1$ clearly is equal to $\lambda(z)$
   defined in \eqref{lzde}. 
 
   To prove that $\E \lambda(Z) < \infty$ for $Z \sim N_n(0, I_n)$,
   we write $\Pi_{\cone(\partial g(\theta))} z = \lambda(z) v(z)$  for
   some $v(z) \in \partial g(\theta)$ and use \eqref{fomc} to obtain
   \begin{align*}
     \lambda(z) &= \frac{1}{\|v_0\|^2} \left<\lambda(z) v(z), v_0
     \right> = \frac{1}{\|v_0\|^2} \left<\Pi_{\cone(\partial
         g(\theta))}(z), v_0 \right> \leq \frac{\|\Pi_{\cone(\partial
         g(\theta))}(z) \|}{\|v_0\|}
   \end{align*}
    where the last inequality follows from the Cauchy-Schwarz
    inequality. The standard fact that the projection onto a closed
    convex cone reduces norm gives $\|\Pi_{\cone(\partial
         g(\theta))}(z) \| \leq \|z\|$ so that $\lambda(z) \leq
       \|z\|/\|v_0\|$ which implies obviously that $\E \lambda(Z) <
       \infty$ when $Z \sim N_n(0, I_n)$. 

  Let us now proceed to prove \eqref{qfmg.eq}. The first step for this
  is to observe that the map $z \mapsto \lambda(z) = 
  \argmin_{\lambda \geq 0} \dist(z, \lambda \partial g(\theta))$ is
  Lipschitz with parameter $1/\|v_0\|$ i.e., 
  \begin{equation}\label{dpat}
    |\lambda(z_1) - \lambda(z_2) | \leq \frac{\|z_1 -
      z_2\|}{\|v_0\|} \qt{for every $z_1, z_2 \in \R^n$}. 
  \end{equation}
   To see this, fix $z_1, z_2 \in \R^n$ and let $\Pi_{\cone(\partial
     g(\theta))} (z_i) = \lambda(z_i) v_i$ for two vectors $v_1, v_2
   \in \partial  g(\theta)$. Then,  by the contraction property for
   projections on closed convex cones, we have
   \begin{align*}
     \|z_1 - z_2\| &\geq  \|\lambda(z_1) v_1 - \lambda(z_2) v_2 \| \\
&= \|(\lambda(z_1) - \lambda(z_2)) v_0 + \lambda(z_1) (v_1 - v_0) -
  \lambda(z_2) (v_2 - v_0) \| \\
&= \|(\lambda(z_1) - \lambda(z_2)) v_0\| + \|\lambda(z_1) (v_1 - v_0) -
  \lambda(z_2) (v_2 - v_0) \|
   \end{align*}
   where the last equality follows from the orthogonality property
   \eqref{fomc}. Because the last term above is nonnegative, the
   inequality \eqref{dpat} follows. 
     
   The Lipschitz property of $z \mapsto \lambda(z)$ proved above
   implies, by standard Gaussian concentration, that
   \begin{align*}
     \P \left\{ \left|\lambda(z) - \E \lambda(Z) \right| <
     \frac{2}{\|v_0\|} \right\} \geq 1 - 2 e^{-2}. 
   \end{align*}
   Let $E := \{z \in \R^n : |\lambda(z) - \E \lambda(Z)| <
   2/\|v_0\|\}$ so that $\P\{z \in E\} \ge 1 - 2e^{-2}$.  Note that $0 \leq \lambda(z) < 
    \lambda^*$ when $z \in E$. This implies that for every $\lambda \geq \lambda^*$
    and vectors $v, v^*  \in \partial g(\theta)$, we have (by convexity of the
    subdifferential $\partial g(\theta)$)
    \begin{align*}
      \frac{\lambda(z)}{\lambda} v + \left(1 -
      \frac{\lambda(z)}{\lambda} \right) v^* \in \partial g(\theta). 
    \end{align*}
    In particular, this is true with $v = v(z)$ where
    $\Pi_{\cone(\partial g(\theta))}(z) := \lambda(z) v(z)$. As a
    result,  
    \begin{align*}
      \dist(z, \lambda \partial g(\theta))  &\leq \|z - \lambda(z) v(z) -
      (\lambda - \lambda(z)) v^* \| \\ 
&\leq \|z - \lambda(z) v(z)\| + \left(\lambda - \lambda(z) \right)
  \|v^*\| \\ 
&= \dist(z, \cone(\partial g(\theta))) + (\lambda -
  \lambda(z)) \|v^*\|. 
    \end{align*}
   Now, again for $z \in E$, we have $\lambda(z) > \E \lambda(Z) -
   2/\|v_0\|$ so that 
   \begin{align*}
     \lambda - \lambda(z) \leq \lambda - \E \lambda(Z) +
     \frac{2}{\|v_0\|} = \lambda - \lambda^* + \frac{4}{\|v_0\|}. 
   \end{align*}
   We have therefore proved that 
   \begin{align*}
     \dist(z, \lambda \partial g(\theta)) \leq \dist(z, \cone(\partial
     g(\theta))) + (\lambda - \lambda^*) \|v^*\| + \frac{4
     \|v_0\|}{\|v^*\|}
   \end{align*}
   for $z \in E$ which further implies that the probability 
   \begin{align*}
     \P \left\{\frac{1}{2} \dist(Z, \lambda \partial g(\theta)) -
     \frac{1}{2} \dist(Z, \cone(\partial g(\theta))) >
     \frac{2\|v^*\|}{\|v_0\|} + \frac{1}{2} (\lambda - \lambda^*)
     \|v^*\| \right\}
   \end{align*}
   is bounded from above by $2 e^{-2}$.  We now use \citet[Lemma
   4]{foygel2014corrupted} to claim that  
    \begin{align}
      \E \dist(Z, \lambda \partial g(\theta)) - \E \dist(Z,
      \cone(\partial g(\theta))) &\leq (\lambda - \lambda^*) \|v^*\| +
                                   \frac{4 \|v^*\|}{\|v_0\|} \nonumber \\ 
&+ 2                              \sqrt{-2 \log(1 - 2 e^{-2}) } \nonumber \\
&\leq (\lambda - \lambda^*) \|v^*\| + \frac{4 \|v^*\|}{\|v_0\|} + 2.  \label{eril}
    \end{align}
    To convert this into a bound on $\E \dist^2(Z, \lambda \partial
    g(\theta))$, we use the fact that $z \mapsto \dist(z,
    \lambda \partial g(\theta))$ is a $1$-Lipschitz function so that
    again by standard Gaussian concentration, we have 
    \begin{align*}
      \mathrm{var}(\dist(Z, \lambda \partial g(\theta))) &=
      \int_0^{\infty} \P \left\{\left|\dist(Z, \lambda \partial
      g(\theta)) - \E \dist(Z, \lambda \partial
      g(\theta))  \right| \geq \sqrt{t} \right\} dt  \\
&\leq 2 \int_0^{\infty} e^{-t/2} dt = 4. 
    \end{align*}
    This gives 
    \begin{align*}
    \D(\lambda \partial g(\theta)) &=  \E \dist^2(Z, \lambda \partial
      g(\theta))\\  &= \left( \E \dist(Z, \lambda \partial
      g(\theta))\right)^2 + \mathrm{var}(\dist(Z, \lambda \partial
                      g(\theta))) \\
&\leq \left( \E \dist(Z, \lambda \partial
      g(\theta))\right)^2 + 4
    \end{align*}
   which, combined with \eqref{eril} and the elementary fact  
   \begin{align*}
     \E \dist(Z, \cone(\partial g(\theta))) \leq \sqrt{\E \dist^2(Z,
     \cone(\partial g(\theta)))} = \sqrt{\D(\cone (\partial
     g(\theta)))},
   \end{align*}
completes the proof of Proposition \ref{qfmg}.    
\end{proof}

\subsection{Proofs of Corollary \ref{expen1}, Lemma \ref{explam1} and
  Corollary \ref{expen}} 
In this subsection, we shall provide the proofs of Corollary \ref{expen1}, Lemma \ref{explam1} and
  Corollary \ref{expen}. 

  \begin{proof}[Proof of Corollary \ref{expen1}]
    Corollary \ref{expen1} is a simple consequence of Theorem
    \ref{prt1} and Lemma \ref{gksd}. Indeed, Lemma \ref{gksd} states
    that for $r = 1$, we have $v^* = v_0$ and that 
    \begin{equation*}
      \|v^*\|^2 = \frac{1}{n_0} + \frac{1}{n_k} + 4 \sum_{i=1}^{k-1}
      \frac{I\{\rs_i \neq \rs_{i+1}\}}{n_i} \leq 4 \sum_{i=0}^{k}
      \frac{I\{\rs_i \neq \rs_{i+1}\}}{n_i}. 
    \end{equation*}
   Using this in the right hand side of \eqref{prt1.eq}, we get
   \begin{equation*}
     R(\hat{\theta}^{(1)}_{\lambda}, \theta^*) \leq C_1 \sigma^2
     \Delta_1(\theta^*) + \frac{64 \sigma^2}{n} + \frac{16 \sigma^2}{n}
     (\lambda - \lambda^*)^2 \sum_{i=0}^{k}
      \frac{I\{\rs_i \neq \rs_{i+1}\}}{n_i}
   \end{equation*}
   which implies \eqref{expen1.eq} as $\Delta_1(\theta^*) \geq
   1/n$. To prove \eqref{expen1.seq}, we further bound the right hand
   side above under the minimum length condition \eqref{nsa} by noting
   that $\Delta_1(\theta^*) \leq C(c) \frac{k+1}{n}\log
   \frac{en}{k+1}$ and also that 
   \begin{equation*}
     \sum_{i=0}^{k} \frac{I\{\rs_i \neq \rs_{i+1}\}}{n_i} \leq
     \frac{k+1}{cn} \sum_{i=0}^k I\{\rs_i \neq \rs_{i+1}\}. 
   \end{equation*}
  \end{proof}

  \begin{proof}[Proof of Lemma \ref{explam1}]
  From the formula \eqref{lams} for $\lambda^*$, it is clear that we
  need to bound both the terms $\E \lambda_{\theta^*}(Z)$ and
  $2/\|v_0\|$ from above in order to upper bound
  $\lambda^*$. For bounding $1/\|v_0\|$ from above, we use
  \eqref{voex}  to obtain
  \begin{align*}
    \|v_0\|^2 &= \frac{1}{n_0} + \frac{1}{n_k} + 4
                           \sum_{i=1}^{k-1} \frac{I\{\rs_i \neq
                           \rs_{i+1}\}}{n_i}  \geq  \sum_{i=0}^k
                           \frac{I\{\rs_i \neq \rs_{i+1}\}}{n_i} \\
&\geq \frac{1}{n} \sum_{i=0}^k I\{\rs_i
    \neq \rs_{i+1}\} 
  \end{align*}
where, in the last inequality above, we used $n_i \leq n$. This gives 
\begin{equation}\label{neda}
  \frac{2}{\|v_0\|} \leq \sqrt{\frac{4n}{\sum_{i=0}^k
      I\{\rs_i \neq \rs_{k+1}\}}}. 
\end{equation}
We shall now bound $\E \lambda_{\theta^*}(Z)$. Note that $\theta^* \in
\R^n$ is such that $D \theta^* \neq 0$. Throughout this proof,
$f(\theta) := \|D \theta\|_1$. As observed in the proof of Proposition
\ref{qfmg}, $\cone(\partial f(\theta^*))$ is a closed convex cone and
for every $z \in \R^n$, we have 
\begin{equation}\label{lamju}
  \Pi_{\cone(\partial f(\theta^*))}(z) = \lambda_{\theta^*}(z) v(z)
\end{equation}
for some vector $v(z) \in \partial f(\theta^*)$. Suppose now that
$\theta^*$ has the $k$ jumps $2 \leq j_1 < \dots < j_k \leq n$ with
associated signs $\rs_1, \dots, \rs_k$. Also let $j_0 = 1, j_{k+1} =
n+1$ and $\rs_0 = \rs_{k+1} = 0$. Then by the characterization of
$\partial f(\theta^*)$ from Proposition \ref{characterization}, we
have 
\begin{equation*}
  \sum_{u = j_i}^n v_u(z) = \rs_i  \qt{for every $i = 0, \dots, k+1$}
\end{equation*}
where $(v_1(z), \dots, v_n(z))$ are the components of the vector
$v(z)$. This implies, via \eqref{lamju}, that 
\begin{equation*}
  \rs_i \lambda_{\theta^*}(z) = \sum_{u=j_i}^n (\Pi z)_u
\end{equation*}
where $(\Pi z)_1, \dots, (\Pi z)_n$ denote the components of
$\Pi z := \Pi_{\cone(\partial f(\theta^*))}(z)$. As a
consequence (by subtracting the above identity for $i$ from the
corresponding identity for $i+1$), we obtain 
\begin{equation*}
 \left( \rs_{i} - \rs_{i+1} \right) \lambda_{\theta^*}(z) = (\Pi
 z)_{j_i} + \dots + (\Pi z)_{j_{i+1} - 1}
\end{equation*}
for every $i = 0, \dots, k$. Multiplying both sides above by $(\rs_i -
\rs_{i+1})$, we get 
\begin{equation*}
  \left( \rs_{i} - \rs_{i+1} \right)^2  \lambda_{\theta^*}(z) = \left(
    \rs_i - \rs_{i+1} \right) \left((\Pi 
 z)_{j_i} + \dots + (\Pi z)_{j_{i+1} - 1} \right) 
\end{equation*}
for every $i = 0, \dots, k$. Adding these for $i = 0, \dots, k$, we
obtain
\begin{equation*}
  \lambda_{\theta^*}(z) \sum_{i=0}^k (\rs_i - \rs_{i+1})^2 =
  \sum_{i=0}^k (\rs_i - \rs_{i+1}) \left((\Pi 
 z)_{j_i} + \dots + (\Pi z)_{j_{i+1} - 1}  \right) 
\end{equation*}
We now use the important identity \eqref{morco} which gives
\begin{equation*}
\Pi z = \Pi_{\cone(\partial f(\theta^*))}(z) = z - \Pi_{T_{K^{(1)}(V^*)}}(z)
\end{equation*}
where $V^* := \|D \theta^*\|_1$. This gives (below we write $\Pi_T z$
as shorthand for $\Pi_{T_{K^{(1)}(V^*)}}(z)$)  
\begin{align*}
  \lambda_{\theta^*}(z) \sum_{i=0}^k (\rs_i - \rs_{i+1})^2 &=
  \sum_{i=0}^k (\rs_i - \rs_{i+1}) \left(z_{j_i} + \dots + z_{j_{i+1}
      - 1}  \right) \\ &-  
  \sum_{i=0}^k (\rs_i - \rs_{i+1}) \left((\Pi_T
 z)_{j_i} + \dots + (\Pi_T z)_{j_{i+1} - 1}  \right) . 
\end{align*}
This equality holds for all vectors $z \in \R^n$. Applying this to
$Z \sim N(0, I_n)$ and taking expectations on both sides with respect
to $Z$, we obtain
\begin{equation*}
\E \lambda_{\theta^*}(Z)  \sum_{i=0}^k (\rs_i - \rs_{i+1})^2 = -
  \sum_{i=0}^k (\rs_i - \rs_{i+1}) \left((\E \Pi_T
 Z)_{j_i} + \dots + (\E \Pi_T Z)_{j_{i+1} - 1}  \right). 
\end{equation*}
Using the Cauchy-Schwarz inequality on the right hand side above, we
deduce
\begin{align*}
\E \lambda_{\theta^*}(Z)  \sum_{i=0}^k (\rs_i - \rs_{i+1})^2 &\leq \| \E \Pi_T Z \|
  \sqrt{\sum_{i=0}^k (\rs_i - \rs_{i+1})^2 n_i} \\ &\leq 2 \| \E \Pi_T Z \|
  \sqrt{\sum_{i=0}^k n_i I\{\rs_i \neq \rs_{i+1}\}}
\end{align*}
where we used the fact that $|\rs_i - \rs_{i+1}| \leq 2$ when $\rs_i
\ne \rs_{i+1}$. This gives (also using $|\rs_i - \rs_{i+1}| \geq 1$
when $\rs_i \neq \rs_{i+1}$ on the left hand side)
\begin{equation}\label{eadc}
  \E \lambda_{\theta^*}(Z) \leq 2  \| \E \Pi_T Z \|
  \sqrt{\frac{\sum_{i=0}^k n_i I\{\rs_i \neq \rs_{i+1}\}}{\left(\sum_{i=0}^k
    I\{\rs_i \neq \rs_{i+1}\} \right)^2}}. 
\end{equation}
To bound $\|\E \Pi_T Z\|$, we use Jensen's inequality and inequality
\eqref{gast} (recall the notions of statistical dimension and Gaussian
width from Subsection \ref{prems}) to obtain
 \begin{equation*}
   \|\E \Pi_TZ\|^2 \leq \E \|\Pi_T Z\|^2 = \delta(T) \leq 1 + w^2(T) =
   1 + w^2(T_{K^{(1)}(V^*)})
 \end{equation*}
 Inequality \eqref{rew} now gives 
 \begin{equation*}
w^2(T_{K^{(1)}(V^*)}) \leq C_1^2 n \Delta_1(\theta^*)
 \end{equation*}
for a positive constant $C_1^2$. This implies (note that
$\Delta_1(\theta^*) \geq 1/n$) that 
\begin{equation*}
  \|\E \Pi_TZ\| \leq C \sqrt{n \Delta_1(\theta^*)}. 
\end{equation*}
Combining this with \eqref{eadc}, we get
\begin{equation*}
    \E \lambda_{\theta^*}(Z) \leq C  \sqrt{n \Delta_1(\theta^*)}
  \sqrt{\frac{\sum_{i=0}^k n_i I\{\rs_i \neq \rs_{i+1}\}}{\left(\sum_{i=0}^k
    I\{\rs_i \neq \rs_{i+1}\} \right)^2}}. 
\end{equation*}
We now use the length condition \eqref{leon}. Under this condition, we
know that  
\begin{equation*}
  n \Delta_1(\theta^*) \leq C(c_1) (k+1) \log \left(\frac{en}{k+1}
  \right). 
\end{equation*}
Using this (and the fact that $n_i \leq c_2 n/(k+1)$), we obtain
\begin{equation*}
  \E \lambda_{\theta^*}(Z) \leq C(c_1, c_2)
  \sqrt{\frac{n}{\sum_{i=0}^k I\{\rs_i \neq \rs_{i+1}\}} \log
    \frac{en}{k+1}}. 
\end{equation*}
The proof of \eqref{rambo} is now completed by the combining the above 
bound with \eqref{neda}. 
  \end{proof}

  \begin{proof}[Proof of Corollary \ref{expen}]
    Suppose $\lambda$ is as in \eqref{lach} for $\Gamma \geq C^*(c_1,
    c_2)$ (where $C^*(c_1, c_2)$ comes from Lemma
    \ref{explam1}). Then, by Lemma \ref{explam1}, $\lambda \geq
    \lambda^*$. We can therefore apply Corollary \ref{expen1}
    (specifically, inequality \eqref{expen1.seq} as $\theta^*$
    satisfies the length condition \eqref{leon} which implies the
    minimum length condition with constant $c_1$) to obtain
    \begin{align}
           R(\hat{\theta}_{\lambda}^{(1)}, \theta^*) &\leq C(c_1) \sigma^2
     \left(\frac{k+1}{n} \log \frac{en}{k+1} + (\lambda - \lambda^*)^2
     \frac{k+1}{n^2} \sum_{i=0}^k I\{\rs_i \neq \rs_{i+1}\}\right) \nonumber \\
&\leq C(c_1) \sigma^2
     \left(\frac{k+1}{n} \log \frac{en}{k+1} + \lambda^2
     \frac{k+1}{n^2} \sum_{i=0}^k I\{\rs_i \neq \rs_{i+1}\}\right) \label{kke}
    \end{align}
   for a constant $C(c_1)$ depending only on $c_1$. In the last
   inequality above, we used the trivial fact that $(\lambda -
   \lambda^*)^2 \leq \lambda^2$. Plugging in the value of $\lambda$
   from \eqref{lach} in the bound \eqref{kke}, we obtain \eqref{jku}.  

  We shall now prove \eqref{cbal} assuming that $\lambda$ is as in
  \eqref{cdal} with $\Gamma \geq C^*(c_1, c_2)$. For this, note first
  that \eqref{kke} holds for this $\lambda$ as well because $\lambda \geq
  \lambda^*$. Plugging in $\lambda = \Gamma \sqrt{n \log(en)}$ in
  \eqref{kke}, we obtain
  \begin{equation*}
           R(\hat{\theta}_{\lambda}^{(1)}, \theta^*) \leq  C(c_1) \sigma^2
     \left(\frac{k+1}{n} \log \frac{en}{k+1} + \Gamma^2 (\log (en))
     \frac{k+1}{n} \sum_{i=0}^k I\{\rs_i \neq \rs_{i+1}\}\right). 
  \end{equation*}
  The trivial bound $\log(en/(k+1)) \leq \log(en)$ now gives
  \eqref{cdal}. The proof of Corollary \ref{expen} is complete. 
  \end{proof}

\subsection{Proofs of Corollary \ref{expenr}, Lemma \ref{explamr}
  and Corollary \ref{supno}}
In this subsection, we provide the proofs of Corollary \ref{expenr},
Lemma \ref{explamr} and Corollary \ref{supno}. 

\begin{proof}[Proof of Corollary \ref{expenr}]
  Corollary \ref{expenr} is a simple consequence of Theorem \ref{prt1}
  and Lemma \ref{gksd}. Indeed, plugging in the lower bound on
  $\|v_0\|$ from \eqref{gksd.low} and the upper bound on $\|v^*\|$
  from \eqref{gksd.up} in inequality \eqref{prt1.eq}, we obtain 
  \begin{equation*}
    R(\hat{\theta}^{(r)}_{\lambda}, \theta^*) \leq C_r \sigma^2
    \Delta_r(\theta^*) + C_r(c) \frac{\sigma^2}{n} (k+1)^{2r} + C_r(c) \sigma^2
    (\lambda - \lambda^*)^2 \frac{(k+1)^{2r}}{n^2}. 
  \end{equation*}
  for a constant $C_r(c)$ depending only on $c$ ($c$ appears in the
  minimum length condition \eqref{nsa}). From here, inequality
  \eqref{expenr.eq} immediately follows from the observation that
  $\Delta_r(\theta^*) \leq C_r(c) \frac{k+1}{n} \log \frac{en}{k+1}$
  under the minimum length condition.  
\end{proof}

\begin{proof}[Proof of Lemma \ref{explamr}]
  Recall that 
  \begin{equation*}
    \lambda^* = n^{1-r} \left(\E \lambda_{\theta^*}(Z) + \frac{2}{\|v_0\|} \right)
  \end{equation*}
  with 
  \begin{equation*}
    \lambda_{\theta^*}(z) := \argmin_{\lambda \geq 0} \inf_{v
      \in \partial f(\theta^*)} \|z - \lambda v\|
  \end{equation*}
   where $f(\theta) := \|D^{(r)} \theta\|_1$ and we have assumed that
   $D^{(r)} \theta^* \neq 0$. To bound 
  $\lambda^*$ from above, we therefore need to bound both the terms
  $\E \lambda_{\theta^*}(Z)$ and $2/\|v_0\|$ from above. To bound
  $2/\|v_0\|$, we simply used inequality \eqref{gksd.low} which gives
  \begin{equation}\label{fro8}
    \frac{2}{\|v_0\|} \leq C_r n^{r - 1/2} 
  \end{equation}
  for a constant $C_r$. The main task therefore is to bound $\E
  \lambda_{\theta^*}(Z)$. We follow a strategy similar to that
  employed in the proof of Lemma \ref{explam1}. As observed in the
  proof of Proposition \ref{qfmg}, $\cone(\partial f(\theta^*))$ is a
  closed convex cone (because $D^{(r)} \theta^* \neq 0$) and for every
  $z \in \R^n$, we can write
\begin{equation}\label{lamjur}
  \Pi_{\cone(\partial f(\theta^*))}(z) = \lambda_{\theta^*}(z) v(z)
\end{equation}
for some vector $v(z) \in \partial f(\theta^*)$. Suppose now that
$\theta^*$ has the $k$ knots (or order $r$): $2 \leq j_1 < \dots < j_k
\leq n-r+1$ with associated signs $\rs_1, \dots, \rs_k$. Then by the 
characterization of $\partial f(\theta^*)$ from Proposition
\ref{characterization} (specifically using \eqref{ch2} with $j = j_k +
r - 1$), we obtain   
\begin{equation*}
  \sum_{i = j_k + r - 1}^n {i-j_k \choose r-1} v_i(z) = \rs_k
\end{equation*}
where $v_1(z), \dots, v_n(z)$ are the  components of the vector
$v(z)$. This implies, via \eqref{lamjur}, that 
\begin{equation*}
  \sum_{i = j_k + r - 1}^n {i-j_k \choose r-1} (\Pi z)_i= \rs_k
  \lambda_{\theta^*}(z)
\end{equation*}
where $(\Pi z)_1, \dots, (\Pi z)_n$ denote the components of $\Pi z :=
\Pi_{\cone(\partial f(\theta^*))}(z)$. Using \eqref{morco}, we can write
\begin{equation*}
  \Pi z = \Pi_{\cone(\partial f(\theta^*))}(z) = z -   \Pi_{T_{K^{(r)}(V^*)}}(z). 
\end{equation*}
We thus obtain (using $\Pi_T z$ as shorthand for
$\Pi_{T_{K^{(r)}(V^*)}}(z)$), 
\begin{equation*}
  \rs_k \lambda_{\theta^*}(z) =   \sum_{i = j_k + r - 1}^n {i-j_k
    \choose r-1} z_i -   \sum_{i = j_k + r - 1}^n {i-j_k \choose r-1}
  (\Pi_T z)_i. 
\end{equation*}
Applying this to $Z \sim N_n(0, I_n)$, we get
\begin{equation*}
  \rs_k \E \lambda_{\theta^*}(Z) =  \sum_{i = j_k + r - 1}^n {i-j_k \choose r-1}
  (\E \Pi_T z)_i
\end{equation*}
so that 
\begin{equation*}
  \E \lambda_{\theta^*}(Z) = \left| \sum_{i = j_k + r - 1}^n {i-j_k \choose r-1}
  (\E \Pi_T z)_i \right|. 
\end{equation*}
By the Cauchy-Schwarz inequality, we  now get
\begin{align*}
 \left( \E \lambda_{\theta^*}(Z) \right)^2 &\leq \left[ \sum_{i=j_k + r - 1}^n
 {i-j_k \choose r-1}^2 \right] \left[ \sum_{i=j_k + r -1}^n \left( (\E
   \Pi_T Z)_i \right)^2 \right] \\
&\leq \left[ \sum_{i=j_k + r - 1}^n  {i-j_k \choose r-1}^2 \right]  \|
  \E \Pi_T Z \|^2 \\ &\leq \left[ \sum_{i=j_k + r - 1}^n  {i-j_k \choose
  r-1}^2 \right]  \E \| \Pi_T Z \|^2 . 
\end{align*}
Note now that for every $i = j_k + r - 1, \dots, n$, clearly
\begin{equation*}
  {i-j_k \choose r-1} \leq {n \choose r-1} \leq n^{r-1}. 
\end{equation*}
As a result, we have
\begin{equation*}
  \left( \E \lambda_{\theta^*}(Z) \right)^2 \leq n^{2r-2} (n - j_k - r
  + 2)  \E \| \Pi_T Z \|^2 . 
\end{equation*}
Noting that $n_k = n-r+2-j_k$, we have proved that 
\begin{equation*}
  \left( \E \lambda_{\theta^*}(Z) \right)^2 \leq n^{2r-2} n_k \E \|
  \Pi_T Z \|^2.  
\end{equation*}
Inequality \eqref{gast} (recall the notions of statistical dimension
and Gaussian width from Subsection \ref{prems}) now gives
 \begin{equation*}
\E \|\Pi_T Z\|^2 = \delta(T) \leq 1 + w^2(T) =
   1 + w^2(T_{K^{(r)}(V^*)}). 
 \end{equation*}
Using inequality \eqref{rew}, we get 
 \begin{equation*}
w^2(T_{K^{(r)}(V^*)}) \leq C_r^2 n \Delta_r(\theta^*)
 \end{equation*}
for a positive constant $C_r^2$. We have therefore proved that
\begin{equation*}
\E \lambda_{\theta^*}(Z)  \leq n^{r-1} \sqrt{n_k \E \|
  \Pi_T Z \|^2} \leq n^{r-1} \sqrt{n_k \left(1 + C_r^2 n
    \Delta_r(\theta^*) \right)}. 
\end{equation*}
We now invoke the length condition \eqref{leonr}. Under this
condition, we first have
\begin{equation*}
  n \Delta_r(\theta^*) \leq C_r(c_1) (k+1) \log \frac{en}{k+1}
\end{equation*}
and also $n_k \leq c_2n/(k+1)$ so that
\begin{align*}
  \E \lambda_{\theta^*}(Z)  &\leq C_r(c_1, c_2) n^{r-1}
  \sqrt{\frac{n}{k+1} (k+1) \log \frac{en}{k+1}} \\
&= C_r(c_1, c_2) n^{r-1}
  \sqrt{n \log \frac{en}{k+1}}. 
\end{align*}
Combining this with \eqref{fro8}, we get
\begin{align*}
  \lambda^* &= n^{1-r} \left(\E \lambda_{\theta^*}(Z) +
              \frac{2}{\|v_0\|} \right) \\
&\leq n^{1-r} \left(C_r(c_1, c_2) n^{r-1}
  \sqrt{n \log \frac{en}{k+1}}  + C_r n^{r-1/2} \right) \\ &\leq
                                                             C^*_r(c_1,
                                                             c_2)
                                                             \sqrt{n
                                                             \log
                                                             \frac{en}{k+1}}.  
\end{align*}
This finishes the proof of Lemma \ref{explamr}. 
\end{proof}

\begin{proof}[Proof of Corollary \ref{supno}]
    Suppose $\lambda$ is as in \eqref{lachr} for $\Gamma \geq C_r^*(c_1,
    c_2)$ (where $C_r^*(c_1, c_2)$ comes from Lemma
    \ref{explamr}). Then, by Lemma \ref{explamr}, $\lambda \geq
    \lambda^*$. We can therefore apply Corollary \ref{expenr}
    (note that $\theta^*$ satisfies the length condition \eqref{leonr}
    which implies the minimum length condition with constant $c_1$) to
    obtain 
    \begin{align}
           R(\hat{\theta}_{\lambda}^{(1)}, \theta^*) &\leq C_r(c_1) \sigma^2
     \left(\frac{k+1}{n} \log \frac{en}{k+1} + \frac{(k+1)^{2r}}{n} + (\lambda - \lambda^*)^2
     \frac{(k+1)^{2r}}{n^2} \right) \nonumber \\
&\leq C_r(c_1) \sigma^2
     \left(\frac{k+1}{n} \log \frac{en}{k+1} + \frac{(k+1)^{2r}}{n} + \lambda^2
     \frac{(k+1)^{2r}}{n^2}\right) \label{kker}
    \end{align}
   for a constant $C_r(c_1)$ depending only on $r$ and $c_1$. In the last
   inequality above, we used the trivial fact that $(\lambda -
   \lambda^*)^2 \leq \lambda^2$. Plugging in the value of $\lambda$
   from \eqref{lachr} in the bound above, we obtain
   \begin{align*}
       R(\hat{\theta}_{\lambda}^{(1)}, \theta^*) &\leq C_r(c_1)
       \sigma^2 \left(\frac{k+1}{n} \log \frac{en}{k+1} +
         \frac{(k+1)^{2r}}{n} + \frac{\Gamma^2 (k+1)^{2r}}{n} \log
         \frac{en}{k+1} \right)  \\
&\leq C_r(c_1) \sigma^2 (2 + \Gamma^2) \frac{(k+1)^{2r}}{n} \log
  \frac{en}{k+1} 
   \end{align*}
  which proves \eqref{jkur}. We shall now prove \eqref{cbalr} assuming
  that $\lambda$ is as in \eqref{cdalr} with $\Gamma \geq C^*(c_1, c_2)$. For this, note first
  that \eqref{kker} holds for this $\lambda$ as well because $\lambda \geq
  \lambda^*$. Plugging in $\lambda = \Gamma \sqrt{n \log(en)}$ in
  \eqref{kker}, we obtain
   \begin{align*}
       R(\hat{\theta}_{\lambda}^{(1)}, \theta^*) &\leq C_r(c_1)
       \sigma^2 \left(\frac{k+1}{n} \log \frac{en}{k+1} +
         \frac{(k+1)^{2r}}{n} + \frac{\Gamma^2 (k+1)^{2r}}{n} \log
                                                   (en) \right)  \\ 
&\leq C_r(c_1) \sigma^2 (2 + \Gamma^2) \frac{(k+1)^{2r}}{n} (\log (en))
   \end{align*}
which proves \eqref{cdalr} and completes the proof of Corollary
\ref{supno}. 
\end{proof}

\subsection{Proof of Lemma \ref{rr0}} \label{rr0.pf}
The proof of Lemma \ref{rr0}, which deals with the case when $D^{(r)}
\theta^* = 0$, is provided here. 

\begin{proof}[Proof of Lemma \ref{rr0}]
  Let $f(\theta) := \|D^{(r)} \theta\|_1$  and $g(\theta) := n^{r-1}
  f(\theta)$. The estimator $\hat{\theta}_{\lambda}^{(r)}$ is then
  given by
  \begin{equation*}
    \hat{\theta}_{\lambda}^{(r)} = \argmin_{\theta \in \R^n}
    \left(\frac{1}{2} \|Y - \theta\|^2 + \sigma \lambda g(\theta)
    \right). 
  \end{equation*}
  The risk result \eqref{ohala} gives
  \begin{equation}\label{oym7}
    R(\hat{\theta}_{\lambda}^{(r)}, \theta^*) \leq \frac{\sigma^2}{n}
    \D(\lambda \partial g(\theta^*)) = \frac{\sigma^2}{n} 
    \D(n^{r-1}\lambda \partial f(\theta^*)). 
  \end{equation}
   Because $D^{(r)} \theta^* = 0$, the subdifferential of $f$ at
   $\theta^*$ consists precisely of all vectors $v \in \R^n$ for which   
  \begin{equation}\label{equ}
    \sum_{i=j}^n {r+i-j-1 \choose r-1} v_i = 0 \qt{for $1 \leq j \leq r$}
  \end{equation}
  and 
  \begin{equation*}
    \max_{r < j \leq n} \left|\sum_{i=j}^n {r+i-j-1 \choose r-1}
      v_i\right| \leq 1. 
  \end{equation*}
   This is a consequence of the characterization of the
   subdifferential given in Proposition \ref{characterization}.
 
  Now let $S_r$ denote the set consisting of all vectors $v \in \R^n$
  such that \eqref{equ}  holds. Clearly $S_r$ is a subspace in $\R^n$
  of dimension exactly equal to $n - r$. Let $\Pi_{S_r}$ denote the
  projection matrix onto $S_r$ and let
  \begin{equation*}
    \lambda(z) := n^{1-r}\max_{r <  j \leq n} \left|\sum_{i=j}^n
      {r+i-j-1 \choose r-1} (\Pi_{S_r} z)_i \right| \qt{for $z \in
      \R^n$}.  
  \end{equation*}
   For $Z \sim N(0, I_n)$, we can write
   \begin{align*}
     \D(n^{r-1}\lambda \partial f(\theta^*)) &= \E \dist^2(Z, n^{r-1}
                                               \lambda \partial
                                               f(\theta^*)) \\
&= \E \dist^2(Z, n^{r-1}
                                               \lambda \partial
                                               f(\theta^*))
  I\{\lambda(Z) \leq \lambda\} \\ &+ \E \dist^2(Z, n^{r-1}
                                               \lambda \partial
                                               f(\theta^*))I\{\lambda(Z)
  > \lambda\} 
   \end{align*}
   From the characterization of $\partial f(\theta^*)$ given above, it
   is clear that when $\lambda(Z) \leq \lambda$, the vector $\Pi_{S_r}
   Z$ belongs to $n^{r-1} \lambda \partial f(\theta^*)$. On the other
   hand, the zero vector always belongs to $\partial f(\theta^*)$
   (note that we are working under the assumption that $D^{(r)}
   \theta^* = 0$). This allows us to deduce that 
   \begin{align*}
     \D(n^{r-1}\lambda \partial f(\theta^*)) &\leq \E \|Z - \Pi_{S_r}
                                               Z \|^2 + \E \|Z\|^2
                                               I\{\lambda(Z) >
                                               \lambda\}. 
   \end{align*}
   Because $S_r$ is a subspace of dimension $n - r$, the first term
   above equals $r$. For the second term, we use Cauchy-Schwarz
   inequlity (and the elementary fact that $\E \|Z\|^4 = n^2 + 2n$) to
   obtain
   \begin{align}\label{aara}
     \D(n^{r-1}\lambda \partial f(\theta^*)) &\leq r + \sqrt{n^2 + 2n}
                                               \sqrt{\P
                                               \left\{\lambda(Z) >
                                               \lambda \right\}}. 
   \end{align}
   To bound $\P \{\lambda(Z) > \lambda\}$, we write (via the union bound)
   \begin{align*}
     \P \{\lambda(Z) > \lambda\} \leq \sum_{r < j \leq n} \P \left\{\left|\sum_{i=j}^n
      {r+i-j-1 \choose r-1} (\Pi_{S_r} Z)_i \right| > n^{r-1} \lambda
  \right\}. 
   \end{align*}
   For each fixed $r < j \leq n$, the random variable 
   \begin{align*}
     \sum_{i=j}^n
      {r+i-j-1 \choose r-1} (\Pi_{S_r} Z)_i
   \end{align*}
   is easily seen to be normally distributed with mean zero and
   variance equal to $\|\Pi_{S_r} u\|^2$ where $u$ is the vector whose
   $i^{th}$ entry is ${r + i - j - 1 \choose r-1}$ for $i \geq j$ and
   0 for $i < j$. Note that
   \begin{align*}
     \|\Pi_{S_r} u\|^2 &\leq \|u\|^2 = \sum_{i=j}^n {r + i - j - 1
     \choose r-1}^2  \\ &\leq n {n-j+r-1 \choose r-1}^2 \leq n \times
     (n^{r-1})^2 = n^{2r-1}. 
   \end{align*}
   Using this (and the Gaussian tail bound: $\P\{|N(0, 1)| \geq
   t\} \leq \exp(-t^2/2)$), we obtain
   \begin{align*}
     \P \{\lambda(Z) > \lambda\} &\leq \sum_{r < j \leq n} \P \left\{\left|\sum_{i=j}^n
      {r+i-j-1 \choose r-1} (\Pi_{S_r} Z)_i \right| 
> n^{r-1} \lambda
  \right\} \\
&\leq n \exp \left( \frac{-(n^{r-1} \lambda)^2}{2 n^{2r-1}} \right)
  = n \exp \left(\frac{-\lambda^2}{2n} \right). 
   \end{align*}
    Combining the above inequality with \eqref{aara}, we obtain
    \begin{align*}
      \D(n^{r-1}\lambda \partial f(\theta^*)) &\leq r + \sqrt{n^3 +
                                                2n^2} \exp
                                                \left(\frac{-\lambda^2}{4n}
                                                \right). 
    \end{align*}
    Now for $\lambda \geq \sqrt{6n \log (en)}$, we obtain
    \begin{align*}
      \D(n^{r-1}\lambda \partial f(\theta^*)) &\leq r + \sqrt{n^3 +
                                                2n^2} (en)^{-3/2} \leq
                                                r + e^{-3/2} \sqrt{1 +
                                                \frac{2}{n}} \leq
                                                C_r
    \end{align*}   
    where $C_r$ only depends on $r$. This bound and inequality
    \eqref{oym7} together complete the proof of Lemma \ref{rr0}.  
\end{proof}

\section{Proofs of Key Technical Results}\label{sutta}
Our main proofs presented in Section \ref{pfmr} were crucially reliant
on the following technical results: Lemma \ref{svg} (used in the proof
of Theorem \ref{woor}), Lemma \ref{alta} and Lemma \ref{fina} (used in
the proof of Theorem \ref{ada}). The proofs of these results are given
in this section. In addition, this section also contains the proofs of
Proposition \ref{characterization} and Lemma \ref{gksd} from Section
\ref{ms} of the main paper. The proofs of this section will further
involve other technical results which (together with some other
supporting results from the previous section such as Lemma \ref{lemco}
which was used in the proof of Corollary \ref{kco}) will be proved in
Section \ref{appa}.   

The organization of this section is as follows. We first prove Lemma
\ref{svg} in Subsection \ref{gwc1}. Next Lemma \ref{alta} is proved in
Subsection \ref{tcc} and this requires a precise understanding 
of the tangent cones $T_{\krv}(\theta)$. Subsection \ref{gwc}
is devoted to the proof of Lemma \ref{fina}. In Subsection
\ref{sudi}, we study the subdifferential of $\theta \mapsto \|D^{(r)}
\theta\|_1$ and provide proofs of Proposition \ref{characterization}
and Lemma \ref{gksd}. 

\subsection{Proof of Lemma \ref{svg}} \label{gwc1}
In this subsection, we shall provide the proof of Lemma \ref{svg}
(which was crucially used for the proof of Theorem \ref{woor}). Our
strategy is to use Dudley's entropy bound to control the left hand
side of \eqref{svg.eq} in terms of the metric entropy of $S_r(V, t)$
(defined in \eqref{c2}). Let us first formally define the notion of
metric entropy. For a set $K \subset \R^n$ and $\epsilon > 0$, we
define $N(\epsilon, K)$ to be the 
smallest integer $m$ for which there exist points $a_1, \dots, a_m \in
\R^n$ satisfying   
\begin{equation*}
  \sup_{a \in K} \inf_{1 \leq i \leq m} \|a - a_i\| \leq \epsilon
\end{equation*}
where, as usual, $\|\cdot\|$ denotes the Euclidean norm. The
$\epsilon$-metric entropy of $K$ is the logarithm of $N(\epsilon, K)$. 

Dudley's entropy bound bounds the left hand side of \eqref{svg.eq} via
$\log N(\epsilon, S_r(V, t))$. The following theorem then provides
upper bounds on $\log N(\epsilon, S_r(V, t))$. 

\begin{theorem}\label{dprg}
For $r \ge 1$, $t > 0$, $V > 0$ and $n \geq r$, let    
\begin{equation*}
  S_r(V, t) := \left\{\theta \in \R^n : \|\theta\| \leq t, V(D^{(r-1)}\theta)
  \leq V n^{1-r}\right\}. 
\end{equation*}
Then for every $\epsilon > 0$, we have 
\begin{equation}\label{dprg.eq}
  \log N(\epsilon, S_r(V, t)) \leq C_r \left(\frac{V
      \sqrt{n}}{\epsilon} \right)^{1/r}
  + r \log \left(2 + \frac{2^rn^{r} t}{\epsilon \sqrt{n}} \right) + C_r
\end{equation}
for a constant $C_r$ that depends only on $r$.     
\end{theorem}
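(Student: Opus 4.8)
The plan is to cover $S_r(V,t)$ by splitting each $\theta\in S_r(V,t)$ into a low-dimensional ``polynomial'' part and a ``spline remainder'' part, and to treat the two parts by different arguments: a volumetric bound for the first, and a fat-shattering (combinatorial dimension) bound for the second.

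Concretely, I would fix any $r$ of the design indices, say $J=\{n-r+1,\dots,n\}$, and for $\theta\in\R^n$ let $p(\theta)$ be the unique element of the $r$-dimensional subspace $\ker D^{(r)}$ (the discrete polynomials of degree $\le r-1$) that agrees with $\theta$ on $J$, and set $g(\theta):=\theta-p(\theta)$. Because the interpolation nodes are clustered at one end, a Lagrange-interpolation estimate gives $\|p(\theta)\|_\infty\le C_r n^{r-1}\|\theta\|_\infty\le C_r n^{r-1}t$, hence $\|p(\theta)\|\le C_r n^{r-1/2}t$; thus $\{p(\theta):\theta\in S_r(V,t)\}$ lies in a Euclidean ball of radius $C_r n^{r-1/2}t$ inside an $r$-dimensional subspace, and so has $(\epsilon/2)$-covering number at most $(1+C_r n^{r-1/2}t/\epsilon)^r$. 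This produces the additive term $r\log\!\big(2+2^rn^{r}t/(\epsilon\sqrt n)\big)+C_r$ in \eqref{dprg.eq}. The remainder obeys $D^{(r)}g(\theta)=D^{(r)}\theta$, so $\|D^{(r)}g(\theta)\|_1\le Vn^{1-r}$, and $\|g(\theta)\|\le\|\theta\|+\|p(\theta)\|\le C_rn^{r-1/2}t$; since the sum of an $(\epsilon/2)$-net of the $p$-part and an $(\epsilon/2)$-net of the $g$-part is an $\epsilon$-net of $S_r(V,t)$ by the triangle inequality, it remains to bound $\log N(\epsilon/2,\G)$ with $\G:=\{g(\theta):\theta\in S_r(V,t)\}$.

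For $\G$ I would use the standard bound of covering numbers in terms of the fat-shattering dimension $\mathrm{fat}_{c\delta}(\G)$ (\citet{rudelson2006combinatorics}), reducing the task to the estimate $\mathrm{fat}_\gamma(\G)\le C_r(V\sqrt n/\gamma)^{1/r}$. To obtain this, suppose coordinates $i_1<\dots<i_d$ are $\gamma$-shattered by $\G$ with witness $s$; applying shattering to a sign pattern and its negation yields $h\in\R^n$ with $\|D^{(r)}h\|_1\le 2Vn^{1-r}$ and $(-1)^j h_{i_j}\ge 2\gamma$ for all $j$. Grouping the indices $i_j$ into disjoint consecutive blocks of size $r+1$ and writing, for each block, the non-uniform $r$-th order divided difference of $h$ as a nonnegative combination of the consecutive-window differences $(D^{(r)}h)_\ell$ (the discrete Peano-kernel / B-spline identity, whose weights are nonnegative, sum to $1/r!$ on each block, and have pairwise disjoint supports across blocks), one side bounds $\sum_{\text{blocks}}\big|[\,\cdot\,]h\big|\le \tfrac1{r!}\|D^{(r)}h\|_1$, while on the other side the alternating signs force every term of each block's divided difference to have the same sign, giving a lower bound for $\sum_{\text{blocks}}\big|[\,\cdot\,]h\big|$ of the form $c_r\gamma$ times a combinatorial factor in $d$ and the block widths; optimizing over the widths (which must sum to at most $n$) then yields the claimed bound on $d$. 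Putting the two parts together, $\log N(\epsilon/2,\G)\le C_r(V\sqrt n/\epsilon)^{1/r}$, which added to the polynomial-part estimate gives \eqref{dprg.eq}.

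The main obstacle is the fat-shattering estimate for $\G$ — turning the purely combinatorial fact that $\G$ shatters $d$ arbitrarily placed coordinates at scale $\gamma$ into a quantitative lower bound on $\|D^{(r)}\cdot\|_1$ with the correct exponent $1/r$; this requires the careful accounting of non-uniform discrete divided differences indicated above, together with a sufficiently sharp choice of blocks (or of shattered points) and the right convexity step in the optimization over block widths, and it is also the point at which one must argue that the covering/fat-shattering passage leaves no spurious logarithmic factor on the leading term (using that the relevant exponent $1/r\le 1<2$). The remaining ingredients — the Lagrange bound, covering the $r$-dimensional ball, and assembling the nets — are routine.
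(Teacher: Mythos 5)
Your high-level decomposition — subtract an $r$-dimensional polynomial interpolant at $r$ fixed nodes, cover that part volumetrically, cover the remainder by fat shattering, then Minkowski-sum the nets — is a legitimate alternative to the paper's bookkeeping. The paper instead quantizes the initial values $(D^{(j)}\theta)_1$, $j=0,\dots,r-1$, into $\delta$-buckets (this is where the $r\log(\cdot)$ term comes from) and then decomposes what remains into a difference $\alpha(\theta)-\beta(\theta)$ of two vectors each lying in a shape-constrained class $\cras$ with monotone $D^{(r-1)}$. Both routes should be made to work; yours is cleaner in some respects because it separates the finite-dimensional piece with a one-shot interpolation estimate.

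The gap is in the fat-shattering estimate for $\G$, and it is not merely a constant. First, a bookkeeping point: to recover $\log N(\epsilon,\G)\lesssim (V\sqrt n/\epsilon)^{1/r}$ from the Rudelson–Vershynin passage $\log N(\epsilon)\le C\,v\big(\epsilon/(C\sqrt n)\big)$ you need $\mathrm{fat}_\gamma(\G)\lesssim (V/\gamma)^{1/r}$ \emph{without} the extra $\sqrt n$; your stated target $\mathrm{fat}_\gamma(\G)\lesssim (V\sqrt n/\gamma)^{1/r}$ would, after the $\gamma\sim\epsilon/\sqrt n$ substitution, give $(Vn/\epsilon)^{1/r}$, which is off by $n^{1/(2r)}$. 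Second, and more seriously, the argument you sketch — disjoint consecutive blocks of $r+1$ shattered indices, one $r$-th order divided difference per block, Peano-kernel weights with disjoint supports summing to $\|D^{(r)}h\|_1/r!$, convexity over the block widths — yields $\gamma\,B^{r+1}/n^{r}\lesssim \|D^{(r)}h\|_1$, i.e. a bound of the form $d\lesssim \big(Vn/\gamma\big)^{1/(r+1)}$, not $\big(V/\gamma\big)^{1/r}$. These have genuinely different exponents: your bound is sharper for $\gamma\lesssim V/n^r$ but strictly weaker for $\gamma\gtrsim V/n^r$, and so it does not imply \eqref{dprg.eq} over the whole range of $\epsilon$. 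Already at $r=1$ one sees the loss clearly: the trivial telescoping $V(h)\ge 2(d-1)\gamma$ gives $d\lesssim V/\gamma$, whereas the block argument replaces plain differences by difference quotients and returns only $d\lesssim(Vn/\gamma)^{1/2}$.

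What is missing is the further split of the remainder into two \emph{shape-constrained} pieces. Writing $g=\alpha-\beta$ with $D^{(r)}\alpha=(D^{(r)}g)^+\ge 0$ and $D^{(r)}\beta=(D^{(r)}g)^-\ge 0$ (which preserves $\|D^{(r)}\alpha\|_1+\|D^{(r)}\beta\|_1=\|D^{(r)}g\|_1$), each piece has monotone $D^{(r-1)}$, i.e. lies in a class of the form $\C_r(a,V')$. For such a class the divided differences $[i_j,\dots,i_{j+r-1};\cdot]$ over \emph{overlapping} windows of $r$ nodes are themselves monotone (Lemma \ref{subs}), and the fat-shattering estimate proceeds by a telescoping sum of $m-r$ increments whose total is controlled by the \emph{range} $V'$; the convexity step then gives $(m-r)^r\lesssim V'n^{r-1}/\gamma$ (Lemma \ref{cdcr}), hence $m\lesssim (V/\gamma)^{1/r}$ after substituting $V'=Vn^{1-r}$. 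The overlapping/telescoping structure, not the disjoint-block $\ell^1$ accounting, is what produces the $1/r$ exponent; it is available only after the monotone decomposition, which your proposal does not perform. If you add this step (apply the monotone split to $g$, control each piece by Lemma \ref{pgr}/\ref{cdcr} and combine nets), your outer decomposition into interpolant plus remainder should then go through, modulo the routine assembly.
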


Let us first complete the proof of Lemma \ref{svg} assuming that
Theorem \ref{dprg}. The proof of Theorem \ref{dprg} will be provided
following the proof of Lemma \ref{svg}.  

\begin{proof}[Proof of Lemma \ref{svg}]
  Let $G$ denote the left hand side of \eqref{svg.eq}. We use
  Dudley's entropy bound to deduce that  
  \begin{equation*}
    G \leq  C \sigma \int_0^{t} \sqrt{\log N(\epsilon, S_r(V, t))}
    \, d\epsilon 
  \end{equation*}
   where the set $S_r(V, t)$ is defined as $\{\theta \in \R^n :
   \|\theta\| \leq t, V(D^{(r-1)}\theta) \leq  V n^{1-r}\}$ and
   $N(\epsilon, S_r(V, t))$ denotes the $\epsilon$-covering number of
   $S_r(V, t)$ under the Euclidean metric. These covering numbers are
   bounded in Theorem \ref{dprg} which furnishes a constant $C_r$ such that  
   \begin{equation*}
     \sqrt{\log N(\epsilon, S_r(V, t))} \leq C_r \left(\frac{V
         \sqrt{n}}{\epsilon} \right)^{1/(2r)} + \sqrt{r \log  \left(2
         + \frac{2^r n^{r} t}{\epsilon \sqrt{n}} \right)} + C_r 
   \end{equation*}
for every $\epsilon >  0$. Note that the square root of the right hand
side of \eqref{dprg.eq} is bounded from above by the right hand side
above via the elementary inequality $\sqrt{a_1 + a_2 +
  a_3} \leq \sqrt{a_1} + \sqrt{a_2} + \sqrt{a_3}$ for $a_1, a_2, a_3
\geq 0$. It follows therefore that    
   \begin{equation*}
     G \leq C_r \sigma t \left(\frac{V \sqrt{n}}{t} \right)^{1/(2r)} + C_r
     \sigma t + C_r \sigma  \int_0^t \sqrt{\log \left(2 + \frac{2^r
           n^{r} t}{\epsilon \sqrt{n}}  \right)} d\epsilon.  
   \end{equation*}
   The last integral above can be controlled in the following way: 
   \begin{align*}
   \frac{1}{t}  \int_0^t \sqrt{\log \left(2 + \frac{2^r n^{r}
         t}{\epsilon \sqrt{n}}  
       \right)} d\epsilon &=  \int_0^1 \sqrt{\log \left(2 + \frac{2^r
           n^{r}}{u \sqrt{n}} \right)} du \\  
&= \int_0^{\sqrt{n} n^{-r}} \sqrt{\log \left(2 + \frac{2^r
           n^{r}}{u \sqrt{n}} \right)} du \\
&+ \int_{\sqrt{n} n^{-r}}^1
     \sqrt{\log  \left(2 + \frac{2^r n^{r}}{u \sqrt{n}} \right)} du.   
   \end{align*}
For the second integral above, we use $u \geq \sqrt{n} n^{-r}$ to
argue that it is bounded from above by $\sqrt{\log(2 +
  2^rn^{2r-1})} \leq C_r \sqrt{\log (en)}$. For the first integral, we
use
\begin{equation*}
  \log \left(2 + \frac{2^r n^{r}}{u \sqrt{n}} \right) \leq 1 +
  \frac{2^r n^{r}}{u \sqrt{n}} \leq \frac{2^{r+1} n^{r}}{u \sqrt{n}}
\end{equation*}
to obtain 
\begin{equation*}
  \int_0^{\sqrt{n} n^{-r}} \sqrt{\log \left(2 + \frac{2^r
           n^{r}}{u \sqrt{n}} \right)} du \leq C_r.
\end{equation*}
We have therefore proved that 
\begin{equation*}
  G \leq C_r \sigma t \left(\frac{V \sqrt{n}}{t} \right)^{1/(2r)}
  + C_r \sigma t \sqrt{\log(en)}  
\end{equation*}
for a constant $C_r$ which completes the proof of Lemma \ref{svg}.  
\end{proof}

Let us now provide the proof of Theorem \ref{dprg}. For this, let us
first introduce the following definition. 
\begin{defn}\label{cras}
For $r \ge 1$, $n \geq r$, real numbers $a_0, \dots, a_{r-1}$ and
non-negative real numbers $s_0, \dots, s_{r-1}$, let $\cras$ denote
the class of all $\theta \in \R^n$  for which $a_i \leq (D^{(i)}
\theta)_1 \leq a_i + s_i, i = 0, 1, \dots, r-2$,  and  
\begin{equation*}
  a_{r-1} \leq (D^{(r-1)} \theta)_1 \leq \dots \leq (D^{(r-1)}
  \theta)_{n-r+1} \leq a_{r-1} + s_{r-1}. 
\end{equation*}  
\end{defn}

\begin{remark}
  Note that when $r = 1$, the condition $a_i \leq (D^{(i)} \theta)_1
  \leq a_i + s_i, i = 0, \dots, r-2$ is vacuous so that vectors in
  $\C_1(\{a_i\}, \{s_i\})$ are required to only satisfy the inequality
  \begin{equation*}
    a_0 \leq \theta_1 \le \theta_2 \le \dots \leq \theta_n \leq a_0 +
    s_0. 
  \end{equation*}
\end{remark}

Our strategy for proving Theorem \ref{dprg} is to derive it from another
result on the metric entropy of $\cras$. The following lemma gives an
upper bound on the metric entropy of $\cras$. This is the most
important ingredient for the proof of Theorem \ref{dprg}. The proof of
this lemma is given in Subsection \ref{ppgr} and is based on an upper
bound on the fat shattering dimension of the classes $\cras$ and a
standard result (from \citet{rudelson2006combinatorics}) relating fat
shattering dimension to metric entropy. See Subsection \ref{ppgr} for
full details including the definition of fat shattering dimension. 


\begin{lemma}\label{pgr}
  For every $\epsilon > 0$, $r \geq 1$, $n \geq r$, $a_0, \dots,
  a_{r-1} \in \R$ and $s_0, \dots, s_{r-1} \geq 0$, we have   
  \begin{equation*}
    \log N(\epsilon, \cras) \leq C_r \left(\frac{\sqrt{n} \sum_{j=1}^r
      n^{j-1} s_{j-1}}{\epsilon} \right)^{1/r}
  \end{equation*}
  where $C_r$ is a positive constant that depends on $r$ alone. 
\end{lemma}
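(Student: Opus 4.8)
The plan is to derive Lemma~\ref{pgr} from two ingredients: a purely combinatorial bound on the $\gamma$-fat-shattering dimension $\mathrm{fat}_\gamma(\cras)$ of the class, and the standard passage from fat-shattering dimension to $L_2$ metric entropy from \citet{rudelson2006combinatorics}.

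First I would reduce to the homogeneous case $a_0 = \dots = a_{r-1} = 0$. Indeed, there is a fixed vector $p \in \R^n$ (the discrete polynomial of degree $\le r-1$ with $(D^{(i)}p)_1 = a_i$) such that $\cras = p + \C_r(\{0\},\{s_i\})$, and translating a function class by a fixed vector changes neither its fat-shattering dimension nor (up to an additive constant absorbed into $C_r$) its metric entropy. From now on assume $a_i = 0$, write $\mathcal F := \C_r(\{0\},\{s_i\})$ and set $B := \sum_{j=1}^r n^{j-1}s_{j-1}$; note that every $\theta \in \mathcal F$ satisfies $\|\theta\|_\infty \le C_r B$, since $\theta$ is obtained from the monotone sequence $D^{(r-1)}\theta \in [0,s_{r-1}]$ by $r-1$ discrete summations with initial data $(D^{(i)}\theta)_1 \in [0,s_i]$, whose summation coefficients are at most $n^{r-1}$ and whose polynomial coefficients are at most $n^{i}$.

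The heart of the argument is the fat-shattering bound
\begin{equation}\label{eq:fatbd}
  \mathrm{fat}_\gamma(\mathcal F) \le C_r \left(\frac{B}{\gamma}\right)^{1/r} \qt{for every $\gamma > 0$,}
\end{equation}
which I would prove by induction on $r$. For $r = 1$, $\mathcal F$ consists of nondecreasing sequences taking values in an interval of length $s_0$; if $i_1 < \dots < i_d$ is $\gamma$-shattered with witness levels $c_1,\dots,c_d$, then applying the witness for the subset $\{1,\dots,j\}$ and using monotonicity forces $c_{j+1} \ge c_j + 2\gamma$, while each $c_j$ lies in an interval of length at most $s_0 - 2\gamma$, so $d \le s_0/\gamma$ (and $d = 0$ when $s_0 < 2\gamma$). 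For the inductive step, given a $\gamma$-shattered set $i_1 < \dots < i_d$ for $\mathcal F$, let $\theta^{+}$ and $\theta^{-}$ be witnesses for the subsets of odd-indexed and even-indexed points; then $\phi := \theta^{+} - \theta^{-}$ satisfies $(-1)^{j+1}\phi_{i_j} \ge 2\gamma$ for all $j$, hence $\phi$ changes by at least $4\gamma$ across each window $[i_j, i_{j+1}]$, so its first difference $D\phi$ attains a value of magnitude $\ge 4\gamma/(i_{j+1}-i_j)$ and sign $(-1)^{j}$ somewhere in that window. Thus $D\phi$ — a difference of two members of a $\C_{r-1}$-type class on $n-1$ points with parameters $s_1,\dots,s_{r-1}$, and in particular a sequence with bounded, bounded-variation $(r-2)$-th difference — exhibits $d-1$ sign-alternating values whose magnitudes are governed by the window lengths. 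Grouping the windows dyadically by length, applying the induction hypothesis (in a two-sided form covering differences of two class members) within each group, and combining the resulting $(\cdot)^{1/(r-1)}$ bounds via Hölder's inequality together with $\sum_j (i_{j+1}-i_j) \le n$ yields \eqref{eq:fatbd}. Carrying out this dyadic bookkeeping so that the exponent collapses precisely from $1/(r-1)$ to $1/r$, with a constant depending on $r$ alone, is the main technical obstacle.

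Finally, since the exponent $1/r$ in \eqref{eq:fatbd} is strictly below $2$, the fat-shattering–to–entropy estimate of \citet{rudelson2006combinatorics} applies with no logarithmic loss: for the uniform probability measure $\mu$ on $\{1,\dots,n\}$,
\begin{equation*}
  \log N(\delta, \mathcal F, L_2(\mu)) \le C_r\, \mathrm{fat}_{c_r\delta}(\mathcal F) \le C_r \left(\frac{B}{\delta}\right)^{1/r}.
\end{equation*}
Since $\|x\|_{L_2(\mu)} = n^{-1/2}\|x\|$, a Euclidean $\epsilon$-net of $\mathcal F$ is exactly an $L_2(\mu)$-net at scale $\epsilon/\sqrt n$, so $\log N(\epsilon, \mathcal F) = \log N(\epsilon/\sqrt n, \mathcal F, L_2(\mu)) \le C_r (B\sqrt n/\epsilon)^{1/r}$, which is the claimed bound; combined with the translation reduction of the first step, this proves Lemma~\ref{pgr}.
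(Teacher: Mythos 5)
Your top-level architecture matches the paper's: both reduce Lemma~\ref{pgr} to a fat-shattering bound of order $(B/\gamma)^{1/r}$ and then invoke the fat-shattering-to-entropy theorem of \citet{rudelson2006combinatorics} (Theorem~\ref{rv} in the paper), and your translation reduction and diameter bound $\|\theta\|_\infty \le C_r B$ are both correct and parallel to what the paper does. The divergence is in how the fat-shattering bound itself is established. The paper proves it (Lemma~\ref{cdcr}) directly, without induction, by a divided-difference telescoping argument: from the shattering data it extracts a chain of divided-difference inequalities along the witnessed indices, telescopes to trap a quantity $T_r$ between $V/(t(r-1)!)$ from above and $(m-r)^r/(n^{r-1}(r-1)^{r-1})$ from below (the lower bound using AM--GM and convexity), and solves for $m$. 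Your proposal instead runs an induction on $r$ via an alternating-sign reduction.

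The inductive step is where there is a real gap. You pick two witnesses $\theta^+,\theta^-$ and observe that $\phi := \theta^+ - \theta^-$ alternates sign with amplitude $\ge t$ on the shattered set, and that $D\phi$ consequently has $d-1$ sign-alternating entries of prescribed magnitudes. But at this point you have a single fixed vector $\phi$, and $\phi$ is a \emph{difference} of two class members, not itself a member of any class of the form $\C_{r-1}(\cdot,\cdot)$ — the constraint ``$D^{(r-2)}(D\phi)$ monotone'' is lost after subtraction; one only retains bounded variation. So the induction hypothesis, which is a statement about the fat-shattering dimension of the class, is not the right object to cite: it says nothing about how many sign-alternating values of prescribed magnitude a single difference of two class members can have. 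You gesture at a ``two-sided form covering differences of two class members,'' but that is a different and stronger statement that would need to be formulated and proved as the actual induction hypothesis; as written, the step from ``$D\phi$ has many sign-alternations of controlled size'' to ``$d \le C_r(B/\gamma)^{1/r}$'' does not follow. Compounding this, the dyadic grouping plus H\"older step that is supposed to turn a $(\cdot)^{1/(r-1)}$ bound into a $(\cdot)^{1/r}$ bound with a constant depending only on $r$ is flagged by you as the main obstacle and is not carried out. The paper's divided-difference argument can be viewed as the thing that fills exactly this hole: the quantity $T_r$ it introduces is, in effect, the correctly weighted count of alternations, bounded above by a telescoping chain of Lemma~\ref{subs}-type inequalities and below by AM--GM plus convexity, with no induction needed.
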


We are now ready to prove Theorem \ref{dprg}. 

\begin{proof}[Proof of Theorem \ref{dprg}] 
  Fix $\delta > 0$ and let 
  \begin{equation*}
    K_i := \max \left\{u \geq 0 \mbox{ integer}: u \delta \leq 2^i t \right\} \qt{for
    $0 \leq i < r$}. 
  \end{equation*}
  It is then clear that $K_i \leq 2^i t/\delta < K_i + 1$ for every
  $0 \leq i < r$. Let $\K$ denote the class of all vectors $\kb := (\kb_0, \dots, \kb_{r-1})$ where each $\kb_i$ is an integer satisfying $-(K_i + 1) \leq \kb_i \leq K_i$. For every $\kb = (\kb_0, \dots, \kb_{r-1}) \in \K$, let 
 \begin{equation*}
   \M(\kb) := \left\{\theta \in S_r(V, t) : \kb_i \delta 
   \leq (D^{(i)} \theta)_1 \leq (\kb_i + 1) \delta \text{ for } 0 \leq
   i < r\right\} .  
 \end{equation*}
As 
\begin{align*}
    \left|(D^{(i)} \theta)_1 \right| = \left|\sum_{j=1}^{i+1} (-1)^j
      \binom{i}{j-1} \theta_j \right| &\leq \left(\binom{i}{0}^2 +
      \dots + \binom{i}{i}^2 \right)^{1/2} \|\theta\| \\
   &= \binom{2i}{i}^{1/2} \|\theta\| \leq 2^i \|\theta\| \leq 2^i t
  \end{align*}
  for $\theta \in S_r(V, t)$ and $0 \leq i < r$, it follows that
  $S_r(V, t) \subseteq \cup_{\kb \in \K} \M(\kb)$. As a result
\begin{equation*}
    N(\epsilon, S_r(V, t)) \leq \sum_{\kb \in \K} N(\epsilon, \M(\kb)) \leq 2^r \prod_{i=0}^{r-1} (K_i + 1) \sup_{\kb \in \K} N(\epsilon, \M(\kb)). 
\end{equation*}
Since $K_i \leq 2^i t/\delta \leq 2^{r-1} t/\delta$, we deduce
\begin{equation}\label{aa1}
    \log N(\epsilon, S_r(V, t)) \leq r  \log \left(2 +
      \frac{2^r t}{\delta} \right) + \sup_{\kb \in \K} \log
    N(\epsilon, \M(\kb)). 
\end{equation}
We now bound $\log N(\epsilon, \M(\kb))$ from above for a fixed $\kb \in \K$. For every $\theta \in \R^n$, let us define two vectors $\alpha(\theta) := (\alpha_1(\theta), \dots,
   \alpha_n(\theta))$ and $\beta(\theta) := (\beta_1(\theta), \dots,
   \beta_n(\theta))$ in $\R^n$ via   
   \begin{equation}\label{bhim1}
     \alpha_i(\theta) := \sum_{j=1}^{i-r} \binom{i-j-1}{r-1} (D^{(r)} \theta)^+_j
    + \sum_{j=1}^r \binom{i-1}{j-1} (D^{(j-1)} \theta)^+_1   
   \end{equation}
and 
   \begin{equation*}
     \beta_i(\theta) := \sum_{j=1}^{i-r} \binom{i-j-1}{r-1} (D^{(r)} \theta)^-_j
    + \sum_{j=1}^r \binom{i-1}{j-1} (D^{(j-1)} \theta)^-_1
   \end{equation*}
where $x^+ := \max(x, 0)$ and $x^- = x^+ - x$. It then follows from
Lemma \ref{dfi} that $\theta = \alpha(\theta) - \beta(\theta)$ and,
consequently,  
\begin{equation}\label{aa2}
  \log N(\epsilon, \M(\kb)) \leq \log N(\epsilon/2, \M_{\alpha}(\kb))
  + \log N(\epsilon/2, \M_{\beta}(\kb))
\end{equation}
where 
\begin{equation*}
  \M_{\alpha}(\kb) := \left\{\alpha(\theta) : \theta \in \M(\kb)
  \right\} ~~ \text{  and  } ~~ \M_{\beta}(\kb) :=
  \left\{\beta(\theta) : \theta \in \M(\kb) \right\}. 
\end{equation*}
We now show how to control $\log N(\epsilon/2, \M_{\alpha}(\kb))$
below. The argument for $\log N(\epsilon/2, \M_{\beta}(\kb))$ will be
similar. The main idea here (recall the definition of $\cras$ from
Definition \ref{cras}) is to note that
\begin{equation}\label{bm}
\M_{\alpha}(\kb)  \subseteq \cras  
\end{equation}
with
\begin{equation*}
  a_i = \kb_i^+ \delta \qt{for $i = 0, \dots, r-1$}, 
\end{equation*}
and 
\begin{equation*}
  s_i = \delta ~~ \text{ for } i = 0, \dots, r-2 ~~ \text{ and } ~~
  s_{r-1} = V n^{1-r} + \delta. 
\end{equation*}
To see \eqref{bm}, first note that from the definition of
$\alpha(\theta)$ in \eqref{bhim1}, it is straightforward to check that  
\begin{equation}\label{ka1}
  (D^{(r)} \alpha(\theta))_j = (D^{(r)} \theta)^+_j \qt{for $j = 1, \dots, n-r$}
\end{equation}
and
\begin{equation}\label{ka2}
  (D^{(i)} \alpha(\theta))_1 = (D^{(i)} \theta)^+_1 \qt{for $0 \leq i < r$}. 
\end{equation}
From these identities, it is easy to verify \eqref{bm} in the
following way. Let $\theta \in \M(\kb)$ so that $\alpha(\theta) \in
\M_{\alpha}(\kb)$. Then $\kb_i \delta \leq (D^{(i)} \theta)_1 \leq
(\kb_i + 1) \delta$ for $0 \leq i < r$. This implies (because the map
$x \mapsto x^+$ is non-decreasing and subadditive) via \eqref{ka2}
that    
\begin{equation}\label{h1}
\kb_i^+ \delta \leq  (D^{(i)} \alpha(\theta))_1 = (D^{(i)} \theta)^+_1
\leq \kb_i^+ \delta + \delta . 
\end{equation} 
Also the identity \eqref{ka1} implies that $D^{(r)} \alpha(\theta)
\geq 0$  which, together with \eqref{h1}, means that
\begin{align*}
\kb_{r-1}^+ \delta \leq (D^{(r-1)} \alpha(\theta))_1 \leq \dots &\leq
(D^{(r-1)} \alpha(\theta))_{n-r+1} \\ &= V(D^{(r-1)} \alpha(\theta)) +
(D^{(r-1)} \alpha(\theta))_1 \\
&\leq V(D^{(r-1)} \alpha(\theta)) + \kb_{r-1}^+ \delta + \delta. 
\end{align*}
The statement \eqref{bm} will therefore be proved if we establish that
$V(D^{(r-1)} \alpha(\theta)) \leq V n^{1-r}$. This follows since
\begin{align*}
  V(D^{(r-1)} \alpha(\theta)) = \|D^{(r)} \alpha(\theta)\|_1 &=
  \|(D^{(r)} \theta)^+\|_1 \\ &\leq \|D^{(r)} \theta\|_1 = V(D^{(r-1)}
  \theta) \leq V n^{1-r}. 
\end{align*}
This proves \eqref{bm}. We can thus use Lemma \ref{pgr} to
bound $\log N(\epsilon/2, \M_{\alpha}(\kb))$ as 
\begin{equation*}
  \log N(\epsilon/2, \M_{\alpha}(\kb)) \leq C_r n^{1/(2r)}
 \left( \frac{\delta n^{r-1} + V}{\epsilon} \right)^{1/r}. 
\end{equation*}
Using the elementary inequality $(a + b)^{1/r} \leq a^{1/r} +
b^{1/r}$, we obtain the simpler inequality
\begin{equation}\label{aa3}
  \log N(\epsilon/2, \M_{\alpha}(\kb)) \leq C_r \frac{\delta^{1/r} n^{1 - 1/2r}}{\epsilon^{1/r}} + C_r \left(\frac{V \sqrt{n}}{\epsilon} \right)^{1/r} . 
\end{equation}
Combining \eqref{aa1}, \eqref{aa2} and \eqref{aa3}, we obtain 
\begin{equation*}
\log N(\epsilon, S_r(V, t))   \leq r \log \left(2 + \frac{2^r
    t}{\delta} \right) + C_r \frac{\delta^{1/r}
    n^{1 - 1/2r}}{\epsilon^{1/r}} + C_r \left(\frac{V
      \sqrt{n}}{\epsilon} \right)^{1/r} . 
\end{equation*}
Note that $\delta > 0$ above is arbitrary. Taking $\delta = \epsilon
\sqrt{n} n^{-r}$, we obtain \eqref{dprg.eq} which completes the proof
of Theorem \ref{dprg}. 
\end{proof}

\subsection{Study of the tangent cones $T_{\krv}(\theta)$ and the
  proof of Lemma \ref{alta}} \label{tcc} 
This section deals with the tangent cone (see \eqref{tcod} for the
definition of tangent cone) of the convex set
$K^{(r)}(V)$ (defined in \eqref{krv}) at $\theta \in \R^n$ for which
$V^{(r)} (\theta) = V$. This tangent cone is denoted by
$T_{\krv}(\theta)$. The ultimate goal of this subsection is to prove
Lemma \ref{alta} which was crucial for the proof of Theorem \ref{ada}.  

We start with the statement and proof of a lemma (Lemma \ref{tcr})
which gives a precise characterization of
$T_{\krv}(\theta)$. Recall the notation $V_{a, b}(\alpha)$ (from
\eqref{vab}) for $1 \leq a \leq b \leq m$ and $\alpha \in \R^m$. Also
recall, from Section \ref{ms}, the notion of $r^{th}$ order knots
(along with their signs) of vectors in $\R^n$.  

\begin{lemma}\label{tcr}
  Fix $r \geq 1$, $n \geq r + 1$ and let $K^{(r)}(V)$ be as in
  \eqref{krv}. Let  $\theta\in K^{(r)}(V)$ be such that $V^{(r)}
  (\theta) = V$.  
  \begin{enumerate}[(i)]
  \item  Let $2 \leq j_1 < \dots < j_k \leq n-r+1$
  denote all the $r^{th}$ order knots of $\theta$ along with
  associated signs $\rs_1, \dots, \rs_k \in \{-1, 1\}$. Then 
   \begin{equation}\label{tcre.eq}
\begin{split}
    T_{K^{(r)}(V)}(\theta) &= \left\{\alpha \in \R^n : \sum_{i=0}^{k} 
      V_{j_i, j_{i+1} - 1} (D^{(r-1)}\alpha) \right. \\
& \left. \leq \sum_{i=1}^k \rs_i
      \left((D^{(r-1)} \alpha )_{j_i-1} - (D^{(r-1)}
          \alpha )_{j_i} \right)  \right\} 
\end{split}
  \end{equation}
  with the convention $j_0 = 1$ and $j_{k+1} = n-r+2$. 
\item Suppose $2 \leq j_1 < \dots < j_k \leq n-r+1$
  denote any set of indices which contains all the $r^{th}$ order
  knots of 
  $\theta$. Let $\rs_1, \dots, \rs_k$ be such that
  $\rs_i$ is the sign of the knot corresponding to $j_i$ if $j_i$ is a
  knot and $\rs_i \in \{-1, 0, 1\}$ is arbitrary if $j_i$ is not a 
  knot. Then  
 \begin{equation}\label{tcr.eq}
\begin{split}
    T_{K^{(r)}(V)}(\theta) &\subseteq \left\{\alpha \in \R^n :
      \sum_{i=0}^{k}  V_{j_i, j_{i+1} - 1} (D^{(r-1)}\alpha)
    \right. \\ & \left. \leq
      \sum_{i=1}^k \rs_i 
      \left((D^{(r-1)} \alpha )_{j_i-1} - (D^{(r-1)}
          \alpha )_{j_i} \right)  \right\} 
\end{split}
  \end{equation}
where again $j_0 = 1$ and $j_{k+1} = n-r+2$. 
  \end{enumerate}
\end{lemma}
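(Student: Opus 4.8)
The plan is to read off the tangent cone from the one-sided directional derivative of the function defining the sublevel set. Put $f(\theta) := \|D^{(r)}\theta\|_1$, so that $K^{(r)}(V) = \{\theta \in \R^n : f(\theta) \le Vn^{1-r}\}$, and note that $V^{(r)}(\theta) = V$ means exactly $f(\theta) = Vn^{1-r}$. When $V > 0$ the point $0$ is a strict interior point of $K^{(r)}(V)$ (since $f(0) = 0 < Vn^{1-r}$), and for a finite convex function this gives
\[
  T_{K^{(r)}(V)}(\theta) = \{\alpha \in \R^n : f'(\theta;\alpha) \le 0\},
  \qquad f'(\theta;\alpha) := \lim_{t\downarrow 0}\tfrac{1}{t}\big(f(\theta+t\alpha) - f(\theta)\big).
\]
The inclusion $\supseteq$ holds because $f(\theta+t\alpha)$ is strictly less than $f(\theta)$ for small $t>0$ whenever $f'(\theta;\alpha)<0$, together with $\{f'(\theta;\cdot)\le 0\} = \overline{\{f'(\theta;\cdot)<0\}}$ (the latter set being nonempty thanks to the interior point); the inclusion $\subseteq$ holds because every vector $t(\eta-\theta)$ with $\eta\in K^{(r)}(V)$ satisfies $f'(\theta;t(\eta-\theta)) \le t(f(\eta)-f(\theta))\le 0$ and $\{f'(\theta;\cdot)\le 0\}$ is closed, hence contains the closure of the set of such vectors. (Alternatively, combine Lemma \ref{roci}, Lemma \ref{woki} and $(\cone S)^{\mathrm o}=S^{\mathrm o}$, since $f'(\theta;\cdot)$ is the support function of $\partial f(\theta)$. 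The case $V=0$ is degenerate: $K^{(r)}(0)$ is a subspace, its tangent cone equals itself, and this matches the stated formula with $k=0$.)

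Next I would compute $f'(\theta;\alpha)$ explicitly. Since $f$ is the $\ell^1$ norm composed with the linear map $D^{(r)}$, differentiating $t\mapsto\sum_m |(D^{(r)}\theta)_m + t(D^{(r)}\alpha)_m|$ at $t=0$ gives
\[
  f'(\theta;\alpha) = \sum_{m:(D^{(r)}\theta)_m\ne 0}\sgn\big((D^{(r)}\theta)_m\big)(D^{(r)}\alpha)_m + \sum_{m:(D^{(r)}\theta)_m = 0}\big|(D^{(r)}\alpha)_m\big|.
\]
Write $\Delta := D^{(r-1)}\alpha$, so that $(D^{(r)}\alpha)_m = \Delta_{m+1}-\Delta_m$, and identify the component index $m$ with the knot position $j = m+1\in\{2,\dots,n-r+1\}$. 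Then $(D^{(r)}\theta)_m\ne 0$ precisely when $j$ is an $r^{th}$ order knot, and $\sgn((D^{(r)}\theta)_m) = \rs_j$; hence the first sum equals $\sum_{i=1}^k\rs_i(\Delta_{j_i}-\Delta_{j_i-1}) = -\sum_{i=1}^k\rs_i(\Delta_{j_i-1}-\Delta_{j_i})$. The non-knot positions $j$ are those not among $j_1,\dots,j_k$, and their indices $m = j-1$ run over $\{1,\dots,n-r\}\setminus\{j_1-1,\dots,j_k-1\} = \bigcup_{i=0}^k\{j_i,\dots,j_{i+1}-2\}$, a disjoint union with the conventions $j_0 = 1$, $j_{k+1} = n-r+2$; grouping the terms block by block turns the second sum into $\sum_{i=0}^k V_{j_i,j_{i+1}-1}(\Delta)$. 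Therefore $f'(\theta;\alpha) = \sum_{i=0}^k V_{j_i,j_{i+1}-1}(D^{(r-1)}\alpha) - \sum_{i=1}^k\rs_i((D^{(r-1)}\alpha)_{j_i-1}-(D^{(r-1)}\alpha)_{j_i})$, and the condition $f'(\theta;\alpha)\le 0$ is exactly \eqref{tcre.eq}. This proves part (i).

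For part (ii), I would pass from the true knot set to the enlarged index set by adding one extra index at a time. Let $j^*\in\{2,\dots,n-r+1\}$ be a non-knot index lying strictly between two consecutive members $j_a < j^* < j_{a+1}$ of the current ordered index set (including the true knots, and with virtual endpoints $1$ and $n-r+2$), assigned an arbitrary sign $\rs^*\in\{-1,0,1\}$. Splitting the block $[j_a,j_{a+1}-1]$ into $[j_a,j^*-1]$ and $[j^*,j_{a+1}-1]$ removes the term $|\Delta_{j^*}-\Delta_{j^*-1}|$ from the left side of the tangent-cone inequality, while it adds $\rs^*(\Delta_{j^*-1}-\Delta_{j^*})$ to the right side; since $|\rs^*|\le 1$ we have $\rs^*(\Delta_{j^*-1}-\Delta_{j^*}) \le |\Delta_{j^*-1}-\Delta_{j^*}| = |\Delta_{j^*}-\Delta_{j^*-1}|$, so validity of the inequality for the smaller index set implies validity for the enlarged one. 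Iterating from part (i) over all extra indices yields the inclusion \eqref{tcr.eq}.

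I expect the main obstacle to be the combinatorial bookkeeping in the middle step --- pinning down the index ranges of the total-variation blocks, handling the boundary blocks $i=0$ and $i=k$ under the conventions $j_0 = 1$, $j_{k+1} = n-r+2$, and verifying that each non-knot component is counted exactly once --- rather than anything conceptual; the only other point needing care is the justification that the tangent cone of the sublevel set equals $\{f'(\theta;\cdot)\le 0\}$, which rests on the existence of a strict interior point and hence on treating $V=0$ separately.
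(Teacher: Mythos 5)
Your proof is correct, and the underlying computations coincide with the paper's, but you package part (i) differently. The paper proves both inclusions $T\subseteq T_{K^{(r)}(V)}(\theta)$ and $T_{K^{(r)}(V)}(\theta)\subseteq T$ by direct manipulation of $V(D^{(r-1)}(\theta+c\alpha))$ and of $V(D^{(r-1)}\alpha)$ respectively, never naming the directional derivative. You instead invoke the general identity $T_{\{f\le c\}}(\theta)=\{\alpha: f'(\theta;\alpha)\le 0\}$ (which is Lemma~\ref{roci} and Lemma~\ref{woki} in disguise, plus the fact that $f'(\theta;\cdot)$ is the support function of $\partial f(\theta)$), and then compute $f'(\theta;\alpha)$ for $f = \|D^{(r)}\cdot\|_1$. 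The two arguments are the same at the computational level --- the paper's small-$c$ expansion of the variation is precisely your $f'(\theta;\alpha)$, and the paper's "$\alpha-\theta\in T$" step is your $f'(\theta;\eta-\theta)\le f(\eta)-f(\theta)\le 0$ --- but your abstraction is arguably cleaner and makes the role of Slater's condition (the interior point $0$) explicit. The cost is that your route breaks down when $V=0$, where the sublevel set has empty interior; you correctly flag this and observe that the $V=0$ case reduces to a subspace. (The paper's direct argument happens to cover $V=0$ without special-casing, though they exclude it anyway earlier in the proof of Theorem~\ref{ada}.) Your treatment of part (ii), splitting one block at a time and using $|\rs^*|\le 1$ to compare the removed variation term with the added signed term, is the same one-line observation the paper records. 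One small caution on your sign bookkeeping: the sign $\rs_i$ of a knot $j_i$ satisfies $\rs_i=\sgn\bigl((D^{(r)}\theta)_{j_i-1}\bigr)$; you state $\sgn((D^{(r)}\theta)_m)=\rs_j$ with $j=m+1$, which conflates the position index $j$ with the knot counter $i$, but the subsequent formula $\sum_{i=1}^k\rs_i(\Delta_{j_i}-\Delta_{j_i-1})$ is the correct one, so this is a notational slip rather than an error.
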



\begin{remark}
  Lemma \ref{tcr} only deals with those $\theta \in K^{(r)}(V)$ for 
  which $V^{(r)}(\theta)  = V$. On the other hand, it is
  easy to see that when $V^{(r)}(\theta) < V$, the
  tangent cone $T_{K^{(r)}(V)}(\theta)$ equals $\R^n$.     
\end{remark}


\begin{remark}
  It must be clear from the right hand side of \eqref{tcr.eq} that the
  tangent cone $T_{K^{(r)}(V)}(\theta)$ only depends on the knot
  indices $j_1, \dots,  j_k$ and the knot signs $\rs_1, \dots,
  \rs_k$. For example, the exact values of $\theta$ at $j_1, \dots,
  j_k$ are not relevant for the determination of the tangent
  cone. 
\end{remark}

\begin{proof}[Proof of Lemma \ref{tcr}]
  We start with the proof of the first part of the lemma. Let $T$
  denote the set on the right hand side of \eqref{tcre.eq}. Let 
  us first prove that $T \subseteq T_{K^{(r)}(V)}(\theta)$. For 
  this, we fix $\alpha \in T$ and argue that $\alpha \in 
  T_{K^{(r)}(V)}(\theta)$, i.e., we show that there exists $c >0$ such
  that $\theta + c \alpha \in K^{(r)}(V)$. For $c > 0$, first note
  that, by the  
  definition of $V(\cdot)$, the variation $v := V(D^{(r-1)}(\theta + 
  c\alpha))$ can be written as   
  \begin{equation*}
    \begin{split}
  v &=  \sum_{i=0}^k V_{j_i, j_{i+1} - 1} (D^{(r-1)} (\theta + c\alpha))
   \\ &+ \sum_{i=1}^k 
  \left|(D^{(r-1)} (\theta + c\alpha))_{j_i} - (D^{(r-1)}(\theta +
    c\alpha))_{j_i - 1} \right| 
    \end{split}
  \end{equation*}
  Because $\theta$ has no $r^{th}$ order knots except at $j_1,
  \dots, j_k$, first term above can be simplified to obtain 
  \begin{equation*}
\begin{split}
    v &= c \sum_{i=0}^k V_{j_i, j_{i+1} - 1} (D^{(r-1)} \alpha) \\
   & + \sum_{i=1}^k 
  \left|(D^{(r-1)} (\theta + c\alpha))_{j_i} - (D^{(r-1)}(\theta +
    c\alpha))_{j_i - 1} \right|.  
\end{split}
  \end{equation*}
  Now when $c > 0$ is sufficiently small, we can rewrite the above as 
   \begin{align*}
    v &= c \sum_{i=0}^k V_{j_i, j_{i+1} - 1} (D^{(r-1)} \alpha) \\
    &+ \sum_{i=1}^k \rs_i \left\{(D^{(r-1)} (\theta + c\alpha))_{j_i} - (D^{(r-1)}(\theta +
    c\alpha))_{j_i - 1} \right\} \\
&= V(D^{(r-1)} \theta) \\ &+ c \left\{\sum_{i=0}^{k}
      V_{j_i, j_{i+1} - 1} (D^{(r-1)}\alpha) - \sum_{i=1}^k \rs_i
      \left((D^{(r-1)} \alpha )_{j_i-1} - (D^{(r-1)}
          \alpha )_{j_i} \right)  \right\} \\
&\leq V n^{1-r}
  \end{align*}
where the last step follows from the fact that $\alpha \in T$ and $V(D^{(r-1)} \theta) = V n^{1-r}$. This
proves $T \subseteq T_{K^{(r)}(V)}$.  

We shall now verify that $T_{K^{(r)}(V)} \subseteq T$. As $T$ is a closed convex cone, it is enough to show that $\alpha - \theta \in T$ for every $\alpha \in K^{(r)}(V)$. For this, as $D^{(r-1)} (\alpha - \theta) = D^{(r-1)} \alpha - D^{(r-1)} \theta$, we need to show
that  
\begin{equation}\label{ve1}
  \sum_{i=0}^k V_{j_i, j_{i+1} - 1} (D^{(r-1)}(\alpha - \theta)) +
  \sum_{i=1}^k \rs_i \left((D^{(r-1)} \alpha)_{j_i} - (D^{(r-1)} 
    \alpha)_{j_i - 1} \right) 
\end{equation}
is not larger than 
\begin{equation} \label{ve2}
  \sum_{i=1}^k  \rs_i \left((D^{(r-1)}
    \theta)_{j_i} - (D^{(r-1)}  \theta)_{j_i - 1} \right). 
\end{equation}
This is easy because \eqref{ve2} equals
$V(D^{(r-1)} \theta) = V n^{1-r}$ and \eqref{ve1} is clearly bounded 
from above by $V(D^{(r-1)} \alpha) \leq V n^{1-r}$. This proves the first part of the lemma.  

The second part is an easy consequence of the first part of the
lemma and the following trivial observation. If $j_i$ and $j_{i+1}$
denote two consecutive knots of $\theta$ and if $j_i'$ is any
integer with $j_i < j_i' < j_{i+1}$, then 
\begin{equation*}
  V_{j_i, j_{i+1} - 1}(\Delta) \geq V_{j_i, j_i'-1}(\Delta) + V_{j_i',
  j_{i+1} - 1}(\Delta) + \rs_i' \left(\Delta_{j_i'} - \Delta_{j_i'-1}
\right) 
\end{equation*}
for every $\alpha \in \R^n$ and $\rs_i' \in \{-1, 0, 1\}$ where
$\Delta := D^{(r-1)} \alpha$.   
\end{proof}

The following corollary to Lemma \ref{tcr} gives a simple necessary
condition for a vector $\alpha$ to belong to $T_{K^{(r)}(V)}(\theta)$.   


\begin{corollary}\label{cort}
Fix $r \geq 1$ and let $K^{(r)}(V)$ be as in \eqref{krv}. Let 
  $\theta$ be any point in $K^{(r)}(V)$ for which $V^{(r)}(\theta) =
  V$. Let $2 \leq j_1 < \dots <  j_k \leq n-r+1$ 
  and $\rs_1, \dots, \rs_k \in \{-1, 0, 1\}$ be as in Lemma
  \ref{tcr}(ii). For every $0 \leq i \leq k$, let $\ell_i$ denote an
  arbitrary index lying in the set $\{j_i, \dots, j_{i+1} - 1\}$. Then
  for every $\alpha \in T_{K^{(r)}(V)}(\theta)$ we have (with  the
  convention that $j_0 = 1$, $j_{k+1} = n-r+2$, $\rs_0 = 0$ and
  $\rs_{k+1} = 0$) 
\begin{equation}
  \label{cort.eq}
  \sum_{i=0}^k \Gamma_i(\alpha, \ell_i) \leq \sum_{i=0}^k (\rs_{i+1} -
  \rs_i) (D^{(r-1)} \alpha)_{\ell_i} 
\end{equation}
where 
\begin{equation*}
  \Gamma_i(\alpha, \ell_i) := V_{j_i, j_{i+1} - 1}(\Delta) -
  \rs_{i+1} \left(\Delta_{j_{i+1} - 1} - \Delta_{\ell_{i}} \right) - \rs_{i}
    \left(\Delta_{\ell_i} - \Delta_{j_{i}} \right) 
\end{equation*}
with $\Delta = (\Delta_1,\ldots, \Delta_{n-r+1}) := D^{(r-1)} \alpha$. 
\end{corollary}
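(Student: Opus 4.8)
The plan is to obtain Corollary \ref{cort} directly from the inclusion in Lemma \ref{tcr}(ii) by a purely algebraic rearrangement, exploiting the boundary conventions $j_0 = 1$, $j_{k+1} = n-r+2$, $\rs_0 = 0$ and $\rs_{k+1} = 0$. Write $\Delta := D^{(r-1)}\alpha \in \R^{n-r+1}$. Fix $\alpha \in T_{K^{(r)}(V)}(\theta)$; since the indices $j_1 < \dots < j_k$ contain all $r^{th}$ order knots of $\theta$ and carry the matching signs, Lemma \ref{tcr}(ii) applies and gives
\begin{equation*}
  \sum_{i=0}^k V_{j_i, j_{i+1}-1}(\Delta) \le \sum_{i=1}^k \rs_i\left(\Delta_{j_i - 1} - \Delta_{j_i}\right).
\end{equation*}

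First I would expand $\sum_{i=0}^k \Gamma_i(\alpha, \ell_i)$ into its three constituent sums. In the sum $\sum_{i=0}^k \rs_{i+1}(\Delta_{j_{i+1}-1} - \Delta_{\ell_i})$ the $i=k$ term vanishes because $\rs_{k+1}=0$, and reindexing $i \mapsto i-1$ rewrites it as $\sum_{i=1}^k \rs_i(\Delta_{j_i-1} - \Delta_{\ell_{i-1}})$; likewise the $i=0$ term of $\sum_{i=0}^k \rs_i(\Delta_{\ell_i}-\Delta_{j_i})$ vanishes because $\rs_0=0$. Hence
\begin{equation*}
  \sum_{i=0}^k \Gamma_i(\alpha, \ell_i) = \sum_{i=0}^k V_{j_i, j_{i+1}-1}(\Delta) - \sum_{i=1}^k \rs_i\left(\Delta_{j_i-1} - \Delta_{\ell_{i-1}}\right) - \sum_{i=1}^k \rs_i\left(\Delta_{\ell_i} - \Delta_{j_i}\right).
\end{equation*}
Substituting the Lemma \ref{tcr}(ii) bound for the first sum and collecting, for each $i$, the coefficient of $\rs_i$, the $\Delta_{j_i-1}$ and $\Delta_{j_i}$ contributions cancel, leaving exactly $\rs_i(\Delta_{\ell_{i-1}} - \Delta_{\ell_i})$, so that
\begin{equation*}
  \sum_{i=0}^k \Gamma_i(\alpha, \ell_i) \le \sum_{i=1}^k \rs_i\left(\Delta_{\ell_{i-1}} - \Delta_{\ell_i}\right).
\end{equation*}

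Finally I would perform one more Abel-type reindexing: shifting the index in $\sum_{i=1}^k \rs_i \Delta_{\ell_{i-1}}$ to $\sum_{i=0}^{k-1}\rs_{i+1}\Delta_{\ell_i}$ and then padding this with the $i=k$ term (which vanishes since $\rs_{k+1}=0$) and padding $\sum_{i=1}^k \rs_i\Delta_{\ell_i}$ with the $i=0$ term (which vanishes since $\rs_0=0$) converts the right-hand side into $\sum_{i=0}^k (\rs_{i+1} - \rs_i)\Delta_{\ell_i}$, which is precisely \eqref{cort.eq}. There is no genuine obstacle here beyond bookkeeping: the only points requiring care are that $\ell_{i-1}$ indeed lies in $\{j_{i-1}, \dots, j_i-1\}$ so that $\Delta_{\ell_{i-1}}$ and $\Delta_{j_i-1}$ are legitimate entries of $D^{(r-1)}\alpha$, and that the conventions on $j_0, j_{k+1}, \rs_0, \rs_{k+1}$ are applied consistently so the reindexings are exact. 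It is worth noting that this argument never uses the (separately true) nonnegativity of the $\Gamma_i$; it rests solely on the tangent-cone inequality of Lemma \ref{tcr}(ii).
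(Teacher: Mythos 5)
Your proof is correct and follows essentially the same route as the paper's: both start from the tangent-cone inequality in Lemma \ref{tcr}(ii), substitute the decomposition $V_{j_i,j_{i+1}-1}(\Delta) = \Gamma_i(\alpha,\ell_i) + \rs_{i+1}(\Delta_{j_{i+1}-1}-\Delta_{\ell_i}) + \rs_i(\Delta_{\ell_i}-\Delta_{j_i})$, and simplify using the boundary conventions $\rs_0 = \rs_{k+1} = 0$. The only difference is that you carry out explicitly the reindexing and cancellation that the paper dismisses as ``trivial to check.''
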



\begin{proof}[Proof of Corollary \ref{cort}]
  Fix $\alpha \in T_{K^{(r)}(V)}(\theta)$. Lemma \ref{tcr} gives
  that  
  \begin{equation}\label{abc}
\sum_{i=0}^{k}
      V_{j_i, j_{i+1} - 1} (\Delta) \leq \sum_{i=1}^k \rs_i
      \left(\Delta_{j_i-1} - \Delta_{j_i} \right).     
  \end{equation}
Writing 
\begin{equation*}
V_{j_i, j_{i+1} - 1}(\Delta) = 
\Gamma_i(\alpha, \ell_i) +  \rs_{i+1} \left(\Delta_{j_{i+1}
    - 1} - \Delta_{\ell_{i}} \right) + \rs_{i}
    \left(\Delta_{\ell_i} - \Delta_{j_{i}} \right) 
\end{equation*}
in \eqref{abc}, we deduce that $\sum_{i=0}^k \Gamma_i(\alpha, \ell_i)$
is bounded from above by 
\begin{equation*}
  \sum_{i=1}^k \rs_i \left(\Delta_{j_i - 1} - \Delta_{j_i} \right) -
  \sum_{i=0}^k \rs_{i+1} \left(\Delta_{j_{i+1} - 1} - \Delta_{\ell_i}
  \right) - \sum_{i=0}^k \rs_i \left(\Delta_{\ell_i} - \Delta_{j_i}
  \right). 
\end{equation*}
It is now trivial to check that the expression above equals the right
hand side of \eqref{cort.eq} which completes the proof of Corollary
\ref{cort}.  
\end{proof}


We next show that under the assumption that $\|\alpha\| \leq 1$, the
right hand side of \eqref{cort.eq} can be made small by choosing
$\ell_0, \dots, \ell_k$ appropriately. This is the content of the next
lemma. Let $2 \leq j_1 < \dots < j_k \leq n-r+1$ and $\rs_1, \dots,
\rs_k \in \{-1, 0, 1\}$ be as in Lemma \ref{tcr}(ii). Also let $j_0 =
1$, $j_{k+1} = n-r+2$ and $\rs_0 = \rs_{k+1} = 0$. The indices $j_0,
j_1, \dots, j_k, j_{k+1}$ can be used to define a partition of $\{1,
\dots, n\}$ in the following way: $\I_0 := \{j_0, \dots, j_1 + r -
2\}$ and    
\begin{equation*}
  \I_i = \{j_i + r - 1, \dots, j_{i+1} + r - 2\} \qt{for $i = 1,
    \dots, k$}.   
\end{equation*}
Observe that the length of $\I_i$ equals $n_i$ where $n_0 :=
j_1 + r - 2$ and $n_i = j_{i+1} - j_i$ for $1 \leq i \leq k$. 


\begin{lemma}\label{cr1}
Let $\theta \in \R^n$ and let $2 \leq j_1 < \dots < j_k \leq n-r+1$
and $\rs_1, \dots, \rs_k \in \{-1, 0, 1\}$ be as in Lemma
\ref{tcr}(ii). Also let $j_0 = 1$, $j_{k+1} = n-r+2$ and $\rs_0 =
\rs_{k+1} = 0$. Further let $\I_0, \dots, \I_k$ and $n_0, \dots, n_k$
be as described above. For every $\alpha \in \R^n$ with $\|\alpha\|
\leq 1$, there  exist indices $\ell_0 \in \I_0, \dots, \ell_k \in
\I_k$ such that 
 \begin{equation}\label{cr1.eq}
   \sum_{i=0}^k (\rs_{i+1} - \rs_i) (D^{(r-1)} \alpha)_{\ell_i} \leq C_r
   \sqrt{\sum_{i=0}^{k} n_i^{1 - 2r} I \{\rs_i
     \neq \rs_{i+1}\}}
 \end{equation}
where $C_r$ is a positive constant that depends only on $r$. 
\end{lemma}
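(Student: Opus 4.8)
The plan is to prove Lemma~\ref{cr1} as a self-contained combinatorial statement, reducing it block by block to an elementary norm lower bound for a vector whose $(r-1)^{th}$ order differences are sign-definite over a long window.

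First I would dispose of the blocks with $\rs_i = \rs_{i+1}$: there the coefficient $\rs_{i+1} - \rs_i$ vanishes, so the term contributes nothing regardless of $\ell_i$, and one picks any admissible $\ell_i \in \I_i$. It remains to treat each block $i$ with $\rs_i \ne \rs_{i+1}$, for which $1 \le |\rs_{i+1} - \rs_i| \le 2$. Write $\Delta := D^{(r-1)}\alpha$, let $\alpha^{(i)}$ be the restriction of $\alpha$ to the coordinates in $\I_i$ (so $\sum_i \|\alpha^{(i)}\|^2 = \|\alpha\|^2 \le 1$ since the $\I_i$ partition $\{1,\dots,n\}$), and let $R_i$ be the set of indices $\ell$ such that $\Delta_\ell$ involves only coordinates of $\alpha$ lying in $\I_i$; since $\Delta_\ell$ depends on $\alpha_\ell,\dots,\alpha_{\ell+r-1}$, the set $R_i$ is an interval of $n_i - r + 1$ consecutive indices. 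In the case $\rs_{i+1} - \rs_i > 0$ I would set $\mu_i := \min_{\ell \in R_i}\Delta_\ell$ and take $\ell_i$ to attain this minimum (and symmetrically the maximum when $\rs_{i+1} - \rs_i < 0$). If $\mu_i \le 0$, then $(\rs_{i+1} - \rs_i)\Delta_{\ell_i} \le 0$, already below the nonnegative target. If $\mu_i > 0$, then $\Delta_\ell \ge \mu_i > 0$ for every $\ell \in R_i$, i.e.\ the $(r-1)^{th}$ differences of $\alpha^{(i)}$ are sign-definite and bounded away from zero over a window of length $\asymp n_i$.

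The key ingredient is then the following elementary fact, to be established separately: \emph{if $\beta \in \R^m$ and $(D^{(r-1)}\beta)_\ell$ has constant sign with $|(D^{(r-1)}\beta)_\ell| \ge \mu$ for all $\ell$ in a window of $L$ consecutive indices, then $\|\beta\| \ge c_r\, \mu\, L^{(2r-1)/2}$ for a constant $c_r > 0$ depending only on $r$.} I would prove this by induction on $r$. For $r = 1$ it is immediate, since $\|\beta\|^2 \ge L\mu^2$. For the step, assume the statement for $r-1$ and set $\eta := D^{(r-2)}\beta$; after a global sign change $\eta_{\ell+1} - \eta_\ell = (D^{(r-1)}\beta)_\ell \ge \mu$ over the window, so $\eta$ increases by at least $\mu$ at each step, whence over a constant fraction of the window $\eta$ is itself sign-definite with $|\eta| \gtrsim \mu L$; applying the inductive hypothesis with $D^{(r-2)}\beta$ in place of $D^{(r-1)}\beta$, lower bound $\asymp \mu L$ in place of $\mu$, and window length $\asymp L$ gives $\|\beta\| \gtrsim (\mu L)\, L^{(2r-3)/2} = c_r \mu L^{(2r-1)/2}$. (An alternative route is to pair $\beta$ against a degree-$(r-1)$ B-spline bump supported on the window and bound the norm of the image of that bump under the adjoint of $D^{(r-1)}$; this also produces the exponent $(2r-1)/2$.) Applying this with $\beta = \alpha^{(i)}$ and $L = n_i - r + 1$ gives $\mu_i \le C_r\|\alpha^{(i)}\|\, n_i^{-(2r-1)/2}$, hence in every case $(\rs_{i+1} - \rs_i)\Delta_{\ell_i} \le C_r\|\alpha^{(i)}\|\, n_i^{-(2r-1)/2}$.

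Finally I would sum over the blocks with $\rs_i \ne \rs_{i+1}$ and invoke Cauchy--Schwarz:
\[
\sum_{i=0}^k (\rs_{i+1} - \rs_i)\Delta_{\ell_i} \;\le\; C_r \sum_{i:\,\rs_i \ne \rs_{i+1}} \|\alpha^{(i)}\|\, n_i^{-(2r-1)/2} \;\le\; C_r \Big(\sum_i \|\alpha^{(i)}\|^2\Big)^{1/2} \Big(\sum_i n_i^{1 - 2r} I\{\rs_i \ne \rs_{i+1}\}\Big)^{1/2},
\]
and $\sum_i \|\alpha^{(i)}\|^2 \le 1$ yields the claimed bound. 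Combined with Corollary~\ref{cort} (and the nonnegativity of the $\Gamma_i$) this immediately gives Lemma~\ref{alta}. The main obstacle is the norm lower bound with the sharp exponent $(2r-1)/2$: one must check that each reduction in $r$ costs only a constant factor in the window length, so the exponents telescope exactly, and one must handle the bookkeeping of degenerate blocks (those with $n_i$ comparable to $r$, where $R_i$ is tiny and one falls back on the crude bound $|\Delta_\ell| \le 2^{r-1}\|\alpha^{(i)}\|$) — but these are routine once the inductive inequality is in place.
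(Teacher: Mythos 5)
Your proof is correct, and the key lemma you isolate is established by a genuinely different (and more elementary) argument than the paper's. Both your proof and the paper's reduce Lemma~\ref{cr1} to the same core inequality, namely: if the $(r-1)^{\mathrm{th}}$ discrete derivative of a vector $\beta$ is uniformly $\geq \mu > 0$ (or $\leq -\mu < 0$) over a window of $L$ consecutive indices, then $\|\beta\| \geq c_r\,\mu\,L^{(2r-1)/2}$. (The paper states the contrapositive as Lemma~\ref{do}: for any $\theta$ with $\|\theta\| \leq t$, some index $\ell$ satisfies $(D^{(r-1)}\theta)_\ell \leq C_r\, n^{1/2-r}\,t$, and symmetrically from below.) Both proofs then apply this blockwise, pick $\ell_i$ at the minimum or maximum of $D^{(r-1)}\alpha$ over the part of block $\I_i$ depending only on $\alpha|_{\I_i}$, and finish with Cauchy--Schwarz exactly as you do.

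Where you diverge is in proving that core inequality. The paper goes through explicit matrix algebra: it writes $\|\theta\|^2 \geq s^T A s$ with $s = D^{(r-1)}\theta$ (up to an off-by-one in the index of $D$ that appears in the paper's proof of Lemma~\ref{do}) and $A = S^T(I-P_X)S$, then proves two nontrivial facts about $A$ --- an exact evaluation of $\mathbf{1}^T A \mathbf{1}$ (Proposition~\ref{rowsum}) and entrywise positivity of $A$ (Proposition~\ref{positive}) --- the latter requiring an explicit inversion of a banded Toeplitz matrix via the Gohberg--Semencul formula. Your induction on $r$ sidesteps all of that: from $D^{(r-1)}\beta \geq \mu$ over a window of length $L$ you conclude $\eta := D^{(r-2)}\beta$ increases with steps $\geq \mu$, so a pigeonhole on the midpoint shows $\eta$ has fixed sign and magnitude $\geq L\mu/4$ on either the leftmost or rightmost quarter of the window; applying the inductive hypothesis with $\mu' = L\mu/4$ and $L' = L/4$ gives $\|\beta\| \gtrsim (\mu L)\,L^{(2r-3)/2} = \mu L^{(2r-1)/2}$, with the exponents telescoping exactly. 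This is shorter, self-contained, and conceptually transparent; the only price is that the constant $c_r$ deteriorates by a factor $\asymp 4^{r}$ from the repeated window-quartering, which is harmless since the statement only requires an $r$-dependent constant. Your handling of degenerate blocks with $n_i \lesssim r$ by inflating $C_r$ with the crude bound $|\Delta_\ell| \leq 2^{r-1}\|\alpha^{(i)}\|$ is the same expedient the paper implicitly relies on (its Lemma~\ref{do} is stated only for $n \geq 2r$, so small $n_i$ must be absorbed into $C_r$ in any case).
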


Note that the role of $\theta \in \R^n$ in the above lemma is just to
define the $j_i$'s and the $\rs_i$'s as in Lemma \ref{tcr}(ii). 

The proof of Lemma \ref{cr1} is given next. A crucial role in this
proof is played by the following result on the magnitude of $\min_{1
  \leq i \leq n-r+1} (D^{(r-1)} \theta)_i$ for a vector $\theta$ with
$\|\theta\| \leq 1$. This result (proved in Subsection \ref{pdo})
might be of independent interest.  

\begin{lemma}\label{do}
  Fix $r \geq 1$. There exists a positive constant $C_r$
  depending only on $r$ such that for every $n \geq 2r$, $t > 0$ and
  $\theta \in \R^n$ with $\|\theta\| \leq t$, there exist indices
  $\ell_1, \ell_2 \in \{1, \dots, n-r+1\}$ such that   
  \begin{equation}\label{do.eq}
    (D^{(r-1)} \theta)_{\ell_1} \leq C_r n^{(1/2) - r} t ~~ \text{   and
    } ~~ (D^{(r-1)} \theta)_{\ell_2} \geq -C_r n^{(1/2) - r} t. 
  \end{equation}
\end{lemma}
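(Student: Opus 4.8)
The plan is to prove \eqref{do.eq} by a duality argument built around a single fixed nonnegative "test vector" $v\in\R^{n-r+1}$ whose $\ell^1$-norm is of order $n^{r}$ but for which $(D^{(r-1)})^{T}v\in\R^n$ (here $(D^{(r-1)})^{T}$ is the transpose of the linear map $D^{(r-1)}:\R^n\to\R^{n-r+1}$) has Euclidean norm only of order $\sqrt n$. By homogeneity we may take $t=1$, so $\|\theta\|\le 1$. Suppose such a $v\ge 0$ (entrywise) is available with $\|v\|_1\ge c_r n^{r}$ and $\|(D^{(r-1)})^{T}v\|\le C_r\sqrt n$. Let $\ell_1$ be an index where $D^{(r-1)}\theta$ attains its minimum and $\ell_2$ an index where it attains its maximum. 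Since $v\ge 0$ and $\|v\|_1>0$, the elementary inequality $\min_i a_i\le \langle a,v\rangle/\|v\|_1\le \max_i a_i$ applied to $a=D^{(r-1)}\theta$, together with the adjoint identity $\langle D^{(r-1)}\theta,v\rangle=\langle\theta,(D^{(r-1)})^{T}v\rangle$ and Cauchy--Schwarz, gives
\begin{equation*}
  (D^{(r-1)}\theta)_{\ell_1}\le \frac{\langle\theta,(D^{(r-1)})^{T}v\rangle}{\|v\|_1}\le \frac{\|\theta\|\,\|(D^{(r-1)})^{T}v\|}{\|v\|_1}\le \frac{C_r\sqrt n}{c_r n^{r}}
\end{equation*}
and likewise $(D^{(r-1)}\theta)_{\ell_2}\ge -C_r\sqrt n/(c_r n^{r})$, which is \eqref{do.eq} after rescaling by $t$. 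So the whole problem reduces to constructing the vector $v$.

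For the construction I would set $N:=\lfloor n/r\rfloor$ (so $N\ge 2$ and $rN\le n$ because $n\ge 2r$), let $\mathbf{1}_N$ be the all-ones vector of length $N$, and take $v$ to be the $r$-fold discrete convolution $\mathbf{1}_N*\mathbf{1}_N*\cdots*\mathbf{1}_N$ (a discrete $B$-spline), which has length $rN-r+1\le n-r+1$; pad with zeros on the right to regard $v\in\R^{n-r+1}$. Then $v\ge 0$, and since $\ell^1$-norms multiply under convolution of nonnegative vectors, $\|v\|_1=N^{r}\ge (n/(2r))^{r}=:c_r n^{r}$. To bound $(D^{(r-1)})^{T}v$, a direct computation with the adjoint identity shows that $(D^{(r-1)})^{T}v$ is, up to a reversal of coordinates, the $(r-1)$-st order difference $D^{(r-1)}$ of the (bi-infinite) zero-extension of $v$, so the two have the same Euclidean norm. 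Iterating the elementary telescoping identity $D^{(1)}(\mathbf{1}_N*w)=w-\mathrm{shift}_N(w)$ (for zero-extended $w$, where $\mathrm{shift}_N$ shifts by $N$ coordinates) yields
\begin{equation*}
  D^{(r-1)}\big(\mathbf{1}_N^{*r}\big)=(I-\mathrm{shift}_N)^{r-1}\,\mathbf{1}_N=\sum_{i=0}^{r-1}(-1)^{i}\binom{r-1}{i}\,\mathrm{shift}_{iN}(\mathbf{1}_N).
\end{equation*}
The shifted blocks $\mathrm{shift}_{iN}(\mathbf{1}_N)$, $i=0,\dots,r-1$, have pairwise disjoint supports contained in an interval of total length $rN\le n$, so the entries of the right-hand side are exactly $\pm\binom{r-1}{i}$ on consecutive blocks; hence $\|(D^{(r-1)})^{T}v\|^{2}=N\binom{2r-2}{r-1}\le 4^{r-1}n$, i.e.\ $\|(D^{(r-1)})^{T}v\|\le 2^{r-1}\sqrt n$.

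Combining the two displays finishes the proof, with $C_r$ in the statement of the lemma equal to (a constant times) $2^{r-1}(2r)^{r}$. The only step that requires genuine care is the construction of $v$ and the verification that $\|(D^{(r-1)})^{T}v\|=O_r(\sqrt n)$ even though $\|v\|_1$ is as large as $n^{r}$: this is exactly the point where one needs $v$ to be a $B$-spline-type vector, so that its $(r-1)$-st difference has short support ($O_r(n)$ coordinates) and bounded ($O_r(1)$) entries rather than entries growing like a power of $n$. Everything else — the reduction to $t=1$, the symmetric treatment of $\ell_1$ and $\ell_2$, the duality inequality, and the multiplicativity of $\|\cdot\|_1$ under convolution — is routine.
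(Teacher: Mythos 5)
Your proof is correct, and it takes a genuinely different route from the paper's. The paper proves this lemma by writing $\theta$ (via Lemma~\ref{dfi}) in terms of the relevant discrete derivative and a polynomial part, reducing the bound to the quadratic form $t^2 \ge s^T A s$ with $A = S^T(I-P_X)S$; it then needs two nontrivial algebraic ingredients, namely Proposition~\ref{positive} (every entry of $A$ is positive, proved via the Gohberg--Semencul inversion formula for the Toeplitz matrix $A^{-1}$) and Proposition~\ref{rowsum} (an explicit closed form for $\mathbf{1}^T A\mathbf{1}$ giving the $n^{2r+1}$ lower bound), before concluding $s^T A s \ge \delta^2\,\mathbf{1}^T A\mathbf{1}$ from entrywise positivity. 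Your argument replaces all of this machinery with a single explicit nonnegative test vector $v = \mathbf{1}_N^{*r}$ (a discrete $B$-spline) and the elementary chain $\min_i(D^{(r-1)}\theta)_i \le \langle D^{(r-1)}\theta, v\rangle/\|v\|_1 = \langle\theta,(D^{(r-1)})^T v\rangle/\|v\|_1 \le \|\theta\|\,\|(D^{(r-1)})^T v\|/\|v\|_1$. The two key estimates, $\|v\|_1 = N^r \gtrsim_r n^r$ (multiplicativity of $\|\cdot\|_1$ under convolution of nonnegative sequences) and $\|(D^{(r-1)})^T v\|^2 = N\binom{2r-2}{r-1} \le 4^{r-1}n$ (the telescoping identity $\Delta^{r-1}(\mathbf{1}_N^{*r}) = (I-\mathrm{shift}_N)^{r-1}\mathbf{1}_N$, which produces $r$ disjoint blocks of constant entries $\pm\binom{r-1}{i}$), are both short and self-contained. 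What your approach buys is a substantial simplification: no Toeplitz inverse identity, no proof that $A$ is entrywise positive, no explicit row-sum computation. What you give up is a somewhat worse constant $C_r$; the paper's argument, by choosing the optimal quadratic form rather than a fixed test functional, would in principle yield a sharper $C_r$, but for the purposes of this lemma that is immaterial. One small terminological point: $(D^{(r-1)})^T v$ is a \emph{shift} (and sign) of $\Delta^{r-1}$ of the zero-extension of $v$, not a reversal; this of course does not affect the Euclidean norm, so the conclusion stands. For $r=1$ your construction degenerates to $v=\mathbf{1}_n$ and reproduces the trivial bound $\min_i\theta_i \le t/\sqrt n$ noted in Remark~\ref{dore}, which is a useful sanity check.
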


\begin{remark}\label{dore}
  Lemma \ref{do} is trivial for $r = 1$ (when it holds with $C_1 = 1$)
  but the extension to $r \geq 2$ is non-trivial. Also, for general $r
  \geq 2$, the two indices $\ell_1$ and $\ell_2$ will be different and
  it will be incorrect to claim that for every $\theta \in \R^n$ with
  $\|\theta \| \leq 1$, there exists a single index $\ell \in \{1,
  \dots, n-r+1\}$ for which   $|(D^{(r-1)} \theta)_{\ell}| \leq C_r
  n^{(1/2) - r} t$. One may define $\ell_1$ and $\ell_2$ as 
  \begin{equation*}
    \ell_1 := \argmin_{1 \leq j \leq n-r+1} (D^{(r-1)} \theta)_j ~~
    \text{ and } \ell_2 := \argmax_{1 \leq j \leq n-r+1} (D^{(r-1)} 
    \theta)_j. 
  \end{equation*}
\end{remark}

We are now ready to prove Lemma \ref{cr1}.

\begin{proof}[Proof of Lemma \ref{cr1}]
  The proof of Lemma \ref{cr1} is crucially reliant on Lemma \ref{do}
  (proved in Section \ref{pdo}) which essentially says that
  $$\sup_{\alpha \in \R^n: \|\alpha\| \leq 
    t} \min_{1 \leq i \leq n-r+1} (D^{(r-1)} \alpha)_i \leq C_r
  n^{(1/2) - r} t$$ for every $t > 0$ and $n \geq r$. 

  Fix $\alpha \in \R^n$. Define
  \begin{equation*}
    \alpha^{(0)} := (\alpha_{j_0}, \dots, \alpha_{j_1 + r  - 2}) 
  \end{equation*}
 and
 \begin{equation*}
   \alpha^{(u)} := (\alpha_{j_u + r - 1}, \dots, \alpha_{j_{u+1} + r
      - 2}) 
 \end{equation*}
for $u = 1, \dots, k$.  Note that the vector $\alpha^{(u)}$ has length
exactly equal to $n_u$, for $u = 0, \dots, k$.  

  Fix $0 \leq u \leq k$ and let $t_u := \|\alpha^{(u)} \|$. By Lemma
  \ref{do}, there exists an index $\ell'_u \in \{1, \dots, n_u - r +
  1\}$ such that 
  \begin{equation}\label{pb}
    (\rs_{u+1} - \rs_u) (D^{(r-1)} \alpha^{(u)})_{\ell_u'} \leq 2 C_r
    n_u^{1/2 - r} t_u I\{\rs_u \neq \rs_{u+1}\}
  \end{equation}
for a constant $C_r$ depending on $r$ alone. Taking  
  \begin{equation*}
    \ell_0 := \ell_0' ~~ \text{ and } ~~ \ell_u := j_u + r - 2 +
    \ell_u' ~~ \text{ for } 1 \leq u \leq k, 
  \end{equation*}
  and using the fact that $(D^{(r-1)} \alpha^{(u)})_{\ell_u'} = (D^{(r-1)}
  \alpha)_{\ell_u}$, we deduce from \eqref{pb} that
  \begin{equation*}
    (\rs_{u+1} - \rs_u) (D^{(r-1)} \alpha)_{\ell_u} \leq 2 C_r 
    n_u^{1/2 - r} t_u I\{\rs_u \neq \rs_{u+1}\}  
  \end{equation*}
 for every $u = 0, 1,  \dots, k$. The left hand side of \eqref{cr1.eq}
 can therefore be bounded as 
   \begin{align*}
    \sum_{i=0}^k (\rs_{i+1} - \rs_i) (D^{(r-1)} \alpha)_{\ell_i} &\leq
   2 C_r \sum_{i=0}^k n_i^{1/2 - r} t_i I\{\rs_i \neq \rs_{i+1}\} \\
&\leq 2 C_r \sqrt{\sum_{i=0}^k n_i^{1 - 2r} I\{\rs_i \neq \rs_{i+1}
\}}  \sqrt{\sum_{i=0}^k t_i^2}  \\ &\leq 2 C_r \sqrt{\sum_{i=0}^k
                                     n_i^{1 - 2r} I\{\rs_i \neq
                                     \rs_{i+1} \}}  
  \end{align*}
where we have used Cauchy-Schwarz inequality and the fact that
$\sum_{i=0}^k t_i^2 = \|\alpha\|^2 \leq 1$. This completes the proof
of Lemma \ref{cr1}. 
\end{proof}

We now have all the ingredients to complete the proof of Lemma
\ref{alta}. 

\begin{proof}[Proof of Lemma \ref{alta}]
The result clearly follows by combining Corollary \ref{cort} and
Lemma \ref{cr1}.  
\end{proof}

\subsection{Proof of Lemma~\ref{fina}}\label{gwc}    
The goal of this subsection is to prove Lemma \ref{fina} which was
crucial for the proof of Theorem \ref{ada}. We shall actually prove
the following more precise result from which Lemma \ref{fina} easily
follows.  


\begin{lemma}\label{fina.full}
  Fix $r \ge 1$, $n \geq r$, $1 \leq \ell \leq n-r+1$, $t > 0$ and
  $\delta \geq 0$. For $\theta \in \R^n$, let $\Delta(\theta) =
  (\Delta_1(\theta),\ldots, \Delta_{n-r+1}(\theta)):= D^{(r-1)}
  \theta$. For every $\rs_1, \rs_2 \in \{-1, 0, 1\}$, the quantity     
  \begin{eqnarray*}
    G := \E \Big[\sup \big\{\left<\xi, \theta \right>
      & : & \theta \in \R^n,  \|\theta 
        \| \le t, \mbox{ and } \\
        \vspace{2in}& &V(\Delta(\theta)) \leq \rs_1 (\Delta_{\ell}(\theta) -
        \Delta_1(\theta)) \\ & & + \rs_2 ( \Delta_{n-r+1}(\theta) -
        \Delta_{\ell}(\theta)) + \delta \big\} \Big]   
  \end{eqnarray*}
  is bounded from above in the following way. When $\rs_1 = \rs_2 =
  0$, we have
  \begin{equation*}
    G \leq C_r \sigma t^{(2r-1)/(2r)} \delta^{1/(2r)} n^{(2r-1)/(4r)} + 
    C_r \sigma t \sqrt{\log (en)}. 
  \end{equation*}
  When $\rs_1 = 0, \rs_2 \neq 0$, we have
  \begin{align*}
    G &\leq C_r     \sigma \left\{t^{(2r-1)/(2r)} \ell_1^{(2r-1)/4r} \delta^{1/(2r)}
  + t \sqrt{\log(e \ell_1)} \right\}  \\ &+  C_r \sigma \left(t + \delta
  \ell^{(2r-1)/2}_2 \right) \sqrt{\log(e \ell_2)}. 
  \end{align*}
  When $\rs_1 \neq 0, \rs_2 = 0$, we have
  \begin{align*}
G &\leq C_r   \sigma \left(t + \delta
  \ell^{(2r-1)/2}_1 \right) \sqrt{\log(e \ell_1)} \\
&+ C_r \sigma
\left\{t^{(2r-1)/(2r)} \ell_2^{(2r-1)/4r} \delta^{1/(2r)} + t
  \sqrt{\log(e \ell_2)} \right\}  . 
  \end{align*}
  Finally when $\rs_1 \neq 0, \rs_2 \neq 0$, we have
  \begin{align*}
    G \leq C_r \sigma \left(t + \delta
  \ell^{(2r-1)/2}_1 \right) \sqrt{\log(e \ell_1)} + C_r\sigma \left(t +
  \delta \ell^{(2r-1)/2}_2 \right) \sqrt{\log(e \ell_2)}. 
  \end{align*}
In each case, $\ell_1 := \ell + r - 1$, $\ell_2 := n - \ell - r + 1$
and $C_r$ is a constant depending on $r$ alone.  
\end{lemma}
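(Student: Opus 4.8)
The plan is to scale $\sigma$ out, use the symmetry $\theta\mapsto-\theta$ (which flips $\rs_1,\rs_2$) to assume $\rs_1,\rs_2\in\{0,1\}$, and then split the feasible set along the index $\ell$ into two independent blocks, bounding the Gaussian supremum over each block either by Lemma~\ref{svg} (when the corresponding sign is $0$) or by the Gaussian--width estimates for higher--order monotone cones of \citet{bellec2015annals} (when the sign is nonzero).

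First I record the decoupling. With $\Delta:=D^{(r-1)}\theta$ one has $V(\Delta)=V_{1,\ell}(\Delta)+V_{\ell,n-r+1}(\Delta)$, and since $V_{1,\ell}(\Delta)\ge\rs_1(\Delta_\ell-\Delta_1)$ and $V_{\ell,n-r+1}(\Delta)\ge\rs_2(\Delta_{n-r+1}-\Delta_\ell)$, feasibility forces
\[
0\le V_{1,\ell}(\Delta)-\rs_1(\Delta_\ell-\Delta_1)\le\delta,\qquad 0\le V_{\ell,n-r+1}(\Delta)-\rs_2(\Delta_{n-r+1}-\Delta_\ell)\le\delta .
\]
Because $(D^{(r-1)}\cdot)_i$ depends only on coordinates $i,\dots,i+r-1$, the partition of $\{1,\dots,n\}$ into its first $\ell_1=\ell+r-1$ and last $\ell_2=n-\ell-r+1$ coordinates is compatible with $D^{(r-1)}$: setting $\theta^{(1)}:=(\theta_1,\dots,\theta_{\ell+r-1})$ and $\theta^{(2)}:=(\theta_{\ell+r},\dots,\theta_n)$ gives $D^{(r-1)}\theta^{(1)}=(\Delta_1,\dots,\Delta_\ell)$ and $D^{(r-1)}\theta^{(2)}=(\Delta_{\ell+r},\dots,\Delta_{n-r+1})$. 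Using $|\Delta_{\ell+r}-\Delta_\ell|\le V_{\ell,\ell+r}(\Delta)$ the two decoupled bounds transfer, with the same slack $\delta$, to $V(D^{(r-1)}\theta^{(i)})-\rs_i\bigl((D^{(r-1)}\theta^{(i)})_{\ell_i-r+1}-(D^{(r-1)}\theta^{(i)})_1\bigr)\le\delta$ for $i=1,2$. Since $\|\theta^{(1)}\|,\|\theta^{(2)}\|\le t$, the supremum splits as $G\le G^{(1)}+G^{(2)}$, where $G^{(i)}$ is the expected Gaussian supremum over $\phi\in\R^{\ell_i}$ with $\|\phi\|\le t$ subject to exactly that one--segment constraint with sign $\rs_i$.

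It remains to bound $G^{(i)}$. If $\rs_i=0$ the constraint is $\|D^{(r)}\phi\|_1=V(D^{(r-1)}\phi)\le\delta$, i.e.\ $\phi\in S_r(\delta\,\ell_i^{\,r-1},t)$ in the notation of Lemma~\ref{svg}, and that lemma yields $G^{(i)}\le C_r t^{(2r-1)/(2r)}\ell_i^{(2r-1)/(4r)}\delta^{1/(2r)}+C_r t\sqrt{\log(e\ell_i)}$. If $\rs_i=1$, write $\Delta':=D^{(r-1)}\phi$ and split $\Delta'=\mu+\nu$ with $\mu_j=\Delta'_1+\sum_{u\le j}(\Delta'_u-\Delta'_{u-1})^+$ nondecreasing and $\nu_j=-\sum_{u\le j}(\Delta'_u-\Delta'_{u-1})^-$ nonincreasing; the slack bound forces the total decrease $\tfrac12(V(\Delta')-(\Delta'_{\mathrm{last}}-\Delta'_1))\le\delta/2$, hence $V(\nu)\le\delta/2$. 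Lift through $D^{(r-1)}$, assigning the polynomial part of the lift to $\mu$ and taking the $\nu$--lift $\phi^{(\nu)}$ to have vanishing initial differences, so that $\phi=\phi^{(\mu)}+\phi^{(\nu)}$ with $\phi^{(\mu)}$ in the cone $\mathcal S_{\ell_i}:=\{\psi:D^{(r-1)}\psi\text{ nondecreasing}\}$ and $\phi^{(\nu)}\in-\mathcal S_{\ell_i}$. Since $\|D^{(r)}\phi^{(\nu)}\|_1=V(\nu)\le\delta/2$, the reconstruction bound $|\phi^{(\nu)}_p|\le\binom{p-1}{r-1}\|D^{(r)}\phi^{(\nu)}\|_1$ of the type used in the proof of Theorem~\ref{dprg} (cf.\ \eqref{bhim1}) gives $\|\phi^{(\nu)}\|\le C_r\ell_i^{\,r-1/2}\delta$, hence $\|\phi^{(\mu)}\|\le t+C_r\ell_i^{\,r-1/2}\delta$. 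Applying the Gaussian--width bound for the cone of $(r-1)$--th order monotone sequences in $\R^{\ell_i}$ from \citet{bellec2015annals}, namely that this width is at most $C_r\sqrt{\log(e\ell_i)}$, separately to $\phi^{(\mu)}$ and $\phi^{(\nu)}$ yields $G^{(i)}\le C_r(t+\delta\,\ell_i^{(2r-1)/2})\sqrt{\log(e\ell_i)}$. Assembling $G\le G^{(1)}+G^{(2)}$ according to the four sign patterns reproduces the four displayed estimates (the case $\rs_1=\rs_2=0$ may instead be done in one step by applying Lemma~\ref{svg} on all of $\R^n$), and a degenerate short segment with $\ell_i<r$ contributes at most $t\,\E\|\eta\|\le C_r t$, which is absorbed.

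The step I expect to be delicate is the last one for a nonzero sign: to obtain the correct exponent $(2r-1)/2$ on $\delta$ it is essential to bound the lifted remainder $\phi^{(\nu)}$ using the $L^1$ (variation) bound on $D^{(r)}\phi^{(\nu)}$ rather than a crude sup--norm bound, and to place $\phi^{(\nu)}$ inside a shape--constrained cone of the right radius so that the $\sqrt{\log\ell_i}$ saving of \citet{bellec2015annals} applies rather than a $\sqrt{\ell_i}$ factor. The decoupling identities and the index bookkeeping in the block splitting are routine but need to be carried out carefully.
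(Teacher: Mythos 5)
Your proposal is correct and follows essentially the same route as the paper: after decoupling along the index $\ell$ into the two blocks $\theta^{(1)}$ and $\theta^{(2)}$ (the paper's inequalities \eqref{kac1}--\eqref{kac2}), you apply Lemma \ref{svg} to the blocks whose sign is zero, and for the nonzero-sign blocks your ``split $\Delta'=\mu+\nu$, lift, bound the remainder $\phi^{(\nu)}$ via Lemma \ref{dfi}, then use the Bellec Gaussian-width bound for $S_n^{[r]}$'' argument is precisely a rederivation of the paper's Lemma \ref{am} (the paper bounds the remainder through $(D^{(r-1)}\beta)_{n-r+1}\le\delta/2$ and Lemma \ref{dfi} applied with $r-1$, whereas you use the essentially equivalent $L^1$ bound $\|D^{(r)}\phi^{(\nu)}\|_1\le\delta/2$, both yielding $\|\phi^{(\nu)}\|\lesssim\delta\,\ell_i^{r-1/2}$). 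The decoupling step and the degenerate-segment remark are carried out correctly, so the argument matches the paper's proof up to presentation.
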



\begin{remark}\label{fincon}
  It is easy to see that Lemma \ref{fina.full} implies Lemma
  \ref{fina}. This is a consquence of the fact that the integers
  $\ell_1$ and $\ell_2$ appearing in Lemma \ref{fina.full} are both
  bounded from above by $n$. 
\end{remark}

The rest of this subsection is dedicated to the proof of Lemma
\ref{fina.full}. As described in Remark above, Lemma
\ref{fina.full} implies Lemma \ref{fina}. Before proceeding to prove
Lemma \ref{fina.full}, we prove an auxiliary result below which will
considerably simplify the proof of Lemma \ref{fina.full}.   

\begin{lemma}\label{am}
  For every $r \geq 1$, $n \geq r, t > 0$ and $\delta \geq 0$, we have
  that 
\begin{equation*}
    \E \left[\sup_{\substack{\theta \in \R^n: \|\theta\| \leq t \\  V(D^{(r-1)}\theta) \leq
      (D^{(r-1)}\theta)_{n-r+1} - (D^{(r-1)}\theta)_1 + \delta}}
    \left<\xi, \theta \right> \right]
\end{equation*}
is bounded from above by 
\begin{equation*}
C_r \sigma \left(t + \delta n^{r-1/2} 
    \right) \sqrt{\log (en)} 
  \end{equation*}
for a constant $C_r$ that depends on $r$ alone. 
\end{lemma}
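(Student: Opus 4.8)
The plan is to bound the Gaussian supremum by relating the constraint set to a shape-constrained cone, for which sharp Gaussian-width bounds are already available in \citet{bellec2015annals}, plus a small-dimensional correction. First I would observe that the constraint $V(D^{(r-1)}\theta) \leq (D^{(r-1)}\theta)_{n-r+1} - (D^{(r-1)}\theta)_1 + \delta$ says that $\Delta := D^{(r-1)}\theta \in \R^{n-r+1}$ is within total-variation distance $\delta$ of being monotone nondecreasing; indeed when $\delta = 0$ the constraint forces $\Delta_1 \le \Delta_2 \le \dots \le \Delta_{n-r+1}$, i.e.\ $\theta$ is an $(r-1)^{th}$ order convex sequence. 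The strategy is therefore to write $\theta = \theta_{\mathrm{mon}} + \theta_{\mathrm{err}}$, where $D^{(r-1)}\theta_{\mathrm{mon}}$ is a monotone projection of $\Delta$ and $D^{(r-1)}\theta_{\mathrm{err}}$ carries the $\delta$-slack, and to bound $\langle \xi,\theta\rangle$ by $\langle \xi, \theta_{\mathrm{mon}}\rangle + \langle \xi, \theta_{\mathrm{err}}\rangle$, taking suprema of each piece separately.

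For the monotone piece, the supremum of $\langle \xi,\theta\rangle$ over $\{\|\theta\|\le t,\ D^{(r-1)}\theta \text{ monotone}\}$ is exactly $\sigma t$ times the Gaussian width of the unit-norm section of the cone $\krv$-style cone of $(r-1)^{th}$ order convex sequences. By \citet[][]{bellec2015annals} (the results already invoked in the discussion preceding Lemma \ref{fina.full}), this Gaussian width is $O_r(\sqrt{\log(en)})$, giving a contribution of order $C_r \sigma t \sqrt{\log(en)}$. For the error piece, the key point is that a vector $\Delta'$ with $V(\Delta') \le \delta$ and $\Delta'_1 = 0$ (or more precisely the remainder after subtracting the monotone part) satisfies $\|\Delta'\|_\infty \le \delta$; inverting the difference operator $D^{(r-1)}$ — which amounts to $(r-1)$-fold summation and inflates norms by a factor $O_r(n^{r-1})$ in sup-norm and another $O_r(\sqrt n)$ in passing from sup-norm on $\R^{n-r+1}$ to Euclidean norm on $\R^n$ — yields a bound $\|\theta_{\mathrm{err}}\| \le C_r \delta n^{r - 1/2}$. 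Then $\sup \langle \xi, \theta_{\mathrm{err}}\rangle$ over such a low-complexity set (it is governed by a single real parameter, the slack, together with finitely many boundary values, so its metric entropy is $O(\log n)$) is $O_r(\sigma \delta n^{r-1/2}\sqrt{\log(en)})$. Adding the two contributions gives the claimed bound $C_r\sigma(t + \delta n^{r-1/2})\sqrt{\log(en)}$.

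The main obstacle I anticipate is making the decomposition $\theta = \theta_{\mathrm{mon}} + \theta_{\mathrm{err}}$ rigorous and controlling $\|\theta_{\mathrm{mon}}\|$ and $\|\theta_{\mathrm{err}}\|$ simultaneously: the natural monotone part (e.g.\ the $L^\infty$-isotonic projection of $\Delta$, or a greedy "running maximum minus running deficit" construction) must itself have norm $O(t)$ so that the \citet{bellec2015annals} bound applies with the right constant, while the remainder $\Delta - D^{(r-1)}\theta_{\mathrm{mon}}$ must have total variation $O(\delta)$ and small enough sup-norm. A clean way to handle this is to avoid an explicit decomposition and instead use Dudley's entropy integral directly on the constraint set $S := \{\theta : \|\theta\|\le t,\ V(D^{(r-1)}\theta) \le \Delta_{n-r+1} - \Delta_1 + \delta\}$, bounding $\log N(\epsilon, S)$ by combining the metric-entropy bound for monotone-$(r-1)^{th}$-order sequences (which contributes the $\sqrt{\log(en)}$) with an $O(\log n)$ term accounting for the one-dimensional $\delta$-slack direction of "radius" $C_r\delta n^{r-1/2}$ — exactly parallel to how the $r\log(2 + \cdots)$ term arose in Theorem \ref{dprg}. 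Integrating then produces the two terms $C_r\sigma t\sqrt{\log(en)}$ and $C_r\sigma\delta n^{r-1/2}\sqrt{\log(en)}$, completing the proof.
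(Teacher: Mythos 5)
Your plan is on the right track and matches the paper's strategy in spirit, but the paper uses a different (and cleaner) decomposition than the isotonic-projection one you propose. The paper sets $\alpha(\theta)$ to be the vector built from $(D^{(r)}\theta)^+$ with the \emph{same} initial conditions $(D^{(j-1)}\theta)_1$, $j=1,\dots,r$, and $\beta(\theta) := \alpha(\theta) - \theta$ built from $(D^{(r)}\theta)^-$ with \emph{zero} initial conditions (both via Lemma~\ref{dfi}). This makes both $\alpha$ and $\beta$ lie in the convex cone $S_n^{[r]}$ of vectors with $D^{(r)}\gamma \ge 0$, and the constraint $V(D^{(r-1)}\theta) \le (D^{(r-1)}\theta)_{n-r+1} - (D^{(r-1)}\theta)_1 + \delta$ translates exactly into $(D^{(r-1)}\beta)_{n-r+1} \le \delta/2$; combined with $\beta \in S_n^{[r]}$ and zero initial differences, this gives $0 \le \beta_i \le (\delta/2)\binom{i-1}{r-1}$, hence $\|\beta\| \le (\delta/2) n^{r-1/2}$. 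The one thing you worried about --- that the monotone piece ``must itself have norm $O(t)$'' --- is not actually a constraint you need: the Gaussian-width bound from \citet{bellec2015annals} applied to the cone $S_n^{[r]}$ is linear in the radius, so the paper simply uses $\|\alpha\| \le \|\theta\| + \|\beta\| \le t + (\delta/2) n^{r-1/2}$ and picks up that factor in the width. If you do want to run your isotonic-projection route, you must explicitly reconstruct $\theta_{\mathrm{err}}$ with \emph{zero} initial differences $(D^{(j)}\theta_{\mathrm{err}})_1 = 0$, $j = 0,\dots,r-2$, to get $\|\theta_{\mathrm{err}}\| \le C_r \delta n^{r-1/2}$ --- otherwise the map from $D^{(r-1)}\theta_{\mathrm{err}}$ back to $\theta_{\mathrm{err}}$ is not even well defined --- and then take $\|\theta_{\mathrm{mon}}\| \le t + \|\theta_{\mathrm{err}}\|$ as above. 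Your alternative Dudley-based sketch is vaguer (the claim that the $\delta$-slack direction contributes $O(\log n)$ entropy is not substantiated) and would require substantially more work than the direct decomposition, so the decomposition route is the one to pursue.
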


Lemma \ref{am} is proved below. This proof will use Lemma \ref{dfi}
(stated and proved in Subsection \ref{dfisec}) which provides a
formula for an arbitrary vector $\theta$  in terms of $D^{(r)} \theta$
and \citet[Theorem 1 in the supplementary material]{bellec2015annals}
which provides a bound for the statistical dimension of the cone of
all $\gamma \in \R^n$ which satisfy $\min_{1 \leq i  \leq n-r}
(D^{(r)} \gamma)_i \geq 0$.  

\begin{proof}[Proof of Lemma \ref{am}] 
We can assume without loss of generality that $t = 1$ (which is
ensured by scaling and replacing $\delta$ by $\delta/t$). The idea of
this proof is to write $\theta$ as the difference of two vectors
$\alpha(\theta)$ and $\beta(\theta)$  which satisfy $\min_{1 \leq i
  \leq n-r} (D^{(r)} \alpha(\theta))_i \geq 0$ and $\min_{1 \leq i
  \leq n-r} (D^{(r)} \beta(\theta))_i \geq 0$. \citet[Theorem 1 in the
supplementary material]{bellec2015annals}  will then be used to
control the Gaussian width of the cone of all $\gamma \in \R^n$ which
satisfy $\min_{1 \leq i  \leq n-r} (D^{(r)} \gamma)_i \geq 0$.  

To construct the sequences $\alpha(\theta)$ and $\beta(\theta)$, we
use Lemma \ref{dfi} which gives the following formula for expressing a
vector $\theta \in \R^n$ in terms of $D^{(r)} \theta$ and $(D^{(i)}
\theta)_1$ for $i = 0, \dots, r-1$: 
  \begin{equation*}
    \theta_i = \sum_{j=1}^{i-r} \binom{i-j-1}{r-1} (D^{(r)} \theta)_j
    + \sum_{j=1}^r \binom{i-1}{j-1} (D^{(j-1)} \theta)_1  
  \end{equation*}
  where we take the convention that $\binom{a}{b} = 0$ for $b > a$, 
  $\binom{0}{0} = 1$ so that the first term in the right hand side is
  zero unless $i > r$. Motivated by the above expression, we define
  $\alpha(\theta) := (\alpha_1(\theta), \dots, \alpha_n(\theta))$ and
  $\beta(\theta) := (\beta_1(\theta), \dots, \beta_n(\theta))$ in the
  following way:  
\begin{equation*}
  \alpha_i(\theta) := \sum_{j=1}^{i-r} \binom{i-j-1}{r-1} (D^{(r)}
  \theta)_{j}^+ + \sum_{j=1}^r \binom{i-1}{j-1} (D^{(j-1)} \theta)_1 
\end{equation*}
and 
\begin{equation*}
  \beta_i(\theta) := \sum_{j=1}^{i-r} \binom{i-j-1}{r-1} (D^{(r)}
  \theta)_{j}^-
\end{equation*}
where $x^+ := \max(x, 0)$ and $x^- := x^+ - x$. It
is easy then to observe the following: (a) $\theta = \alpha(\theta) -
\beta(\theta)$, (b) $(D^{(r)} \alpha(\theta))_i = (D^{(r)}
\theta)_i^+$, $(D^{(r)} \beta(\theta))_i = (D^{(r)}
\theta)_i^-$, (c) both vectors $\alpha(\theta)$ and
$\beta(\theta)$ belong to $S_n^{[r]}$ where 
\begin{align*}
  S_n^{[r]} &:= \left\{\gamma \in \R^n: \min_{1 \leq i \leq n-r} (D^{(r)}
    \gamma)_i \geq 0 \right\} \\
&= \left\{\gamma \in \R^n: (D^{(r-1)} \gamma)_1
    \leq \dots \leq (D^{(r-1)} \gamma)_{n-r+1} \right\},
\end{align*}
and (d) $(D^{(j-1)} \beta(\theta))_1 = 0$ for $1 \le j \leq r$. From these, it
follows that 
\begin{align*}
  V(D^{(r-1)} \theta) = \|D^{(r)} \theta\|_1 &= \sum_{i=1}^{n-r}
  |(D^{(r)} \theta)_i| \\
&= \sum_{i=1}^{n-r} (D^{(r)} \alpha(\theta))_i + \sum_{i=1}^{n-r}
(D^{(r)} \beta(\theta))_i \\
&= (D^{(r-1)} \alpha(\theta))_{n-r+1} - (D^{(r-1)} \alpha(\theta))_1 \\ &+
(D^{(r-1)} \beta(\theta))_{n-r+1} - (D^{(r-1)} \beta(\theta))_1 \\
&= (D^{(r-1)} \alpha(\theta))_{n-r+1} - (D^{(r-1)} \alpha(\theta))_1 \\ &+
(D^{(r-1)} \beta(\theta))_{n-r+1}. 
\end{align*}
From the above (and the fact that $ D^{(r-1)} \theta = D^{(r-1)}
\alpha(\theta) - D^{(r-1)} \beta(\theta)$), it is straightforward to
observe that the condition  
\begin{equation*}
V(D^{(r-1)}\theta) \leq
      (D^{(r-1)}\theta)_{n-r+1} - (D^{(r-1)}\theta)_1 + \delta  
\end{equation*}
is equivalent to 
\begin{equation}\label{bth}
  (D^{(r-1)} \beta(\theta))_{n-r+1} \leq \frac{\delta}{2}.  
\end{equation}
Now for $\beta(\theta) \in S_n^{[r]}$, $(D^{(j-1)} \beta(\theta))_1 = 0$ for
$1 \le j \leq r$, and satisfying \eqref{bth}, we can use Lemma
\ref{dfi} (with $r$ replaced by $r - 1$) to observe that  
\begin{equation}\label{bmnu}
 0 \leq \beta_i(\theta) \leq \frac{\delta}{2} \sum_{j=1}^{i-r+1}
 \binom{i-j-1}{r-2} = \frac{\delta}{2} \binom{i-1}{r-1} \leq
 \frac{\delta}{2} i^{r-1}
\end{equation}
where we have used the following elementary identity involving
binomial coefficients: for every two integers $a$ and $b$ with $0 \leq
b < a$, we have 
\begin{equation}\label{bnom}
  \binom{b}{b} + \binom{b+1}{b} + \dots + \binom{a}{b} =
  \binom{a+1}{b+1}. 
\end{equation}
Note the presence of the
term $r - 2$ in some of the binomial coefficients in \eqref{bmnu}
which will be negative when $r = 1$. But the inequality $0 \leq
\beta_i(\theta) \leq \delta/2$ is also true for $r = 1$ which can
directly be seen from $\beta_n(\theta) \leq \delta/2$ (inequality
\eqref{bth} for $r = 1$), the fact that $S_n^{[1]}$ consists of
monotone sequences (so that $\beta_i(\theta) \leq \beta_n(\theta)$)
and the fact that $(D^{(j-1)} \beta(\theta))_1 = 0$ for $1 \leq j \leq
r$ (which for $r = 1$ gives $\beta_1(\theta) = 0$).   
 
A consequence of \eqref{bmnu}  is that
\begin{equation*}
  \|\beta(\theta)\|^2 \leq \frac{\delta^2}{4} \sum_{i=1}^n i^{2r - 2} \leq
  \frac{\delta^2}{4} n^{2r - 1}
\end{equation*}
or $\|\beta(\theta)\| \leq \delta n^{r - 1/2}/2$. Because
$\|\theta\| \leq 1$, we further deduce that 
\begin{equation*}
  \|\alpha(\theta)\| \leq \|\theta\| + \|\beta(\theta)\| \leq 1 +
  \frac{\delta}{2} n^{r - 1/2}.  
\end{equation*}
Based on these observations, if 
\begin{equation*}
G :=   \E \left[\sup_{\substack{\theta \in \R^n: \|\theta\| \leq t \\  V(D^{(r-1)}\theta) \leq
      (D^{(r-1)}\theta)_{n-r+1} - (D^{(r-1)}\theta)_1 + \delta}}
    \left<\xi, \theta \right> \right],
\end{equation*}
we can write 
\begin{equation*}
  G \leq \E \left[\sup_{\alpha \in S_n^{[r]} : \|\alpha\| \leq 1 + \delta
    n^{r - 1/2} /2}  \left<\xi, \alpha \right> \right]+ \E \left[\sup_{\beta \in
      S_n^{[r]} : \|\beta\| \leq  \delta n^{r - 1/2} /2}  \left<\xi,
      -\beta \right>\right]. 
\end{equation*}
By an elementary scaling property and the fact that $\xi$ and $-\xi$ have
the same distribution, we deduce that 
\begin{equation*}
  G \leq \left(1 + {\delta} n^{r - 1/2} \right) w(S_n^{[r]}) 
\end{equation*}
where $w(S_n^{[r]})$ is the Gaussian width of $S_n^{[r]}$ (defined in
\eqref{gwef}). The right hand side above can be bounded using
\citet[Theorem 1 in the supplementary material]{bellec2015annals}
which implies that  
\begin{equation*}
w(S_n^{[r]})\leq C_r \sigma \sqrt{\log (en)} 
\end{equation*}
for a constant $C_r$. To be precise, \citet[Equation (5) in the
supplementary material]{bellec2015annals} gives a bound for
$\delta(S_n^{[r]})$. The connection \eqref{gast} between Gaussian
width and statistical dimension then leads to the above stated
bound. We therefore have   
\begin{equation*}
  G \leq C_r \left(1 + {\delta} n^{r - 1/2} \right)
  \sqrt{\log (en)}. 
\end{equation*}
which completes the proof of Lemma \ref{am}. 
\end{proof}

We are now ready to prove Lemma \ref{fina.full}. 
\begin{proof}[Proof of Lemma \ref{fina.full}]
  The case when $\rs_1 = \rs_2 = 0$ follows directly from Lemma
  \ref{svg} so we assume that at least one of $\rs_1$ and $\rs_2$ is
  non-zero.  

  For $\theta \in \R^n$, let $\theta^{(1)} := (\theta_1, \dots,
  \theta_{\ell+r-1})$ and $\theta^{(2)} := (\theta_{\ell+r}, \dots,
  \theta_n)$. We analogously define $\xi^{(1)}$ and
  $\xi^{(2)}$. Recall that $\Delta \equiv \Delta(\theta) =
  (\Delta_1(\theta),\ldots, \Delta_{n-r+1}(\theta)):= D^{(r-1)}
  \theta$. We first 
  claim that under the assumption $V(\Delta) \leq \rs_1 (\Delta_{\ell} -
  \Delta_1) + \rs_2(\Delta_{n-r+1} - \Delta_{\ell}) + \delta$, we have  
  \begin{equation}\label{kac1}
    V(D^{(r-1)} \theta^{(1)}) = V(\Delta_1, \dots, \Delta_{\ell}) \leq
    \rs_1(\Delta_{\ell} - \Delta_1) + \delta
  \end{equation}
  and 
  \begin{equation}\label{kac2}
    V(D^{(r-1)} \theta^{(2)}) = V(\Delta_{\ell+r}, \dots, \Delta_{n -
      r + 1}) \leq \rs_2(\Delta_{n-r+1} - \Delta_{\ell + r}) + \delta. 
  \end{equation}
Inequality \eqref{kac1} is a consequence of 
  \begin{align*}
   \rs_1 (\Delta_{\ell} -
  \Delta_1) + \rs_2(\Delta_{n-r+1} - \Delta_{\ell}) + \delta \geq
  V(\Delta) \geq V(\Delta_1, \dots, \Delta_{\ell}) +
  \rs_2(\Delta_{n-r+1} - \Delta_{\ell}) 
  \end{align*}
while \eqref{kac2} is a consequence of 
  \begin{align*}
   \rs_1 (\Delta_{\ell} -
  \Delta_1) + \rs_2(\Delta_{n-r+1} - \Delta_{\ell}) + \delta &\geq
    V(\Delta) \geq \rs_1(\Delta_{\ell} - \Delta_1) \\ &+ V(\Delta_{\ell+r},
  \dots, \Delta_{n-r+1}) + \rs_2(\Delta_{\ell+r} - \Delta_{\ell}). 
  \end{align*}
From inequalities \eqref{kac1} and \eqref{kac2}, and the fact that
$\left<\xi,  \theta \right> = \sum_{i=1}^2 \left<\xi^{(i)},
  \theta^{(i)}\right>$, it follows that $G \leq G_1 + G_2$ where  
\begin{equation*}
\begin{split}
  G_1 &:= \E \left[ \sup \left\{\left<\xi^{(1)}, \theta^{(1)} \right> :
    \|\theta^{(1)} \| \leq t, \right. \right. \\ & \left. \left. V(D^{(r-1)} \theta^{(1)}) \leq \rs_1
    ( (D^{(r-1)} \theta^{(1)})_{\ell} - (D^{(r-1)} \theta^{(1)})_1) +
    \delta \right\}  \right]
\end{split}
\end{equation*}
 and 
 \begin{equation*}
\begin{split}
   G_2 &:= \E \left[\sup \left\{\left<\xi^{(2)}, \theta^{(2)} \right> :
    \|\theta^{(2)} \| \leq t, \right. \right. \\ & \left. \left.
    V(D^{(r-1)} \theta^{(2)}) \leq \rs_2 
    ( (D^{(r-1)} \theta^{(2)})_{n-\ell-2r+2} - (D^{(r-1)} \theta^{(2)})_1) +
    \delta \right\} \right].  
\end{split}
 \end{equation*}
Note now that when $\rs_1 = 0$, we have
\begin{align*}
  G_1 &\leq C_r \sigma \left\{t^{(2r-1)/(2r)} (\ell+r-1)^{(2r-1)/(4r)} \delta^{1/(2r)}
 \right. \\ & \left. + t \sqrt{\log(e(\ell+r-1))} \right\}
\end{align*}
as this bound simply follows from Lemma \ref{svg}. On the other hand,
when $\rs_1 \neq 0$, we have 
\begin{align*}
  G_2 &\leq C_r \sigma \left(t + \delta (\ell + r - 1)^{(2r-1)/2} \right)
 \sqrt{\log(e(\ell + r  - 1))}. 
\end{align*}
This follows from Lemma \ref{am} when $\rs_1 = 1$. When $\rs_1 = -1$,
we can switch from $\theta^{(1)}$ to $-\theta^{(1)}$ so that the above
bound will again follow from Lemma \ref{am}. An identical argument
also gives that 
\begin{align*}
  G_2 &\leq C_r \sigma \left\{t^{(2r-1)/(2r)} (n-\ell-r+1)^{(2r-1)/(4r)} \delta^{1/(2r)}
\right. \\ & \left.  + t \sqrt{\log(e(n-\ell-r+1))} \right\} 
\end{align*}
when $\rs_2 = 0$ and 
\begin{align*}
  G_2 &\leq  C_r \sigma \left(t + \delta (n-\ell - r + 1)^{(2r-1)/2} \right)
 \sqrt{\log(e(n-\ell-r + 1))}   
\end{align*}
when $\rs_2 \neq 0$. By putting together the above bounds for $G_1$
and $G_2$ the proof of Lemma \ref{fina.full} is complete.  
\end{proof}

\subsection{Subdifferential of $\theta \mapsto \|D^{(r)} \theta\|_1$
  and proof of Lemma \ref{gksd}}\label{sudi}
This subsection provides a study of the subdifferential $\partial
f(\theta)$ where $f(\theta) := \|D^{(r)} \theta\|_1$ with an aim to
prove Proposition \ref{characterization} and Lemma \ref{gksd} in
Section \ref{ms}. We start by proving Proposition
\ref{characterization} which gives a precise characterization of the
subdifferential.  

\begin{proof}[Proof of Proposition \ref{characterization}]
  Let us first construct an $n \times n$ matrix $M$ such that for
  every $\beta \in \R^n$, we have
\[
    (M \beta)_i = \left\{
  \begin{array}{ll}
(D^{(i-1)} \beta)_1 &  \text{for $i = 1, \dots, r$}\\
(D^{(r)} \beta)_{i-r} &  \text{for $i = r+1, \dots, n$.}
  \end{array}
\right.
\]
This is of course possible because $\beta \mapsto (D^{(i)} \beta)_j$
is a linear mapping. More specifically, it can be checked that
$M = (M_{ij})$ defined by 
 \[
   M_{ij} = \left\{
  \begin{array}{ll}
(-1)^{i-j} \binom{i-1}{i-j} I\{1 \leq j \leq i \leq n\} &  \text{for
  $1 \leq i \leq r, 1 \leq j \le n$}\\
(-1)^{i-j} \binom{r}{i-j} I \{i - r \leq j \leq i\} &  \text{for $r+1
  \leq i \leq n, 1 \leq j \leq n$}
  \end{array}
\right.
\]
satisfies the requirement. This is a consequence of the expression: 
\begin{equation*}
  (D^{(r)} \beta)_j = \sum_{k=j}^{j+r} (-1)^{j+r-k} \binom{r}{k-j}
  \beta_k \qt{for $1 \leq j \leq n - r$}. 
\end{equation*}
It is easy to see from the formula for $M$ that it is lower triangular
with positive diagonal entries and hence invertible. 

Now a vector $v \in \R^n$ is in $\partial f(\theta)$ if and only if it
satisfies 
\begin{equation}\label{defsg}
  f(\theta + \beta) - f(\theta) \geq \left<v,  \beta \right> \qt{for every $\beta
    \in \R^n$}. 
\end{equation}
The left hand side above can be written as 
\begin{equation}\label{eq:f_theta}
  f(\theta + \beta) - f(\theta) = \sum_{j=1}^{n-r} \left[ |(D^{(r)}
    \theta)_j + (M \beta)_{j+r}| - |(D^{(r)} \theta)_j| \right].  
\end{equation}
The right hand side in \eqref{defsg} can be written using Lemma
\ref{dfi} as  
  \begin{equation*}
  \begin{split}
\left<v, \beta \right> &= \sum_{i=1}^n v_i\beta_i \\
      &= \sum_{i=1}^n
          v_i\sum_{j=1}^{i-r} \binom{i-j-1}{r-1}(D^{(r)}\beta)_j +
        \sum_{i=1}^n
          v_i\sum_{j=1}^r \binom{i-1}{j-1}(D^{(j-1)}\beta)_1 \\
      &= \sum_{j=1}^{n-r} (D^{(r)}\beta)_j\sum_{i=r+j}^n
          \binom{i-j-1}{r-1}v_i +
        \sum_{j=1}^r (D^{(j-1)}\beta)_1\sum_{i=j}^n
          \binom{i-1}{j-1}v_i \\
      &= \sum_{j=1}^{n-r} a_{r+j}(M\beta)_{r+j} +
        \sum_{j=1}^r b_j(M\beta)_j
  \end{split}
  \end{equation*}
where 
\begin{equation*}
  b_j := \sum_{i=1}^n \binom{i-1}{j-1} v_i \qt{for $1 \leq j \leq r$} 
\end{equation*}
and 
\begin{equation*}
  a_{r+j} := \sum_{i=r+j}^n  \binom{i-j-1}{r-1} v_i \qt{for $1 \leq j
    \leq n-r$}. 
\end{equation*}
We now set $\beta = \pm M^{-1}\mathbf{e}_j$ for $1 \le j \le r$, where  
  $\mathbf{e}_j$ is the $j$'th standard basis vector of
  $\mathbb{R}^n$. Then, using~\eqref{eq:f_theta}, $f(\theta+\beta) - f(\theta) = 0$, so we
  must have $\left<v, \beta \right> = b_j = 0$. Now set
  $\beta = \lambda M^{-1}\mathbf{e}_{r+j}$ for $1 \le j \le n - r$.
  If $(D^{(r)}\theta)_j > 0$, then
  $f(\theta+\beta) - f(\theta) = \lambda$ for
  $\lambda \ge -(D^{(r)}\theta)_j$, and
  $\left<v, \beta \right> = \lambda a_{r+j}$. In particular, $a_{r+j} \le 1$ by
  taking $\lambda > 0$, and $a_{r+j} \ge 1$ by taking
  $0 > \lambda \ge -(D^{(r)}\theta)_j$, so we must have
  $a_{r+j} = 1$. Similarly, if $(D^{(r)}\theta)_j < 0$, then we must
  have $a_{r+j} = -1$. If $(D^{(r)}\theta)_j = 0$, then
  $f(\theta+\beta) - f(\theta) = |\lambda|$, so we must have
  $a_{r+j} \in [-1, 1]$. We have thus proved that if $v \in \partial
  f(\theta)$, then $b_j = 0$ for $1 \leq j \leq r$ and 
\[
   a_{r+j} = \left\{
  \begin{array}{ll}
\sgn((D^{(r)}\theta)_{j}) &  \text{if $(D^{(r)}\theta)_{j}
  \neq 0$}\\
    \in [-1, 1] &  \text{otherwise}
  \end{array}
\right.
\] 
for $1 \leq j \leq n-r$. On the other hand, it is easy to see that if
these two conditions are satisfied, then $v \in \partial
f(\theta)$. The proof of Lemma \ref{characterization} will then 
be complete by the observation that $b_j = 0$ for $1 \leq j \leq r$ is
equivalent to $a_j = 0$ for $1 \leq j \leq r$, where $a_j$ is the left
hand side of \eqref{ch1}. To see this, just note that 
  \begin{equation*}
  \begin{split}
    \sum_{k=j}^r \binom{r-j}{r-k}b_k
      &= \sum_{k=j}^r \binom{r-j}{r-k}\sum_{i=k}^n
        \binom{i-1}{k-1}v_i = \sum_{i=j}^n v_i\sum_{k=j}^i
        \binom{r-j}{r-k}\binom{i-1}{k-1} \\
      &= \sum_{i=j}^n v_i\sum_{k=1}^r
        \binom{r-j}{r-k}\binom{i-1}{k-1} = \sum_{i=j}^n v_i\binom{r+i-j-1}{r-1}
        = a_j.
  \end{split}
  \end{equation*}
  so that $(a_j)_{j=1}^r$ is related to $(b_j)_{j=1}^r$ by a
  triangular linear system. This completes the proof of Proposition
  \ref{characterization}. 
\end{proof}

We are now ready to prove Lemma \ref{gksd}. 

\begin{proof}[Proof of Lemma \ref{gksd}] We start with proof of the
  assertions for $r = 1$ (including inequality \eqref{voex}) and then
  proceed to the proofs of inequalities \eqref{gksd.low} and
  \eqref{gksd.up}.  

{\bf Proofs for $r = 1$. } Assume that $r = 1$ and that
$D \theta^* \neq 0$. Let $2 \leq
j_1 < \dots < j_k \leq n$ denote the jumps (first order knots) of
$\theta$ with signs are $\rs_1, \dots, \rs_k$. Also let $j_0 =
1$, $j_{k+1} = n+1$ and $\rs_0 = \rs_{k+1} = 0$. Then $n_i :=  j_{i+1}
- j_i$ for $0 \leq i \leq k$ denote the lengths 
of the $k+1$ constant pieces of $\theta^*$. 

Define the vector $v_0 = (v_{01}, \dots, v_{0n}) \in \R^n$  in the
following way. For $1 \le i \leq n$, let $0 \leq l \leq k$ be the
unique integer such that $j_l \leq i < j_{l+1}$. Then we take $v_{0i}
:= (\rs_l -  \rs_{l+1})/n_l$. We first claim that $v_0 \in \partial
f(\theta^*)$ where $f(\theta) := \|D \theta\|_1$. By the
characterization of $\partial f(\theta^*)$ given in Proposition
\ref{characterization}, to prove that $v_0 \in \partial
f(\theta^*)$, we need to prove that 
\begin{equation*}
  v_{01} + \dots + v_{0n} = 0 , 
\end{equation*}
\begin{equation*}
  v_{0j} + \dots + v_{0n} \in [0, 1] \qt{for every $1 \leq j \leq n$}
\end{equation*}
and
\begin{equation*}
  v_{0j_u} + \dots v_{0n} = \rs_u \qt{for $u = 1, \dots, k$}. 
\end{equation*}
Each of three conditions follow from the calculation below. Fix $1
\leq j \leq n$ and let $0 \leq l \leq k$  be the unique integer such
that $j_l \leq i < j_{l+1}$. Then 
\begin{align*}
  \sum_{i = j}^n v_{oi} &= \sum_{i=j}^{j_{l+1} - 1} v_{0i} +
                          \sum_{u=l+1}^k \sum_{i=j_u}^{j_{u+1} - 1}
                          v_{0i} \\
&= \sum_{i=j}^{j_{l+1} - 1} \frac{\rs_{l} - \rs_{l+1}}{n_l} +
                          \sum_{u=l+1}^k \sum_{i=j_u}^{j_{u+1} - 1}
                          \frac{\rs_u - \rs_{u+1}}{n_u}  \\
&= \frac{\rs_l - \rs_{l+1}}{n_l} (j_{l+1} - j) + \sum_{u=l+1}^k (\rs_u
  - \rs_{u+1}) \\
&= \frac{\rs_l - \rs_{l+1}}{n_l} (j_{l+1} - j) + \rs_{l+1}  = \rs_l
  \left(\frac{j_{l+1} - j}{n_l} \right) + \rs_{l+1} \left(\frac{j -
  j_l}{n_l} \right). 
\end{align*}
This proves $v_0 \in \partial f(\theta^*)$. We shall next prove that
$v_0$ minimizes $\|v\|$ over $v \in \aff(\partial f(\theta^*))$. This will
automatically (because $v_0 \in \partial f(\theta^*)$) also prove that
$v_0$ minimizes $\|v\|$ over $v \in \partial f(\theta^*)$ so that $v_0
= v^*$.  Because $\aff(\partial f(\theta^*))$ is an affine set and
$v_0 \in \partial f(\theta^*)$, the fact that $v_0$ minimizes $\|v\|$
over $\aff(\partial f(\theta^*))$ is equivalent to the condition:  
\begin{align}\label{wd.con}
  \left<v - v_0, v_0 \right> = 0 \qt{for every $v \in \partial
  f(\theta^*)$}. 
\end{align}
Therefore we only need to verify \eqref{wd.con}. For this, write
\begin{align*}
  \left<v - v_0, v_0 \right> &= \sum_{u=0}^k \sum_{i = j_u}^{j_{u+1} -
                               1}
  \left(v_i - \frac{\rs_u - \rs_{u+1}}{n_u} \right) \left(\frac{\rs_u
      - \rs_{u+1}}{n_u} \right) \\ 
&= \sum_{u=0}^k \frac{\rs_u - \rs_{u+1}}{n_u} \left(\sum_{i  =
  j_u}^{j_{u+1} - 1} 
  v_i \right) - \sum_{u=0}^k \frac{(\rs_u - \rs_{u+1})^2}{n_u}. 
\end{align*}
The quantity above equals zero because, by the characterization of the
subdifferential $\partial f(\theta^*)$, we have $\sum_{i
  =j_u}^{j_{u+1} - 1} v_i =
\rs_u - \rs_{u+1}$ for every $v \in \partial f(\theta^*)$ and $0 \leq
u \leq k$. This proves that the condition \eqref{wd.con} holds. 

We now prove inequality \eqref{voex}. For this, simply write
\begin{align*}
  \|v_0\|^2 &= \sum_{u=0}^k \sum_{i=j_u}^{j_{u+1} - 1} \left(
  \frac{\rs_u - \rs_{u+1}}{n_u}\right)^2 \\
&=  \sum_{u=0}^k \frac{(\rs_u - \rs_{u+1})^2}{n_u} = \frac{1}{n_0} +
  \frac{1}{n_k} + 4 \sum_{u=1}^{k-1} \frac{I\{\rs_u \neq
  \rs_{u+1}\}}{n_u} 
\end{align*}
because $(\rs_u - \rs_{u+1})^2$ equals 1 for $u = 0, k$ and $4I\{\rs_u
\neq \rs_{u+1}\}$ for all other $u$. This proves \eqref{voex} and
completes the proof of the first part of Lemma \ref{gksd} (for $r = 1$). 

{\bf Proof of inequality \eqref{gksd.low}.}
  Fix $\theta^* \in \R^n$ with $D^{(r)} \theta^* \neq
  \mathbf{0}_{n-r}$. Note that $v_0$ is the projection of the zero  
  vector $\mathbf{0}_{n}$ onto $\aff(\partial f(\theta^*))$. 

  Because $\partial f(\theta^*)$ is given by a finite number of linear
  inequalities (i.e., it is a polyhedron), its affine hull is given by
  the intersection of the inequalities which are actually equalities
  (see, for example, \citet[Chapter 8]{Schrijver}). Therefore,
  $\aff(\partial f(\theta^*))$ is given by the vectors $v \in \R^n$
  for which \eqref{ch1} holds and for which 
  \begin{equation*}
    a_j = \sum_{i=j}^{n} \binom{r+i-j-1}{r-1} v_i = \sgn((D^{(r)}
    \theta^*)_{j-r}) 
  \end{equation*}
  for $r < j \leq n$ such that $(D^{(r)} \theta^*)_{j-r} \neq
  0$. Let the number of $r^{th}$ order knots of $\theta^*$ be $k$ so
  that the number of equalities in $\aff(\partial f(\theta^*))$ is $k
  + r$. We can represent these equalities in matrix form as $B v = b$
  where $B$ is $(k+r) \times n$ and $b \in \R^{k+r}$ with $\|b\|_1 = 
  k$. Note also that $\max_{i, j} |B_{ij}| \leq \binom{n+r-2}{r-1}$ so
  that 
  \begin{equation*}
    \|B\|_1 := \sup_{x \ne 0} \frac{\|B x\|_1}{\|x\|_1} = \max_{1 \leq
    j \leq n} \sum_{i=1}^{k+r} |B_{ij}| \leq (k+r) \binom{n+r-2}{r-1}
  \leq \frac{(r+1)k}{(r -1)!} (2n)^{r-1}. 
  \end{equation*}
  As a result, because the vector $v_0$ satisfies $B v_0 = b$, we
  obtain 
  \begin{equation*}
    \|v_0\| \geq \frac{\|v_0\|_1}{\sqrt{n}} \geq
    \frac{\|b\|_1}{\sqrt{n} \|B\|_1} \geq \frac{k}{\sqrt{n}}
    \frac{(r-1)!}{(r+1) k (2n)^{r-1}} = \frac{(r-1)!}{(r+1) 2^{r-1}}
    n^{-r + 1/2}. 
  \end{equation*}
  This proves \eqref{gksd.low}. 

{\bf Proof of Inequality \eqref{gksd.up}.}
  This proof is rather long. Fix $\theta^* \in \R^n$ with $D^{(r)}
  \theta^* \neq \mathbf{0}_{n-r}$. Let $2 \leq j_1
  < \dots < j_k \leq n-r+1$ be the $r^{th}$ order knots of $\theta^*$
  along with associated signs $\rs_1, \dots, \rs_k \in \{-1,
  1\}$. Also let $j_0 = 1, j_{k+1} = 
  n-r+2$ and $\rs_0  = \rs_{k+1} = 0$. It will be
  convenient below to take $m_i := j_i + r - 1$ for $l = 0,   \dots,
  k$. Also let $n_0 = j_1 + r - 2$ and $n_i = j_{i+1} - j_i$ for $i =
  1, \dots, k$.    

  Because it is assumed that the minimum length condition \eqref{nsa}
  holds for $\theta^*$ with constant $c$, it follows that  $n_i \geq
  cn/(k+1)$ whenever $\rs_i \neq \rs_{i+1}$.  

   Let $\gf : \R \rightarrow \R$ be a smooth (i.e., $C^{\infty}$)
   function such that
   \begin{enumerate}
   \item $\gf(0) = 0$, $\gf(1) = 1$. 
   \item $\gf^{(j)}(0) = \gf^{(j)}(1) = 0$ for $j \geq 1$. 
   \item $\gf(t) \in [0, 1]$ for $t \in [0, 1]$. 
   \end{enumerate}
   where $\gf^{(j)}$ is the $j^{th}$ order derivative of $\gf$. For
   example, the function $\gf(x) := \int_0^{x} \phi(t) dt$ where
\[
   \phi(t) = \left\{
  \begin{array}{ll}
\gamma \exp \left(\frac{-1}{t(1-t)} \right) &  \text{for $t  \in (0, 1)$}\\
0 &  \text{otherwise}
  \end{array}
\right.
\]
where $\gamma$  is chosen so that $\int_0^1 \phi(t) dt = 1$ will
satisfy the requirements for $\gf$. 

Let us now define a function $S : [1, n+r] \rightarrow \R$ as follows: 
\[
   S(t) = \left\{
  \begin{array}{ll}
0 &  \text{for $t  \in [1, r] \cup [n+1, n+r]$}\\ 
\rs_i \left(1 - \gf \left(\frac{t-m_i}{n_i} \right) \right) + \rs_{i+1}
\gf \left(\frac{t - m_i}{n_i} \right) &  \text{for $m_i \leq t \leq
  m_{i+1}$,  $0 \leq i \leq k$}
  \end{array}
\right.
\]
By an abuse of notation, we shall also denote by $S$, the
$n+r$-dimensional vector $ (S(1), \dots, S(n+r))$. It will be
clear from the context whether we are referring to the vector $S$ or
the function $S$. From the properties of $\gf$, it is easy to deduce
that $S(m_i) = \rs_i$ and $S^{(j)}(m_i) = 0$ for all $j$ and $0 \leq i
\leq k+1$. Also $\sup_{t \in [1, n+r]}|S(t)| \leq 1$. 

The first key observation is that the vector $v^* \in \R^n$ defined by  
\begin{equation*}
  v^*_j := (-1)^r (D^{(r)} S)_j = \sum_{k=j}^{j+r} (-1)^{k -j}
  \binom{r}{k-j} S_k \qt{for $1 \leq j \leq n$}
\end{equation*}
belongs to the subdifferential $\partial f(\theta^*)$. To see this, we
need to use Proposition \ref{characterization}. Note that  for $1 \leq
j \leq n$, 
{\small \begin{align*}
  a_j^* := \sum_{i=j}^{n} \binom{r+i-j-1}{r-1} v_i^* 
&= \sum_{i=j}^{n} \binom{r+i-j-1}{r-1} \sum_{k=i}^{i+r} (-1)^{k-i} \binom{r}{k-i} S_k \\
&= \sum_{k=j}^{n+r} S_k \sum_{i=k-r}^{\min(k, n)} (-1)^{k-i}
\binom{r}{k-i} \binom{r+i-j-1}{r-1} \\
&= \sum_{k=j}^{n} S_k \sum_{i=k-r}^{k} (-1)^{k-i}
\binom{r}{k-i} \binom{r+i-j-1}{r-1}  
\end{align*}}
where the last equality follows because $S_k = 0$ for $k = n+1, \dots,
n+r$. Now let
\begin{equation*}
  \beta_i := \binom{r+i-j-1}{r-1} \qt{for $i = \dots, -2, -1, 0, 1, 2,
    \dots $}
\end{equation*}
where the binomial coefficient is taken to be zero if $r + i - j - 1 <
r-1$. Then 
\begin{equation*}
  a_j^* = \sum_{k=j}^n S_k \sum_{i=k-r}^{k} (-1)^{k-i}
\binom{r}{k-i} \beta_i = \sum_{k=j}^n S_k (D^{(r)} \beta)_{k-r}. 
\end{equation*}
It is now easy to see that $\beta_i$ is a polynomial in $i$ for $i
\geq j+1-r$ which implies that $(D^{(r)} \beta)_{k-r} = 0$ for $k \geq
j+1$. It can also be checked that $(D^{(r)} \beta)_{j-r} = 1$. This
therefore gives $a_j^* = S_j$ for $j = 1, \dots, n$. Proposition
\ref{characterization} and the fact that $S_j = S(j) = 0$ for $1 \leq
j \leq r$, $S(m_i) = \rs_i$ and $|S(t)| \le 1$ for all $t$ proves that
$v^* \in \partial f(\theta^*)$. 
   
We shall now bound $\|v^*\|$ by writing
  \begin{equation*}
  \begin{split}
    \|v^*\|^2 &= \sum_{l=0}^{k+1} \sum_{j=m_l-r+1}^{m_l-1}
    {v^*_j}^2 + \sum_{l=0}^k \sum_{j=m_l}^{m_{l+1}-r} {v^*_j}^2 \\
      &= \sum_{l=0}^{k+1} \sum_{j=m_l-r+1}^{m_l-1}
          ((D^{(r)}S)_j)^2 +
        \sum_{l=0}^k \sum_{j=m_l}^{m_{l+1}-r}
          ((D^{(r)}S)_j)^2
  \end{split}
  \end{equation*}
  Let
  \begin{equation*}
    M_r := \sup_{t \in [0, 1]} \left|\gf^{(r)}(t)\right|,
  \end{equation*}
  and note that
  \begin{equation*}
    \left|S^{(r)}(t)\right|
      \le |\mathfrak{r}_{l+1}-\mathfrak{r}_l|M_rn_l^{-r}
      \le 2M_rn_{min}^{-r}
  \end{equation*}
  for $t \in [m_l, m_{l+1}]$ and $0 \leq l \leq k$ where 
\begin{equation*}
  n_{\min} :=   \min_{0 \leq i \leq k : \rs_i \neq \rs_{i+1}} n_i . 
\end{equation*}
Then for
  $m_l - r < j < m_l + r$ and $0 \le l \le k + 1$ we have
  \begin{equation*}
    |S(j)|
      \le \frac{2M_rn_{\min}^{-r}}{r!}|j-m_l|^r
      \le \frac{2r^r}{r!}M_rn_{\min}^{-r}
      \le \frac{2e^r}{\sqrt{2\pi r}}M_rn_{\min}^{-r}
  \end{equation*}
  by $(r-1)$-th order Taylor expansion about $m_l$ and Stirling's
  approximation. (The bound trivially holds if $j < r$ or $j > n$;
  if $j \notin (m_{l-1}, m_{l+1})$, then the bound holds by
  expansion about the nearest $m_i$). Thus for $m_l - r < j < m_l$ 
  and $0 \le l \le k + 1$, again by Stirling's approximation, we have
  \begin{equation*}
    |(D^{(r)}S)_j| \le \sum_{i=0}^r
          \binom{r}{i}\frac{2e^r}{\sqrt{2\pi r}}M_rn_{\min}^{-r} \le
          \frac{2^{r+1}e^r}{\sqrt{2\pi r}}M_rn_{\min}^{-r} 
  \end{equation*}
and so
  \begin{equation}\label{truncation}
    \begin{split}
    \sum_{l=0}^{k+1} \sum_{j=m_l-r+1}^{m_l-1}
        ((D^{(r)}S)_j)^2
      &\le \sum_{l=0}^{k+1}
        (r-1)\frac{2^{2r+2}e^{2r}}{2\pi r}M_r^2n_{\min}^{-2r} 
        \\
      &\le \frac{2(k+2)}{\pi}(2 e)^{2r} M_r^2n_{\min}^{-2r} \\
&\le
      (2e)^{2r}M_r^2(k+1)n_{\min}^{-2r}. 
     \end{split}
  \end{equation}
  We now proceed to the second term for bounding $\|v^*\|$. For this,
  let 
  \begin{equation*}
    N_r = \sup_{t \in [0, 1]} \left|\gf^{(r+1)}(t)\right|,
  \end{equation*}
  and note that
  \begin{equation*}
    \left|S^{(r+1)}(t)\right|
      \le |\mathfrak{r}_{l+1}-\mathfrak{r}_l|N_rn_l^{-r-1}.
  \end{equation*}
  for $t \in [m_l, m_{l+1}]$ and $0 \le l \le k$. Then for
  $m_l \le j \le m_{l+1} - r$ and $0 \le l \le k$,
  \begin{equation*}
  \begin{split}
    \left|(-1)^r(D^{(r)}S)_j-S^{(r)}(j)\right|
      &\le \sum_{i=0}^r
        \binom{r}{i}
          \frac{|\mathfrak{r}_{l+1}-\mathfrak{r}_l|N_rn_l^{-r-1}}
            {(r+1)!}
          i^{r+1} \\
&\le \frac{2^rr^{r+1}}{(r+1)!}
        |\mathfrak{r}_{l+1}-\mathfrak{r}_l|N_rn_l^{-r-1} \\
      &\le \frac{2^re^{r+1}}{\sqrt{2\pi(r+1)}}
        |\mathfrak{r}_{l+1}-\mathfrak{r}_l|N_rn_l^{-r-1} \\
&\le \frac{2^{r-1}e^{r+1}}{\sqrt{\pi}}
        |\mathfrak{r}_{l+1}-\mathfrak{r}_l|N_rn_l^{-r-1},
  \end{split}
  \end{equation*}
  by $r$-th order Taylor expansion about $j$ and
  Stirling's approximation, using the fact that the $r$-th order
  forward difference approximates the $r$-th derivative up to an
  error depending on the $(r+1)$-th derivative (i.e. all lower order
  terms in the Taylor expansion cancel). Then the trivial inequality
  $|a^2 - b^2| \leq (a-b)^2 + 2|b||a-b|$ gives, for 
  \begin{equation*}
    T_j := \left|((D^{(r)}S)_j)^2-\left(S^{(r)}(j)\right)^2\right|,
  \end{equation*}
  the upper bound
  \begin{equation*}
  \begin{split}
T_j  &\le \left|(-1)^r(D^{(r)}S)_j-S^{(r)}(j)\right|^2 +
        2\left|S^{(r)}(j)\right|
          \left|(-1)^r(D^{(r)}S)_j-S^{(r)}(j)\right| \\
      &\le \frac{2^{2r-2}e^{2r+2}}{\pi}
          (\mathfrak{r}_{l+1}-\mathfrak{r}_l)^2N_r^2n_l^{-2r-2} +
        \frac{2^re^{r+1}}{\sqrt{\pi}}
          (\mathfrak{r}_{l+1}-\mathfrak{r}_l)^2M_rN_rn_l^{-2r-1} \\
      &\le (2e)^{2r}(\mathfrak{r}_{l+1}-\mathfrak{r}_l)^2
        (M_r+N_r)N_rn_l^{-2r-1}.
  \end{split}
  \end{equation*}
  So for $0 \le l \le k$ we have,
  \begin{equation*}
  \begin{split}
   &  \left|\sum_{j=m_l}^{m_{l+1}-r} ((D^{(r)}S)_j)^2 -
      \sum_{j=m_l}^{m_{l+1}-r} \left(S^{(r)}(j)\right)^2\right| \\
      &\le (n_l-r+1)(2e)^{2r}(\mathfrak{r}_{l+1}-\mathfrak{r}_l)^2
        (M_r+N_r)N_rn_l^{-2r-1} \\
      &\le (2e)^{2r}(\mathfrak{r}_{l+1}-\mathfrak{r}_l)^2
        (M_r+N_r)N_rn_l^{-2r} \le 4(2e)^{2r}(M_r+N_r)N_rn_{\min}^{-2r}
  \end{split}
  \end{equation*}
  (the above bound trivially holds if $n_l < r$). Thus 
  \begin{equation}\label{finitedifference}
\begin{split}
    \sum_{l=0}^k \sum_{j=m_l}^{m_{l+1}-r} ((D^{(r)}S)_j)^2
      &\leq \sum_{l=0}^k \sum_{j=m_l}^{m_{l+1}-r}
          \left(S^{(r)}(j)\right)^2 \\ &+
        4(2e)^{2r}(M_r+N_r)N_r(k+1)n_{\min}^{-2r}.
\end{split}
  \end{equation}
  Now let
  \begin{equation*}
    K_r = \sup_{t \in [0, 1]}
      \left|\frac{d}{dt}
        \left(\left(\gf^{(r)}(t)\right)^2\right)\right|,
  \end{equation*}
  and note that
  \begin{equation*}
    \left|\frac{d}{dt} \left(\left(S^{(r)}(t)\right)^2\right)\right|
      \le (\mathfrak{r}_{l+1}-\mathfrak{r}_l)^2K_rn_l^{-2r-1}
  \end{equation*}
  for $t \in [m_l, m_{l+1}]$ and $0 \le l \le k$, regarding the
  derivative as one-sided at the endpoints. Then for
  $m_l \le j \le m_{l+1} - r $ and $0 \le l \le k$,
  \begin{equation*}
  \begin{split}
    \left|\left(S^{(r)}(j)\right)^2-
        \int_j^{j+1} \left(S^{(r)}(t)\right)^2 dt\right|
      &\le \int_j^{j+1} (\mathfrak{r}_{l+1}-\mathfrak{r}_l)^2K_r
        n_l^{-2r-1}(t-j) dt \\
      &= \frac{1}{2}(\mathfrak{r}_{l+1}-\mathfrak{r}_l)^2K_r
        n_l^{-2r-1}
  \end{split}
  \end{equation*}
  by a zeroth order Taylor expansion about $j$. So for $0 \le l \le k$
  we have
  \begin{equation*}
  \begin{split}
   & \left|\sum_{j=m_l}^{m_{l+1}-r}\left(S^{(r)}(j)\right)^2-
        \int_{m_l}^{m_{l+1}-r+1} \left(S^{(r)}(t)\right)^2 dt\right| \\
      &\le (n_l-r+1)\frac{1}{2}(\mathfrak{r}_{l+1}-\mathfrak{r}_l)^2
        K_rn_l^{-2r-1} \\
      &\le \frac{1}{2}(\mathfrak{r}_{l+1}-\mathfrak{r}_l)^2K_r
        n_l^{-2r} \le 2K_rn_{\min}^{-2r}
  \end{split}
  \end{equation*}
  (the bound trivially holds if $n_l < r$.) Thus
  \begin{equation}\label{rectanglemethod}
\begin{split}
    \sum_{l=0}^k \sum_{j=m_l}^{m_{l+1}-r} \left(S^{(r)}(j)\right)^2
     & \le \sum_{l=0}^k \int_{m_l}^{m_{l+1}-r+1}
          \left(S^{(r)}(t)\right)^2 dt \\ &+
        2K_r(k+1)n_{\min}^{-2r}.
\end{split}
  \end{equation}
  Let
  \begin{equation*}
    I_r = \int_0^1 \left(\gf^{(r)}(t)\right)^2 dt,
  \end{equation*}
  and note that for $0 \le l \le k$,
  \begin{equation*}
  \begin{split}
    \int_{m_l}^{m_{l+1}-r+1} \left(S^{(r)}(t)\right)^2 dt
      &\le \int_{m_l}^{m_{l+1}} \left(S^{(r)}(t)\right)^2 dt \\ &= \int_{m_l}^{m_{l+1}}
        (\mathfrak{r}_{l+1}-\mathfrak{r}_l)^2
          \left(\gf^{(r)}\left(\frac{t-m_l}{n_l}\right)\right)^2
          n_l^{-2r} dt \\
      &= (\mathfrak{r}_{l+1}-\mathfrak{r}_l)^2I_rn_l^{-2r+1} \le
      4I_rn_{\min}^{-2r+1}. 
  \end{split}
  \end{equation*}
  Thus
  \begin{equation}\label{integral}
    \sum_{l=0}^k \int_{m_l}^{j_{m+1}-r+1}
        \left(S^{(r)}(t)\right)^2 dt
      \le 4I_r(k+1)n_{\min}^{-2r+1}.
  \end{equation}
  Combining bounds (\ref{truncation}), (\ref{finitedifference}),
  (\ref{rectanglemethod}), and (\ref{integral}), we have
 \begin{equation*}
  \begin{split}
    \|v^*\|^2
      &\le (2e)^{2r}M_r^2(k+1)n_{\min}^{-2r} +
        4(2e)^{2r}(M_r+N_r)N_r(k+1)n_{\min}^{-2r} \\
&+ 2K_r(k+1)n_{\min}^{-2r} + 4I_r(k+1)n_{\min}^{-2r+1} \\
      &\le \left((2e)^{2r}(M_r+2N_r)^2+2K_r+4I_r\right)(k+1)
        n_{\min}^{-2r+1}.
  \end{split}
  \end{equation*}
  This proves \eqref{gksd.up} with
  $C_r = \sqrt{(2e)^{2r}(M_r+2N_r)^2+2K_r+4I_r}$ (because of the fact
  that $n_{\min} \geq cn/(k+1)$ under assumption \eqref{nsa}).  
\end{proof}

\section{Additional technical results and proofs}\label{appa}
\subsection{A result on Gaussian suprema}
The following result was used in the proof of Theorem \ref{ada}. 
\begin{lemma}\label{gt}
Suppose $p, n \geq 1$ and let $\Theta_1, \dots, \Theta_p$ be subsets
of $\R^n$ each containing the origin and each contained in the closed
Euclidean ball of radius $D$ centered at the origin. Then, for $\xi 
  \sim N(0, \sigma^2 I)$, we have 
  \begin{equation}\label{gt.eq}
    \E \left(\max_{1 \leq i\le p} \sup_{\theta \in \Theta_i} \left<\xi,
        \theta \right> \right) \leq \max_{1 \leq i \leq p} \E
    \sup_{\theta \in \Theta_i} \left<\xi, \theta \right> + D \sigma
    \left(\sqrt{2 \log p} + \sqrt{\frac{\pi}{2}}\right). 
  \end{equation}
\end{lemma}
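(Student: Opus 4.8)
The plan is to reduce \eqref{gt.eq} to three ingredients: the Lipschitz property of the support functions $\xi\mapsto\sup_{\theta\in\Theta_i}\left<\xi,\theta\right>$, the Gaussian concentration inequality for Lipschitz functions, and an elementary maximal inequality obtained by integrating the resulting tail bound.

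First I would set $Z_i(\xi):=\sup_{\theta\in\Theta_i}\left<\xi,\theta\right>$ and note that, since $\|\theta\|\le D$ for every $\theta\in\Theta_i$, the map $\xi\mapsto Z_i(\xi)$ is $D$-Lipschitz: for $\xi,\xi'\in\R^n$ one has $|Z_i(\xi)-Z_i(\xi')|\le\sup_{\theta\in\Theta_i}|\left<\xi-\xi',\theta\right>|\le D\|\xi-\xi'\|$. Writing $\xi=\sigma g$ with $g\sim N(0,I_n)$, the function $g\mapsto Z_i(\sigma g)$ is $D\sigma$-Lipschitz, so the Gaussian concentration inequality for Lipschitz functions yields the one-sided tail bound $\P\{Z_i-\E Z_i\ge t\}\le\exp(-t^2/(2D^2\sigma^2))$ for every $t\ge 0$ and every $1\le i\le p$ (the containment of $0$ in each $\Theta_i$ is only needed to guarantee $Z_i\ge 0$, which is not actually used in the bound).

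Next I would control the expected maximum. With $m:=\max_{1\le i\le p}\E Z_i$, the bounds $\E Z_i\le m$ and $x\le x_+$ give
\[
\E\max_{1\le i\le p}Z_i-m\;\le\;\E\Big(\max_{1\le i\le p}(Z_i-\E Z_i)\Big)_+\;=\;\int_0^\infty\P\Big\{\max_{1\le i\le p}(Z_i-\E Z_i)\ge t\Big\}\,dt\;\le\;\int_0^\infty\min\Big(1,\;p\,e^{-t^2/(2D^2\sigma^2)}\Big)\,dt,
\]
where the last step uses the union bound and the tail estimate of the previous paragraph. It remains to estimate this integral. Splitting the range at $t_0:=D\sigma\sqrt{2\log p}$ (the point where $p\,e^{-t_0^2/(2D^2\sigma^2)}=1$), the part $t\le t_0$ contributes $t_0=D\sigma\sqrt{2\log p}$, and for the tail one uses the elementary inequality $e^{-u^2/2}\le e^{-a^2/2}e^{-a(u-a)}$ for $u\ge a>0$ to get $\int_{t_0}^\infty e^{-t^2/(2D^2\sigma^2)}\,dt\le \frac{D\sigma}{p\sqrt{2\log p}}$, so the tail part contributes at most $D\sigma/\sqrt{2\log p}$. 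Finally $1/\sqrt{2\log p}\le\sqrt{\pi/2}$ for every $p\ge 2$, while for $p=1$ the integral equals $\int_0^\infty e^{-t^2/(2D^2\sigma^2)}\,dt=D\sigma\sqrt{\pi/2}$ directly; in both cases the integral is at most $D\sigma(\sqrt{2\log p}+\sqrt{\pi/2})$, which combined with the display gives \eqref{gt.eq}.

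I do not anticipate a genuine obstacle: the only mildly delicate point is arranging the final integral estimate so that the constants come out precisely as $\sqrt{2\log p}+\sqrt{\pi/2}$ rather than a cruder pair, and this is exactly what the $t_0$-splitting together with the Gaussian-tail bound above accomplishes.
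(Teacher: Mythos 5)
Your proof is correct and follows essentially the same route as the paper: Lipschitz support functions, Gaussian concentration, a union bound, and integration of the resulting tail, with the split of the integral at $t_0 = D\sigma\sqrt{2\log p}$. The only cosmetic difference is in bounding the tail integral, where you use the Mills-ratio estimate $\int_a^\infty e^{-u^2/2}\,du\le e^{-a^2/2}/a$ plus the numerical check $1/\sqrt{2\log p}\le\sqrt{\pi/2}$ for $p\ge 2$ (handling $p=1$ separately), whereas the paper invokes the Gaussian tail bound $1-\Phi(x)\le\tfrac12 e^{-x^2/2}$ directly; both yield the same constant.
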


\begin{proof}[Proof of Lemma \ref{gt}]
  For every $t \geq 0$, by the union bound
  \begin{equation*}
\begin{split}
    \P \left\{\max_{1 \leq i \leq p} \sup_{\theta \in \Theta_i}
      \left<\xi, \theta \right>  \geq \max_{1 \leq i \leq p} \E
      \sup_{\theta \in \Theta_i} \left<\xi, \theta \right> + t \sigma \right\}
    &\leq \sum_{i=1}^p \P \left\{\sup_{\theta \in \Theta_i} \left<\xi,
        \theta \right>  \right.\\ & \left.\geq \E \sup_{\theta \in \Theta_i} \left<\xi,
        \theta \right> + t \sigma \right\}. 
\end{split}
  \end{equation*}
  Now by hypothesis, every vector in $\Theta_i$ has norm bounded by
  $D$. As a result, the map $\xi \mapsto \sup_{\theta \in \Theta_i}
  \left<\xi, \theta \right>$ is Lipschitz with constant $D$. By the
  Gaussian concentration inequality, we deduce therefore that 
  \begin{equation*}
    \P \left\{\sup_{\theta \in \Theta_i} \left<\xi,
        \theta \right> \geq \E \sup_{\theta \in \Theta_i} \left<\xi,
        \theta \right> + \sigma t  \right\} \leq \exp \left(- \frac{t^2}{2
        D^2} \right) 
  \end{equation*}
for every $1 \leq i \leq p$. Consequently, 
  \begin{equation*}
    \P \left\{\max_{1 \leq i \leq p} \sup_{\theta \in \Theta_i}
      \left<\xi, \theta \right>  \geq \max_{1 \leq i \leq p} \E
      \sup_{\theta \in \Theta_i} \left<\xi, \theta \right> + t \sigma \right\}
    \leq \min \left\{p \exp\left(- \frac{t^2}{2 D^2} \right), 1
    \right\} 
  \end{equation*}
for every $t \geq 0$. Integrating both sides of this inequality from $t = 0$ to $t =
 \infty$, we obtain
 \begin{equation*}
   \E \left(\max_{1 \leq i \leq p} \sup_{\theta \in \Theta_i} \left<\xi,
     \theta\right> - \max_{1 \leq i \leq p} \E \sup_{\theta \in
     \Theta_i} \left<\xi, \theta\right> \right)^+ \leq \sigma
 \int_0^{\infty}  \min \left\{p \exp\left(- \frac{t^2}{2 D^2} \right), 1
    \right\} dt. 
 \end{equation*}
The trivial inequality $a \leq b + (a - b)^+$ therefore gives 
\begin{equation*}
\E \left(\max_{1 \leq i\le p} \sup_{\theta \in \Theta_i} \left<\xi,
        \theta \right> \right) \leq \max_{1 \leq i \leq p} \E
    \sup_{\theta \in \Theta_i} \left<\xi, \theta \right> +   \sigma
 \int_0^{\infty}  \min \left\{p \exp\left(- \frac{t^2}{2 D^2} \right), 1
    \right\} dt.  
\end{equation*}
We will now bound the integral from above. For this,
we simply write
\begin{align*}
  \int_0^{\infty}  \min \left\{p \exp\left(- \frac{t^2}{2 D^2} \right), 1
    \right\} dt &\leq \int_0^{D \sqrt{2 \log p}} 1 dt \\ &+ p \int_{D
      \sqrt{2 \log p}}^{\infty} \exp \left(- \frac{t^2}{2 D^2} \right)
    dt \\
&= D \sqrt{2 \log p} + \sqrt{2 \pi} p D  \left(1 - \Phi(\sqrt{2 \log
    p}) \right). 
\end{align*}
We now complete the proof of \eqref{gt.eq} via the Gaussian tail bound
$1 - \Phi(x) \leq \exp(-x^2/2)/2$ for $x = \sqrt{2 \log p}$ (see e.g.,~\citet{dumbgen2010bounding}). 
\end{proof}

\subsection{A formula for $\theta$ in terms of $D^{(r)}
  \theta$} \label{dfisec}
The following result provides a formula for expressing a
vector $\theta \in \R^n$ in terms of $D^{(r)} \theta$ and $(D^{(i)}
\theta)_1$ for $i = 0, \dots, r-1$. This result is quite useful and
we have used it in multiple places in our proofs. 
\begin{lemma}\label{dfi}
  Fix $r \geq 1$ and $n \geq r$. For every $\theta \in \R^n$ and $1
  \leq i \leq n$, we have 
  \begin{equation}
    \label{rle.eq}
    \theta_i = \sum_{j=1}^{i-r} \binom{i-j-1}{r-1} (D^{(r)} \theta)_j
    + \sum_{j=1}^r \binom{i-1}{j-1} (D^{(j-1)} \theta)_1  
  \end{equation}
  where we take the convention that $\binom{a}{b} = 0$ for $b > a$, 
  $\binom{0}{0} = 1$ and that the first term in the right hand side is
  zero unless $i > r$. 
\end{lemma}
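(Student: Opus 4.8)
The plan is to establish \eqref{rle.eq} by induction on $r$, with the telescoping identity for first differences as the base case and the hockey-stick identity \eqref{bnom} doing the combinatorial work in the inductive step.

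\emph{Base case.} For $r = 1$, formula \eqref{rle.eq} reduces (using $\binom{i-j-1}{0} = \binom{i-1}{0} = 1$) to $\theta_i = \sum_{j=1}^{i-1}(D\theta)_j + \theta_1$, which is immediate from $(D\theta)_j = \theta_{j+1} - \theta_j$ and telescoping.

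\emph{Inductive step.} Assume \eqref{rle.eq} holds with $r$ replaced by $r-1$:
\[
  \theta_i = \sum_{j=1}^{i-r+1} \binom{i-j-1}{r-2} (D^{(r-1)}\theta)_j + \sum_{j=1}^{r-1} \binom{i-1}{j-1} (D^{(j-1)}\theta)_1 .
\]
Applying the base case to the vector $D^{(r-1)}\theta \in \R^{n-r+1}$ (and using $D(D^{(r-1)}\theta) = D^{(r)}\theta$) gives, for $1 \le m \le n-r+1$,
\[
  (D^{(r-1)}\theta)_m = \sum_{l=1}^{m-1} (D^{(r)}\theta)_l + (D^{(r-1)}\theta)_1 .
\]
Substituting the second identity into the first and splitting off the two pieces, I would treat separately: (i) the constant-in-$j$ contribution $(D^{(r-1)}\theta)_1 \sum_{j=1}^{i-r+1}\binom{i-j-1}{r-2}$, where the reindexing $m = i-j-1$ turns the sum into $\sum_{m=r-2}^{i-2}\binom{m}{r-2} = \binom{i-1}{r-1}$ by the hockey-stick identity \eqref{bnom}, producing exactly the $j = r$ term that upgrades the second sum of the inductive hypothesis to $\sum_{j=1}^{r}\binom{i-1}{j-1}(D^{(j-1)}\theta)_1$; and (ii) the double sum $\sum_{j=1}^{i-r+1}\binom{i-j-1}{r-2}\sum_{l=1}^{j-1}(D^{(r)}\theta)_l$, where swapping the order of summation (so $1 \le l \le i-r$ and $l+1 \le j \le i-r+1$) and applying \eqref{bnom} again to $\sum_{j=l+1}^{i-r+1}\binom{i-j-1}{r-2} = \binom{i-l-1}{r-1}$ yields $\sum_{l=1}^{i-r}\binom{i-l-1}{r-1}(D^{(r)}\theta)_l$, which is the first sum of \eqref{rle.eq}. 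Combining (i) and (ii) gives \eqref{rle.eq} for $r$.

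The only delicate point is the bookkeeping with the conventions $\binom{a}{b} = 0$ for $b \notin \{0,\dots,a\}$ and empty sums equal zero: one must check that every reindexing keeps the running index inside the range where \eqref{bnom} is valid, and that the degenerate cases $i \le r$ (where the first sum in \eqref{rle.eq} is empty) are handled correctly. Apart from this routine care, the argument is entirely mechanical. An alternative would be to observe that both sides of \eqref{rle.eq} have the same image under the invertible linear map $\theta \mapsto \big(\theta_1,(D\theta)_1,\dots,(D^{(r-1)}\theta)_1,(D^{(r)}\theta)_1,\dots,(D^{(r)}\theta)_{n-r}\big)$ represented by the lower-triangular matrix $M$ in the proof of Proposition \ref{characterization}, using that $i \mapsto \sum_{j=1}^{r}\binom{i-1}{j-1}(D^{(j-1)}\theta)_1$ is a polynomial in $i$ of degree at most $r-1$ and hence annihilated by $D^{(r)}$; but the inductive route avoids having to compute finite differences of truncated polynomials, so I would favour it.
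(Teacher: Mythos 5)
Your proof is correct and takes essentially the same route as the paper: induction on $r$ with the $r=1$ telescoping identity as base case, applying that base case to $D^{(r-1)}\theta$, substituting into the inductive hypothesis, swapping the order of summation, and invoking the hockey-stick identity \eqref{bnom} twice. The two computations (i) and (ii) you isolate correspond exactly to the two applications of \eqref{bnom} in the paper's argument (with the paper's $\ell$ playing the role of your $r-1$), so the only differences are cosmetic.
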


\begin{proof}[Proof of Lemma \ref{dfi}]
  We shall use induction on $r \geq 1$. For $r = 1$, the formula
  \eqref{rle.eq} becomes
  \begin{equation}\label{r1}
    \theta_i = \sum_{j=1}^{i-1} (D \theta)_j + \theta_1 
  \end{equation}
  which is trivial because $(D \theta)_j = \theta_{j+1} - \theta_j$. 
 
  Let us now assume that \eqref{rle.eq} is true for some $r = \ell
  \geq 1$ and we shall then prove it for $r = \ell + 1$. Because
  \eqref{rle.eq} is true for $r = \ell$, we have 
  \begin{equation}\label{ih}
    \theta_i = \sum_{j=1}^{i-\ell} \binom{i-j-1}{\ell-1} (D^{(\ell)} \theta)_j
    + \sum_{j=1}^{\ell} \binom{i-1}{j-1} (D^{(j-1)} \theta)_1. 
  \end{equation}
   Inequality \eqref{r1} for $\theta$ replaced by $D^{\ell} \theta$
   gives 
   \begin{equation*}
     (D^{\ell} \theta)_j = (D^{\ell} \theta)_1 + \sum_{k=1}^{j-1}
     (D^{(\ell + 1)} \theta)_k. 
   \end{equation*}
   Using the above identity in \eqref{ih}, we obtain 
   \begin{align}  
    \theta_i &= \sum_{j=1}^{i-\ell} \binom{i-j-1}{\ell-1} \left((D^{\ell} \theta)_1 + \sum_{k=1}^{j-1}
     (D^{(\ell + 1)} \theta)_k \right) + \sum_{j=1}^{\ell} \binom{i-1}{j-1}
   (D^{(j-1)} 
    \theta)_1 \nonumber \\
&= \sum_{j=1}^{i - \ell} \sum_{k=1}^{j-1} \binom{i-j-1}{\ell-1}
(D^{(\ell + 1)} \theta)_k + (D^{\ell} \theta)_1 \sum_{j=1}^{i - \ell}
\binom{i-j-1}{\ell-1} \nonumber \\ &+ \sum_{j=1}^{\ell} \binom{i-1}{j-1} 
   (D^{(j-1)}  \theta)_1 \nonumber \\  
&= \sum_{k=1}^{i - \ell - 1} (D^{(\ell + 1)} \theta)_k
\sum_{j=k+1}^{i-\ell} \binom{i-j-1}{\ell-1} + (D^{\ell} \theta)_1
\sum_{j=1}^{i - \ell} 
\binom{i-j-1}{\ell-1} \nonumber \\ &+ \sum_{j=1}^{\ell} \binom{i-1}{j-1} 
   (D^{(j-1)}  \theta)_1 \label{kal}
  \end{align}
We now use the elementary identity \eqref{bnom} involving binomial 
coefficients to obtain
\begin{equation*}
  \sum_{j=k+1}^{i-\ell} \binom{i-j-1}{\ell-1} = \binom{i - k -
    1}{\ell} ~~ \text{ and } ~~ \sum_{j=1}^{i - \ell} 
\binom{i-j-1}{\ell-1} = \binom{i-1}{\ell}. 
\end{equation*}
From the above and \eqref{kal}, we deduce that 
\begin{equation*}
  \theta_i = \sum_{k=1}^{i-\ell-1} (D^{(\ell + 1)} \theta)_k
  \binom{i-k-1}{\ell} + \sum_{j=1}^{\ell + 1} \binom{i-1}{j-1}
  (D^{(j-1)} \theta)_1 
\end{equation*}
which is exactly \eqref{rle.eq} for $r = \ell + 1$. This completes the
proof of Lemma \ref{dfi}. 
\end{proof}

\subsection{Strong Sparsity and Discrete Splines} \label{ssds}
The following result gives a connection between sparsity of the vector
$D^{(r)} \theta$ and discrete splines. 
\begin{proposition}\label{prssds}
  Suppose $\theta \in \R^n$ with $\|D^{(r)} \theta\|_0 = k$. Then
  $\theta$ equals $(p(1/n), \dots, p((n-1)/n), p(1))$ for a discrete
  spline $p$ that is made of $k+1$ polynomials each of degree $(r-1)$.   
\end{proposition}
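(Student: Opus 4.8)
The plan is to read the piecewise-polynomial structure directly off the reconstruction formula of Lemma~\ref{dfi}. Write $c_j := (D^{(j-1)}\theta)_1$ for $1 \le j \le r$, and let $1 \le j_1 < \dots < j_k \le n-r$ be the positions of the nonzero coordinates of the length-$(n-r)$ vector $D^{(r)}\theta$. Lemma~\ref{dfi} gives, for $1 \le i \le n$,
\begin{equation*}
  \theta_i = \sum_{j=1}^{i-r}\binom{i-j-1}{r-1}(D^{(r)}\theta)_j + \sum_{j=1}^{r}\binom{i-1}{j-1}c_j .
\end{equation*}
The second sum is a polynomial in $i$ of degree at most $r-1$; denote it $q_0(i)$. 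In the first sum every index obeys $j \le i-r$, hence $i-j-1 \ge r-1 \ge 0$, so the convention $\binom{a}{b}=0$ for $b>a$ is never invoked and $\binom{i-j-1}{r-1}$ coincides with the honest polynomial $\tfrac{1}{(r-1)!}\prod_{\ell=1}^{r-1}(i-j-\ell)$ of degree exactly $r-1$ in $i$.

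First I would organize $\{1,\dots,n\}$ into the $k+1$ consecutive blocks $B_0 := \{1,\dots,j_1+r-1\}$, $B_m := \{j_m+r,\dots,j_{m+1}+r-1\}$ for $1 \le m \le k-1$, and $B_k := \{j_k+r,\dots,n\}$; using $j_1 \ge 1$, $j_m < j_{m+1}$ and $j_k \le n-r$ one checks these are nonempty and partition $\{1,\dots,n\}$. For $i \in B_m$ the constraint $j \le i-r$ picks out exactly the knot positions $j_1,\dots,j_m$, so
\begin{equation*}
  \theta_i = q_0(i) + \sum_{m'=1}^{m}\binom{i-j_{m'}-1}{r-1}(D^{(r)}\theta)_{j_{m'}} =: q_m(i),
\end{equation*}
a polynomial in $i$ of degree at most $r-1$ which does not depend on $i$ inside $B_m$ (for $k=0$ this reads $\theta_i = q_0(i)$ on all of $\{1,\dots,n\}$). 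Since $q_m - q_{m-1} = \binom{\,i-j_m-1\,}{r-1}(D^{(r)}\theta)_{j_m}$ is a nonzero polynomial (as $(D^{(r)}\theta)_{j_m}\neq 0$), consecutive pieces are genuinely distinct, so the number of pieces is exactly $k+1$.

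Then I would assemble the function: set $\tau_m := (j_m + r - \tfrac12)/n$ for $1 \le m \le k$, so each $\tau_m$ lies strictly between the grid points $(j_m+r-1)/n$ and $(j_m+r)/n$ (equivalently, strictly between the last point of $B_{m-1}$ and the first point of $B_m$), the $\tau_m$ are increasing, and $0 < \tau_1$, $\tau_k < 1$. Define $p$ on $[0,1]$ by $p(x) := q_m(nx)$ for $x$ in the interval bounded by $\tau_m$ and $\tau_{m+1}$, with the conventions $\tau_0:=0$ and $\tau_{k+1}:=1$. Then $p$ is piecewise polynomial of degree at most $r-1$ (which is the meaning of ``degree $r-1$'' here) with $k+1$ pieces, and for $i \in B_m$ one has $i/n$ in the $m$-th subinterval, so $p(i/n) = q_m(i) = \theta_i$; in particular $p(1) = \theta_n$.

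Finally, to confirm $p$ is a \emph{discrete} spline of degree $r-1$, I would verify the regularity condition at each $\tau_m$: because $\binom{i-j_m-1}{r-1}$ vanishes at the $r-1$ consecutive integers $i = j_m+1,\dots,j_m+r-1$, the adjacent pieces satisfy $q_{m-1}(i)=q_m(i)$ there, which is exactly the statement that the discrete differences of the two polynomial pieces agree across the knot (cf.\ \citet{mangasarian1971discrete}). I expect the only real bookkeeping obstacle to be the boundary cases: checking that the blocks $B_m$ partition $\{1,\dots,n\}$ in every configuration (adjacent $j_m$'s, $k=0$, a piece of length $<r$), that the $\tau_m$ always land strictly between the correct grid points, and that the agreement set $\{j_m+1,\dots,j_m+r-1\}$ is the right formulation of the discrete-spline smoothness condition. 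None of this is deep; the whole content is carried by Lemma~\ref{dfi}.
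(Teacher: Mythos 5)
Your proposal is correct and builds on the same key ingredient as the paper's own proof, namely the reconstruction formula of Lemma~\ref{dfi}, but it packages the argument somewhat differently, and in one respect more transparently. The paper first routes through Lemma~\ref{fn}, a block-local restatement of Lemma~\ref{dfi} that produces a formula for $\theta_i$ on each stretch where $D^{(r-1)}\theta$ is constant; it then observes that the formulas for consecutive pieces are simultaneously valid on overlapping ranges of $i$, and reads the discrete-spline agreement condition~\eqref{kdsdef} off that overlap. You bypass Lemma~\ref{fn} and work directly from the global formula of Lemma~\ref{dfi}, which naturally yields the incremental form $q_m(i) = q_{m-1}(i) + \binom{i-j_m-1}{r-1}(D^{(r)}\theta)_{j_m}$; the regularity condition then drops out immediately because the added binomial coefficient, viewed as a degree-$(r-1)$ polynomial, visibly has its roots at the $r-1$ consecutive integers $j_m+1,\dots,j_m+r-1$, which (after reconciling your indexing of $D^{(r)}\theta$ with the paper's $r$th-order-knot indexing, offset by one) is precisely the agreement set in~\eqref{kdsdef}. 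This incremental formulation makes the regularity verification a one-liner rather than an overlap argument, which I find slightly cleaner. One remark: you flag several boundary checks --- that the blocks $B_m$ partition $\{1,\dots,n\}$, that the $\tau_m$ fall strictly between grid points, and that the stated agreement set is indeed the discrete-spline condition --- as ``expected obstacles'' without carrying them out. All three are routine and go through without incident (using $1 \le j_1 < \dots < j_k \le n-r$), so this does not affect correctness, but a fully finished write-up would include them rather than deferring.
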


The proof of Proposition \ref{prssds} is given below. Note that the
result is trivial when $r = 1$. So it may well be assumed that $r \ge
2$ in the rest of this subsection. In fact, the argument below will
also hold for $r =1$  provided the involved binomial coefficients are
interpreted correctly for $r = 1$.  

The following lemma will be used in the proof of Proposition
\ref{prssds}. 
\begin{lemma}\label{fn}
Let $r \geq 1$, $n \geq r$  and $1 \leq a \leq b-1 \leq
n-r+1$. Suppose that 
\begin{equation}\label{fn.con}
  (D^{(r-1)} \theta)_a = \dots = (D^{(r-1)} \theta)_{b-1} = c. 
\end{equation}
Then 
\begin{equation}\label{fn.eq}
  \theta_i = c \binom{i-a}{r-1} + \sum_{j=1}^{r-1} \binom{i-a}{j-1}
  (D^{(j-1)} \theta)_a 
\end{equation}
for every $i = a, \dots, r+b-2$. 
\end{lemma}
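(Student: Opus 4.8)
The plan is to reduce the statement to the general formula for $\theta$ in terms of its discrete derivatives that was established in Lemma \ref{dfi}, specialized so that the role of the ``base point'' is $a$ rather than $1$. First I would observe that Lemma \ref{dfi}, being a purely algebraic identity, holds verbatim if we replace $\theta$ by the shifted vector $\tilde\theta := (\theta_a, \theta_{a+1}, \dots, \theta_n)$ (of length $n-a+1$), since all the operators $D^{(j)}$ act locally. Applying \eqref{rle.eq} with $r$ there taken to be $r-1$ (so the top-order object is $D^{(r-1)}$), and with the index $i$ there replaced by $i-a+1$, gives, for every $i$ with $a \le i \le n$,
\begin{equation*}
  \theta_i = \sum_{j=1}^{(i-a+1)-(r-1)} \binom{i-a-j}{r-2} (D^{(r-1)} \theta)_{a+j-1}
  + \sum_{j=1}^{r-1} \binom{i-a}{j-1} (D^{(j-1)} \theta)_a,
\end{equation*}
where I have used $(D^{(r-1)} \tilde\theta)_j = (D^{(r-1)} \theta)_{a+j-1}$ and $(D^{(j-1)} \tilde\theta)_1 = (D^{(j-1)} \theta)_a$. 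The second sum already matches the second sum in \eqref{fn.eq}, so the whole task is to evaluate the first sum under the hypothesis \eqref{fn.con}.

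The key step is then the following. As long as $i \le r+b-2$, every index $a+j-1$ appearing in the first sum (for $1 \le j \le i-a-r+2$) satisfies $a \le a+j-1 \le i-r+1 \le b-1$, so by \eqref{fn.con} each term $(D^{(r-1)}\theta)_{a+j-1}$ equals $c$. Hence the first sum becomes
\begin{equation*}
  c \sum_{j=1}^{i-a-r+2} \binom{i-a-j}{r-2},
\end{equation*}
and by the hockey-stick identity \eqref{bnom} (summing $\binom{r-2}{r-2} + \binom{r-1}{r-2} + \dots + \binom{i-a-1}{r-2}$ after re-indexing) this collapses to $c\binom{i-a}{r-1}$. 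This yields exactly \eqref{fn.eq} for $a \le i \le r+b-2$, with the binomial coefficients interpreted in the usual way (vanishing when the bottom exceeds the top), which also handles the edge cases $i < a+r-1$ where the first sum is empty and $\binom{i-a}{r-1}=0$.

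I do not expect any serious obstacle here; the lemma is essentially a careful bookkeeping consequence of Lemma \ref{dfi} together with one application of the binomial summation identity \eqref{bnom}. The only point requiring a little care is getting the index ranges right — in particular verifying that the upper cutoff $i \le r+b-2$ is exactly what guarantees all invoked values of $(D^{(r-1)}\theta)$ fall in the range $[a,b-1]$ where the hypothesis applies — and checking that for $r=1$ the formula degenerates correctly (the first sum is $c\binom{i-a}{0}=c$ when $i\ge a$, consistent with $(D^{(0)}\theta)_i = \theta_i = c$ and the empty second sum). I would state these index verifications explicitly but briefly, since they are the substance of the argument.
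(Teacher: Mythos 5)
Your proposal is correct and follows essentially the same route as the paper: shift to a vector based at index $a$, apply Lemma~\ref{dfi} with $r$ replaced by $r-1$, use hypothesis~\eqref{fn.con} to make the top-order sum constant, and collapse it via the binomial identity~\eqref{bnom}. The only cosmetic difference is that the paper truncates the shifted vector at $\theta_{b+r-2}$ while you keep the full tail and restrict the index $i$ instead; the index verification you flag as ``the substance'' is exactly what the paper does.
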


\begin{proof}[Proof of Lemma \ref{fn}]
  Let $\alpha$ be the $b-a+r-1$-dimensional vector defined by
  \begin{equation*}
    \alpha = (\theta_a, \theta_{a+1}, \dots, \theta_{b+r-2}). 
  \end{equation*}
  Then $(D^{(r-1)} \alpha)_u = (D^{(r-1)} \theta)_{a+u-1}$ for $u = 1, \dots, b-a$
  and hence we have $(D^{(r-1)} \alpha)_1 = \dots = (D^{(r-1)}
  \alpha)_{b-a} = c$ because of \eqref{fn.con}. An application of
  Lemma \ref{dfi} now gives  
  \begin{equation*}
    \alpha_u = c \sum_{j=1}^{u-r+1} \binom{u - j - 1}{r-2} +
    \sum_{j=1}^{r-1} \binom{u-1}{j-1} (D^{(j-1)} \alpha)_1  
  \end{equation*}
  for $u = 1, \dots, r-1+b-a$. The elementary inequality \eqref{bnom}
  applied to $a = u-2$ and $b = r-2$ allows us to deduce   
   \begin{equation*}
     \alpha_u = c \binom{u-1}{r-1} + \sum_{j=1}^{r-1} \binom{u-1}{j-1}
     (D^{(j-1)} \alpha)_1  
   \end{equation*}
    for $u = 1, \dots, r-1+b-a$. Applying the above to $u = i+1-a$, we
    obtain inequality \eqref{fn.eq}. This completes the proof of Lemma
    \ref{fn}.  
\end{proof}
We now prove Proposition \ref{prssds}. 

\begin{proof}[Proof of Proposition \ref{prssds}]
  Suppose $\theta \in \R^n$ and let $2 \leq j_1 < \dots < j_k \leq n-r+1$
denote all the $r^{th}$ order knots of $\theta$ with $j_0 = 1$ and
$j_{k+1} = n-r+2$. We then have 
\begin{equation*}
  (D^{(r-1)} \theta)_{j_u} = \dots = (D^{(r-1)} \theta)_{j_{u+1} - 1}
  = c_u \qt{for $u = 0, \dots, k$} 
\end{equation*}
for some real numbers $\{c_u, 0 \leq u \leq k\}$. 

Lemma \ref{fn} applied to $a = j_u$ and $b = j_{u+1}$ then implies
that for every $0 \leq u \leq k$ and $i = j_u, \dots, r+j_{u+1}-2$,
we have 
\begin{equation}\label{bir}
  \theta_i = c_u \binom{i-j_u}{r-1} + \sum_{j=1}^{r-1}
  \binom{i-j_u}{j-1} 
  (D^{(j-1)} \theta)_{j_u} 
\end{equation}
Now, for each $0 \leq u \leq k$, let $p_u$ denote the polynomial in
$x$ defined by 
\begin{align*}
  p_u(x) &:= \frac{c_u}{(r-1) !} (nx - j_u) \dots (nx - j_u -
  r+2) \\ &+ \sum_{j=1}^{r-1} \frac{(nx - j_u) \dots (nx - j_u -
    j+2)}{(j-1)!} (D^{(j-1)} \theta)_{j_u}. 
\end{align*}
It is clear that $p_u(x)$ is a polynomial in $x$ of degree
$(r-1)$. Also the identity \eqref{bir} is equivalent to  
\begin{equation}
  \label{gill}
\theta_i = p_u(i/n) \qt{for $0 \leq u \leq k$ and $j_u \leq i \leq r +
  j_{u+1} - 2$}.   
\end{equation}
We now define a function $p$ via
\[
 p(x) = 
  \begin{cases} 
   p_0(x) & \text{for } x < \frac{r + j_1 -2}{n} \\
   p_u(x)       & \text{for } \frac{r + j_{u} - 2}{n} \leq x <
   \frac{r + j_{u+1} - 2}{n}, u = 1, \dots, k-1 \\
   p_k(x)  & \text{for } x \geq \frac{r + j_k - 2}{n}. 
  \end{cases}
\]
Clearly $p$ is a piecewise polynomial of degree $(r-1)$. Also, it is 
trivial to see from \eqref{gill} that $p(i/n) = \theta_i$ for every $1
\leq i \leq n$. Moreover, using \eqref{gill}, it is easy to show that
one has 
\begin{equation}
  \label{kds}
   p_{u-1} \left(\frac{i}{n} \right) = p_u
 \left(\frac{i}{n} \right) \qt{for $1 \leq u \leq k$ and $j_u \leq i
   \leq r + j_u -2 $}
\end{equation}
for $r \ge 2$. Thus if $x_u := (r + j_u - 2)/n$ denotes the knots of
the piecewise polynomial $p$, then we have 
\begin{equation}
  \label{kdsdef}
  p_{u-1} \left(x_u - \frac{i}{n} \right) = p_u \left(x_u -
    \frac{i}{n} \right) \qt{for $i = 0,
    1, \dots, r-2$}. 
\end{equation}
This means that the function $p$ is a discrete spline of degree
$(r-1)$ having $k+1$ polynomial pieces which proves Proposition
\ref{prssds}. 
\end{proof}

\subsection{A result on the magnitude of $\min_{1 \leq i \leq n-r+1}
  (D^{(r-1)} \theta)_i$ when $\|\theta\| \leq 1$}\label{pdo}
This section is devoted to the proof of the Lemma \ref{do} which was
crucially used in the proof of Lemma \ref{cr1}.  

\begin{proof}[Proof of Lemma \ref{do}]
  We only need to prove the first inequality in \eqref{do.eq}. The
  second inequality follows by applying the first inequality to
  $-\theta$. 

  Via Lemma \ref{dfi}, we can write the following for every $\theta
  \in \R^n$ with $\|\theta\| \leq t$: 
  \begin{equation*}
    t^2 \geq \|\theta\|^2 = \sum_{i=1}^n \left(\sum_{j=1}^{i-r}
    \binom{i-j-1}{r-1} s_j + \sum_{j=1}^r \binom{i-1}{j-1} (D^{(j-1)}
    \theta)_1  \right)^2 
  \end{equation*}
where $s_j := (D^{(r)} \theta)_j$ for $ = 1, \dots, n-r$. It follows
from here that
\begin{equation*}
  t^2 \geq \inf_{\beta_1, \dots, \beta_r \in \R} \sum_{i=1}^n
\left(\sum_{j=1}^{i-r} \binom{i-j-1}{r-1} s_j - \sum_{j=1}^r
  \binom{i-1}{j-1} \beta_j  \right)^2.  
\end{equation*}
  We now define two matrices. Let $X$ be the $n \times r$ matrix whose
$(i, j)^{th}$ entry equals $\binom{i-1}{j-1}$. Let $S$ be the $n
\times (n-r)$ matrix whose $(i, j)^{th}$ entry equals
$\binom{i-j-1}{r-1}$.  Throughout we use the convention that
$\binom{a}{b} = 0$ when $a < b$. Also let $s := D^{(r)} \theta = (s_1,
\dots, s_{n-r})^T$ and $\beta := (\beta_1, \dots, \beta_r)$ . It is then
easy to see from the previous inequality that  
\begin{equation}\label{gan}
  t^2 \geq \inf_{\beta_1, \dots, \beta_r \in \R} \|S s - X \beta\|^2 =
  s^T S^T (I - P_X) S s 
\end{equation}
where $P_X = X(X^T X)^{-1} X^T$ is the projection matrix on to the
column space of $X$. 

We now need the following two facts about the matrix $A := S^T (I -
P_X) S$. These facts (whose proofs are long) are proved in Proposition
\ref{rowsum} and Proposition \ref{positive} respectively. 
\begin{enumerate}
\item If $\mathbf{1}$ denotes the $n-r$ vector consisting of ones,
  then $\mathbf{1}^T A \mathbf{1} \geq C_r n^{2r + 1}$  for a constant
    $C_r$ depending on $r$ alone.  
\item Every entry of the matrix $A$ is positive. 
\end{enumerate}
We shall now complete the proof of Lemma \ref{do} assuming the above
two facts about the matrix $A$. Let $\delta := \min_{1 \leq j \leq
  n-r} s_j$. Our goal is to prove that $\delta \leq C_r t n^{-r-1/2}$ so
we can assume that $\delta \geq 0$ for otherwise there is nothing to
prove. In that case,  inequality \eqref{gan} and the second fact about
$A$ together imply   
\begin{equation*}
  t^2 \geq \delta^2 \mathbf{1}^T S^T (I - P_X) S \mathbf{1} = \delta^2
  \mathbf{1}^T A \mathbf{1} . 
\end{equation*}
The first fact about $A$ then gives $t^2 \geq C_r \delta^2 n^{2r+1}$
  and this completes the proof of Lemma \ref{do}. 
\end{proof}

The remainder of this subsection is devoted to proving the two facts
about the matrix $A := S^T (I - P_X) S$ stated in the proof of Lemma
\ref{do}. These proofs are tedious and long. We adopt the convention
that $\binom{n}{k} = \frac{(n)_k}{k!}$ if $k \geq 0$ and $0$
otherwise, where $(n)_k$ is the falling factorial, extending the
definition of the binomial coefficient to integer arguments. We will
make judicious use of the identities $\binom{n}{k} = \binom{n}{n-k}$
and $\binom{n}{k} = (-1)^k\binom{k-n-1}{k}$, as well as the
Chu-Vandermonde identity, $\binom{m+n}{r} =
  \sum_{k=0}^r\binom{m}{k}\binom{n}{r-k}$, in its equivalent form 
$\binom{m+n}{r-s} = \sum_{k=s}^r\binom{m}{k-s}\binom{n}{r-k}$.

Recall that $X$ is the $n \times r$ matrix with
$X_{ij} = \binom{i-1}{j-1} = \binom{i-1}{i-j}$, $S$ is the $n
\times (n-r)$ matrix with $S_{ij} = \binom{i-j-1}{r-1} =
\binom{i-j-1}{i-j-r}$ if $i - j \ge r$ and $0$ otherwise, and $A =
S^T(I - P_X)S$ where $P_X$ is the projection onto the column space of
$X$. Our first step is to compute the inverse of the matrix
$A$ explicitly. This is the content of the following Proposition. 


\begin{proposition}\label{inverse}
  Let $T$ be the $(n-r) \times (n-r)$ matrix with
  $T_{ij} = (-1)^{i-j}\binom{2r}{r+i-j}$. Then $T = A^{-1}$.
\end{proposition}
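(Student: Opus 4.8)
The plan is to recognise the banded matrix $T$ as $D^{(r)}(D^{(r)})^{T}$, where here $D^{(r)}$ is read as the $(n-r)\times n$ difference matrix, and then to collapse the product $AT$ using the ``integration by parts'' identities furnished by Lemma~\ref{dfi}. First I would record the following consequences of Lemma~\ref{dfi}. Let $B$ be the $r\times n$ matrix with $(B\theta)_j=(D^{(j-1)}\theta)_1$ and recall $X_{ij}=\binom{i-1}{j-1}$; then formula \eqref{rle.eq} says exactly that $\theta = S\,D^{(r)}\theta + XB\theta$ for every $\theta\in\R^n$, since (with the stated falling‑factorial convention) the truncation in $S$ is automatic and the polynomial part of \eqref{rle.eq} is precisely $XB\theta$. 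Hence $I_n = SD^{(r)} + XB$. Because the $r$-th forward difference annihilates polynomials of degree $<r$ and the columns of $X$ are such polynomials sampled at $1,\dots,n$, we have $D^{(r)}X=0$, equivalently $X^{T}(D^{(r)})^{T}=0$; consequently the columns of $(D^{(r)})^{T}$ lie in $\mathrm{col}(X)^{\perp}$, so $P_X(D^{(r)})^{T}=0$. Finally, applying Lemma~\ref{dfi} to the unique $\theta$ with $D^{(r)}\theta=e_k$ and $(D^{(j-1)}\theta)_1=0$ for all $j$, the right‑hand side of \eqref{rle.eq} reduces to $\theta_i=\binom{i-k-1}{r-1}=(Se_k)_i$, whence $D^{(r)}Se_k=e_k$; thus $D^{(r)}S=I_{n-r}$.

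Next I would prove the key identity $T=D^{(r)}(D^{(r)})^{T}$, i.e.\ $\sum_i (D^{(r)})_{ji}(D^{(r)})_{ki}=(-1)^{j-k}\binom{2r}{r+j-k}$ for $1\le j,k\le n-r$. Since $(D^{(r)}v)_j=\sum_{l=0}^{r}(-1)^{r-l}\binom{r}{l}v_{j+l}$, we have $(D^{(r)})_{ji}=(-1)^{r-(i-j)}\binom{r}{i-j}$, so the left-hand side equals $(-1)^{j+k}\sum_{a}\binom{r}{a}\binom{r}{a+j-k}$ with $a=i-j$. The two binomial factors force $\max(j,k)\le i\le\min(j,k)+r$, which is automatically contained in $\{1,\dots,n\}$ because $1\le j,k\le n-r$, so there are no boundary corrections. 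The Chu--Vandermonde identity gives $\sum_a\binom{r}{a}\binom{r}{a+j-k}=\binom{2r}{r+j-k}$, and $(-1)^{j+k}=(-1)^{j-k}$, which is the claim.

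Given these two ingredients, the proof is immediate:
\[
AT \;=\; S^{T}(I-P_X)\,S\,D^{(r)}(D^{(r)})^{T} \;=\; S^{T}(I-P_X)(I_n - XB)(D^{(r)})^{T},
\]
using $SD^{(r)}=I_n-XB$. Since $(I-P_X)X=0$ we have $(I-P_X)(I_n-XB)=I-P_X$, and since $P_X(D^{(r)})^{T}=0$ we have $(I-P_X)(D^{(r)})^{T}=(D^{(r)})^{T}$; therefore $AT=S^{T}(D^{(r)})^{T}=(D^{(r)}S)^{T}=I_{n-r}$. As $A$ and $T$ are both $(n-r)\times(n-r)$, this yields $T=A^{-1}$.

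The main obstacle is really the recognition step: seeing that the Toeplitz‑banded matrix $T$ is $D^{(r)}(D^{(r)})^{T}$, together with the care needed to check that the truncations built into the finite‑difference matrix $D^{(r)}$ and the ``summation'' matrix $S$ behave exactly as in the formal operator calculus — in particular that $D^{(r)}S=I_{n-r}$ holds with no edge corrections and that the Chu--Vandermonde sum computing $D^{(r)}(D^{(r)})^{T}$ has no boundary terms. Once the identity $T=D^{(r)}(D^{(r)})^{T}$ and the identities $D^{(r)}S=I$, $SD^{(r)}=I-XB$, $D^{(r)}X=0$ are established, all the cancellations above are forced and the conclusion follows.
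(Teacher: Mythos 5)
Your proof is correct, and it takes a genuinely different route from the paper. The paper proceeds by first proving the auxiliary identity $XY+ST=U$ (Lemma~\ref{decomposition}, itself a fairly involved binomial computation), then observing $X^TU=0$ so $(I-P_X)ST=U$, computing $U^TU=T$ via Chu--Vandermonde, concluding $TAT=T$, and finally invoking full column rank of $U$ to deduce that $T$ is invertible and hence $AT=I$. You instead recognize $T=D^{(r)}(D^{(r)})^T$ (which is the same $U^TU=T$ computation, since $U=(D^{(r)})^T$) but replace Lemma~\ref{decomposition} with the cleaner structural identities $SD^{(r)}=I_n-XB$, $D^{(r)}S=I_{n-r}$, and $D^{(r)}X=0$, all of which follow directly from Lemma~\ref{dfi} and the annihilation of low-degree polynomials by $D^{(r)}$. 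The collapse $AT = S^T(I-P_X)(I_n-XB)(D^{(r)})^T = S^T(D^{(r)})^T = (D^{(r)}S)^T = I$ then lands on $AT=I$ in one pass, and the right-inverse-of-a-square-matrix fact gives $T=A^{-1}$ without any separate invertibility argument. What your approach buys is the elimination of Lemma~\ref{decomposition} and of the $TAT=T$ detour (together with the rank argument); what the paper's approach buys is the explicit matrix $Y$ in the decomposition $U=XY+ST$, which is not needed for this proposition but makes the algebra self-contained without appealing to the operator identities $D^{(r)}S=I$ and $SD^{(r)}=I-XB$. Your checks on edge effects (that $D^{(r)}S=I$ holds exactly because $(Se_k)_i=0$ for $i\le r$, and that the Chu--Vandermonde sum for $T_{jk}$ sees no truncation because $\min(j,k)+r\le n$) are exactly the points that need care, and you handled them correctly.
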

In order to prove Proposition \ref{inverse}, we need the following
lemma. 


\begin{lemma}\label{decomposition}
  Let $Y$ be the $r \times (n-r)$ matrix with
  $Y_{ij} = (-1)^{r+i-j}\binom{r+i-1}{i-j}$, and let $U$ be
  $n \times (n-r)$ matrix with
  $U_{ij} = (-1)^{r+i-j}\binom{r}{i-j}$. Then $XY + ST = U$.
\end{lemma}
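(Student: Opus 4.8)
The plan is to recast the claimed identity $XY+ST=U$ in operator form and reduce it to two applications of the Vandermonde convolution identity $\binom{m+n}{p}=\sum_k\binom{m}{k}\binom{n}{p-k}$.

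\textbf{Setting up matrix notation.} Let $D$ denote the $(n-r)\times n$ matrix of the operator $D^{(r)}$, so that $D_{jk}=(-1)^{j+r-k}\binom{r}{k-j}$ (this is just the formula $(D^{(r)}\beta)_j=\sum_{k=j}^{j+r}(-1)^{j+r-k}\binom{r}{k-j}\beta_k$ recalled earlier). Let $E$ be the $r\times n$ matrix with $E_{jk}=(-1)^{j-k}\binom{j-1}{k-1}$, so that $(E\theta)_j=(D^{(j-1)}\theta)_1$. With this notation I would first observe that Lemma~\ref{dfi} is exactly the matrix identity
\[
  SD+XE=I_n ,
\]
since in formula~\eqref{rle.eq} the first sum equals $(SD\theta)_i$ (the convention $\binom{a}{b}=0$ for $b>a$ makes the truncation $j\le i-r$ automatic, so the inner sum may run to $n-r$) and the second sum equals $(XE\theta)_i$. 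I would also record the elementary fact that $U=D^T$: indeed $(D^T)_{ij}=D_{ji}=(-1)^{j+r-i}\binom{r}{i-j}$, and $(-1)^{j+r-i}=(-1)^{r+i-j}$ because the exponents differ by $2(i-j)$.

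\textbf{Two binomial identities.} The heart of the argument consists of the following two computations.
\begin{enumerate}
\item[(a)] $DD^T=T$. Expanding, $(DD^T)_{ij}=(-1)^{i+j}\sum_k\binom{r}{k-i}\binom{r}{k-j}$; the substitution $k=i+m$ together with Vandermonde gives $\sum_m\binom{r}{m}\binom{r}{(r-i+j)-m}=\binom{2r}{r-i+j}=\binom{2r}{r+i-j}$, so $(DD^T)_{ij}=(-1)^{i-j}\binom{2r}{r+i-j}=T_{ij}$.
\item[(b)] $ED^T=Y$. Since $E_{ik}$ is supported on $k\le i$ and $(D^T)_{kj}=U_{kj}=(-1)^{r+k-j}\binom{r}{k-j}$, we get $(ED^T)_{ij}=(-1)^{r+i-j}\sum_k\binom{i-1}{k-1}\binom{r}{k-j}$; rewriting $\binom{r}{k-j}=\binom{r}{(r+j-1)-(k-1)}$ and applying Vandermonde yields $\sum_k\binom{i-1}{k-1}\binom{r}{k-j}=\binom{r+i-1}{r+j-1}=\binom{r+i-1}{i-j}$, hence $(ED^T)_{ij}=(-1)^{r+i-j}\binom{r+i-1}{i-j}=Y_{ij}$.
\end{enumerate}

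\textbf{Assembling.} Combining (a), (b), the matrix identity $SD+XE=I_n$, and $U=D^T$,
\[
  XY+ST=X(ED^T)+S(DD^T)=(XE+SD)D^T=I_n D^T=D^T=U ,
\]
which is the assertion. The only genuine computation is the pair of Vandermonde evaluations in the previous paragraph; everything else is bookkeeping about the supports of the binomial coefficients and the matrix translation of Lemma~\ref{dfi}. I expect the main (and quite minor) obstacle to be keeping index ranges and the sign parities $(-1)^{r+i-j}$ versus $(-1)^{j+r-i}$ straight, and checking carefully that the truncated sums in~\eqref{rle.eq} genuinely coincide with the full matrix products $SD$ and $XE$; I do not anticipate any substantive difficulty beyond that.
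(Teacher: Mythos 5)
Your proof is correct, and it takes a genuinely cleaner route than the paper's. The paper expands $(XY+ST)_{ij}$ directly and runs a four-case analysis on the relative positions of $i$, $j$, and $r$, applying Vandermonde-type identities in each branch. You instead factor everything through $U=D^{T}$: writing $D$ for the matrix of $D^{(r)}$ and $E$ for the row-collecting matrix with $(E\theta)_j=(D^{(j-1)}\theta)_1$, you observe that Lemma~\ref{dfi} is exactly the matrix identity $SD+XE=I_n$, verify $DD^{T}=T$ and $ED^{T}=Y$ by two one-line Vandermonde evaluations, and then the claim $XY+ST=(XE+SD)D^{T}=D^{T}=U$ is immediate. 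This buys you several things: the case analysis disappears entirely; the objects $T$ and $Y$ acquire a transparent structural meaning (both are obtained from $D^{T}$ by left multiplication); and it unifies work the paper does twice, since the computation $DD^{T}=T$ is precisely the step $U^{T}U=T$ the paper carries out again in the proof of Proposition~\ref{inverse}. The only genuinely new calculation relative to the paper is $ED^{T}=Y$, a single Vandermonde, which you have checked correctly, including the index-support bookkeeping (the truncated sums in~\eqref{rle.eq} do coincide with the full products $SD$ and $XE$ because $\binom{i-j-1}{r-1}$ vanishes for $j>i-r$ and $\binom{i-1}{j-1}$ vanishes for $j>i$). No gaps.
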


\begin{proof}[Proof of Lemma \ref{decomposition}]
  We have
  \begin{equation*}
  \begin{split}
    (XY+ST)_{ij} &= \sum_{k=1}^r X_{ik}Y_{kj} +
        \sum_{l=1}^{n-r} S_{il}T_{lj} \\
      &= \sum_{k=j}^r(-1)^{r+k-j}
          \binom{i-1}{i-k}\binom{r+k-1}{k-j} \\ &+
        \sum_{l=1}^{i-r} (-1)^{l-j}
          \binom{i-l-1}{i-l-r}\binom{2r}{r+l-j} \\
      &= (-1)^r\sum_{k=j}^r \binom{i-1}{i-k}\binom{-r-j}{k-j} \\ &+
        (-1)^{r+i-j}\sum_{l=1}^{i-r}
          \binom{-r}{i-l-r}\binom{2r}{r+l-j}.
  \end{split}
  \end{equation*}
  If $i < j$, then at least one of $i - k$, $k - j$ is negative,
  since $(i-k) + (k-j) = i - j < 0$. Hence $(XY)_{ij} = 0$, and
  similarly $(ST)_{ij}$ = 0, so $(XY+ST)_{ij} = 0 = U_{ij}$.
  Otherwise, there are three cases. If $j \le i \le r$, then
  $(ST)_{ij} = 0$ since the sum is empty and
  \begin{align*}
    (XY)_{ij} &= (-1)^r\sum_{k=j}^i
        \binom{i-1}{i-k}\binom{-r-j}{k-j} \\
    &  = (-1)^r\binom{-r+i-j-1}{i-j}
        = (-1)^{r+i-j}\binom{r}{i-j}
        = U_{ij}.
  \end{align*}
  If $r < j \le i$, then $(XY)_{ij} = 0$ since the sum is empty, and
  \begin{align*}
    (ST)_{ij} &= (-1)^{r+i-j}\sum_{l=j-r}^{i-r}
        \binom{-r}{i-l-r}\binom{2r}{r+l-j} \\ &=
          (-1)^{r+i-j}\binom{r}{i-j} = U_{ij}.
  \end{align*}
  Finally, if $j \le r < i$, then
  \begin{equation*}
  \begin{split}
    (U-XY)_{ij} &= U_{ij} -
        (-1)^r\sum_{k=j}^i \binom{i-1}{i-k}\binom{-r-j}{k-j} \\ &+
        (-1)^r\sum_{k=r+1}^i \binom{i-1}{i-k}\binom{-r-j}{k-j} \\
      &= (-1)^r\sum_{k=1}^{i-r}
          \binom{i-1}{i-k-r}\binom{-r-j}{r+k-j} \\
      &= \sum_{k=1}^{i-r} (-1)^{k-j}
          \binom{i-1}{i-k-r}\binom{2r+k-1}{r+k-j} \\
      &= \sum_{k=1}^{i-r} (-1)^{k-j}
          \binom{i-1}{i-k-r}\binom{2r+k-1}{r+j-1} \\
      &= \sum_{k=1}^{i-r} (-1)^{k-j}\binom{i-1}{i-k-r}
          \sum_{l=1}^{r+j} \binom{2r}{r+j-l}\binom{k-1}{l-1} \\
      &= \sum_{k=1}^{i-r} \sum_{l=1}^k
          (-1)^{k-j}\binom{i-1}{i-k-r}\binom{2r}{r+l-j}
            \binom{k-1}{k-l} \\
      &= \sum_{l=1}^{i-r} \binom{2r}{r+l-j}
          \sum_{k=l}^{i-r}
           (-1)^{k-j}\binom{i-1}{i-k-r}\binom{k-1}{k-l} \\
      &= \sum_{l=1}^{i-r} (-1)^{l-j}\binom{2r}{r+l-j}
          \sum_{k=l}^{i-r} \binom{i-1}{i-k-r}\binom{-l}{k-l} \\
      &= \sum_{l=1}^{i-r}
          (-1)^{l-j}\binom{2r}{r+l-j}\binom{i-l-1}{i-l-r}
        = (ST)_{ij}
  \end{split}
  \end{equation*}
where the sixth equality above follows from the fact that 
  $\binom{2r}{r+j-l} = 0$ for $l > r + j$ and $\binom{k-1}{l-1} = 0$
  for $l > k$.
\end{proof}

We are now ready to prove Proposition \ref{inverse}. 

\begin{proof}[Proof of Proposition \ref{inverse}]
    Let $Y$ and $U$ be defined as in Lemma \ref{decomposition}. Note
  that
  \begin{equation*}
  \begin{split}
    (X^TU)_{ij} = \sum_{k=1}^n X_{ki}U_{kj} &= \sum_{k=i}^{r+j}
        (-1)^{r+k-j}\binom{k-1}{k-i}\binom{r}{r+k-j} \\
      &= (-1)^{r+i-j}\sum_{k=i}^{r+j}
        \binom{-i}{k-i}\binom{r}{r+j-k}.
  \end{split}
  \end{equation*}
  If $r + j < i$, then $(X^TU)_{ij} = 0$ since the sum is empty.
  Otherwise,
  \begin{equation*}
    (X^TU)_{ij} = (-1)^{r+i-j}\binom{r-i}{r+j-i} = 0
  \end{equation*}
  since $0 \le r - i < r + j - i$ for $1 \le i \le r$. That is,
  $X^TU = \mathbf{0}_{r\times(n-r)}$; then each column of $U$ is in
  $\mathcal{N}(X^T) = \mathcal{C}(X)^\perp$, so $(I-P_X)U = U$.
  Lemma \ref{decomposition} gives $(I-P_X)ST = (I-P_X)(U-XY) = U$.
  Also,
  \begin{align*}
    (U^TU)_{ij} &= \sum_{k=1}^n U_{ki}U_{kj} \\ &=
    (-1)^{i-j}\sum_{k=j}^{r+i} \binom{r}{k-i}\binom{r}{k-j} \\ &=
      (-1)^{i-j}\sum_{k=j}^{r+i} \binom{r}{r+i-k}\binom{r}{k-j}. 
  \end{align*}
  If $r + i < j$, then $(U^TU)_{ij} = 0 = T_{ij}$ since the sum is
  empty. Otherwise,
  \begin{equation*}
    (U^TU)_{ij} = (-1)^{i-j}\binom{2r}{r+i-j} = T_{ij}.
  \end{equation*}
  That is, $U^TU = T$. Then
  $TAT = TS^T(I-P_X)ST = ((I-P_X)ST)^T(I-P_X)ST = U^TU = T$, and $T$
  is invertible since $U$ is lower triangular with full column rank
  $n-r$ and $\text{rank}(U^TU) = \text{rank}(U)$. Thus
  $AT = I_{n-r}$, i.e. $T = A^{-1}$.
\end{proof}

We shall now prove the first fact about $A$ in the proof of Lemma
\ref{do}: the bound on $\mathbf{1}^T A \mathbf{1}$. In fact, the
result below gives a precise formula for this quantity from which the
stated bound trivially follows. 


\begin{proposition}\label{rowsum}
  $\mathbf{1}_{n-r}^TA\mathbf{1}_{n-r}
     = \binom{2r}{r}^{-1}\binom{n+r}{2r+1}
     = \binom{2r}{r}^{-1}\binom{n+r}{n-r-1}$.
\end{proposition}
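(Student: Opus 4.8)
The plan is to avoid computing $\mathbf{1}_{n-r}^TA\mathbf{1}_{n-r}$ directly and instead use Proposition \ref{inverse}, which tells us $A^{-1}=T$ with $T_{ij}=(-1)^{i-j}\binom{2r}{r+i-j}$. Since $A=T^{-1}$, if I set $z:=A\mathbf{1}_{n-r}$ then $z$ is the unique solution of $Tz=\mathbf{1}_{n-r}$, and $\mathbf{1}_{n-r}^TA\mathbf{1}_{n-r}=\mathbf{1}_{n-r}^Tz=\sum_{j=1}^{n-r}z_j$. So the task reduces to (a) exhibiting $z$ in closed form and (b) summing its entries.

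For step (a) I would guess
\begin{equation*}
  z_j=\binom{2r}{r}^{-1}\binom{j+r-1}{r}\binom{n-j}{r},\qquad j=1,\dots,n-r,
\end{equation*}
and verify $Tz=\mathbf{1}_{n-r}$. The key point is that $g(j):=\binom{j+r-1}{r}\binom{n-j}{r}$ is a polynomial in $j$ of degree exactly $2r$ with leading coefficient $(-1)^r/(r!)^2$, and that $g$ vanishes at the $2r$ integers $j\in\{1-r,\dots,0\}\cup\{n-r+1,\dots,n\}$. Consequently, extending $\binom{2r}{r}z$ by zero outside $\{1,\dots,n-r\}$ produces a sequence that agrees with the polynomial $g$ on the entire range $\{1-r,\dots,n\}$ of indices that can enter any row of $Tz$ (for $1\le i\le n-r$ the row only involves $j\in\{i-r,\dots,i+r\}$, which lies inside $\{1-r,\dots,n\}$, and the out-of-range indices are exactly the zero set of $g$). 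Then, reindexing $l=r+i-j$,
\begin{equation*}
  \binom{2r}{r}(Tz)_i=\sum_{l=0}^{2r}(-1)^{l-r}\binom{2r}{l}\,g(i+r-l)=(-1)^r(\Delta^{2r}g)_{i-r},
\end{equation*}
a $2r$-th forward difference of $g$; since the $2r$-th difference of a degree-$2r$ polynomial equals $(2r)!$ times its leading coefficient, this is $(-1)^r(2r)!\,(-1)^r/(r!)^2=\binom{2r}{r}$, so $(Tz)_i=1$ for all $i$. Invertibility of $T$ (Proposition \ref{inverse}) then pins down $z=A\mathbf{1}_{n-r}$.

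For step (b), reindexing $k=j-1$ and setting $m=n-1-r$,
\begin{equation*}
  \sum_{j=1}^{n-r}\binom{j+r-1}{r}\binom{n-j}{r}=\sum_{k=0}^{m}\binom{k+r}{r}\binom{m-k+r}{r}=\binom{m+2r+1}{2r+1}=\binom{n+r}{2r+1},
\end{equation*}
where the middle identity is the Cauchy-product/Vandermonde formula (coefficient of $x^m$ in $(1-x)^{-(r+1)}(1-x)^{-(r+1)}=(1-x)^{-(2r+2)}$). Multiplying by $\binom{2r}{r}^{-1}$ gives $\mathbf{1}_{n-r}^TA\mathbf{1}_{n-r}=\binom{2r}{r}^{-1}\binom{n+r}{2r+1}$, and $\binom{n+r}{2r+1}=\binom{n+r}{n-r-1}$ since $(n+r)-(2r+1)=n-r-1$.

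The main obstacle is the verification $Tz=\mathbf{1}_{n-r}$: getting the boundary bookkeeping exactly right, i.e. confirming that $g$ vanishes at precisely the $r$ left and $r$ right "phantom" indices, that those are the only out-of-range indices contributing, and that the leading coefficient of $g$ works out to $(-1)^r/(r!)^2$; once that is clean, the finite-difference collapse and the Vandermonde sum are routine. (As a cross-check and alternative, one may instead write $\mathbf{1}_{n-r}^TA\mathbf{1}_{n-r}=(S\mathbf{1}_{n-r})^T(I-P_X)(S\mathbf{1}_{n-r})$ with $(S\mathbf{1}_{n-r})_i=\binom{i-1}{r}$ by the hockey-stick identity, so the quantity is the squared norm of the degree-$r$ discrete orthogonal polynomial on $\{1,\dots,n\}$; this is classical but ultimately reduces to the same sums, so I would keep the $A^{-1}$-based argument as the main line.)
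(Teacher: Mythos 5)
Your proof is correct, and it shares the paper's overall architecture: both guess the same closed form $z_j = \binom{2r}{r}^{-1}\binom{r+j-1}{r}\binom{n-j}{r}$ for $A\mathbf{1}_{n-r}$ (this is exactly the vector $b$ in the paper's Equation \eqref{pill}), both verify the guess by showing $T z = \mathbf{1}_{n-r}$ using the inverse from Proposition \ref{inverse}, and both then compute $\mathbf{1}^T z$ via a Chu--Vandermonde (Cauchy product) identity. The genuine difference is in the verification step. The paper reindexes so the summand contains $\binom{j-1}{r}$ (degree $r$ in $j$), expands this polynomial in the falling-factorial basis $(j-i)_k$ as $\sum_{k\le r} c_k (j-i)_k$ with $c_r = 1/r!$, applies Chu--Vandermonde term by term, and checks that all $k < r$ terms produce a binomial coefficient $\binom{r-k-1}{n-i-k}$ that vanishes because $n-i-k > r-k-1 \ge 0$. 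You instead keep the full degree-$2r$ polynomial $g(j) = \binom{j+r-1}{r}\binom{n-j}{r}$, notice that its $2r$ roots are precisely the out-of-range indices $\{1-r,\dots,0\}\cup\{n-r+1,\dots,n\}$ touched by a row of $T$, so the zero-extended candidate vector agrees with $g$ everywhere relevant, and then invoke the fact that the $2r$-th finite difference of a degree-$2r$ polynomial is a constant equal to $(2r)!$ times its leading coefficient $(-1)^r/(r!)^2$. Your route is more transparent about \emph{why} the formula works (the candidate is designed so that its extension by zero is polynomial) and avoids carrying unidentified expansion coefficients, at the cost of having to track the leading coefficient and the root locations of $g$ explicitly; the paper's route is a more mechanical chain of binomial identities that stays entirely within the Chu--Vandermonde framework. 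Both are complete and rigorous.
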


\begin{proof}[Proof of Proposition \ref{rowsum}]
   Let us first complete the proof of Proposition \ref{rowsum}
   assuming that the following claim is true. We shall subsequently
   give the proof of this claim. 
   \begin{equation}\label{pill}
     A \mathbf{1}_{n-r} = b
   \end{equation}
  where $b$ is the $(n-r)$-dimensional vector with $b_i =
  \binom{2r}{r}^{-1}\binom{n-i}{r}\binom{r+i-1}{r}$. 

By \eqref{pill}, for the claimed expression of $\mathbf{1}_{n-r}^T A
\mathbf{1}_{n-r}$, it is equivalent to show that
$\binom{2r}{r}\mathbf{1}_{n-r}^Tb = \binom{n+r}{n-r-1}$. To see this,
  write
  \begin{equation*}
  \begin{split}
    \binom{2r}{r}\mathbf{1}_{n-r}^Tb
      &= \binom{2r}{r}\sum_{i=1}^{n-r} b_i = \sum_{i=1}^{n-r} \binom{n-i}{r}\binom{r+i-1}{r} 
    \\ & = \sum_{i=1}^{n-r} \binom{n-i}{n-r-i}\binom{r+i-1}{i-1} \\
      &= (-1)^{n-r-1}\sum_{i=1}^{n-r}
        \binom{-r-1}{n-r-i}\binom{-r-1}{i-1}  \\ &=
        (-1)^{n-r-1}\binom{-2r-2}{n-r-1} = \binom{n+r}{n-r-1}
  \end{split}
  \end{equation*}
which proves Proposition \ref{rowsum} assuming that \eqref{pill} is
true. We shall now prove \eqref{pill}. By Proposition \ref{inverse},
it is equivalent to show that $\binom{2r}{r}Tb =
\binom{2r}{r}\mathbf{1}_{n-r}$. We have 
  \begin{equation*}
  \begin{split}
    \binom{2r}{r}(Tb)_i
      &= \binom{2r}{r}\sum_{j=1}^{n-r} T_{ij}b_j \\
      &= \sum_{j=1}^{n-r}
        (-1)^{i-j}\binom{2r}{r+i-j}\binom{n-j}{r}\binom{r+j-1}{r} \\
      &= \sum_{j=1}^{n-r}
        (-1)^{i-j}\binom{2r}{r+j-i}\binom{n-j}{n-r-j}
          \binom{r+j-1}{r} \\
      &= \sum_{j=r+1}^n
        (-1)^{r+i-j}\binom{2r}{j-i}\binom{n+r-j}{n-j}
          \binom{j-1}{r} \\
      &= (-1)^{n-r+i}\sum_{j=r+1}^n
        \binom{2r}{j-i}\binom{-r-1}{n-j}\binom{j-1}{r}.
  \end{split}
  \end{equation*}
  Since $\binom{j-1}{r}$ is a degree $r$ polynomial with leading
  coefficient $\frac{1}{r!}$ and $\left((j-i)_k\right)_{k=0}^r$ is a
  basis for degree $r$ polynomials, we can write
  $\binom{j-1}{r} = \sum_{k=0}^r c_k(j-i)_k$ with
  $c_r = \frac{1}{r!}$. Then
  \begin{equation*}
  \begin{split}
    \binom{2r}{r}(Tb)_i &= (-1)^{n-r+i}\sum_{j=i}^n
        \binom{2r}{j-i}\binom{-r-1}{n-j}\sum_{k=0}^r c_k(j-i)_k \\
      &= (-1)^{n-r+i}\sum_{j=i}^n \sum_{k=0}^r
        c_k(2r)_k\binom{2r-k}{j-i-k}\binom{-r-1}{n-j} \\
      &= (-1)^{n-r+i}\sum_{k=0}^r c_k(2r)_k
        \sum_{j=i+k}^n \binom{2r-k}{j-i-k}\binom{-r-1}{n-j} \\
      &= (-1)^{n-r+i}\sum_{k=0}^r c_k(2r)_k\binom{r-k-1}{n-i-k} \\
      &= (-1)^{n-r+i}c_r(2r)_r\binom{-1}{n-r-i}
        = \frac{(2r)_r}{r!}\binom{n-r-i}{n-r-i}
        = \binom{2r}{r}.
  \end{split}
  \end{equation*}
  The first equality follows from the fact that
  $\binom{2r}{j-i} = 0$ for $j < i$ and $\binom{j-1}{r} = 0$ for
  $j \le r$. The second equality follows from the identity
  $\binom{2r}{j-i}(j-i)_k = (2r)_k\binom{2r-k}{j-i-k}$. The third
  equality follows from the fact that $\binom{2r-k}{j-i-k} = 0$ for
  $j < i + k$. This completes the proof of \eqref{pill}. 
\end{proof}

We now turn to the second claimed fact about $A$ in the proof of
Lemma \ref{do}. This is the content of the following proposition. 


\begin{proposition}\label{positive}
 Every entry of the matrix $A$ is positive.
\end{proposition}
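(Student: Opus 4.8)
The plan is to exploit the explicit inverse supplied by Proposition~\ref{inverse}: since $A = T^{-1}$ with $T_{ij} = (-1)^{i-j}\binom{2r}{r+i-j}$, positivity of the entries of $A$ is really a statement about the sign pattern of the inverse of a known banded symmetric matrix. First I would perform a diagonal sign change: set $D := \mathrm{diag}\big((-1)^1,(-1)^2,\ldots,(-1)^{n-r}\big)$ and $\tilde T := DTD$, so that $\tilde T_{ij} = \binom{2r}{r+i-j}$ (with the usual convention that the binomial coefficient vanishes outside $\{0,\ldots,2r\}$). Then every entry of $\tilde T$ is nonnegative, its diagonal entries equal $\binom{2r}{r}>0$, and its first sub- and super-diagonal entries equal $\binom{2r}{r-1}>0$. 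Since $\tilde T^{-1} = DAD$, we have $(\tilde T^{-1})_{ij} = (-1)^{i+j}A_{ij}$, so the claim $A_{ij}>0$ is equivalent to the assertion that $\tilde T^{-1}$ has the strict checkerboard sign pattern $\mathrm{sign}\big((\tilde T^{-1})_{ij}\big) = (-1)^{i+j}$.

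Second, I would show that $\tilde T$ is a nonsingular, irreducible, totally nonnegative matrix, i.e. an oscillation matrix in the sense of Gantmacher--Krein. It is nonsingular because $T = U^TU$ is positive definite ($U$ has full column rank, as established in the proof of Proposition~\ref{inverse}), hence so is $\tilde T = DTD$. For total nonnegativity, observe that the bi-infinite lower-bidiagonal matrix $B$ with $B_{ij} = \binom{1}{i-j}$ (ones on the main diagonal and first subdiagonal, zeros elsewhere) is totally nonnegative — a standard fact about bidiagonal matrices — so by the Cauchy--Binet formula its $2r$-th power $B^{2r}$, which is banded and has entries $(B^{2r})_{ij} = \binom{2r}{i-j}$, is totally nonnegative as well; and $\tilde T$ is the submatrix of $B^{2r}$ on rows $1,\ldots,n-r$ and columns $1-r,\ldots,n-2r$, so $\tilde T$ is totally nonnegative. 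Irreducibility is immediate from the strict positivity of the first off-diagonals noted above, and the Gantmacher--Krein criterion (a nonsingular, totally nonnegative matrix with strictly positive first sub- and super-diagonal entries is an oscillation matrix) then applies.

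Finally, I would invoke the classical Gantmacher--Krein theorem that the inverse of an oscillation matrix has all entries nonzero with $\mathrm{sign}\big((\tilde T^{-1})_{ij}\big) = (-1)^{i+j}$; combined with $(\tilde T^{-1})_{ij} = (-1)^{i+j}A_{ij}$ this gives $A_{ij}>0$ for all $i,j$, which is the Proposition. I expect the last step to be the main obstacle: one must pin down that sign result at a citable level of generality and verify that $\tilde T$ meets all hypotheses (including the strict-positivity conditions needed for the \emph{strict} inequality $A_{ij}>0$). One can avoid the black box by instead using Jacobi's formula $(\tilde T^{-1})_{ij} = (-1)^{i+j}\det\big(\tilde T\,\widehat{ji}\big)/\det\tilde T$ (where $\tilde T\,\widehat{ji}$ denotes $\tilde T$ with row $j$ and column $i$ deleted) together with the total nonnegativity of $\tilde T$, which reduces matters to the strict positivity of the minors $\det(\tilde T\,\widehat{ji})$, extractable from the banded oscillatory structure. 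An even more self-contained route — presumably the one intended here, given the warning that the proof is ``tedious and long'' — is to solve $\tilde T A = I$ column by column and obtain a closed-form expression for $A_{ij}$ as a manifestly positive sum of products of binomial coefficients, in the spirit of the computation of $A\mathbf{1}_{n-r}$ in Proposition~\ref{rowsum}; there the obstacle is guessing the correct closed form and verifying it through a chain of binomial identities (Chu--Vandermonde and the like).
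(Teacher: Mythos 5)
Your route is genuinely different from the paper's. The paper does not invoke total nonnegativity at all: it first computes the entire first column $x$ of $A$ in closed form (Lemma~\ref{firstcolumn}), establishes the monotonicity inequality $x_ix_j \ge x_{n-r-i+1}x_{n-r-j+1}$ for $i+j\le n-r+1$, and then exploits the fact that $T=A^{-1}$ is symmetric Toeplitz to apply the symmetric Gohberg--Semencul formula, which writes $A_{ij}$ as a telescoping sum of such differences plus a strictly positive residual $x_ix_j/x_1$. Your proposal instead identifies the abstract mechanism behind the sign pattern: after the checkerboard conjugation $\tilde T = DTD$, the matrix $\tilde T$ is banded, nonsingular, totally nonnegative with strictly positive first sub/super-diagonals, hence an oscillation matrix by the Gantmacher--Krein criterion, and one then appeals to sign-regularity of inverses of oscillation matrices. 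Your bidiagonal argument for total nonnegativity (viewing $\tilde T$ as a submatrix of $B^{2r}$) is correct, as is reading nonsingularity off $T=U^TU$. This is a cleaner structural explanation than the paper's computation and reveals \emph{why} $A$ is positive; the paper's route has the advantage of being entirely explicit and also produces the quantitative lower bound $A_{ij}\ge x_ix_j/x_1$, which is incidentally reused in Lemma~\ref{lemco}.

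The step you flag as the main obstacle is indeed where the argument is not yet closed. The cofactor formula $(\tilde T^{-1})_{ij} = (-1)^{i+j}\det\big(\tilde T[\hat\jmath,\hat\imath]\big)/\det\tilde T$ combined with total nonnegativity gives the \emph{weak} sign pattern $(-1)^{i+j}(\tilde T^{-1})_{ij}\ge 0$ for free, but strict positivity requires every complementary minor $\det\big(\tilde T[\hat\jmath,\hat\imath]\big)$ to be strictly positive, and a generic nonsingular TN matrix can have vanishing complementary minors (a lower bidiagonal unitriangular matrix is the standard example). The oscillation hypothesis is exactly what rules this out, but you must actually cite the theorem rather than gesture at it: it is in Gantmacher--Krein's book (Chapter II) and, in explicitly matrix form, in Markham, \textit{On oscillatory matrices}, Linear Algebra Appl.\ 3 (1970), which proves the inverse of an oscillation matrix is entrywise nonzero with checkerboard sign. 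Be aware that the easier contiguous-minor lemma for irreducible NTN matrices does \emph{not} suffice here, since $\{1,\dots,n-r\}\setminus\{j\}$ is not contiguous for interior $j$, so your self-contained Jacobi-formula fallback still needs a real argument for those minors. Until that lemma is pinned down and its hypotheses checked, the proposal is a plan rather than a proof.
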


We need the following lemma for the proof of Proposition
\ref{positive}.

 
\begin{lemma}\label{firstcolumn}
  Let $x$ be the $(n-r)$-dimensional vector with $i^{th}$ component: 
  $x_i =
     \binom{n+r-1}{n-1}^{-1}\binom{r+i-2}{r-1}\binom{n-i}{n-r-i}$.
  Then $x$ is the first column of $A$.
\end{lemma}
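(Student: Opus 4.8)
The plan is to invoke Proposition~\ref{inverse}, which furnishes an explicit inverse $T = A^{-1}$ with entries $T_{ij} = (-1)^{i-j}\binom{2r}{r+i-j}$. Since $T$ is invertible, the vector $x$ is the first column of $A$ if and only if $Tx = \mathbf{e}_1$, the first standard basis vector of $\R^{n-r}$. So the whole proof reduces to verifying this one identity, exactly in the spirit of the proof of Proposition~\ref{rowsum} (where one checks $Tb = \mathbf{1}_{n-r}$). Using $\binom{n-i}{n-r-i} = \binom{n-i}{r}$, the statement to be established is
\[
  \binom{n+r-1}{n-1}(Tx)_i
    = \sum_{j=1}^{n-r} (-1)^{i-j}\binom{2r}{r+i-j}
        \binom{r+j-2}{r-1}\binom{n-j}{r}
    = \binom{n+r-1}{n-1}\,\delta_{i1}
\]
for $1 \le i \le n-r$.

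First I would observe that, as a function of $j$, the summand is supported on the $2r+1$ consecutive integers $i-r \le j \le i+r$, since $\binom{2r}{r+i-j}$ vanishes outside this range; the terms that the actual sum $\sum_{j=1}^{n-r}$ omits are all zero anyway, because $\binom{r+j-2}{r-1}$ vanishes for $j \in \{0,-1,\dots,2-r\}$ and $\binom{n-j}{r}$ vanishes for $j \in \{n,n-1,\dots,n-r+1\}$ — the single exception being $j = 1-r$, which lies in the support precisely when $i=1$ and there $\binom{r+j-2}{r-1} = \binom{-1}{r-1} = (-1)^{r-1} \neq 0$. Next I would use that on its full support the summand equals $(-1)^{i-j}\binom{2r}{r+i-j}$ times $\binom{r+j-2}{r-1}\binom{n-j}{r}$, and the latter is a polynomial in $j$ of degree $(r-1)+r = 2r-1 < 2r$. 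Reindexing by $k = j-i+r$ (so that $\binom{2r}{r+i-j} = \binom{2r}{k}$) identifies $\sum_{j=i-r}^{i+r}$ with $\pm$ the $2r$-th forward difference at $0$ of a polynomial of degree below $2r$, hence it vanishes.

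For $i \ge 2$ this already finishes the computation: the actual sum coincides with the full (vanishing) sum because every omitted term is zero, so $(Tx)_i = 0$. For $i = 1$ the actual sum equals the vanishing full sum minus the single nonzero omitted term at $j = 1-r$, which is
\[
  (-1)^{r}\binom{2r}{2r}\binom{-1}{r-1}\binom{n+r-1}{r}
    = (-1)^{r}(-1)^{r-1}\binom{n+r-1}{r} = -\binom{n+r-1}{r},
\]
so that $\binom{n+r-1}{n-1}(Tx)_1 = \binom{n+r-1}{r} = \binom{n+r-1}{n-1}$ and $(Tx)_1 = 1$. Thus $Tx = \mathbf{e}_1$, and multiplying through by $A = T^{-1}$ gives $x = A\mathbf{e}_1$, i.e.\ $x$ is the first column of $A$.

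The main obstacle is the combinatorial bookkeeping at the two ends of the summation window: one must carefully track which terms are gained or lost when passing between the genuine range $1 \le j \le n-r$ and the symmetric range $i-r \le j \le i+r$ on which the finite-difference vanishing lives, and confirm — from the explicit roots of the two binomial-coefficient polynomials above — that all such terms are zero apart from the lone term $j = 1-r$ in the case $i=1$. Once that is settled, the remainder is the standard fact that a $2r$-th order finite difference annihilates any polynomial of degree less than $2r$, together with elementary binomial identities of precisely the type already used in the proofs of Propositions~\ref{rowsum} and~\ref{inverse}.
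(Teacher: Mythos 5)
Your proof is correct, and it takes a genuinely different route from the paper's. Both begin the same way --- via Proposition~\ref{inverse}, reduce the claim to verifying $Tx = \mathbf{e}_1$ --- but the paper then rewrites $\binom{n-j}{n-r-j}$ through the reflection identity as $(-1)^{n-r-j}\binom{-r-1}{n-r-j}$, shifts indices, extends the summation range to $\{i,\dots,n\}$ with a boundary correction at $j=1$, expands $\binom{j-2}{r-1}$ in the falling-factorial basis $(j-i)_k$, and disposes of each term by Chu--Vandermonde, each application ending in a vanishing binomial coefficient $\binom{r-k-1}{n-i-k}$; this deliberately mirrors the paper's own proof of Proposition~\ref{rowsum}. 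You instead observe at the outset that the weight $\binom{2r}{r+i-j}$ confines the sum to the window $i-r \le j \le i+r$, that the remaining factor $\binom{r+j-2}{r-1}\binom{n-j}{r}$ is, as a function of $j$, a polynomial of degree $2r-1$, and that the signed weighted sum over the full window is therefore (up to sign) a $2r$-th forward difference of a polynomial of degree strictly below $2r$, hence zero. The rest is boundary bookkeeping between that window and the genuine range $\{1,\dots,n-r\}$, which you carry out correctly: the two polynomial factors have exactly the integer roots needed to kill every boundary term except $j = 1-r$ when $i = 1$, and that lone survivor produces precisely $\binom{n+r-1}{n-1}\,\delta_{i1}$. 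Your argument is shorter, treats the two binomial factors symmetrically rather than singling one out as the Chu--Vandermonde companion, and makes the mechanism of cancellation --- high-order differencing annihilates low-degree polynomials --- explicit where the paper's term-by-term computation leaves it implicit; the paper's version has the complementary virtue of literally reusing the manipulations already performed for Proposition~\ref{rowsum}.
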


\begin{proof}[Proof of Lemma \ref{firstcolumn}]
  By Proposition \ref{inverse}, it is equivalent to show that
  $Tx = \mathbf{e}_1$, where $\mathbf{e}_1$ is the first standard
  basis vector of $\mathbb{R}^{n-r}$. We have
  \begin{equation*}
  \begin{split}
    \binom{n+r-1}{n-1}(Tx)_i
      &= \binom{n+r-1}{r}\sum_{j=1}^{n-r} T_{ij}x_j \\
      &= \sum_{j=1}^{n-r}
          (-1)^{i-j}\binom{2r}{r+i-j}\binom{r+j-2}{r-1}
            \binom{n-j}{n-r-j} \\
      &= (-1)^{n-r-i}\sum_{j=1}^{n-r}
          \binom{2r}{r+j-i}\binom{r+j-2}{r-1}\binom{-r-1}{n-r-j} \\
      &= (-1)^{n-r-i}\sum_{j=r+1}^n
          \binom{2r}{j-i}\binom{j-2}{r-1}\binom{-r-1}{n-j} \\
      &= (-1)^{n-r-i}\sum_{j=i}^n
          \binom{2r}{j-i}\binom{j-2}{r-1}\binom{-r-1}{n-j} \\ 
          & \qquad -
        (-1)^{n-r-i}\binom{2r}{1-i}\binom{-1}{r-1}
          \binom{-r-1}{n-1}\delta_{i1},
  \end{split}
  \end{equation*}
  where $\delta_{ij}$ is the Kronecker delta. The last equality
  follows from the fact that $\binom{2r}{j-i} = 0$ for $j < i$ and
  $\binom{j-2}{r-1} = 0$ for $2 \le j \le r$. Now
  \begin{align*}
    -(-1)^{n-r-i}\binom{2r}{1-i}\binom{-1}{r-1}\binom{-r-1}{n-1}
     &= (-1)^{i-1}\binom{r-1}{r-1}\binom{n+r-1}{n-1}\delta_{i,1} \\
     &= \binom{n+r-1}{n-1}\delta_{i,1}.
  \end{align*}
  Writing $\binom{j-2}{r-1} = \sum_{k=0}^{r-1} c_k(j-i)_k$,
  similarly to the proof of Proposition \ref{rowsum},
  \begin{equation*}
  \begin{split}
    \sum_{j=i}^n \binom{2r}{j-i}\binom{j-2}{r-1}\binom{-r-1}{n-j}
      &= \sum_{j=i}^n
        \binom{2r}{j-i}\binom{-r-1}{n-j}
          \sum_{k=0}^{r-1} c_k(j-i)_k \\
      &= \sum_{j=i}^n \sum_{k=0}^{r-1}
        c_k(2r)_k\binom{2r-k}{j-i-k}\binom{-r-1}{n-j} \\
      &= \sum_{k=0}^{r-1}
        c_k(2r)_k\sum_{j=i+k}^n
          \binom{2r-k}{j-i-k}\binom{-r-1}{n-j} \\
      &= \sum_{k=0}^{r-1} c_k(2r)_k\binom{r-k-1}{n-i-k}
        = 0.
  \end{split}
  \end{equation*}
  The second and third equalities follow from the same reasoning as
  in the proof of Proposition \ref{rowsum}. The last equality
  follows from the fact that $0 \le r - k - 1 < n - i - k$ for
  $i \le n - r$. Thus $(Tx)_i = \delta_{i,1}$, i.e.
  $Tx = \mathbf{e}_1$.
\end{proof}

We are now ready to prove Proposition \ref{positive}.
\begin{proof}[Proof of Proposition \ref{positive}]
  Let $x$ be defined as in Lemma \ref{firstcolumn}. Observe that
  \begin{equation*}
    \frac{x_{k+1}}{x_k} = \frac{r+k-1}{k}\cdot\frac{n-r-k}{n-k}
  \end{equation*}
  and
  \begin{equation*}
    \frac{x_{n-r-k+1}}{x_{n-r-k}}
      = \frac{n-k}{n-r-k+1}\cdot\frac{k-1}{r+k-1},
  \end{equation*}
  so
  \begin{equation*}
    \frac{x_{k+1}}{x_k}\cdot\frac{x_{n-r-k+1}}{x_{n-r-k}}
     = \frac{k-1}{k}\cdot\frac{n-r-k}{n-r-k+1} < 1.
  \end{equation*}
  Then for $i \le \frac{n-r+1}{2}$,
  \begin{equation*}
    \frac{x_{n-r-i+1}}{x_i}
      = \prod_{k=i}^{n-r-i} \frac{x_{k+1}}{x_k}
      = \prod_{k=i}^{n-r-i} \frac{x_{n-r-k+1}}{x_{n-r-k}}
      = \sqrt{\prod_{k=i}^{n-r-i}
          \frac{x_{k+1}}{x_k}\frac{x_{n-r-k+1}}{x_{n-r-k}}}
      \leq 1
  \end{equation*}
  is increasing in $i$ since the number of terms in the product
  decreases as $i$ increases. Let $1 \le i \le j \le n - r$ such
  that $i + j \le n - r + 1$. If $j \le \frac{n-r+1}{2}$, then
  \begin{equation*}
    \frac{x_{n-r-i+1}}{x_i}\cdot\frac{x_{n-r-j+1}}{x_j} \le 1.
  \end{equation*}
  Otherwise, let $j' = n - r - j + 1$, so that
  $i \le j' \le \frac{n-r+1}{2}$. Then
  \begin{align*}
    \frac{x_{n-r-i+1}}{x_i}\cdot\frac{x_{n-r-j+1}}{x_j}
      &= \frac{x_{n-r-i+1}}{x_i}\cdot
        \left(\frac{x_{n-r-j'+1}}{x_j'}\right)^{-1} \\
     & \le \frac{x_{n-r-j'+1}}{x_j'}\cdot
        \left(\frac{x_{n-r-j'+1}}{x_j'}\right)^{-1}
      = 1.
  \end{align*}
  Thus
  \begin{equation*}
    x_ix_j - x_{n-r-i+1}x_{n-r-j+1} \ge 0.
  \end{equation*}
  
  Observe that $T$ is a symmetric Toeplitz matrix. By Lemma
  \ref{firstcolumn}, $x$ is the first column of $A$, so the
  symmetric Gohberg-Semencul formula (see, for example,
  \citet{gohberg1972inversion}) gives 
  \begin{equation*}
  \begin{split}
    A = \frac{1}{x_1}
      &\left(
        \begin{bmatrix}
        x_1     & 0         & \cdots & 0 \\
        x_2     & x_1       & \cdots & 0 \\
        \vdots  & \vdots    & \ddots & \vdots \\
        x_{n-r} & x_{n-r-1} & \cdots & x_1
        \end{bmatrix}
        \begin{bmatrix}
        x_1    & x_2    & \cdots & x_{n-r} \\
        0      & x_1    & \cdots & x_{n-r-1} \\
        \vdots & \vdots & \ddots & \vdots \\
        0      & 0      & \cdots & x_1
        \end{bmatrix}\right. \\
        &\qquad\left. - 
        \begin{bmatrix}
        0       & \cdots & 0       & 0 \\
        x_{n-r} & \cdots & 0       & 0 \\
        \vdots  & \ddots & \ddots  & \vdots \\
        x_2     & \cdots & x_{n-r} & 0
        \end{bmatrix}
        \begin{bmatrix}
        0      & x_{n-r} & \cdots & x_2 \\
        \vdots & \vdots  & \ddots & \vdots \\
        0      & 0       & \ddots & x_{n-r} \\
        0      & 0       & \cdots & 0
        \end{bmatrix}
      \right),
  \end{split}
  \end{equation*}
  or
  \begin{equation*}
  \begin{split}
    A_{ij}
      &= \frac{1}{x_1}\left[\sum_{k=1}^i x_kx_{k+j-i} -
          \sum_{k=1}^{i-1} x_{n-r-k+1}x_{n-r+i-j-k+1}\right] \\
      &= \frac{1}{x_1}\left[x_ix_j +
          \sum_{k=1}^{i-1} \left(x_kx_{k+j-i} -
            x_{n-r-k+1}x_{n-r+i-j-k+1}\right)\right].
  \end{split}
  \end{equation*}
  Since $T$ is symmetric Toeplitz, in particular it is symmetric
  persymmetric; by Proposition \ref{inverse}, $T = A^{-1}$, so $A$
  is symmetric persymmetric as well. It suffices then to consider
  $A_{ij}$ for $1 \le i \le j \le n - r$ satisfying
  $i + j \le n - r + 1$. Now $1 \le k \le k + j - i \le n - r$ and
  $k + (k + j - i) \le n - r + 1$ for $1 \le k \le i - 1$, hence
  $x_kx_{k+j-i} - x_{n-r-k+1}x_{n-r+i-j-k+1} \ge 0$ and
  $A_{ij} \ge \frac{x_ix_j}{x_1} > 0$. Thus $A_{ij} > 0$ for all $1
  \le i,j \le n-r$ which completes the proof of Proposition
  \ref{positive}. 
\end{proof}

\subsection{A Result on Variance and Variation (Lemma
  \ref{lemco})}\label{vava} 
In this subsection, we provide the proof of Lemma \ref{lemco} which
was used in the proof of Corollary \ref{kco}. 

\begin{proof}[Proof of Lemma \ref{lemco}]
  Note that first that for $r = 1$, the result follows by taking $\eta
  = \bar{\theta} \mathbf{1}_{n}$ (where $\bar{\theta} := (\theta_1 +
  \dots + \theta_n)/n$) and using the inequality  
  \begin{equation}\label{gaao}
    \sum_{i=1}^n \left(\theta_i - \bar{\theta} \right)^2 \leq n \|D
    \theta\|_1^2 = n V^2(\theta),
  \end{equation}
  which is a consequence of the fact that $|\theta_i - \bar{\theta}|
  \leq \max_{k, l} |\theta_k - \theta_l| \leq V(\theta)$ for every $1
  \leq i \leq n$. 

  Let us therefore assume that $r \geq 2$. We may assume without loss
  of generality that the vector $D^{(r-1)} \theta$ has mean zero (if
  not, we will work with $\tilde{\theta}$ instead of $\theta$ where
  $\tilde{\theta}$ is created by subtracting a suitable polynomial
  sequence of degree $(r-1)$ from $\theta$; this will ensure that
  $D^{(r-1)} \tilde{\theta}$ has mean zero and that $D^{(r)} \theta =
  D^{(r)} \tilde{\theta}$). Let $X$ be the $n \times
  (r-1)$ matrix whose $(i, j)^{th}$ entry equals
  $\binom{i-1}{j-1}$. Let $S$ be the $n \times (n-r+1)$ matrix whose
  $(i, j)^{th}$ entry equals $\binom{i-j-1}{r-2}$.  Throughout we use
  the convention that $\binom{a}{b} = 0$ when $a < b$.  Let $\eta$
  denote the projection of $\theta$ on to the column space of $X$. We
  shall prove that the conditions of Lemma \ref{lemco} are satisifed
  for this choice of $\eta$. 

  Note first that $\eta$ belongs to the column space of $X$ which
  implies that the entries $\eta_i$ of $\eta$ will be given by a
  polynomial in $i$ of degree at most $r-2$ so that $D^{(r-1)} \eta =
  \mathbf{0}_{n-r+1}$. The reader may observe that $D^{(r-1)} \eta =
  \mathbf{0}_{n-r+1}$ is stronger than the statement of Lemma
  \ref{lemco} which reads  $D^{(r)} \eta = \mathbf{0}_{n-r}$. This is
  because we have assumed that $D^{(r-1)} \theta$ has mean zero. When
  this condition is not true, 
   we would need to add a polynomial sequence of degree $(r-1)$ to
   $\eta$ so that then $D^{(r-1)} \eta$ will have a constant mean
   which is same as saying that $D^{(r)} \eta = \mathbf{0}_{n-r}$.

Note from Lemma \ref{dfi} that $S D^{(r-1)} \theta$ differs
  from $\theta$ by a polynomial of degree at most $r - 2$ so that 
  \begin{equation*}
  \theta -  \eta =  (I - P_X) \theta = (I - P_X) S D^{(r-1)} \theta
  \end{equation*}
  where $P_X$ is the projection matrix on to the column space of
  $X$. As a result
  \begin{equation*}
    \|\theta - \eta\|^2 = \|(I - P_X) S D^{(r-1)} \theta\|^2 \leq \|(I
    - P_X) S\|^2 \|D^{(r-1)} \theta\|^2
  \end{equation*}
  where $\|(I - P_X) S\|$ denotes the operator norm of the matrix $(I
  - P_X) S$. It is clear that the square of the operator norm of $(I -
  P_X) S$ equals the operator norm of $A := S^T (I - P_X) S$ so that
  \begin{equation*}
    \|\theta - \eta\|^2 \leq \|A\| \|D^{(r-1)}  \theta\|^2. 
  \end{equation*}
  Note now that because $A$ is symmetric, its operator norm is bounded
  by its $\|\cdot\|_{\infty}$ norm (see, for example, \citet[Corollary
  2.3.2]{GolubVanLoan}) defined by  
  \begin{equation*}
    \|A\|_{\infty} := \max_{1 \leq i \leq n-r+1} \sum_{j=1}^{n-r+1}
      |a_{ij}| 
  \end{equation*}
  and hence we have
  \begin{equation}\label{mra}
    \|\theta - \eta\|^2 \leq \|A\|_{\infty} \|D^{(r-1)} \theta\|^2. 
  \end{equation}
   It may be noted that the matrix $A$ is the same matrix that
   appeared in the previous section (for example, in Proposition
   \ref{positive} and Proposition \ref{rowsum}) with $r$ replaced by
   $r - 1$. Therefore because all entries of $A$ are positive
   (Proposition \ref{positive}), we deduce that $\|A\|_{\infty} = \|A
   \mathbf{1}_{n-r+1} \|_{\infty}$ (this latter $\|\cdot\|_{\infty}$
     norm refers to the usual $L_{\infty}$ norm for vectors). In
    the proof of Proposition \ref{rowsum}, we gave a precise
    expression for $A \mathbf{1}_{n-r+1}$ (see equation
      \eqref{pill}). Using this, we deduce that (note that $r$ needs
      to be replaced by $r-1$ in \eqref{pill})
      \begin{equation*}
        \|A\|_{\infty} = \max_{1 \leq i \leq n-r+1}
          \frac{\binom{n-i}{r-1}
            \binom{r+i-2}{r-1}}{\binom{2r-2}{r-1}} \leq
              \frac{n^{2r-2}}{\binom{2r-2}{r-1}} \leq n^{2r-2}. 
      \end{equation*}
  Using the above with inequality \eqref{mra}, we obtain
  \begin{equation*}
    \|\theta - \eta\|^2 \leq n^{2r-2} \|D^{(r-1)} \theta\|^2. 
  \end{equation*}
  To bound the right hand side above further, we use \eqref{gaao}
  (note that the mean of the vector $D^{(r-1)} \theta$ is taken to be
  zero) to deduce that 
  \begin{equation*}
    \|\theta - \eta\|^2 \leq n^{2r-1} \|D^{(r)} \theta\|_1^2
  \end{equation*}
   which completes the proof of Lemma \ref{lemco}.  
\end{proof}

\subsection{Proof of the metric entropy bound for $\cras$ (Lemma
  \ref{pgr})} \label{ppgr}  
We shall provide the proof of Lemma \ref{pgr} in this subsection. For
this, we need to bound the metric entropy $\log N(\epsilon, \cras)$ of
the class $\cras$ defined in \eqref{cras}. Our strategy for this
involves the notion of fat shattering dimension. This is a standard
concept from the theory of empirical processes (see
e.g.,~\citet{Pollard90Iowa, rudelson2006combinatorics}) and is
recalled below for the convenience of the reader. 


\begin{defn}[Fat Shattering Dimension]\label{fsd}
Let $K$ be a subset of $\R^n$. For $t \geq 0$, we say that a subset
$\{i_1, \dots, i_m\}$ of $\{1, \dots, n\}$ is $t$-shattered by $K$ if
there exist real numbers $h_{i_1}, \dots, h_{i_m}$ such that for every
subset $S \subseteq \{i_1, \dots, i_m\}$, there exists a vector
$\theta \in K$ for which $\theta_{i_k} \leq h_{i_k}$ if $i_k \in S$
and $\theta_{i_k} \geq h_{i_k} + t$ if $i_k \notin S$. The fat
shattering dimension of $K$, denoted by $v(K, t)$ is defined as the
maximum cardinality of a set $\{i_1, \dots, i_m\} \subseteq \{1,
\dots, n\}$ that is $t$-shattered by $K$. 
\end{defn}

A deep connection between fat shattering dimension and metric entropy
is given by the following result due to \citet[Corollary
6.4]{rudelson2006combinatorics} which bounds the metric entropy using
the fat shattering dimension. 


\begin{theorem}[Rudelson and Vershynin] \label{rv}
  Let $K$ be a subset of $\R^n$. Assume that there exists a decreasing
  function $v : (0,\infty) \rightarrow (0, \infty)$ and a real number
  $a > 2$ such that 
  \begin{equation}
    \label{rv.con}
    v(K, s) \leq v(s) ~~~ \text{ and } ~~~ v(as) \leq \frac{1}{2} v(s)
    ~~ \text{ for all } s > 0. 
  \end{equation}
  Then there exists a constant $C$ depending on $a$ alone such that 
  \begin{equation}
    \label{rv.eq}
    \log N(\epsilon, K) \leq C v \left(\frac{\epsilon}{C \sqrt{n}} \right). 
  \end{equation}
\end{theorem}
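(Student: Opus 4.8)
The plan is to prove Theorem \ref{rv} in three moves: reduce Euclidean covering numbers of $K$ to its $\ell_\infty$-covering numbers, control the latter by the fat-shattering dimension at a comparable scale via a combinatorial (Sauer--Shelah-type) estimate, and finally use the doubling hypothesis $v(as)\le\tfrac12 v(s)$ to absorb the logarithmic overhead that the combinatorial estimate produces. For the first move, note that $\|x\|_2\le\sqrt n\,\|x\|_\infty$ for $x\in\R^n$, so every $\delta$-net of $K$ in the $\ell_\infty$-metric is a $(\sqrt n\,\delta)$-net in the Euclidean metric; hence $N(\epsilon,K)\le N_\infty(\epsilon/\sqrt n, K)$, where $N_\infty(\cdot,K)$ denotes $\ell_\infty$-covering numbers. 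It therefore suffices to prove a bound of the form $\log N_\infty(\delta, K)\le C\,v(K,\delta/C)$ with $C=C(a)$: applying it at $\delta=\epsilon/\sqrt n$, together with the fact that $v(K,\cdot)$ and the dominating function $v$ are nonincreasing, yields \eqref{rv.eq}. (Observe also that $v(K,s)$ is integer-valued and $v(K,a^k s_0)\le 2^{-k}v(s_0)\to 0$, so $K$ has finite $\ell_\infty$-diameter $D$, which we use below.)

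The heart of the matter is the single-scale combinatorial estimate: the $\ell_\infty$-packing number of a set of bounded diameter at scale $\delta$ is controlled, up to a logarithmic factor, by its fat-shattering dimension at a comparable scale. Concretely, one shows that a maximal $\delta$-separated subset $\mathcal{N}$ of $K$ in the $\ell_\infty$-metric must $(\delta/c_0)$-shatter (in the sense of Definition \ref{fsd}) a coordinate set of cardinality comparable to $\log|\mathcal{N}|$ divided by a logarithmic term; equivalently $\log N_\infty(\delta,K)\le C\,v(K,\delta/c_0)\,\log(e/\delta')$ for a suitably normalized scale $\delta'$. This is the Mendelson--Vershynin combinatorial-dimension/entropy inequality, a real-valued analogue of the Sauer--Shelah lemma proved by a probabilistic extraction argument; I would invoke it as a black box rather than reprove it. This is the step I expect to be the main obstacle to a self-contained proof, since it is where the genuinely hard combinatorics lives.

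It remains to remove the logarithmic overhead using the doubling condition. The idea is a dyadic multiscale decomposition: cover $K$ first at scale $D$, then refine repeatedly, at each step covering a scale-$2s$ cell of $K$ by scale-$s$ cells; summing the logarithms of the per-step covering numbers and applying the single-scale estimate to each cell bounds $\log N_\infty(\delta, K)$ by a sum $\sum_{j\ge 0} C\,v(K, c\,2^{-j}D)$ running down to scale $\delta$. Grouping the dyadic scales into consecutive blocks of length $\lceil\log_2 a\rceil$, the hypothesis $v(as)\le\tfrac12 v(s)$ forces $v(K,\cdot)$ to at least halve across each block, so the series is dominated by a convergent geometric one and sums to at most $C(a)\,v(K,c\delta)$. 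The residual logarithmic factor from the previous paragraph is absorbed by the same mechanism: under the doubling condition $v(\cdot,s)$ grows at least like a fixed power of $1/s$ as $s\downarrow 0$, so measuring the fat-shattering dimension at a scale smaller by a fixed constant factor costs only a bounded multiplicative factor, which dominates the $\log(e/\delta')$ term after a suitable choice of that factor. Chaining the three steps gives $\log N(\epsilon,K)\le\log N_\infty(\epsilon/\sqrt n, K)\le C\,v(K,\epsilon/(C\sqrt n))\le C\,v(\epsilon/(C\sqrt n))$, which is precisely \eqref{rv.eq}, with $C$ depending only on $a$.
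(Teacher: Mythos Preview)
The paper does not prove this theorem. It is quoted verbatim as a known result from \cite{rudelson2006combinatorics} (Corollary~6.4) and used as a black box in the proof of Lemma~\ref{pgr}; there is no argument in the paper to compare your proposal against.

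That said, your sketch is a reasonable outline of how the Rudelson--Vershynin result is actually established: the reduction to $\ell_\infty$ covering, the Mendelson--Vershynin single-scale combinatorial bound (which you rightly flag as the nontrivial input), and the dyadic summation exploiting the doubling hypothesis $v(as)\le\tfrac12 v(s)$ are indeed the main ingredients. The one point where your write-up is loose is the claim that the logarithmic overhead from the single-scale estimate is absorbed simply because the doubling condition forces $v$ to grow like a power of $1/s$: the exponent you get this way is $\log 2/\log a<1$, which by itself does not immediately dominate an arbitrary logarithmic factor at a fixed scale. In the Rudelson--Vershynin argument the log factor is handled through the multiscale iteration itself (the per-level log terms are summed against the geometrically decaying fat-shattering bounds), not by a one-shot rescaling at the final scale. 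If you wanted to make your sketch into a proof, that is the step to tighten.
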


In order to use Theorem \ref{rv} to prove Lemma \ref{pgr}, it is clear
that we need to bound the fat shattering dimension $v(\cras, t)$ of
$\cras$. The following lemma bounds the fat shattering dimension of
the class $\C_r(a, V)$ defined as: 
\begin{equation}\label{crav}
  \C_r(a, V) := \left\{\theta \in \R^n : a \leq (D^{r-1} \theta)_1
    \leq \dots \leq  (D^{r-1} \theta)_{n-r+1} \leq a + V \right\}
\end{equation}
for $a \in \R$ and $V \geq 0$. Note that $\cras \subseteq
\C_r(a_{r-1}, s_{r-1})$ so that the fat shattering dimension of
$\cras$ is bounded from above by that of $\C_r(a_{r-1}, s_{r-1})$. 

\begin{lemma}\label{cdcr}
  For every $V > 0$, $a \in \R$, $r \geq 1$, $n \geq r$ and 
  $t > 0$,   we have  
  \begin{equation}\label{cdcr.eq}
    v(\C_r(a, V), t) \leq r + \frac{V^{1/r} n^{1 - (1/r)}}{t^{1/r}} C_r
  \end{equation}
  for a positive constant $C_r$ that depends solely on $r$.  
\end{lemma}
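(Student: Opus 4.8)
The goal is to bound the fat-shattering dimension $v(\C_r(a,V),t)$ of the class $\C_r(a,V)$ from \eqref{crav}, i.e.\ of vectors $\theta\in\R^n$ for which $D^{(r-1)}\theta$ is nondecreasing and confined to an interval of length $V$. The plan is to reduce the problem to a purely combinatorial statement about monotone sequences with bounded range via the identity in Lemma \ref{dfi}, and then to count how rapidly a monotone bounded sequence can ``oscillate'' against shattering thresholds.

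First I would set up the reduction. Suppose $\{i_1<\dots<i_m\}\subseteq\{1,\dots,n\}$ is $t$-shattered by $\C_r(a,V)$, with witness heights $h_{i_1},\dots,h_{i_m}$. For any sign pattern we get a vector $\theta\in\C_r(a,V)$ whose coordinates at the $i_k$'s straddle the $h_{i_k}$'s by a gap of $t$. The key structural fact is that, by Lemma \ref{dfi}, $\theta_i = \sum_{j=1}^{i-r}\binom{i-j-1}{r-1}(D^{(r)}\theta)_j + \sum_{j=1}^r\binom{i-1}{j-1}(D^{(j-1)}\theta)_1$; equivalently, writing $\gamma := D^{(r-1)}\theta$ (a nondecreasing vector of length $n-r+1$ with $a\le\gamma_1\le\dots\le\gamma_{n-r+1}\le a+V$), one has $\theta_i = \sum_{j=1}^{i-r+1}\binom{i-j-1}{r-2}\gamma_j + (\text{polynomial in }i\text{ of degree }\le r-2)$. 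Since a degree-$(r-1)$ polynomial sequence is determined by $r$ values, if $m>r$ we may discard $r$ of the shattered indices; on the remaining $m-r$ indices, after subtracting off the (sign-pattern-dependent but low-dimensional) polynomial part, the oscillation of $\theta$ is governed entirely by the oscillation of the monotone bounded sequence $\gamma$. This is where the ``$+r$'' in \eqref{cdcr.eq} comes from.

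The heart of the argument is the $r=1$ case: bounding the fat-shattering dimension of the set of monotone sequences $\{\gamma\in\R^N : a\le\gamma_1\le\dots\le\gamma_N\le a+V\}$. Here I would argue directly: if indices $i_1<\dots<i_m$ are $t$-shattered with heights $h_{i_1},\dots,h_{i_m}$, consider the sign pattern $S$ that alternates (put $i_k\in S$ for $k$ odd, $i_k\notin S$ for $k$ even). Monotonicity of $\gamma$ then forces $\gamma_{i_1}\le h_{i_1}$, $\gamma_{i_2}\ge h_{i_2}+t$, $\gamma_{i_3}\le h_{i_3}$, etc., so that consecutive witness heights must themselves alternate appropriately, and chaining these inequalities through the monotone sequence forces the total increase $\gamma_{i_m}-\gamma_{i_1}$ to be at least on the order of $(m/2)\,t$ minus correction terms — but this increase is at most $V$. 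Hence $m \le C V/t + O(1)$, i.e.\ $v \le r + CV/t$ when $r=1$, which matches \eqref{cdcr.eq} with $r=1$.

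For general $r\ge2$ the iterated-summation structure degrades the rate: a single unit of increase in $\gamma$ at position $j$ contributes $\binom{i-j-1}{r-2}\sim (i-j)^{r-2}$ to $\theta_i$, so that to create $m$ independent sign flips in $\theta$ spaced out over $\{1,\dots,n\}$ one needs the increments of $\gamma$ to be localized on a scale that makes the effective ``budget'' $V$ yield only about $(Vn^{r-1}/t)^{1/r}$ flips rather than $Vn^{r-1}/t$ — this is precisely the $V^{1/r}n^{1-1/r}/t^{1/r}$ term. I would make this rigorous by taking the shattered indices and, using the alternating sign pattern as above, deriving from the explicit formula a system of inequalities on $\gamma$; by a discrete-derivative / summation-by-parts manipulation (taking $(r-1)$-st differences of the constraints at the shattered points) one isolates $\gamma$ itself and obtains $\sum (\text{gaps in }\gamma) \ge c\, t\, m^{-(r-1)} \cdot(\text{something})$ against the bound $V$, and solving for $m$ gives the claimed exponent. \textbf{The main obstacle} I anticipate is precisely this last step: organizing the inequalities coming from the alternating sign pattern into a clean bound that produces the sharp exponent $1/r$ rather than a weaker power, since the binomial weights $\binom{i-j-1}{r-1}$ make the straddling constraints interact in a way that requires care about the spacing of the shattered indices (one likely needs to further thin the shattered set to a well-separated subset, losing only a constant factor, before the telescoping works). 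Once \eqref{cdcr.eq} is in hand, Lemma \ref{pgr} follows by feeding $v(s) = r + C_r(\sum_j n^{j-1}s_{j-1})^{1/r} s^{-1/r}$-type bounds (noting $\cras\subseteq\C_r(a_{r-1},s_{r-1})$ and handling the lower-order $s_i$'s by the reduction to the polynomial part) into Theorem \ref{rv} with a suitable $a>2$.
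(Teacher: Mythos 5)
Your high-level picture is right — reduce to the monotone bounded vector $\gamma := D^{(r-1)}\theta$ via Lemma \ref{dfi}, note that a degree-$(r-2)$ polynomial part costs only $r$ indices, and use the dimensional scaling $V n^{r-1}/m^r \gtrsim t$ to heuristically get $m \lesssim (Vn^{r-1}/t)^{1/r}$. Your $r=1$ sketch is essentially the paper's argument ($T_1 = m-1$ in the paper's notation, bounded by $V/t$). But for $r\geq 2$ there is a genuine gap at precisely the step you flagged, and the ``alternating sign pattern'' idea does not fix it: the constraints that make the argument work are not about raw alternation of $\theta$ against thresholds, but about the sign structure of \emph{divided differences} of the threshold vector $h$ at $r$ consecutive shattered indices. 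Because the shattered indices $i_1<\dots<i_m$ are not equally spaced, finite differences are the wrong tool; the paper works with divided differences $[i_j,\dots,i_{j+r-1};\cdot]$, and at each step $j$ the sign pattern on $\{i_{j-1},\dots,i_{j+r-1}\}$ is chosen depending on the signs of the combinatorial coefficients $\tau_{k,j}$ (which depend on the spacing of the $i_k$'s), not by a single global alternating rule.

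The two lemmas you would need, and which your sketch does not supply, are: (i) a monotonicity statement for divided differences of vectors with nondecreasing $(r-1)$-th differences (the paper's Lemma \ref{subs}, a generalization of the ``increasing slopes'' property of convex sequences), which yields a telescoping chain of inequalities across the shattered indices; and (ii) the fact that $(r-1)!\,[i_1,\dots,i_r;\theta]$ is a convex combination of the entries of $D^{(r-1)}\theta$ (the paper's Lemma \ref{nogs}), which pins the endpoints of the telescope to $a$ and $a+V$. After that one still has to lower-bound the accumulated coefficient sum $T_r = \sum_a \sum_k (-\tau_{k,a+1})\{\tau_{k,a+1}<0\}$ by $(m-r)^r / (n^{r-1}(r-1)^{r-1})$, which the paper does via AM--GM on the gap products and convexity of $x\mapsto x^{-(r-1)}$ — this is where the exponent $1/r$ is actually earned. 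Your proposed ``summation-by-parts on the shattered points'' would run into exactly the unequal-spacing difficulty that divided differences are designed to absorb, so without importing the divided-difference machinery the argument does not close.
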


Let us first prove Lemma \ref{pgr} assuming that Lemma \ref{cdcr} is
true. The proof of Lemma \ref{cdcr} will be provided following the
next proof. 

\begin{proof}[Proof of Lemma \ref{pgr}]
It turns out that it is enough to prove the following bound on the fat shattering
dimension of $\cras$: 
\begin{equation}\label{clb}
  v(\cras, t) \leq C_r \left(\frac{\sum_{j=1}^r n^{j-1} s_{j-1}}{t}
  \right)^{1/r}.  
\end{equation}
Indeed, Lemma \ref{pgr} is a direct consequence of the above
inequality along with Theorem \ref{rv}. To see this, note that if
inequality \eqref{clb} is true, one can simply take the function
$v(\cdot)$ in Theorem \ref{rv} to be   
\begin{equation*}
  v(s) = C_r \left(\frac{\sum_{j=1}^r n^{j-1} s_{j-1}}{s}
  \right)^{1/r}.   
\end{equation*}
Then the condition \eqref{rv.con} in Theorem \ref{rv} is true with $a
= 2^r$ and Lemma \ref{pgr} is therefore a consequence of inequality
\eqref{rv.eq}. 

The key therefore is to prove \eqref{clb}. For this, note first the
identity (which is a consequence of Lemma \ref{dfi} applied with $r-1$
instead of $r$) 
  \begin{equation*}
    \theta_i = \sum_{j=1}^{i-r+1} \binom{i-j-1}{r-2} (D^{(r-1)} \theta)_j
    + \sum_{j=1}^{r-1} \binom{i-1}{j-1} (D^{(j-1)} \theta)_1. 
  \end{equation*}
 This identity obviously implies the following lower and upper bounds
 on $\theta_i$  for every  $\theta \in \cras$:  
 \begin{equation*}
   \theta_i \geq \sum_{j=1}^{i-r+1} \binom{i-j-1}{r-2} a_{r-1} +
   \sum_{j=1}^{r-1} \binom{i-1}{j-1} a_{j-1}
 \end{equation*}
and
\begin{align*}
   \theta_i &\leq \sum_{j=1}^{i-r+1} \binom{i-j-1}{r-2} a_{r-1} +
   \sum_{j=1}^{r-1} \binom{i-1}{j-1} a_{j-1} + \sum_{j=1}^{r-1}
   \binom{i-1}{j-1} s_{j-1} \\
& + \sum_{j=1}^{i-r+1} \binom{i-j-1}{r-2}
   s_{r-1} . 
 \end{align*}
The last two terms in the expression above can be
combined into one term as follows: 
\begin{equation*}
  \sum_{j=1}^{r-1}
   \binom{i-1}{j-1} s_{j-1} + \sum_{j=1}^{i-r+1} \binom{i-j-1}{r-2} =
   \sum_{j=1}^r \binom{i-1}{j-1} s_{j-1}.   
\end{equation*}
This is a consequence of the fact that 
\begin{equation*}
  \sum_{j=1}^{i-r+1} \binom{i-j-1}{r-2} = \binom{i-1}{r-1}
\end{equation*}
which itself follows from \eqref{bnom} applied to $a = i-2$ and $b =
r-2$. We thus have
\begin{equation*}
  \theta_i \leq \sum_{j=1}^{i-r+1} \binom{i-j-1}{r-2} a_{r-1} +
   \sum_{j=1}^{r-1} \binom{i-1}{j-1} a_{j-1} + \sum_{j=1}^r
   \binom{i-1}{j-1} s_{j-1}. 
\end{equation*}
Combining the upper and lower bounds for $\theta_i$ derived above, we
deduce that 
\begin{equation*}
  \max_{\theta \in \cras}\theta_i - \min_{\theta \in \cras} \theta_i
  \leq \sum_{j=1}^r \binom{n-1}{j-1} s_{j-1} \leq \sum_{j=1}^r n^{j-1}
  s_{j-1}.  
\end{equation*}
The presence of $r  -2$ in the binomial coefficients above might seem
to make the above statement true only for $r \ge 2$. However for $r =
1$, this directly follows from the fact that every vector $\theta$ in
$\C_1(\{a_i\}, \{s_i\})$ satisfies $a_0 \le \theta_1 \leq \dots \le
\theta_n \le a_0 + s_0$. 

 As a consequence, it turns out that $v(\cras, t) = 0$ if $t > \Gamma
 := \sum_{j=1}^r n^{j-1} s_{j-1}$ and hence inequality \eqref{clb} is
 trivially true when $t > \Gamma$. We can therefore assume that $t \le
 \Gamma$. In this case, because $\cras \subseteq \C_r(a_{r-1},
 s_{r-1})$, Lemma \ref{cdcr} gives
 \begin{align*}
   v(\cras, t) &\leq v(\C_r(a_{r-1}, s_{r-1}), t) \\ &\leq r + C_r
   \left(\frac{n^{r-1} s_{r-1}}{t} \right)^{1/r} \\
& \leq r \left(\frac{\Gamma}{t} \right)^{1/r} + C_r
   \left(\frac{\Gamma}{t} \right)^{1/r} = (C_r + r)
   \left(\frac{\Gamma}{t} \right)^{1/r}
 \end{align*}
which proves \eqref{clb} when $t \leq \Gamma$. The completes the proof
of Lemma \ref{ppgr}. 
\end{proof}

We now prove Lemma \ref{cdcr}. For this, we use the
notion of divided differences (see, for example, \citet[Chapter  
15]{kuczma2009introduction}). For $k \geq 1$, indices $1 \leq \ell_1 <
\dots < \ell_k \leq n$ and real  numbers $\alpha_{\ell_1},
\dots,\alpha_{\ell_k}$, the divided difference $[\ell_1, \dots,
\ell_k; \alpha]$ is defined as 
\begin{equation*}
  [\ell_1, \dots, \ell_k ; \alpha] := \sum_{i=1}^k
  \frac{\alpha_{\ell_i}}{\prod_{j \neq i} (\ell_i - \ell_j) } 
\end{equation*}
As examples, note that $[\ell_1; \alpha] = \alpha_{\ell_1}$ and
$[\ell_1, \ell_2; \alpha] = (\alpha_{\ell_2} -
\alpha_{\ell_1})/(\ell_2 - \ell_1)$. 

It is easy to verify that the divided differences satisfy the
recursive relation 
\begin{equation*}
  [\ell_1, \dots, \ell_k; \alpha] = \frac{[\ell_2, \dots, \ell_k ;
    \alpha] - [\ell_1, \dots, \ell_{k-1}; \alpha]}{\ell_k - \ell_1}. 
\end{equation*}

We shall use the following two facts about divided differences for
the proof of Lemma \ref{cdcr}. The first fact is given in Lemma
\ref{subs} below which is a simple consequence of \citet[Theorem
15.3.1]{kuczma2009introduction}.   


\begin{lemma}\label{subs}
  Fix $r \geq 1$  and $n \geq r$. Suppose $\theta \in \R^n$ satisfies
  $(D^{(r-1)} \theta)_1 \leq \dots \leq (D^{(r-1)}
  \theta)_{n-r+1}$. Then for every choice of indices $1 \leq i_1 <
  \dots < i_{r+1} \leq n$, we have 
  \begin{equation*}
    [i_2, \dots, i_{r+1}; \theta] \geq [i_{1}, \dots, i_r; \theta]. 
  \end{equation*}
\end{lemma}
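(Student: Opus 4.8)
The plan is to connect divided differences directly to higher-order forward differences and then invoke the cited monotonicity result, \citet[Theorem 15.3.1]{kuczma2009introduction}, which states that a function whose $r$-th divided differences are all nonnegative on an interval has monotone $r$-th divided differences in an appropriate sense; equivalently, a sequence with $(D^{(r)}\theta) \geq 0$ is ``$r$-th order convex'' and therefore its $r$-th divided differences on any $r+1$ points are nondecreasing as the points slide to the right. Concretely, first I would recall the standard identity relating divided differences on equally spaced points to forward differences: for any indices $1 \le i_1 < \dots < i_{r+1} \le n$, the divided difference $[i_1, \dots, i_{r+1}; \theta]$ has the same sign structure as, and is a positive-coefficient combination controlled by, the $r$-th order differences of $\theta$ restricted to the relevant window. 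In particular, the hypothesis $(D^{(r-1)}\theta)_1 \le \dots \le (D^{(r-1)}\theta)_{n-r+1}$ says exactly that $(D^{(r)}\theta)_j \ge 0$ for all $j = 1, \dots, n-r$, which is the discrete statement that $\theta$ is an $r$-th order convex sequence.

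Next I would argue the monotonicity of the divided difference under a rightward shift of one node at a time. It suffices to prove the inequality when $(i_2, \dots, i_{r+1})$ is obtained from $(i_1, \dots, i_r)$ by deleting the leftmost index $i_1$ and appending a new rightmost index $i_{r+1} > i_r$; the general case then follows by chaining such elementary moves (delete-left-then-append-right, one index at a time, keeping the tuple strictly increasing throughout). For a single such move I would use the recursive relation for divided differences,
\begin{equation*}
  [i_1, \dots, i_{r+1}; \theta] = \frac{[i_2, \dots, i_{r+1}; \theta] - [i_1, \dots, i_r; \theta]}{i_{r+1} - i_1},
\end{equation*}
and observe that the left-hand side, being an $(r+1)$-point divided difference, is a positively weighted average of $r$-th order forward differences $(D^{(r)}\theta)_j$ over $j$ in the range spanned by $i_1$ and $i_{r+1}$ (this is the content of \citet[Theorem 15.3.1]{kuczma2009introduction}, or can be derived from the Hermite–Genocchi / Newton-form representation of divided differences); since each such $(D^{(r)}\theta)_j \ge 0$ and $i_{r+1} - i_1 > 0$, the numerator $[i_2, \dots, i_{r+1}; \theta] - [i_1, \dots, i_r; \theta]$ is nonnegative, which is precisely the desired inequality for the elementary move.

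Finally I would assemble the chain: starting from $(i_1, \dots, i_r)$, at each step either append the next node from the target tuple to the right end or drop the current leftmost node, arranging the order of these operations so that the tuple always stays strictly increasing and has length $r$ or $r+1$, and so that each operation is an ``append-right'' or a ``delete-left'' of the type handled above; after finitely many steps we reach $(i_2, \dots, i_{r+1})$, and the divided difference has only increased, giving $[i_2, \dots, i_{r+1}; \theta] \ge [i_1, \dots, i_r; \theta]$. The main obstacle I anticipate is purely bookkeeping: making the reduction to elementary single-node moves clean, and correctly citing (or re-deriving in a line) the representation of an $(r+1)$-point divided difference as a nonnegative combination of the $(D^{(r)}\theta)_j$'s — once that representation is in hand, the inequality is immediate, so the real work is just packaging the equally-spaced case of \citet[Theorem 15.3.1]{kuczma2009introduction} into the exact form needed here.
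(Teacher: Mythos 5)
Your core argument is correct and is, in essence, the one the paper is implicitly invoking when it says Lemma~\ref{subs} is ``a simple consequence of'' the cited theorem: apply the divided-difference recursion
\begin{equation*}
[i_1, \dots, i_{r+1}; \theta] = \frac{[i_2, \dots, i_{r+1}; \theta] - [i_1, \dots, i_r; \theta]}{i_{r+1} - i_1},
\end{equation*}
note that the $(r+1)$-point divided difference $[i_1, \dots, i_{r+1}; \theta]$ is a nonnegative combination of the quantities $(D^{(r)}\theta)_j$ (this is the extension of Lemma~\ref{nogs} with $r$ replaced by $r+1$), and conclude that the left-hand side is $\geq 0$ since the hypothesis $(D^{(r-1)}\theta)_1 \leq \dots \leq (D^{(r-1)}\theta)_{n-r+1}$ is exactly $(D^{(r)}\theta)_j \geq 0$ for all $j$. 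Since $i_{r+1} - i_1 > 0$, the numerator is nonnegative, which is the claim.

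There is, however, one piece of your write-up that is a red herring and suggests a small misunderstanding of what you're being asked to prove. You spend a paragraph reducing ``the general case'' to ``a single elementary move'' (delete the leftmost node and append a new rightmost node) and then assembling a chain of such moves. But the lemma statement \emph{is} that single elementary move: the tuples $(i_1, \dots, i_r)$ and $(i_2, \dots, i_{r+1})$ already differ by exactly one such operation, so there is nothing to chain. The reduction is vacuous --- the chain has only one link --- and keeping it in would likely confuse a reader into thinking you are proving a stronger ``distant-tuple'' comparison for which this one recursion application does not suffice. If you delete that paragraph, what remains is a clean, complete, one-step proof that matches the paper's intent. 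A chain argument would only be needed for a genuinely different statement, e.g., comparing $[j_1, \dots, j_r; \theta]$ with $[i_1, \dots, i_r; \theta]$ when the tuples are not adjacent; that is not what is being claimed here.
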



\begin{remark}
  When $r = 2$, it is easy to see that Lemma 2.2 reduces to the
  well-known \textit{increasing slopes} property of convex sequences. 
\end{remark}

The second fact about divided differences is given in Lemma \ref{nogs}
below which is a consequence of \citet[Lemma 15.2.5 and Theorem 
15.2.6]{kuczma2009introduction}. 


\begin{lemma}\label{nogs}
  Fix $r \geq 1$ and $n \geq r$. For every choice of indices $1 \leq
  i_1 < i_2 <  \dots < i_r \leq n$, there exist 
  non-negative real numbers $\{c_i, 1 \leq i \leq n - r + 1\}$
  with $\sum_{i=1}^{n-r+1} c_i = 1$ 
   such that 
   \begin{equation*}
     [i_1, \dots, i_r; \theta] = \frac{1}{(r-1)!}\sum_{i=1}^{n - r +
       1} c_i  (D^{(r-1)} \theta)_i \qt{for every $\theta \in \R^n$}.  
   \end{equation*}
\end{lemma}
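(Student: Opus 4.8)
The plan is to establish the representation directly from Lemma~\ref{dfi}, reading off the coefficients $c_i$ as divided differences of truncated powers, and then to obtain $c_i\ge 0$ from the classical nonnegativity of such divided differences.

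First dispose of $r=1$: here $[i_1;\theta]=\theta_{i_1}=(D^{(0)}\theta)_{i_1}$ and $\tfrac{1}{(r-1)!}=1$, so $c_i:=I\{i=i_1\}$ works. Assume $r\ge 2$. For $1\le j\le n-r+1$ let $\psi_j\in\R^n$ be given by $(\psi_j)_i:=\binom{i-j-1}{r-2}$ (with the convention $\binom{a}{b}=0$ for $b>a$, so $(\psi_j)_i=0$ for $i\le j+r-2$ and $(\psi_j)_i$ equals a fixed polynomial of degree $r-2$ in $i$ for $i\ge j+r-1$). Apply Lemma~\ref{dfi} with $r$ replaced by $r-1$ (legitimate since $n\ge r>r-1$): every $\theta\in\R^n$ satisfies, for $1\le i\le n$,
\[
\theta_i=\sum_{j=1}^{n-r+1}\binom{i-j-1}{r-2}(D^{(r-1)}\theta)_j+\sum_{j=1}^{r-1}\binom{i-1}{j-1}(D^{(j-1)}\theta)_1,
\]
where the first sum has been extended to $j=n-r+1$ since $\binom{i-j-1}{r-2}=0$ for $j>i-r+1$. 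The second sum is a polynomial in $i$ of degree at most $r-2$. Since $\theta\mapsto[i_1,\dots,i_r;\theta]$ is linear, annihilates polynomials of degree at most $r-2$, and returns the leading coefficient on polynomials of degree $r-1$ (standard facts for an order-$(r-1)$ divided difference), applying it to the display gives
\[
[i_1,\dots,i_r;\theta]=\sum_{j=1}^{n-r+1}(D^{(r-1)}\theta)_j\,[i_1,\dots,i_r;\psi_j]=\frac{1}{(r-1)!}\sum_{j=1}^{n-r+1}c_j\,(D^{(r-1)}\theta)_j,
\]
with $c_j:=(r-1)!\,[i_1,\dots,i_r;\psi_j]$ depending only on the nodes. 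To get $\sum_j c_j=1$, apply the identity to the test vector $\theta_i=\binom{i-1}{r-1}$: a one-line Pascal-rule induction gives $(D^{(r-1)}\theta)_j=1$ for every $j$, while $\binom{i-1}{r-1}$ is a degree-$(r-1)$ polynomial in $i$ with leading coefficient $\tfrac1{(r-1)!}$, so $[i_1,\dots,i_r;\theta]=\tfrac1{(r-1)!}$; hence $\tfrac1{(r-1)!}=\tfrac1{(r-1)!}\sum_j c_j$.

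The remaining point — and the only substantive one — is $c_j\ge 0$, i.e.\ $[i_1,\dots,i_r;\psi_j]\ge 0$. Here $\psi_j$ is, up to adding a polynomial of degree $\le r-2$ (annihilated by the divided difference), exactly the discrete truncated power of degree $r-2$ with breakpoint $j+r-2$, because the degree-$(r-2)$ polynomial describing $\psi_j$ for $i\ge j+r-1$ vanishes at the $r-2$ consecutive points $i=j+1,\dots,j+r-2$. Nonnegativity of the order-$(r-1)$ divided difference of such a truncated power is classical (its divided difference is a discrete B-spline); I would cite this via \citet[Chapter~15]{kuczma2009introduction}, as elsewhere in the paper, or alternatively prove it by induction on $r$, combining the divided-difference recursion with the Pascal-rule recursion $\psi^{(r)}_j=\psi^{(r-1)}_{j+1}+\psi^{(r)}_{j+1}$ to strip away the truncation one step at a time. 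The $c_i$ ($1\le i\le n-r+1$) produced above, being nonnegative and summing to $1$, are then the desired coefficients, and the displayed identity is precisely the claim. I expect the bookkeeping in the truncated-power/positivity step to be the main obstacle, everything else being routine once Lemma~\ref{dfi} is invoked.
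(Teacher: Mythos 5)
Your construction is sound, and it is genuinely more explicit than what the paper does: the paper's entire ``proof'' of Lemma \ref{nogs} is the citation to \citet[Lemma 15.2.5 and Theorem 15.2.6]{kuczma2009introduction}, whereas you derive the representation from Lemma \ref{dfi} with $r$ replaced by $r-1$, identify the weights as $c_j=(r-1)!\,[i_1,\dots,i_r;\psi_j]$ for the columns $(\psi_j)_i=\binom{i-j-1}{r-2}$, and obtain $\sum_j c_j=1$ from the test vector $\theta_i=\binom{i-1}{r-1}$. Each of these steps checks out: the extension of the sum in Lemma \ref{dfi} to $j\le n-r+1$ only adds zero terms, the $r$-point divided difference is linear, kills polynomials of degree at most $r-2$ and extracts the leading coefficient in degree $r-1$, and repeated Pascal gives $(D^{(r-1)}\theta)_j\equiv 1$ for the test vector; the $r=1$ case is handled correctly.

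The caveat is about what you defer. Because two vectors with the same $(r-1)$-st differences differ by a polynomial of degree at most $r-2$, which the divided difference annihilates, the weights in the lemma are unique; hence the nonnegativity $[i_1,\dots,i_r;\psi_j]\ge 0$ you postpone is not a peripheral technicality but is \emph{equivalent} to the lemma itself (it is exactly the nonnegativity of the Peano kernel, i.e.\ discrete B-spline positivity). If you discharge it by citation --- to discrete B-spline theory or to \citet[Chapter 15]{kuczma2009introduction} --- your argument is, at bottom, the same appeal to the classical literature that the paper makes, only with the kernel exhibited; that is acceptable, but it is not a self-contained proof. If instead you intend the sketched induction, it is not yet a proof: note that $D^{(r-1)}\psi_j$ is the $j$-th coordinate indicator (a spike, not a monotone sequence), so nothing in the spirit of Lemma \ref{subs} applies, and the Pascal identity $\psi^{(r)}_j=\psi^{(r)}_{j+1}+\psi^{(r-1)}_{j+1}$ combined with the divided-difference recursion does not by itself close the induction. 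A workable self-contained route is the factorization $\psi^{(r)}_j(i)=\frac{i-j-1}{r-2}\,\psi^{(r-1)}_{j+1}(i)$ together with the Steffensen (discrete Leibniz) rule for divided differences of a product and a case analysis on whether the breakpoint lies below $i_1$, above $i_r$ (both cases giving zero), or strictly between --- essentially the de Boor--Cox argument --- and that bookkeeping is precisely the part your proposal has not carried out.
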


We are now ready to give the proof of Lemma \ref{cdcr}. 
\begin{proof}[Proof of Lemma \ref{cdcr}]
Fix $t > 0$ and suppose that $S := \{i_1, \dots, i_m\}$ (with $1
  \leq i_1 < \dots < i_m \leq n$) is a subset of $\{1, \dots, n\}$
  that is $t$-shattered by $\C(V)$. Let $h_{i_1}, \dots, h_{i_m}$
  denote the associated levels and denote by $h$ the vector in $\R^m$
  given by $(h_{i_1}, \dots, h_{i_m})$. We shall then prove that $m$
  is bounded from above by the right hand side of
  \eqref{cdcr.eq}. Note that we can assume that $m \geq r$ (otherwise
  there is nothing to prove). 

We first claim that 
  \begin{equation}\label{fcr}
    [i_j, i_{j+1}, \dots, i_{j+r-1}; h] \geq [i_{j-1}, \dots,
    i_{j+r-2}; h] + t \sum_{k=j-1}^{j+r-1} (- \tau_{k, j}) \{\tau_{k,
      j} < 0\} 
  \end{equation}
 for every $j = 2, \dots, m-r+1$ where
 \begin{equation*}
   \tau_{k,j} := \prod_{j \leq \ell \leq j+r-1 : \ell \neq k}
     \frac{1}{i_k - i_{\ell}} - \prod_{j-1 \leq \ell \leq j+r-2 : \ell \neq k}
     \frac{1}{i_k - i_{\ell}} 
 \end{equation*}
for $k = j , \dots, j+r-2$ and 
\begin{equation*}
  \tau_{j-1, j} := (-1)^r \prod_{\ell = j}^{j+r-2} \frac{1}{i_{\ell} -
    i_{j-1}} ~~ \text{ and } ~~ \tau_{j+r-1, j} := \prod_{\ell =
    j}^{j+r-2} \frac{1}{i_{j+r-1} - i_{\ell}} 
\end{equation*}
In the above, for $r = 1$, we take $\tau_{j-1, j} = -1$ and $\tau_{j,
  j} = 1$.  

To see \eqref{fcr}, note first that because $S$ is $t$-shattered by
$\C_r(a, V)$, there exists $\theta \in \C_r(a, V)$ such that $\theta_{i_k}
\leq h_{i_k}$ whenever $\tau_{k, j} \geq 0$ and $\theta_{i_k} \geq
h_{i_k} + t$ whenever $\tau_{k, j} < 0$. Because $\theta \in
\C_r(a, V)$, Lemma \ref{subs} gives
\begin{equation*}
   [i_j, i_{j+1}, \dots, i_{j+r-1}; \theta] \geq [i_{j-1}, \dots,
    i_{j+r-2}; \theta].
\end{equation*}
It can be checked that the above inequality is equivalent to
$\sum_{k=j-1}^{j+r-1} \tau_{k, j} \theta_{i_k} \geq 0$ which is
further equivalent to 
\begin{equation*}
  \sum_{k=j-1}^{j+r-1} \tau_{k, j} \theta_{i_k} \{\tau_{k, j} \geq 0\}
  \geq \sum_{k=j-1}^{j+r-1} (-\tau_{k, j}) \theta_{i_k} \{\tau_{k, j}
  < 0\}.
\end{equation*}
The above inequality, together with the fact that $\theta_{i_k} \leq
h_{i_k}$ when $\tau_{k, j} \geq 0$ and $\theta_{i_k} \geq h_{i_k}
+ t$ when $\tau_{k, j} < 0$, gives \eqref{fcr}. 

From \eqref{fcr}, it is easy that by recursive application, one
obtains
\begin{equation*}
  [i_j , i_{j+1}, \dots, i_{j+r-1}; h] \geq [i_u, i_{u+1}, \dots,
  i_{u+r-1}; h] + t \sum_{a = u}^{j-1} \sum_{k=a}^{a+r} (- \tau_{k,
    a+1}) \left\{\tau_{k, a+1} < 0 \right\}
\end{equation*}
for every $1 \leq u < j \leq m-r+1$. Taking $u = 1$ and $j = m-r+1$,
we obtain 
\begin{equation}\label{say}
  [i_{m-r+1} , \dots, i_{m}; h] - [i_1,  \dots,
  i_{r}; h]  \geq t T_r. 
\end{equation}
where 
\begin{equation*}
  T_r := \sum_{a = 1}^{m-r} \sum_{k=a}^{a+r} (- \tau_{k,
    a+1}) \left\{\tau_{k, a+1} < 0 \right\}. 
\end{equation*}
We now claim that 
\begin{equation}\label{pnk}
  [i_1,  \dots, i_{r}; h] \geq \frac{a}{(r-1)!} ~~ \text{ and } ~~
  [i_{m-r+1} , \dots, i_{m}; h] \leq \frac{a+V}{(r-1)!}. 
\end{equation}
We shall prove the first inequality in \eqref{pnk} below. The proof of
the second inequality will be similar. One can write $[i_1, \dots,
i_r; h]$ as $\sum_{j=1}^r \beta_j h_{i_j}$ for some real coefficients
$\beta_j$. Because $S$ is $t$-shattered by $\C_r(a, V)$, there exists
$\theta \in \C_r(a, V)$ such that $h_{i_j} \geq \theta_{i_j}$ for
$\beta_j \geq 0$ and $h_{i_j} < \theta_{i_j}$ for $\beta_j < 0$. This
implies that 
\begin{equation*}
[i_1, \dots, i_r; h] = \sum_{j=1}^r \beta_j h_{i_j} \geq \sum_{j=1}^r
\beta_j \theta_{i_j} = [i_1, \dots, i_r; \theta]. 
\end{equation*}
Lemma \ref{nogs} now implies that, for some $c_i \geq 0, 1 \leq i \leq
n-r+1$  with $\sum_{i=1}^{n-r+1} c_i = 1$, we have 
\begin{equation*}
  [i_1, \dots, i_r; \theta] = \frac{1}{(r-1)!} \sum_{i=1}^{n-r+1} c_i
  (D^{(r-1)} \theta)_i \geq \frac{a}{(r-1)!}
\end{equation*}
where the last inequality follows because $\theta \in \C_r(a,
V)$. This proves \eqref{pnk}. 

Combining \eqref{pnk} and \eqref{say}, we obtain
\begin{equation*}
  T_r \leq \frac{V}{t (r-1)!}. 
\end{equation*}
We now claim the following lower bound for $T_r$: 
\begin{equation}\label{rbo}
T_1 = m-1 ~~~ \text{ and } ~~~  T_r \geq \frac{ (m-r)^r}{n^{r-1}
  (r-1)^{r-1}} \qt{for every $r  \geq 2$}.   
\end{equation}
Before we prove \eqref{rbo}, note first that as a consequence of the
above pair of inequalities, inequality \eqref{cdcr.eq} holds with $C_1
= 1$ and 
\begin{equation*}
  C_r = \left(\frac{(r-1)^{r-1}}{(r-1)!} \right)^{1/r} \qt{for $r \geq
    2$}. 
\end{equation*}
Therefore, to complete the proof of Lemma \ref{cdcr}, we only need to
prove inequality \eqref{rbo}. 

To prove \eqref{rbo}, we assume that $r \geq 2$ (the fact that $T_1 =
m-1$ is obvious) and note first that $\tau_{a+r-1, a+1} < 0$ for every
$a = 1, \dots, m-r$ . As a result,  
\begin{equation*}
  T_r \geq \sum_{a=1}^{m-r} \left(- \tau_{a+r-1, a+1} \right) \geq
  \sum_{a=1}^{m-r} \frac{1}{(i_{a+r-1} - i_a) \dots (i_{a+r-1} -
    i_{a+r-2})}. 
\end{equation*}
By the AM-GM inequality, we have
\begin{equation*}
  (i_{a+r-1} - i_a) \dots (i_{a+r-1} - i_{a+r-2}) \leq
  \left(\frac{(i_{a+r-1} - i_a) + \dots + (i_{a+r-1} -
      i_{a+r-2})}{r-1} \right)^{r-1} . 
\end{equation*}
If we define $s_j := i_{j+1} - i_j$ for $j = 1, \dots, m-1$, then it
is easy to see that 
\begin{equation*}
  \frac{(i_{a+r-1} - i_a) + \dots + (i_{a+r-1} -
      i_{a+r-2})}{r-1} = \sum_{j=0}^{r-2}  \frac{j+1}{r-1} s_{a+j}
    \leq \sum_{j=0}^{r-2} s_{a+j}. 
\end{equation*}
We have deduced therefore that 
\begin{equation*}
  T_r \geq \sum_{a=1}^{m-r} \left( \frac{1}{\sum_{j=0}^{r-2} s_{a+j}} \right)^{r-1}.
\end{equation*}
We now use the convexity of the map $x \mapsto (1/x)^{r-1}$ for $x >
0$ to obtain
\begin{equation*}
  T_r \geq \frac{(m-r)^r}{\left( \sum_{a=1}^{m-r} \sum_{j=0}^{r-2}
      s_{a+j} \right)^{r-1}}. 
\end{equation*}
Inequality \eqref{rbo} follows from here because
\begin{equation*}
  \sum_{a=1}^{m-r} \sum_{j=0}^{r-2} s_{a+j} = \sum_{j=0}^{r-2}
  \sum_{a=1}^{m-r} s_{a+j} = \sum_{j=0}^{r-2} \left(i_{m-r+j+1} -
    i_{j+1} \right) \leq n (r-1). 
\end{equation*}
This completes the proof of Lemma \ref{cdcr}. 
\end{proof}

\section{Additional Simulation Results}\label{adsims}
The purpose of this section is to provide additional details for the
main simulation section as well as to provide results for the function
$f_3^*(x) := 14.77 I\{0.1 < x \leq 0.13\} - 3.69 I\{0.13 < x \leq 
  0.15\} + 7.39 I\{0.15 < x \leq 0.23\}  - 7.39 I\{0.23 < x \leq 0.25\}
  + 11.08 I\{0.25 < x \leq 0.4\} - 4.43 I\{0.4 < x \leq 0.44\} + 3.32
  I\{0.44 < x \leq 0.65\} + 19.21 I\{0.65 < x \leq 0.76\} + 7.76
  I\{0.76 < x \leq 0.78\} + 15.51 I\{0.78 < x \leq 0.81\}$. This
  function (plotted in Figure \ref{fr1}) is similar to the blocks
  function of \citet{donoho1994ideal}. 
\begin{figure}[!htbp]
\begin{center}
 \includegraphics[width=\textwidth]{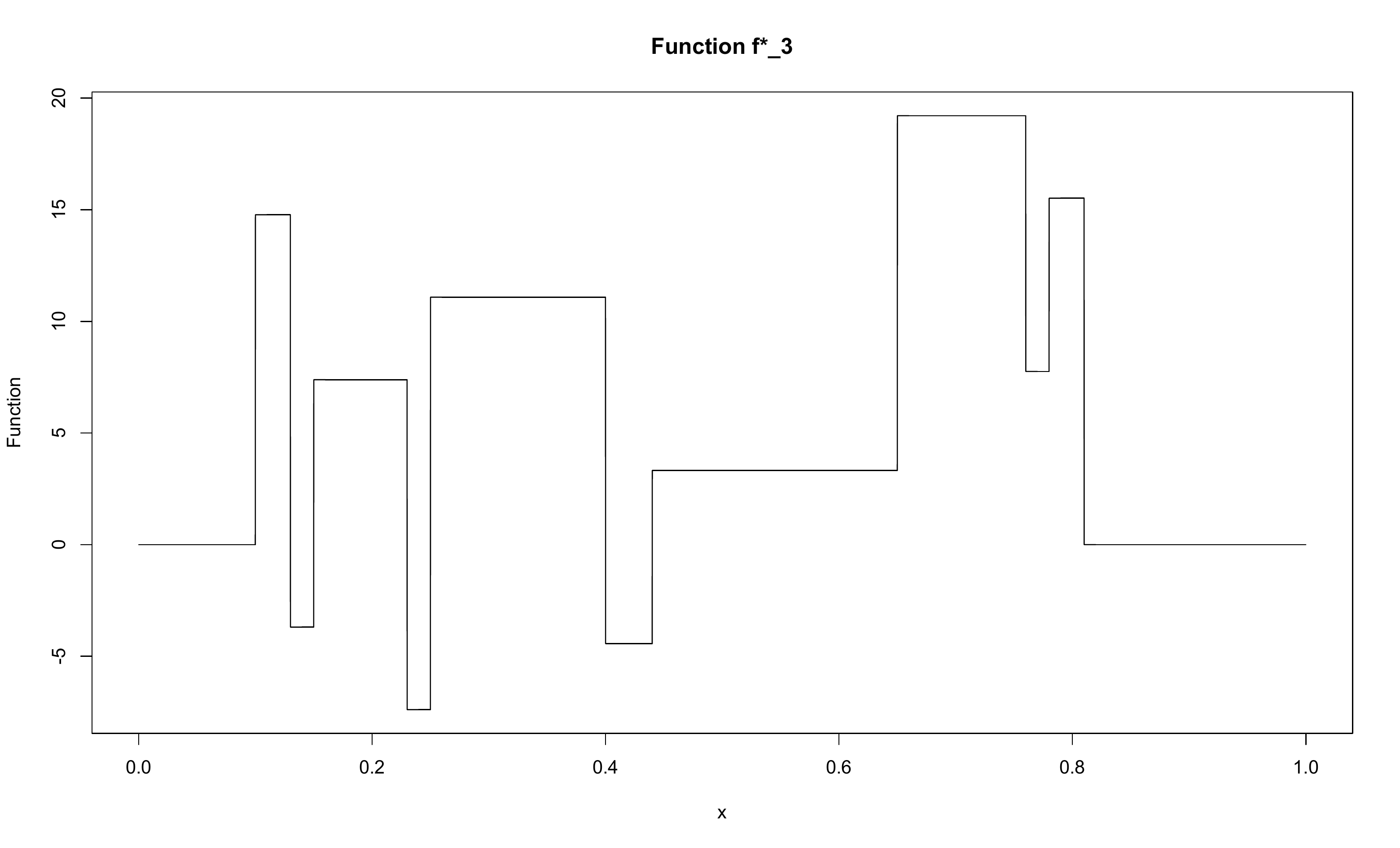}    
\caption{The function $f_3^*$}  
\label{fr1}
\end{center}
\end{figure}

Note that in our simulation results for $f_1^*$, we computed the ideal
penalized estimator with $\lambda$ taken to be $\lambda^*$ defined as
in \eqref{lams}. We mentioned that $\lambda^*$ was computed by
Monte-Carlo averaging based on a convex optimization scheme for
computing $\lambda_{\theta^*}(z)$ for each $z \in \R^n$. Let us
provide more details behind this convex optimization here. For general
$r \geq 1$, it is easy to see (using the definition of 
  $\lambda_{\theta^*}(z)$ and the subdifferential characterization in
  Proposition \ref{characterization}) that $\lambda_{\theta^*}(z)$
  can be read-off as the optimizing value for $\lambda$ in the
  following convex optimization problem:  
\begin{equation*}
\begin{aligned}
& \underset{v_1, \dots, v_n, \lambda}{\text{minimize}}
& & \|z - v\| \\
& \text{subject to}
& & \sum_{i=j}^n {r+i-j-1 \choose r-1} v_i = 0 \text{ for }  j = 1,
\dots, r\\
&&& \sum_{i=j}^n {r+i-j-1 \choose r-1} v_i - \lambda  \leq 0 \text{ for } r < j \leq n  \\
&&& \sum_{i=j}^n {r+i-j-1 \choose r-1} v_i + \lambda  \geq 0 \text{
  for } r < j \leq n \\ 
&&& \sum_{i=j}^n {r+i-j-1 \choose r-1} v_i - \lambda~
\mathrm{sgn}((D^{(r)} \theta)_{j-r})  = 0\text{ for } r < j \leq n \\ &&&\text{
  with } (D^{(r)} \theta)_{j-r} \neq 0. 
\end{aligned}
\end{equation*}
This optimization problem can be solved efficiently by the convex optimization
software MOSEK for $r = 1$. In fact, for computational reasons, it is
easier to solve the dual of this problem. For $r \geq 2$ however, this
problem becomes quite ill-conditioned and MOSEK seems to have trouble
finding the global minimizer. This is why we could not compute the
$\lambda^*$ values for the function $f_2^*$. 

\begin{figure}[h!]
\begin{center}
 \includegraphics[width = \textwidth]{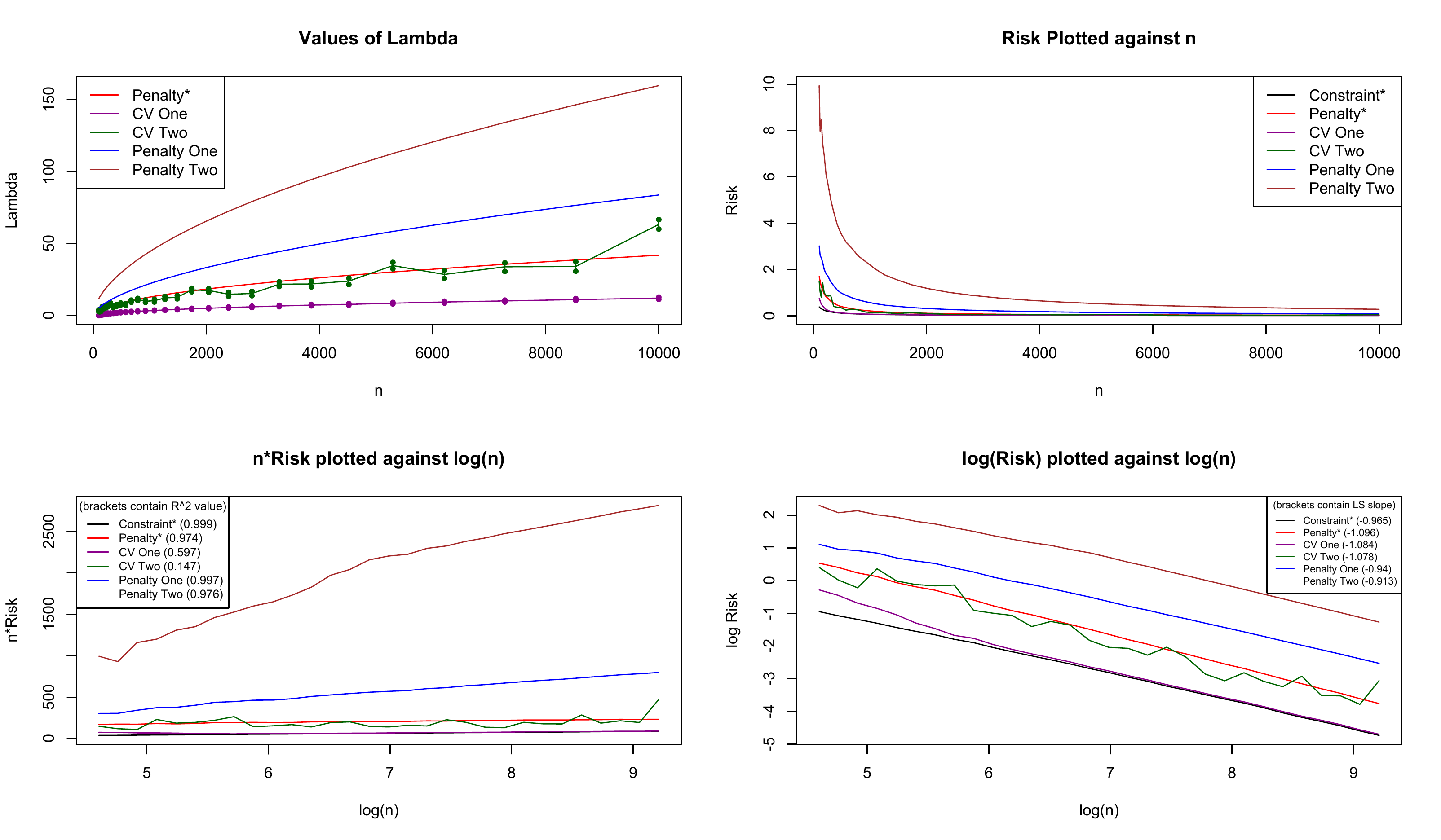}    
\caption{Plots when the true function is $f_3^*$. }  
\label{f2p}
\end{center}
\end{figure}

The simulation results for the function $f_3^*$ (here $r = 1$ as
$f_3^*$ is a piecewise constant function) are given in Figure
\ref{f2p}. It is clear from here that the behavior of the non-CV
estimators is in accordance with our theoretical results. The CV
estimators seem to behave in a complicated manner in the bottom-left
plot. Again, understanding the risk behavior of CV estimates in this
setting is beyond the scope of the present paper.

\bibliographystyle{imsart-nameyear}
\bibliography{AG}

\end{document}